\documentclass[10pt,a4paper,english]{amsart}

\usepackage{amssymb}
\usepackage{amsthm}
\usepackage[T1]{fontenc}
\usepackage{lmodern}
\usepackage{ae}
\usepackage{enumerate}
\usepackage{mathtools}

\usepackage[utf8]{inputenc}
\usepackage{hyperref}
\hypersetup{
   colorlinks=true,
   linkcolor=blue,
   citecolor=blue,
   filecolor=red,
   urlcolor=green
}
\usepackage{geometry}
\usepackage{geometry}
\geometry{inner=2cm, outer=3cm, top=2cm, bottom=2cm}
\geometry{headheight=12pt}
\usepackage{fancyhdr}

\pagestyle{fancy}
\fancyhf{}
\fancyhead[RE]{\textsc{\nouppercase{\leftmark}}}
\fancyhead[LO]{\slshape \nouppercase{\rightmark}}
\fancyfoot[C]{\thepage}
\setlength{\headheight}{14.5pt}
\usepackage{stmaryrd}
\usepackage{amsmath}
\theoremstyle{plain} 
\newtheorem{theorem}{Theorem}[section]

\newtheorem{proposition}[theorem]{Proposition}
\newtheorem{lemma}[theorem]{Lemma}

\newtheorem{remark}[theorem]{Remark}
\numberwithin{equation}{section}

\newcommand{\diag}{\operatorname{diag}}

\newcommand{\tr}{\operatorname{tr}}

\renewcommand{\O}{  \mathcal{O}   }

\newcommand{\s}{  \sigma   }

\renewcommand{\phi}{  \varphi  }

\newcommand{\Card}{\operatorname{Card}}
\newcommand{\be}{\begin{equation}}
\newcommand{\ee}{\end{equation}}
\newcommand{\ben}{\begin{equation*}}
\newcommand{\een}{\end{equation*}}
\newcommand{\ban}{\begin{align*}}
\newcommand{\ean}{\end{align*}}
\newcommand{\lc}{\llbracket}
\newcommand{\rc}{\rrbracket}

\begin{document}
\title[\small{KAM for the nonlinear wave equation on the circle: A normal form theorem}]{KAM for the nonlinear wave equation on the circle: A normal form theorem}
\author{Moudhaffar Bouthelja}
\address{Laboratoire Paul-Painlev\'e, Universit\'e de Lille 1, UMR CNRS 8524, Cit\'e Scientifique, 59655 Villeneuve-d'Ascq}
\email{moudhaffar.bouthelja@math.univ-lille1.fr}
\thanks{This work was supported in part by the CPER Photonics4Society and the Labex CEMPI (ANR-11-LABX-0007-01)}

	\begin{abstract}
In this paper we prove a KAM theorem in infinite dimension which treats the case of multiple eigenvalues (or frequencies) of finite order. More precisely, we consider a Hamiltonian normal form in infinite dimension:
\begin{equation} \nonumber
h(\rho)=\omega(\rho).r + \frac{1}{2} \langle \zeta,A(\rho)\zeta \rangle,
\end{equation}
where $ r \in \mathbb{R}^n $, $\zeta=((p_s,q_s)_{s \in \mathcal{L}})$ and $ \mathcal{L}$ is a subset of $\mathbb{Z}$. We assume that the infinite matrix  $A(\rho)$ satisfies $A(\rho)= D(\rho)+N(\rho)$, where $D(\rho) =\operatorname{\diag} \left\lbrace  \lambda_{i} (\rho) I_2 ,\: 1\leq i \leq m\right\rbrace$ and $N$ is a bloc diagonal matrix. We assume that the size of each bloc of $N$ is the multiplicity of the corresponding eigenvalue in $D$.

In this context, if we start from a torus, then the solution of the associated Hamiltonian system remains on that torus. Under certain conditions emitted on the frequencies, we can affirm that the trajectory of the solution fills the torus. In this context,  the starting torus is an invariant torus. Then, we perturb this integrable Hamiltonian and we want to prove that the starting torus is a persistent torus. We show that, if the perturbation is small and under certain conditions of non-resonance of the frequencies, then the starting torus is a persistent torus.
	\end{abstract}
	\maketitle
	\tableofcontents
\section{Introduction}    
Kepler's laws predict that planetary orbits describe regular ellipses. In the eighteenth century, Newton's laws helped to better understand phenomena related to gravitation. Mathematicians then realized that Kepler's laws did not take into account the perturbations due to the interactions between the planets. The question then is whether these deviations are likely to significantly modify the trajectories of the planets.

In 1889, Poincaré showed that the series used to describe these perturbations were divergent. In other words, a small perturbation could possibly have an infinite contribution. This phenomenon was interpreted as a confirmation of the hypotheses of statistical mechanics.

In 1954, the situation changed once again after Kolmogorov's works. During a presentation at the International Congress of Mathematicians in Amsterdam, he briefly presented a result, according to which the solar system is probably stable. Instability is perfectly possible, as Poincaré said, but it happens very rarely. Indeed, Kolmogorov's theorem states that, if we start from a stable dynamic system (the solar system as imagined by Kepler) and add a small perturbation, then the system obtained remains stable for most of the initial data. In \cite{Kolmogorov54}, Kolmogorov gave only the broad lines of the proof. This discovery did not attract much interest from his contemporaries, so Kolmogorov did not continue his work in this direction.

Almost ten years later, in 1963, a student of Kolmogorov, Arnold, who was interested in the stability of planetary motion, came back to this approach. He proved that for quite small perturbations, almost all the trajectories remain close to the Kepler ellipse (see \cite{arno63a, arn63b}).

Independently, the same year, Moser developed generic techniques to solve problems related to disturbances, such as those studied by Kolmogorov (see \cite{mos62}).

All this work forms the basis of the KAM theory. For more history on the KAM theory see \cite{villani} and \cite{dumas2014kam}.

In recent years, significant progress has been made in KAM theory. For PDEs, everything starts in 1987 in \cite{Kuk87,kuk89}. In the second paper the author proves the existence of quasi-periodic solutions following the perturbation of an integrable hamiltonian in infinite dimension. In the second paper, the author proves the existence of quasi-periodic solutions after perturbing an integrable infinite dimension Hamiltonian. He assumes that the spectrum of the integrable hamiltonian is in the form of $\lambda_n \sim n^d$ with $n \geq 1$ and $d>1$. He then applies this result to the Schr\"odinger equation with potential, i.e. with external parameter, in dimension 1 with Dirichlet condition. In \cite{KukWay95}, he proves with P\"oschel a similar result for the Schr\"odinger equation, without parameter, in dimension 1 and with Dirichlet condition. This also implies the simplicity of the spectrum. See also \cite{Kuk98} for the Korteweg–de Vries equation.

Still in the context of KAM theory for PDEs, Wayne proves in 1990 in \cite {Wayne90}, using KAM methods, the existence of periodic and quasi-periodic solutions for the wave equation in dimension 1, with potential and Dirichlet condition.

In 1996, P\"oschel proves, in \cite{Poschel1996}, the existence of invariant tori of finite dimension in an infinite phase space, after a small perturbation of an integrable Hamiltonian. He assumes that the spectrum of the quadratic part of the integrable Hamiltonian is simple and satisfies:$$\lambda_s= s^d + \ldots + O(s^\delta),$$ where $d \geq 1$ and $\delta <d-1$. He then applies this result, in \cite{Poschel_Wave}, to the non-linear wave equation without an external parameter and with Dirichlet condition.

The first KAM result for the non-linear wave equation with periodic boundary conditions in dimension 1 is due to Chierchia and You in \cite{Chierchia_You} in 2000. In this paper, the authors prove the existence of quasi-periodic solutions for the wave equation with potential, i.e. with external parameter. They assume also that the non-linearity does not depends on the space variable.

More recently, in 2010, Eliasson and Kuksin succeeded in applying KAM theory to a multidimensional EDP. In \cite{eliasson2010kam}, they prove the existence of quasi-periodic solutions for the Schr\"odinger equation with potential in any dimension. For the proof they use a KAM theorem in infinite dimension, and such that the quadratic part of the integrable Hamiltonian admits an infinity of eigenvalue with any multiplicity.

In 2011 Grébert and Thomann proves in \cite{GrTh11} a KAM result in infinite dimension by improving results of Kuksin and P\"oschel \cite{kuksin1996invariant, Poschel_Wave}, and this by using the recent techniques of Eliasson and Kuksin \cite{eliasson2010kam}. They prove the existence of invariant tori of finite dimension, for a small perturbation of an integrable Hamiltonian whose external frequencies are of the form $\lambda_s \sim s$. They apply this result to prove the existence of quasi-periodic solutions for the Schr\"odinger equation in dimension 1 with harmonic potential. They also prove the reductibility of the Schr\"odinger equation with a harmonic potential quasi-periodic in time.

For more details about existing results for KAM theory for PDEs we can refer to \cite{berti16}. In this paper, the author provides an overview of the state of the art of KAM theory for PDEs. He gives several examples of Hamiltonian and reversible PDEs like the nonlinear wave, Klein–Gordon and Schrödinger equations, the water waves equations for fluids and some of its approximate models like the KdV (Korteweg de Vries) equation. He also gives a classification of the existing results. He distinguishes three categories depending on what we perturb. A first class for linear PDEs with parameters. A second class for integrable PDEs and a third one for normal form, i.e. we have to perform a Birkhoff normal form before applying a KAM result.

In this paper we use recent techniques developed by Eliasson-Grébert-Kuksin in \cite{EGK_final} and by Grébert-Paturel in \cite{grepat16_final}. In \cite{EGK_final}, the authors prove a KAM theorem in infinite dimension, which they apply to the multidimensional beam equation without external parameter and with a cubic non-linearity. They prove the existence of quasi-periodic solutions of low amplitude. In \cite{grepat16_final}, the authors obtain a similar result for the multidimensional Klein Gordon equation.

In this paper we prove an abstract KAM theorem (Theorem~\ref{theoreme kam}) that we apply to  the convolutive wave equation on the circle:
\begin{equation} \label{eq_onde_pot}
u_{tt} - u_{xx} + V \star u + \varepsilon g(x,u) = 0, \quad t \in \mathbb{R},\: x \in \mathbb{S}^1,
\end{equation}
in section 6. For simplicity we assume that $\Lambda:=-\partial_{xx}+ V \star >0$. Thanks to the potential $V$, considered as a parameter, the eigenvalues $(\lambda_a:= \sqrt{a^2+\hat{V}(a)}, \: a \in \mathbb{Z})$ of $\Lambda^{1/2}$ satisfy some suitable non-resonance conditions. This allows us to prove the existence of quasi-periodic solutions for generic potential $V$.  We also use this abstract KAM theorem to prove the existence of small amplitude quasi-periodic solutions for the nonlinear wave equation on the circle without parameter (see \cite{B2}). More precisely we consider the cubic wave equation on the circle:
\begin{equation} \label{eq_onde_m}
u_{tt} - u_{xx} + m u = 4u^3+ O(u^4), \quad t \in \mathbb{R},\: x \in \mathbb{S}^1,
\end{equation}
for $m \in \left[ 1,2 \right]$. The eigenvalues $(\lambda_a:= \sqrt{a^2+m}, \: a \in \mathbb{Z})$ of $\sqrt{-\partial_{xx}+m}$ are completely resonant. In order to satisfy the KAM non-resonance conditions for this case, we have to perform a Birkhoff normal form and "extract" from the non-linearity the integrable term in order "tune" the frequencies. In the two previous wave equations, we remark that $\lambda_a=\lambda_{-a}, a \in \mathbb{Z}$. Thus, the KAM theorem that we will use must deal with the case of multiple eigenvalues with finite order.

We begin the paper by stating a KAM result for a Hamiltonian $H=h+f$ of the following form: 
\begin{equation} \nonumber
H(\rho)=\omega(\rho).r + \frac{1}{2} \langle \zeta,A(\rho)\zeta \rangle+ f(r,\theta, \zeta; \rho),
\end{equation}
where
\begin{itemize}
\item[i)] $\rho \in \mathcal{D}$ is an external parameter. $\mathcal{D}$ is a compact set of $\mathbb{R}^p$.
\item[ii)] $ \omega$ is the frequencies vector corresponding to the internal modes in action-angle variables $(r,\theta) \in \mathbb{R}^n \times \mathbb{T}^n$.
\item[iii)]  $\zeta=(\zeta_s)_{s \in \mathcal{L}}$ are the external modes,  $ \mathcal{L}$ is an infinite set of indices of $\mathbb{Z}$, $\zeta_s= (p_s,q_s) \in \mathbb{R}^2$.
\item[iv)] $A$ is a block diagonal linear operator acting on the external modes.
\item[v)] $f$ is a perturbative hamiltonian depending  on all the modes.
\end{itemize}
Before giving the main result (Theorem~\ref{theoreme kam}), we detail the structure behind these object  and the hypothesis needed for the KAM result. In Section 3 we study the Hamiltonian flows generated by Hamiltonian functions. In Section 4 we detail the resolution of the homological equation. In section 5 we give the proof of the abstract KAM theorem \ref{theoreme kam}. In section 6 we apply the KAM theorem to the wave equation with a convolutive potential.
\section{Setting and abstract KAM theorem}
For $\mathcal{L}$ a set of $\mathbb{Z}$ and $\alpha\geq 0$, we define the $\ell_2$ weighted space:
\begin{equation} \nonumber
Y_\alpha:= \lbrace \zeta= \left( \zeta_{s} =\left( p_{s}, q_{s} \right)  , s \in \mathcal{L}  \right) | \Vert \zeta \Vert_\alpha < \infty \rbrace  ,
\end{equation}
where $$ \Vert \zeta \Vert_\alpha^2 = \sum_{s \in \mathcal{L}} \vert \zeta_s \vert^2 \langle s\rangle^{2\alpha}, \quad \langle s\rangle = \max(\vert s \vert , 1 ).$$
We endow $\mathbb{C}^2$ with the euclidean norm, i.e if $\zeta_s={}^t(p_s,q_s)$ then $|\zeta_s|=\sqrt{p_s^2+q_s^2}$.\\
We define the following linear operator on $Y_\alpha$
\begin{equation} \nonumber
J: \lbrace \zeta_{s} \rbrace \mapsto \lbrace \sigma_2 \zeta_{s} \rbrace, \mbox{ where } \sigma_2 = \begin{pmatrix}
   0 & -1 \\
   1 & 0 
\end{pmatrix}.
\end{equation}
For $\beta\geq 0$ we define the $\ell_\infty$ weighted space
$$ L_\beta= \lbrace  \left( \zeta_{s} =\left( p_{s}, q_{s} \right)  , s \in \mathcal{L}  \right) | \vert \zeta \vert_\beta < \infty \rbrace  ,$$
where
$$ \vert \zeta \vert_\beta= \underset{s \in \mathcal{L}}{\sup} \vert \zeta_s \vert \langle s\rangle^{\beta} .$$
For $\beta \leq s$, we have $Y_s \subset L_\beta.$ Consider the phase space $\mathcal{P}= \mathbb{T}^n \times \mathbb{R}^n \times Y_\alpha,$
that we endow with the following symplectic form
\begin{equation} \nonumber
dr \wedge d\theta + J d\zeta \wedge d\zeta.
\end{equation}
\textbf{Infinite matrices.} Consider the orthogonal projector $\Pi$ defined on the set of square matrices by
\begin{equation}\nonumber
\Pi: \: \mathcal{M}_{2\times2}(\mathbb{C}) \rightarrow \mathbb{S},
\end{equation}
where
\begin{equation} \nonumber
\mathbb{S}=\mathbb{C}I+\mathbb{C}\sigma_2, \quad \text{with} \quad \sigma_2=\begin{pmatrix}
   0 & -1 \\
   1 & 0
\end{pmatrix}.
\end{equation}
We introduce $\mathcal{M}$ the set  of infinite symmetric matrices $A: \mathcal{L} \times \mathcal{L} \rightarrow \mathcal{M}_2 \left( \mathbb{R} \right)$, that verify, for any $ s, s' \in \mathcal{L}$,
\begin{center}
 $A_s^{s'} \in \mathcal{M}_2 \left( \mathbb{R} \right)$,
 $A_s^{s'} = {}^tA_{s'}^s$ and  $\Pi A_{s}^{s'}=A_{s}^{s'}.$
\end{center}
We also define $\mathcal{M}_\alpha$, a subset of $\mathcal{M}$, by:
\begin{equation} \nonumber
A \in \mathcal{M}_\alpha \Leftrightarrow \vert A \vert_\alpha:= \underset{s,s' \in \mathcal{L}}{\sup} \langle s\rangle^{\alpha} \langle s'\rangle^{\alpha} \Vert A_s^{s'} \Vert_\infty < \infty .
\end{equation}
Let $n \in \mathbb{N}$, $\rho > 0$ and  $B$ be a Banach space. We define:\begin{equation} \nonumber
\mathbb{T}^n_\rho= \lbrace  \theta \in \mathbb{C}^n/ 2\pi \mathbb{Z}^n \vert \: \vert Im\theta \vert < \rho\rbrace
\end{equation}
and
\begin{equation} \nonumber
\mathcal{O}_\rho \left( B \right) = \left\lbrace x \in B \vert \Vert x \Vert_B < \rho \right\rbrace .
\end{equation}
For $\sigma , \mu \in  \left] 0 , 1\right[ $, we define
\begin{equation} \nonumber
\mathcal{O}^\alpha (  \sigma , \mu )  = \mathbb{T}^n_\sigma \times \mathcal{O}_{\mu^2} ( \mathbb{C}^n ) \times \mathcal{O}_\mu ( Y_\alpha )= \lbrace ( \theta,r,\zeta) \rbrace,
\end{equation}
\begin{equation} \nonumber
\mathcal{O}^{\alpha, \mathbb{R}}(  \sigma , \mu )= \mathcal{O}^\alpha (  \sigma , \mu ) \cap \lbrace \mathbb{T}^n \times \mathbb{R}^n \times Y_\alpha^{\mathbb{R}} \rbrace,
\end{equation}
where $ Y_\alpha^{\mathbb{R}}= \left\lbrace  \zeta \in Y_\alpha \: | \: \zeta = \left( \zeta_{s} = \begin{pmatrix}
\xi_s \\
\eta_s
\end{pmatrix}, \xi_s=\bar{\eta}_s \: s \in \mathcal{L}  \right) \right\rbrace$.\\
Let us denote a point in $\mathcal{O}^\alpha (  \sigma , \mu )$ as $x= (\theta,r, \zeta)$. A function on $\O^\alpha(\s,\mu)$ is real if it has a real value for any real $x$. We define:
\begin{equation} \nonumber
\Vert (r,\theta,\zeta) \Vert_\alpha=\max(|r|,|\theta|,\Vert \zeta \Vert_\alpha).
\end{equation}

\textbf{Class of Hamiltonian functions.}
Let $\mathcal{D}$ be a compact set of $\mathbb{R}^p$, called the parameters set from now on. Let $f:\mathcal{O}^{\alpha}(  \delta , \mu ) \times \mathcal{D} \rightarrow \mathbb{C}$ be a $\mathcal{C}^1$ function, real and holomorphic in the first variable, such that for all $\rho \in \mathcal{D}$, the maps
\begin{equation} \nonumber
\mathcal{O}^{\alpha}(  \delta , \mu ) \ni x \mapsto \nabla_\zeta f(x,\rho) \in Y_\alpha \cap L_\beta
\end{equation}
and
\begin{equation} \nonumber
\mathcal{O}^{\alpha}(  \delta , \mu ) \ni x \mapsto \nabla_{\zeta}^2f(x,\rho) \in \mathcal{M}_\beta,
\end{equation}
are holomorphic.  We define:
\begin{align*}
\left\vert f(x,.) \right\vert_\mathcal{D} = \underset{\rho \in \mathcal{D}}{sup} \left\vert f(x,\rho) \right\vert, &\quad \left\Vert \frac{\partial f}{\partial \zeta} (x,.) \right \Vert_\mathcal{D} = \underset{\rho \in \mathcal{D}}{sup}\left\Vert  \nabla_\zeta f(x,\rho) \right\Vert_\alpha ,\\
\left\vert \frac{\partial f}{\partial \zeta} (x,.) \right \vert_\mathcal{D} = \underset{\rho \in \mathcal{D}}{sup}\left\vert \nabla_\zeta f(x,\rho) \right\vert_\beta , &\quad \left\vert \frac{\partial^2 f}{\partial \zeta^2} (x,.) \right\vert_\mathcal{D} = \underset{\rho \in \mathcal{D}}{sup}\left\vert \ \nabla_\zeta^2 f(x,\rho) \right\vert_\beta.
\end{align*}
We denote by $\mathcal{T}^{\alpha,\beta}(\mathcal{D},\sigma,\mu)$ the space of functions $f$ that verify, for all $x \in \mathcal{O}^\alpha(\sigma,\mu)$, the following estimates:
\begin{equation} \nonumber
\left\vert f(x,.)\right\vert_\mathcal{D} \leq C, \quad \left\Vert \frac{\partial f}{\partial \zeta}  (x,.) \right\Vert_\mathcal{D} \leq \frac{C}{\mu}, \quad \left\vert \frac{\partial f}{\partial \zeta}  (x,.) \right\vert_\mathcal{D} \leq \frac{C}{\mu},\quad \left\vert \frac{\partial^2 f}{\partial \zeta^2}  (x,.) \right\vert_\mathcal{D} \leq \frac{C}{\mu^2}.
\end{equation}

For $f\in \mathcal{T}^{\alpha,\beta}(\mathcal{D},\sigma,\mu)$, we denote by $\lc f \rc^{\alpha,\beta} _{\sigma,\mu,\mathcal{D}}$ the smallest constant $C$ that satisfies the above estimates. If $\partial_\rho^j f \in \mathcal{T}^{\alpha,\beta}(\mathcal{D},\sigma,\mu)$ for $j\in \lbrace 0,1 \rbrace$, then for $\gamma >0$ we define:
\begin{equation} \nonumber
\lc f \rc^{\alpha,\beta,\gamma} _{\sigma,\mu,\mathcal{D}}=\lc f \rc^{\alpha,\beta} _{\sigma,\mu,\mathcal{D}}+\gamma \lc \partial_\rho f \rc^{\alpha,\beta} _{\sigma,\mu,\mathcal{D}}.
\end{equation}
\\ \textbf{ Hamiltonian equations.} Consider a $C^1$-Hamiltonian function, the Hamiltonian equations are given by:

\[ \left\{ 
\begin{aligned}
 \dot{r} &=& -\nabla_\theta f(r,\theta,\zeta),
 \\ \dot{\theta} &=& \nabla_r f(r,\theta,\zeta),
 \\ \dot{\zeta} &=& J \nabla_\zeta f(r,\theta,\zeta),
  \end{aligned}
 \right.
\]
\textbf{Poisson bracket.}
 Consider $f$ and $g$ two $C^1$-Hamiltonian function in $x=(r,\theta,\zeta)$. We define the Poisson bracket by:
\begin{equation}\nonumber
\lbrace f,g \rbrace = \nabla_r f.\nabla_\theta g -\nabla_\theta f.\nabla_r g + \langle \nabla_\zeta f, J\nabla_\zeta g \rangle.
\end{equation}
\textbf{Hamiltonian and normal form.}
Consider a Hamiltonian under the following form:
\begin{equation} \label{hamiltonien}
h(\rho)=\omega(\rho).r + \frac{1}{2} \langle \zeta,A(\rho)\zeta \rangle,
\end{equation}
where $ r \in \mathbb{R}^n , \: \zeta \in Y_\alpha$ and $A(\rho) \in \mathcal{M} $. Assume that $\omega\: : \mathcal{D} \: \rightarrow \mathbb{R}^n$ is a $C^1$ vector. Assume also that $A(\rho)$ is under the following form: $A(\rho)= D(\rho)+N(\rho)$. The matrix $D$ satisfies:
\begin{equation} \label{mat diag}
D(\rho) = \diag \left\lbrace  \lambda_{s} (\rho) I_2,\: s \in \mathcal{L}  \right\rbrace, 
\end{equation}
where \begin{enumerate}
\item[-] $ \lambda_s \geq \lambda_{s'}$ for $ s , s' \in \mathcal{L}$ and $s \geq s'$,
\item[-] $\Card \lbrace s' \in \mathcal{L} |\: \lambda_s=\lambda_{s'} \rbrace \leq d < \infty$ for $s \in \mathcal{L}$.
\end{enumerate}
$N$ is a bloc diagonal matrix and belongs to $\mathcal{M}$. We assume that the size of each bloc of $N$ is the multiplicity of the corresponding eigenvalue in $D$. We assume also that the coefficients of $N$ are under the following form: 
$\begin{pmatrix}
   \alpha & -\beta \\
   \beta & \alpha 
\end{pmatrix}$.

The matrix $N$ is a normal form if $N \in \mathcal{M}$ and satisfies the previous hypothesis. We denote $\mathcal{NF}$ the set of the normal form matrices.

We consider now the following complex change of variable:
\begin{equation} \nonumber
z_{j}= \begin{pmatrix}	  
   		\xi_{j} \\
  		 \eta_{j} 
  	\end{pmatrix}
 = \frac{1}{\sqrt{2}} \begin{pmatrix}	  
   		1 & i \\
  		1 & -i 
  	\end{pmatrix}
  \begin{pmatrix}	  
   		p_{j} \\
  		 q_{j} 
  	\end{pmatrix}	
 = \begin{pmatrix}	  
   		\frac{1}{\sqrt{2}}(p_{j}+iq_{j}) \\
  		\frac{1}{\sqrt{2}}(p_{j}-iq_{j}) 
  	\end{pmatrix}.
\end{equation}
If  $A \in \mathcal{NF}$, then, using the previous change of variable, we can transform the Hamiltonian \eqref{hamiltonien} under the following form:
\begin{equation} \nonumber
h(\rho)=\omega(\rho).r + \langle \xi,Q(\rho)\eta \rangle
\end{equation}
where $Q$ is hermitian matrix with complex coefficient.

The internal frequency vector $\omega$ and the matrices $D$ and $N$ verify hypotheses that will be stated in the following paragraph. For the following we fix two parameters $0 < \delta_0 \leq \delta \leq 1$. Assume that the eigenvalues of $D$ satisfy hypotheses A1, A2 and A3 and that $N $ satisfies hypothesis B.

\textbf{Hypothesis A1: Separation condition.} Assume that, for all $\rho \in \mathcal{D}$, we have:
\begin{itemize}
\item[$\star$] there exists a constant $c_0$ such that for all $s \in \mathcal{L}$,
\begin{equation} \nonumber
 \lambda_{s} (\rho)  \geq c_0 \langle s \rangle;
 \end{equation}
\item[$\star$] there exists a constant $c_1$ such that for all $s$, $s'$ $\in \mathcal{L}$ and $|s| \neq |s'|$, we have:
\begin{equation} \nonumber
 \vert \lambda_{s} (\rho) - \lambda_{s} ( \rho) \vert \geq c_1\left| \left| s \right| - \left| s \right| \right|.
\end{equation}
\end{itemize}
\textbf{Hypothesis A2: Transversality condition.} Assume that for all $\omega' \in \mathcal{C}^1 (\mathcal{D},\mathbb{R}^n)$ that satisfies
\begin{equation} \nonumber
\vert \omega - \omega' \vert_{\mathcal{C}^1 \left( \mathcal{D} \right)  } < \delta_0,
\end{equation}
\emph{for all $k \in \mathbb{Z}^n $, there exists a unit vector $z_k \in \mathbb{R}^p$, and all $s$, $s'$ $\in \mathcal{L}$ with $|s|>|s'|$ the following holds}:
\begin{itemize}
\item[$\star$]
\begin{equation} \nonumber
\vert k\cdot\omega' (\rho) \vert  \geq \delta,\quad \forall \rho \in \mathcal{D},
\end{equation}
or
\begin{equation} \nonumber
\langle \partial_\rho ( k\cdot\omega'(\rho)),z_k \rangle \geq \delta \quad \forall \rho \in \mathcal{D};
\end{equation}
where $k \neq 0$
\item[$\star$]
\begin{equation} \nonumber
\vert k\cdot\omega' (\rho)   \pm \lambda_{s}(\rho) \vert  \geq \delta \langle s \rangle, \quad \forall \rho \in \mathcal{D},
\end{equation}
or
\begin{equation} \nonumber
\langle \partial_\rho ( k\cdot\omega'(\rho)\pm \lambda_{s}(\rho)),z_k \rangle \geq \delta \quad \forall \rho \in \mathcal{D};
\end{equation}
\item[$\star$]
\begin{equation} \nonumber
\vert k\cdot\omega' (\rho)  + \lambda_{s}(\rho) + \lambda_{s'} ( \rho)  \vert \geq \delta( \langle s \rangle + \langle s' \rangle), \quad \forall \rho \in \mathcal{D},
\end{equation}
or
\begin{equation} \nonumber
\langle \partial_\rho ( k\cdot\omega'(\rho)+\lambda_{s}(\rho)+\lambda_{s'}(\rho)),z_k \rangle \geq \delta \quad \forall \rho \in \mathcal{D};
\end{equation}
\item[$\star$]
\begin{equation} \nonumber
\vert k\cdot\omega' (\rho) + \lambda_{s}(\rho) - \lambda_{s'} ( \rho) \vert \geq \delta ( 1 + \vert  \vert s \vert - \vert s' \vert \vert ), \quad \forall \rho \in \mathcal{D},
\end{equation}
or
\begin{equation} \nonumber
\langle \partial_\rho ( k\cdot\omega'(\rho)+\lambda_{s}(\rho) - \lambda_{s'} ( \rho)),z_k \rangle \geq \delta \quad \forall \rho \in \mathcal{D};
\end{equation}
\end{itemize}

\textbf{Hypothesis A3: Second Melnikov condition.}
Assume that for all $\omega' \in \mathcal{C}^1 (\mathcal{D},\mathbb{R}^n)$ that satisfies
\begin{equation} \nonumber
\vert \omega - \omega' \vert_{\mathcal{C}^1 \left( \mathcal{D} \right)  } < \delta_0,
\end{equation}
the following holds:
\\ for each $0 < \kappa < \delta$ and $N>0$ there exists a closed set $\mathcal{D}'\subset \mathcal{D}$ that satisfies
\begin{equation} \label{Melnikov-measure}
mes ( \mathcal{D} \setminus \mathcal{D}') \leq C (\delta^{-1} \kappa)^\tau N^\iota;
\end{equation}
for some $\tau,\: \iota >0$, such that for all  $\rho \in \mathcal{D}'$, all $ 0< \vert k \vert < N$ and all $s$,$s'\in \mathcal{L}$ with $|s| \neq |s'|$ we have:
\begin{equation} \label{Melnikov-cond}
\vert \omega'( \rho) \cdot k+\lambda_{s}(\rho)-\lambda_{s'}(\rho) \vert \geq \kappa(1+  \vert \vert s \vert - \vert s' \vert \vert).
\end{equation}

\textbf{Hypothesis B:} Assume that $N \in \mathcal{NF}$ and for all $\rho \in \mathcal{D}$ we have:
\begin{equation} \label{Hypothèse B}
\vert \partial^j_\rho N(\rho) \vert_\beta \leq \frac{\delta}{8},\:\:\: j=0,1.
\end{equation}
\begin{remark} \label{melnikov}
Assume that hypothesis A2 is satisfied, then for  $0< \kappa < \delta \leq \frac{1}{2}c_0$ and $N>1$ there exists a closed set $\mathcal{D}_1 = \mathcal{D}_1(\kappa,N) \subset \mathcal{D}$ such that:
\begin{equation*}
\operatorname{mes} ( \mathcal{D} \setminus \mathcal{D}_1) \leq C \kappa \delta^{-1} N^{2(n+1)},
\end{equation*}
where the constant $C$ depends on $\vert \omega \vert_{\mathcal{C}^1 \left( \mathcal{D} \right)}$ and $c_0$. For $\rho \in \mathcal{D}_1$ and $ \vert k \vert \leq N$ the following holds:
\begin{align*}
 &\vert k \cdot \omega'  \vert  \geq \kappa, \; \mbox{ if } k\neq 0, \\
 &\vert k \cdot \omega'  \pm \lambda_{s} \vert  \geq \kappa  \langle s\rangle ,\\
 &\vert k \cdot \omega'   + \lambda_{s} + \lambda_{s'}  \vert \geq \kappa  (\langle s\rangle +  \langle s'\rangle).
\end{align*}
\end{remark}
The remark is proven in the Appendix
\smallskip
\\Now we are able to state the abstract KAM theorem

\begin{theorem}\label{theoreme kam}
Consider the following Hamiltonian:
$$h(\rho)=\omega(\rho).r + \frac{1}{2} \langle \zeta,A(\rho)\zeta \rangle.$$
Assume that $A(\rho)=D(\rho)+N(\rho)$ where $D(\rho)$ is defined as in $\eqref{mat diag}$ and satisfies with the internal vector frequency $\omega$ the hypothesis A1, A2 and A3, while $N \in \mathcal{NF}$  and satisfies hypothesis B for fixed $\delta$ and $\delta_0$ and all $\rho \in \mathcal{D}$. Fix $\alpha,\beta>0$ and $0<\sigma,\mu \leq1$. Then there exists $\varepsilon_0$ depending on $d,n,\alpha,\beta, \sigma,\mu, |\omega_0|_{\mathcal{C}^1 \left( \mathcal{D} \right) }$ and $\vert  A_0 \vert_{\beta,{\mathcal{C}^1 \left( \mathcal{D} \right) }}$ such that, if  $\partial^j_\rho f\in \mathcal{T}^{\alpha,\beta}(\mathcal{D},\sigma, \mu)$ for $j=0,1$, if $$\lc f^T \rc^{\alpha,\beta,\kappa} _{\sigma,\mu,\mathcal{D}}=\varepsilon < \min(\varepsilon_0, \frac{1}{8} \delta_0) \mbox{ and  } \: \lc f \rc^{\alpha,\beta,\kappa} _{\sigma,\mu,\mathcal{D}} 
=O(\varepsilon^\tau),$$
for $\: 0 < \tau <1$, then there is a Borel set $\mathcal{D}' \subset \mathcal{D}$ with $mes ( \mathcal{D} \setminus \mathcal{D}') \leq  c(\sigma, \delta) \varepsilon^\gamma$ such that for all $\rho \in \mathcal{D}'$:
\begin{itemize}
\item there is a real symplectic analytical change of variable
\begin{equation} \nonumber
\Phi=\Phi_\rho: \mathcal{O}^\alpha(\frac{\sigma}{2}, \frac{\mu}{2}) \rightarrow \mathcal{O}^\alpha(\sigma,\mu)
\end{equation}
\item there is a new internal frequency vector $\tilde{\omega}(\rho) \in \mathbb{R}^n$, a matrix $\tilde{A}$ and a perturbation $\tilde{f} \in \mathcal{T}^{\alpha,\beta}(\mathcal{D}',\sigma/2, \mu/2)$ such that
\begin{equation}  \nonumber
(h_\rho + f ) \circ \Phi = \tilde{\omega}(\rho) \cdot r + \frac{1}{2}  \langle \zeta,\tilde{A}(\rho)\zeta \rangle + \tilde{f} (\theta,r,\zeta; \rho),
\end{equation}
where $ A:\mathcal{L}\times\mathcal{L} \to \mathcal{M}_{2\times 2}(\mathbb{R})$ is a block diagonal symmetric infinite matrix in $\mathcal{M}_\beta$ (ie $A_{[s]}^{[s']}=0$ if $[s] \neq [s']$). Moreover $\partial_r \tilde{f}=\partial_\zeta \tilde{f}=\partial_{\zeta\zeta}^2 \tilde{f}=0$ for $r=\zeta=0$. The mapping  $\Phi = (\Phi_\theta, \Phi_r,\Phi_\zeta)$ is close to identity, and for all $x \in  \mathcal{O}^\alpha(\frac{\sigma}{2}, \frac{\mu}{2})$ and all $\rho \in \mathcal{D}'$, we have:
\begin{equation} \label{kam_id}
 \Vert \Phi - Id \Vert_\alpha \leq C \varepsilon^{4/5}.
\end{equation}
For all $\rho \in \mathcal{D}'$, the new frequencies $\tilde{\omega}$ and the matrix $\tilde{A}$ satisfy
\begin{equation} \label{freq_kam}
\left|  \tilde{A}(\rho)-A(\rho))\right|_\alpha\leq C \varepsilon, \qquad \vert \tilde{\omega} (\rho) - \omega (\rho) \vert_{\mathcal{C}^1(\mathcal{D}')} \leq C \varepsilon,
\end{equation}
where $C$ is a constant that depends on $\varepsilon_0$.
\end{itemize}
\end{theorem}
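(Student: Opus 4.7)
\emph{Overall strategy.} I would prove Theorem~\ref{theoreme kam} by a standard KAM (Newton) iteration with super-linear convergence, in the spirit of Eliasson--Grébert--Kuksin and Grébert--Paturel, adapted to the block-diagonal normal forms forced by the finite multiplicities $\lambda_s = \lambda_{s'}$. Introduce decreasing sequences
\begin{equation*}
\sigma_j \searrow \sigma/2, \qquad \mu_j \searrow \mu/2, \qquad \kappa_j \searrow 0, \qquad N_j \nearrow \infty,
\end{equation*}
and an error parameter $\varepsilon_j$ satisfying $\varepsilon_{j+1} \leq C \varepsilon_j^{1+\theta}$ for some $\theta>0$ (e.g.\ $\theta = 1/3$), and construct inductively closed parameter sets $\mathcal{D}_j \subset \mathcal{D}$, symplectic maps $\Phi_j : \mathcal{O}^\alpha(\sigma_{j+1},\mu_{j+1}) \to \mathcal{O}^\alpha(\sigma_j,\mu_j)$, frequency vectors $\omega_j$, and normal-form matrices $A_j = D_j + N_j$ with $N_j \in \mathcal{NF}$, so that $H_j \circ \Phi_j = H_{j+1} = h_{j+1} + f_{j+1}$ and $\lc f_j^T \rc^{\alpha,\beta,\kappa_j}_{\sigma_j,\mu_j,\mathcal{D}_j} \leq \varepsilon_j$.

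\emph{Elementary step.} I would generate each $\Phi_j$ as the time--one map of a Hamiltonian flow $X_{S_j}$. Split the truncation $f_j^T$ (Taylor jet of order $\leq 2$ in $\zeta$, $\leq 1$ in $r$, Fourier-truncated at $|k|\leq N_j$) into a resonant piece $[f_j^T]$ that Poisson-commutes with $h_j$---this updates the normal form $h_{j+1} = h_j + [f_j^T]$, contributing $\mathcal{O}(\varepsilon_j)$ increments to $\omega_j$ and to the blocks of $N_j$---and a complementary piece to be eliminated by the homological equation $\{S_j,h_j\} = f_j^T - [f_j^T]$. After passing to the complex coordinates $(\xi,\eta)$, this equation diagonalises into scalar equations whose denominators are
\begin{equation*}
k\cdot\omega_j(\rho)+\Omega^{(s,s')}_j(\rho), \qquad \Omega^{(s,s')}_j \in \{0,\ \pm\lambda_s^{(j)},\ \pm\lambda_s^{(j)} \pm \lambda_{s'}^{(j)}\}.
\end{equation*}
Solvability with quantitative bounds in the class $\mathcal{T}^{\alpha,\beta}$ follows from hypotheses A2, A3 and Remark~\ref{melnikov}; the degenerate pairs $|s|=|s'|$ are precisely those placed into $[f_j^T]$ and absorbed into $N_{j+1}$. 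The Poisson-bracket and flow estimates of Sections 3--4 then give the new perturbation
\begin{equation*}
f_{j+1} = (f_j - f_j^T) \circ \Phi_j + \int_0^1 \bigl\{(1-t)[f_j^T] + t(f_j^T - [f_j^T]),\, S_j\bigr\} \circ \Phi_j^t \, dt,
\end{equation*}
and after tuning $N_j \sim \kappa_j^{-a}$, $\kappa_j \sim \varepsilon_j^b$ and $\sigma_j-\sigma_{j+1} \sim j^{-2}$, one obtains $\varepsilon_{j+1} \leq C\varepsilon_j^{1+\theta}$.

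\emph{Convergence and measure estimate.} The composition $\Phi = \lim_{J\to\infty} \Phi_0 \circ \cdots \circ \Phi_J$ converges on $\mathcal{O}^\alpha(\sigma/2,\mu/2)$ and telescoping the geometric bounds on $\Vert\Phi_j - \mathrm{Id}\Vert_\alpha$ yields \eqref{kam_id}; summing the $\mathcal{O}(\varepsilon_j)$ corrections to the frequencies and to the normal form matrices gives \eqref{freq_kam}. The excluded parameter set decomposes as $\mathcal{D}\setminus\mathcal{D}' = \bigcup_j (\mathcal{D}_{j-1}\setminus\mathcal{D}_j)$, and each piece is controlled by \eqref{Melnikov-measure} with parameters $(\kappa_j,N_j)$; with the choices above the series sums to $c(\sigma,\delta)\varepsilon^\gamma$, yielding the desired measure bound.

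\emph{Main obstacle.} The most delicate part is preserving the block structure along the iteration. On the one hand, the resonant part $[f_j^T]$ must lie in $\mathcal{NF}$ so that $N_{j+1}$ remains a block-diagonal matrix whose blocks match the multiplicity pattern of $D$ and whose entries keep the required $\Pi$-structure $\bigl(\begin{smallmatrix} \alpha & -\beta \\ \beta & \alpha\end{smallmatrix}\bigr)$; this has to be checked from the reality, symmetry and $\Pi$-invariance of $\mathcal{M}$ together with the commutation of block-diagonal matrices with $D$. On the other hand, the Melnikov quantifier \emph{``for all $\omega'$ with $\vert\omega-\omega'\vert_{\mathcal{C}^1}<\delta_0$''} in A2 and A3 is stated exactly so that the running frequencies $\omega_j$ and the running spectrum of $A_j$ (a perturbation of size $\leq \delta/8$ by hypothesis B, maintained by induction) keep satisfying the non-resonance inequalities at every stage. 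Tracking simultaneously the $\mathcal{C}^1(\mathcal{D})$ estimates on $\omega_j$ and $N_j$, the block preservation, and the spectral separation required by A1, while summing the measure losses, is the main technical burden; the remainder is a textbook Newton iteration.
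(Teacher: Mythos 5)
You have correctly identified the broad strategy (Newton iteration with super-linear jet convergence, block-diagonal normal forms, Melnikov-type measure excision), but your elementary step uses the \emph{linear} homological equation $\{S_j,h_j\}=f_j^T-[f_j^T]$, and the resulting new-perturbation formula contains the composed term $(f_j-f_j^T)\circ\Phi_j$. This is precisely the term the paper goes out of its way to avoid (see Remark~\ref{utilite eq hom NL} and the discussion at the start of Section~4), and it constitutes a genuine gap: the iteration as you have set it up will \emph{not} give $\varepsilon_{j+1}\leq C\varepsilon_j^{1+\theta}$.

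Here is why. Along the scheme only the jet norm $\varepsilon_j=\lc f_j^T\rc$ is driven to zero; the full norm $\Xi_j=\lc f_j\rc$, hence $\lc f_j-f_j^T\rc$, stabilises near $\Xi_0=O(\varepsilon_0^\tau)$ and does not shrink. On the other hand $\Phi_j-\mathrm{Id}$ is only $O(\varepsilon_j/\kappa_j\cdot(\sigma_j-\sigma_{j+1})^{-c})\sim\varepsilon_j^{\,c'}$ for some $c'<1$. Thus the jet of $(f_j-f_j^T)\circ\Phi_j$ at $(\theta,0,0)$ is controlled, via the chain rule, by $\Xi_j\cdot\|\Phi_j-\mathrm{Id}\|\sim\varepsilon_0^\tau\,\varepsilon_j^{\,c'}$. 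For super-linear convergence one would need $\varepsilon_0^\tau\,\varepsilon_j^{\,c'}\leq\varepsilon_j^{1+\theta}$, i.e.\ $\varepsilon_0^\tau\leq\varepsilon_j^{1+\theta-c'}$ with $1+\theta-c'>0$, which fails as soon as $\varepsilon_j$ drops below a $j$-independent threshold. The paper fixes this by solving the \emph{nonlinear} homological equation $\{h,S\}+\{f-f^T,S\}^T+f^T=\hat h+R$ (Proposition~\ref{proposition homo NL}), which it decomposes into three linear-type sub-equations after splitting $S=S_0+S_1+S_2$ as in \eqref{decomp jet NL}. With this choice the jet of the new perturbation reads $f_1^T=R+\{f^T,S\}^T+(\int_0^1(1-t)\{\{h+f,S\},S\}\circ\Phi_S^t\,dt)^T$ (equation~\eqref{jet pertub NL}), in which every term is manifestly of size $O(\varepsilon_j^2/\kappa_j)$ (or exponentially small), so Lemma~\ref{esti-f+} delivers $\varepsilon_{j+1}\leq\varepsilon_j^{8/5}$. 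You would also need to track the full-norm growth $\Xi_{j+1}\leq\varepsilon_j^{4/5}+\Xi_j$ separately, as the paper does in Lemma~\ref{perturabation-jet-etape-k}.

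Your identification of the ``main obstacle'' as block preservation is reasonable but secondary: in the paper that is handled inside the fourth component of the homological equation by solving $(k\cdot\omega I+Q)\hat S_{\xi\eta}-\hat S_{\xi\eta}Q=\ldots$ block-by-block and putting the $[s]=[s']$, $k=0$ piece into $\hat K$, which then stays in $\mathcal{NF}\cap\mathcal{M}_\beta$ and satisfies Hypothesis~B by \eqref{partie quad eq hom NL}. The genuinely non-textbook ingredient you are missing is the nonlinear homological equation and the associated jet bookkeeping.
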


\section{Jets of functions, Poisson bracket and Hamiltonian flow}
The space $\mathcal{T}^{\alpha,\beta}(\mathcal{D},\sigma,\mu)$ is not closed under the Poisson bracket. Therefor we will introduce the new subspace $\mathcal{T}^{\alpha,\beta+}(\mathcal{D},\sigma,\mu) \subset \mathcal{T}^{\alpha,\beta}(\mathcal{D},\sigma,\mu)$. We will prove that the Poisson bracket of a function from $\mathcal{T}^{\alpha,\beta+}(\mathcal{D},\sigma,\mu)$ and a function from $\mathcal{T}^{\alpha,\beta}(\mathcal{D},\sigma,\mu)$ belongs to $\mathcal{T}^{\alpha,\beta}(\mathcal{D},\sigma,\mu)$.
\subsection{The space $\mathcal{T}^{\alpha,\beta+}(\mathcal{D},\sigma,\mu)$}
We define the two spaces  $L_{\beta+}$ and $\mathcal{M}_{\beta+}$ by
\begin{equation} \nonumber
L_{\beta+}=\lbrace \zeta= \left( \zeta_{s} =\left( p_{s}, q_{s} \right)  , s \in \mathcal{L}  \right) | \quad \vert \zeta \vert_{\beta+} < \infty \rbrace,
\end{equation}
where
\begin{equation} \nonumber
\vert \zeta \vert_{\beta+}= \underset{s \in \mathcal{L}}{\sup} \vert \zeta_s \vert \langle s\rangle^{\beta+1},
\end{equation}
and
\begin{equation} \nonumber
\mathcal{M}_{\beta+}=\lbrace A \in \mathcal{M} | \: \vert A \vert_{\beta+} < \infty \rbrace,
\end{equation}
where
\begin{equation} \nonumber
\vert A \vert_{\beta+} = \underset{s,s' \in \mathcal{L}}{\sup} (1+| \: | s | - | s'| \:  |)\langle s\rangle^{\beta} \langle s'\rangle^{\beta} \Vert A_s^{s'} \Vert_\infty.
\end{equation}

We note that $L_{\beta+} \subset L_\beta$ and  $\mathcal{M}_{\beta+} \subset \mathcal{M}_{\beta}$. We define  $\mathcal{T}^{\alpha,\beta+}(\mathcal{D},\sigma,\mu)$ in the same way as we have defined $\mathcal{T}^{\alpha,\beta}(\mathcal{D},\sigma,\mu)$ but replacing $L_{\beta}$ by $L_{\beta+}$ and $\mathcal{M}_{\beta}$ by $\mathcal{M}_{\beta+}$. Hence we obtain that $\mathcal{T}^{\alpha,\beta+}(\mathcal{D},\sigma,\mu) \subset \mathcal{T}^{\alpha,\beta}(\mathcal{D},\sigma,\mu).$

\begin{lemma} \label{norme}
Let $\beta >0$, then there exists a positive constant $C$ that depends on $\beta$ such that:
\item[1.] Let $A \in \mathcal{M}_{\beta+}$ and $B \in \mathcal{M}_{\beta}$ then $AB,\:BA\: \in \mathcal{M}_{\beta}$ and:
$$\vert AB \vert_{\beta} \leq C \vert A \vert_{\beta+} \vert B \vert_\beta, \quad \vert BA \vert_{\beta} \leq  C \vert A \vert_{\beta+} \vert B \vert_\beta. $$
\item[2.] Let $A \in \mathcal{M}_{\beta+}$ and $\zeta \in L_\beta$ then $A\zeta \in L_\beta $ and:
$$\vert A \zeta \vert_{\beta} \leq C \vert A \vert_{\beta+} \vert \zeta \vert_{\beta}.$$
\item[3.] Let $A \in \mathcal{M}_{\beta}$ and $\zeta \in L_{\beta+}$ then $A\zeta \in L_\beta $ and:
$$\vert A \zeta \vert_{\beta} \leq C  \vert A \vert_{\beta} \vert \zeta \vert_{\beta+}.$$
\item[4.] Let $A \in \mathcal{M}_{\beta+}$ and $\zeta \in L_{\beta+}$ then $A\zeta \in L_{\beta} $ and:
$$\vert A \zeta \vert_{\beta+} \leq C  \vert A \vert_{\beta+} \Vert \zeta \Vert_{\beta+}.$$
\item[5.] Let $X\in L_\beta$ and $Y \in L_\beta$ then $A=X \otimes Y\in \mathcal{M}_\beta$ and:
$$ \vert A \vert_\beta \leq 2 \vert X \vert_\beta \vert Y \vert_\beta.  $$
\item[6.] Let $X\in L_{\beta+}$ and $Y \in L_{\beta+}$ then $A=X \otimes Y\in \mathcal{M}_{\beta+}$ and:
$$ \vert A \vert_{\beta+} \leq 2 \vert X \vert_{\beta+} \vert Y \vert_{\beta+}.  $$
\end{lemma}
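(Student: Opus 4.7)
The plan is to unpack each of the six inequalities into a direct calculation on matrix entries (resp.\ on vector components) and to reduce all of them to a single uniform summability estimate. For items~1--4 I would start from $(AB)_s^{s'}=\sum_{s''\in\mathcal{L}} A_s^{s''}B_{s''}^{s'}$ and $(A\zeta)_s=\sum_{s''\in\mathcal{L}} A_s^{s''}\zeta_{s''}$, apply $\|\cdot\|_\infty$ or $|\cdot|$ inside the sum, and plug in the defining bounds of the norms $|\cdot|_\beta$ and $|\cdot|_{\beta+}$. In each case this factors out a prefactor $\langle s\rangle^{-\beta}\langle s'\rangle^{-\beta}$ (possibly with an extra $\langle s\rangle^{-1}$ or $(1+||s|-|s'||)^{-1}$) multiplied by one infinite sum over $s''$.

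The heart of the argument is then the uniform estimate
\begin{equation*}
\sup_{s\in\Z}\,\sum_{s''\in\Z}\frac{1}{(1+||s|-|s''||)\,\langle s''\rangle^{2\beta}}\leq C(\beta),\qquad \beta>0.
\end{equation*}
I would prove it by splitting the range of $|s''|$ into three regions $|s''|\leq |s|/2$, $|s|/2\leq |s''|\leq 2|s|$, and $|s''|\geq 2|s|$. On the first region $(1+||s|-|s''||)^{-1}\lesssim\langle s\rangle^{-1}$ kills the at-most polynomial growth of $\sum_{k\leq|s|/2}\langle k\rangle^{-2\beta}$; on the middle region $\langle s''\rangle\sim\langle s\rangle$ and the harmonic sum in $(1+||s|-|s''||)^{-1}$ produces only a logarithmic factor, so the contribution is dominated by $C\langle s\rangle^{-2\beta}\log\langle s\rangle$, bounded for $\beta>0$; on the last region $(1+||s|-|s''||)\gtrsim\langle s''\rangle$ upgrades the tail to $\sum\langle s''\rangle^{-2\beta-1}$, which converges. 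The variants required for items~3 and~4 only replace one exponent $\beta$ by $\beta+1$ in the summand and are handled by the same splitting, the extra power being exactly what is needed to absorb the additional $\langle s\rangle$ appearing in the $\beta+$ norm of the output.

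Items~5 and~6 need no summation. Writing $(X\otimes Y)_s^{s'}=X_s\,{}^tY_{s'}$ and using the rank-one identity $\|X_s\,{}^tY_{s'}\|_\infty=|X_s|\,|Y_{s'}|$, multiplication by the weights immediately yields $|X\otimes Y|_\beta\leq |X|_\beta|Y|_\beta$; the factor~$2$ absorbs the operator-norm loss of the projection $\Pi$ onto $\mathbb{S}=\C I+\C\sigma_2$ demanded by the definition of $\mathcal{M}$. For item~6 one uses in addition the crude bound $1+||s|-|s'||\leq \langle s\rangle+\langle s'\rangle$; after distributing between the two $(\beta+1)$-weights it leaves $\langle s'\rangle^{-1}+\langle s\rangle^{-1}\leq 2$. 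The only genuine obstacle in the whole lemma is the uniform summability estimate above; the remaining content reduces to bookkeeping on which of the factors $\langle s\rangle^\beta$ or $\langle s\rangle^{\beta+1}$ is absorbed by the matrix and which by the vector in each of the four multiplication identities.
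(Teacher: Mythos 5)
Your argument is correct and follows the same overall structure as the paper: write out the matrix products entrywise, factor out the weights, and reduce items 1--4 to a uniform summability estimate, while items 5 and 6 are direct. The one place where you genuinely depart from the paper is the proof of the key estimate
\[
\sup_{s}\sum_{k}\frac{1}{\langle k\rangle^{2\beta}\bigl(1+\bigl|\,|k|-|s|\,\bigr|\bigr)}\leq C(\beta).
\]
You prove it by splitting the sum into the three regions $|k|\le|s|/2$, $|s|/2\le|k|\le 2|s|$, $|k|\ge 2|s|$; the paper instead applies Young's inequality $ab\le a^{p}/p+b^{q}/q$ with $p>1/(2\beta)$, which separates the two factors into $\sum\langle k\rangle^{-2\beta p}$ and $\sum(1+|k|)^{-q}$, each convergent and $s$-independent. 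Both arguments are sound. Your three-region splitting is more elementary and more transparent about where the $\beta>0$ hypothesis enters; the Young's-inequality route is shorter and avoids the logarithmic bookkeeping on the middle region. Two small remarks: for item 3 the $(1+\bigl||k|-|s'|\bigr|)^{-1}$ factor is absent from the summand, so no splitting is needed at all and the bound is simply $\sum\langle k\rangle^{-2\beta-1}<\infty$; and the factor $2$ in items 5--6 is not due to the projection $\Pi$ (which is a contraction in $\|\cdot\|_\infty$) but is just slack in passing from the max-entry norm of the rank-one block to the Euclidean norms $|X_s|\,|Y_{s'}|$.
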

The lemma is proven in the Appendix.
\subsection{Jets of functions}
For any function $f \in \mathcal{T}^{\alpha,\beta}(\mathcal{D},\sigma,\mu)$ we define its jet $f^T$ as the following Taylor polynomial of $f$ at $r=0$ and $\zeta=0$:
\begin{align*}
f^T (\theta,r,\zeta;\rho) & =  f(\theta,0,0,\rho)+\nabla_r f(\theta,0,0,\rho)r +\langle\nabla_\zeta f(\theta,0,0,\rho),\zeta\rangle +\frac{1}{2} \langle \nabla^2_{\zeta}f(\theta,0,0,\rho)\zeta,\zeta\rangle
\\ & =  f_\theta(\theta)+f_r(\theta)r+\langle f_\zeta(\theta),\zeta\rangle+\frac{1}{2} \langle f_{\zeta\zeta}(\theta)\zeta,\zeta\rangle.
\end{align*}
From the definition of the norm $\lc f \rc^{\alpha,\beta} _{\sigma,\mu,\mathcal{D}}$ we obtain the following estimations: 
\begin{equation} \label{estim deriv f-moyenne}
\begin{aligned}
\vert f_\theta(\theta;.) \vert_\mathcal{D} & \leq  \lc f \rc^{\alpha} _{\sigma,\mu,\mathcal{D}}, \:\:\:\:\:\:\:\:\:\:\:\:    \vert f_r(\theta;.) \vert_\mathcal{D} \leq \mu^{-2} \lc f \rc^{\alpha} _{\sigma,\mu,\mathcal{D}}, \\
\Vert f_\zeta(\theta;.) \Vert_\mathcal{D} & \leq  \mu^{-1}\lc f \rc^{\alpha} _{\sigma,\mu,\mathcal{D}}, \:\:\:\: \vert f_\zeta(\theta;.) \vert_\mathcal{D}  \leq  \mu^{-1}\lc f \rc^{\alpha,\beta} _{\sigma,\mu,\mathcal{D}} , \\
\vert f_{\zeta \zeta}(\theta;.) \vert_\mathcal{D} & \leq \mu^{-2} \lc f \rc^{\alpha,\beta} _{\sigma,\mu,\mathcal{D}},
\end{aligned}
\end{equation}
for $\theta \in \mathbb{T}^n_\sigma$.

We denote that for any $ \theta$ we have:
\begin{equation} \nonumber
f_\theta(\theta)= f_\theta^T(\theta), \quad f_r(\theta)=f_r^T(\theta), \quad f_\zeta(\theta)=f_\zeta^T(\theta), \mbox{  et  } f_{\zeta\zeta}(\theta)=f_{\zeta\zeta}^T(\theta).
\end{equation}
Hence we obtain:
\begin{equation} \label{estim deriv f-moyenne-jet}
\begin{aligned}
\vert f_\theta(\theta;.) \vert_\mathcal{D} & \leq  \lc f^T \rc^{\alpha} _{\sigma,\mu,\mathcal{D}}, \:\:\:\:\:\:\:\:\:\:\:\:    \vert f_r(\theta;.) \vert_\mathcal{D} \leq \mu^{-2} \lc f^T \rc^{\alpha} _{\sigma,\mu,\mathcal{D}}, \\
\Vert f_\zeta(\theta;.) \Vert_\mathcal{D} & \leq  \mu^{-1}\lc f^T \rc^{\alpha} _{\sigma,\mu,\mathcal{D}}, \:\:\:\: \vert f_\zeta(\theta;.) \vert_\mathcal{D}  \leq  \mu^{-1}\lc f^T \rc^{\alpha,\beta} _{\sigma,\mu,\mathcal{D}} , \\
\vert f_{\zeta \zeta}(\theta;.) \vert_\mathcal{D} & \leq \mu^{-2} \lc f^T \rc^{\alpha,\beta} _{\sigma,\mu,\mathcal{D}},
\end{aligned}
\end{equation}
for any $\theta \in \mathbb{T}^n_\sigma$.
\begin{lemma}\label{estimation jet fonction} 
For any $f \in \mathcal{T}^{\alpha,\beta}(\mathcal{D},\sigma,\mu)$ and $0 < \mu ' < \mu\leq 1$  we have:
\begin{equation}\label{estimation jet}
\lc f^T \rc^{\alpha,\beta} _{\sigma,\mu,\mathcal{D}} \leq 3 \lc f \rc^{\alpha,\beta} _{\sigma,\mu,\mathcal{D}},
\end{equation}
\begin{equation}\label{estimation difference jet}
\lc f - f^T \rc^{\alpha,\beta} _{\sigma,\mu ',\mathcal{D}} \leq 2 \left(  \frac{\mu'}{\mu} \right)^3  \lc f \rc^{\alpha,\beta} _{\sigma,\mu,\mathcal{D}}
\end{equation}
\end{lemma}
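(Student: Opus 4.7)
The plan is to reduce both estimates to one-variable complex analysis by introducing the analytic rescaling
\[
\phi(t) := f(\theta,\, t^{2}r,\, t\zeta,\,\rho),
\]
defined for each fixed $(\theta,r,\zeta,\rho)$ on the disk $|t|<R$ with $R:=\min\bigl(\mu/\sqrt{|r|},\,\mu/\|\zeta\|_{\alpha}\bigr)$. A direct computation shows that the first three Taylor coefficients of $\phi$ at $t=0$ are
\[
a_{0}=f_{\theta}(\theta),\quad a_{1}=\langle f_{\zeta}(\theta),\zeta\rangle,\quad a_{2}=f_{r}(\theta)\cdot r+\tfrac12\langle f_{\zeta\zeta}(\theta)\zeta,\zeta\rangle,
\]
so that $a_{0}+a_{1}+a_{2}=f^{T}(x,\rho)$ while $\phi(1)=f(x,\rho)$. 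The analogous rescalings $\psi(t):=\partial_{\zeta}f(\theta,t^{2}r,t\zeta,\rho)$ (valued in $Y_{\alpha}$ and in $L_{\beta}$) and $\chi(t):=\partial_{\zeta\zeta}^{2}f(\theta,t^{2}r,t\zeta,\rho)$ (valued in $\mathcal{M}_{\beta}$) satisfy $\partial_{\zeta}f^{T}=\psi(0)+\psi'(0)$ and $\partial_{\zeta\zeta}^{2}f^{T}=\chi(0)$, and the four pieces of the $\mathcal T^{\alpha,\beta}$-seminorm provide uniform bounds on these three rescaled functions in their respective target spaces.

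For the first estimate, fix $x\in\mathcal{O}^{\alpha}(\sigma,\mu)$, so that $R>1$, and set $C:=\lc f\rc^{\alpha,\beta}_{\sigma,\mu,\mathcal{D}}$. Cauchy's inequality applied to $\phi$ gives $|a_{k}|\le C/R^{k}$, whence
\[
|f^{T}(x,\rho)|\;\le\; C\bigl(1+R^{-1}+R^{-2}\bigr)\;<\;3C.
\]
The same argument applied to $\psi$ produces $\|\partial_{\zeta}f^{T}\|_{\alpha},\,|\partial_{\zeta}f^{T}|_{\beta}\le (1+R^{-1})C/\mu<2C/\mu$, while $|\partial_{\zeta\zeta}^{2}f^{T}|_{\beta}=|f_{\zeta\zeta}(\theta,\rho)|_\beta\le C/\mu^{2}$ directly from \eqref{estim deriv f-moyenne-jet}. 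Together these four bounds yield $\lc f^{T}\rc^{\alpha,\beta}_{\sigma,\mu,\mathcal{D}}\le 3C$, proving \eqref{estimation jet}.

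For the difference estimate, fix $x\in\mathcal{O}^{\alpha}(\sigma,\mu')$, which forces $R>\mu/\mu'>1$. Applying Cauchy's integral remainder formula to $\phi$ at radius $R':=\mu/\mu'$,
\[
\phi(1)-\sum_{k=0}^{2}a_{k}\;=\;\frac{1}{2\pi i}\oint_{|z|=R'}\frac{\phi(z)}{z^{3}(z-1)}\,dz,
\]
gives $|f(x,\rho)-f^{T}(x,\rho)|\le C/\bigl(R'^{2}(R'-1)\bigr)=C\mu'^{3}/\bigl(\mu^{2}(\mu-\mu')\bigr)$, which is bounded by $2(\mu'/\mu)^{3}C$ in the useful range $\mu'\le\mu/2$. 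Subtracting the first two Taylor terms of $\psi$ (resp.\ the zeroth term of $\chi$) and using the same remainder formula with $n=2$ (resp.\ $n=1$) yields the other three bounds, $\|\partial_{\zeta}(f-f^{T})\|_{\alpha},\,|\partial_{\zeta}(f-f^{T})|_{\beta}\le 2(\mu'/\mu)^{3}C/\mu'$ and $|\partial_{\zeta\zeta}^{2}(f-f^{T})|_{\beta}\le 2(\mu'/\mu)^{3}C/\mu'^{2}$, whence \eqref{estimation difference jet}.

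The main accounting challenge is that $\phi,\psi,\chi$ take values in three inequivalent target spaces, so Cauchy's inequality must be invoked in the correct norm for each, and the number of Taylor terms subtracted (three, two, one respectively) has to be matched against the $\mu$-weights $1,\mu^{-1},\mu^{-2}$ appearing in the definition of $\lc\cdot\rc^{\alpha,\beta}_{\sigma,\mu,\mathcal{D}}$ so that a single multiplicative factor $2(\mu'/\mu)^{3}$ controls all four seminorms simultaneously.
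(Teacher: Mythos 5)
Your proof is correct on the range $\mu'\le\mu/2$, and for the second estimate it is essentially the paper's argument in a different parametrization: the paper sets $m=\mu'/\mu$, works with $g(z)=\nabla_\zeta f(\theta,(z/m)^2r,(z/m)\zeta)$ on $|z|<1$, expands in a power series, and sums the tail $\sum_{j\ge 2}f_jm^j\le 2m^2\sup|f_j|$ for $m\le 1/2$; your $\psi(t)=\partial_\zeta f(\theta,t^2r,t\zeta)$ together with the Cauchy remainder contour at radius $R'=\mu/\mu'=1/m$ is exactly the same estimate after the substitution $z=mt$. Where you genuinely diverge is the first estimate: the paper derives \eqref{estimation jet} indirectly from \eqref{estimation difference jet} by writing $f^T=f-(f-f^T)$, first at radius $\mu/2$ and then upscaling using that $f^T$ is quadratic in $\zeta$ and linear in $r$. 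Your direct route — applying Cauchy's inequality to $\phi,\psi,\chi$ at the natural radius $R>1$, matching the number of retained Taylor terms ($3,2,1$) against the weights $1,\mu^{-1},\mu^{-2}$ — is cleaner and avoids the paper's somewhat murky bootstrap between radii $\mu/2$ and $\mu$. It buys a self-contained, one-pass argument for \eqref{estimation jet}, at the small cost of having to set up three separate vector-valued rescalings.

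One shared gap you should be aware of: your contour estimate yields $C\mu'^3/\bigl(\mu^2(\mu-\mu')\bigr)$, which is only $\le 2(\mu'/\mu)^3 C$ when $\mu'\le\mu/2$, and you say as much. The lemma, however, is stated for all $0<\mu'<\mu\le 1$, and the bound $2(\mu'/\mu)^3$ stays below $2$ while your estimate blows up as $\mu'\to\mu$. The paper attempts the remaining range $\mu/2<\mu'<\mu$ by the crude triangle inequality $\lc f-f^T\rc_{\sigma,\mu'}\le\lc f\rc_{\sigma,\mu}+\lc f^T\rc_{\sigma,\mu}\le 3\lc f\rc_{\sigma,\mu}$, but $3$ is not $\le 2(\mu'/\mu)^3$ either, so the stated constant is not literally established there; in practice the lemma is invoked with $\mu'\le\mu/2$ and the constant is immaterial, but a careful write-up should either restrict the lemma's hypotheses to $\mu'\le\mu/2$ or weaken the conclusion in the remaining range. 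Flagging this explicitly, as you did, is the right move; silently stopping at $\mu'\le\mu/2$ would have been a real omission.
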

\begin{proof}
Let $(x,\rho) \in \mathcal{O}^\alpha (\sigma, \mu') \times \mathcal{D} $.
\begin{itemize}
\item
We start by proving the second estimate. We need to prove that:
\begin{itemize}
\item[$\cdot$] $\vert  ( f-f^T ) (x,\rho) \vert \leq 2  \left(  \frac{\mu'}{\mu} \right)^3  \lc f \rc^{\alpha,\beta} _{\sigma,\mu,\mathcal{D}}$,
\item[$\cdot$] $\Vert  \nabla_\zeta ( f-f^T ) (x,\rho) \Vert_\alpha \leq 2    \frac{\mu'^2}{\mu^3}   \lc f \rc^{\alpha,\beta} _{\sigma,\mu,\mathcal{D}}$,
\item[$\cdot$] $\vert  \nabla_\zeta ( f-f^T ) (x,\rho) \vert_\beta \leq 2    \frac{\mu'^2}{\mu^3}   \lc f \rc^{\alpha} _{\sigma,\mu,\mathcal{D}}$,
\item[$\cdot$] $\vert  \nabla_{\zeta}^2 ( f-f^T ) (x,\rho) \vert_\beta \leq 2    \frac{\mu'}{\mu^3}   \lc f \rc^{\alpha,\beta} _{\sigma,\mu,\mathcal{D}}$.
\end{itemize}
The prove of the four inequalities is the same. We choose to prove that
\begin{equation} \nonumber
\vert  \nabla_\zeta ( f-f^T ) (x,\rho) \vert_\beta \leq 2    \frac{\mu'^2}{\mu^3}   \lc f \rc^{\alpha} _{\sigma,\mu,\mathcal{D}}.
\end{equation}
Let us denote $m=\frac{\mu'}{\mu}$. For $\vert z \vert \leq 1 $ we have $( \theta, (z/m)^2r,(z/m)\zeta) \in \mathcal{O}(\sigma,\mu)$. Consider the function
\begin{center}
$\begin{array}{ccccl}
g & : & \left\lbrace  \vert  z  \vert < 1 \right\rbrace & \longrightarrow & Y^c \\
 & & z & \longmapsto & \nabla_\zeta f ( \theta, (z/m)^2r,(z/m)\zeta). \\
\end{array}$
\end{center}
$g$ is a holomorphic function bounded by $\mu^{-1} \lc f \rc^{\alpha,\beta} _{\sigma,\mu,\mathcal{D}}$ and we have
\begin{equation} \nonumber
g(z) = \underset{j\geq0}{\sum} f_j z^j.
\end{equation}
By Cauchy estimate we have: $\vert f_j \vert_\beta \leq \mu^{-1} \lc f \rc^{\alpha,\beta} _{\sigma,\mu,\mathcal{D}}  $. We remark that  $\nabla_\zeta ( f-f^T ) (x,\rho)=\underset{j\geq 2}{\sum} f_j m^j$. For $\mu'\leq \frac{1}{2} \mu$ we obtain that  $m\leq1/2$. So we have:
\begin{equation} \nonumber
\
\vert \nabla_\zeta ( f-f^T ) \vert_\beta \leq \mu^{-1} \lc f \rc^{\alpha\beta} _{\sigma,\mu,\mathcal{D}} \underset{j\geq 2}{\sum} m^j \leq \mu^{-1} \lc f \rc^{\alpha,\beta} _{\sigma,\mu,\mathcal{D}} 2 (\frac{\mu'}{\mu})^2
\end{equation}
\item Now let us prove the first estimate using the second one. We remark that $f^T=f-(f-f^T)$. The function $ (f-f^T),\:  \nabla_\zeta(f-f^T)$ and$\nabla^2_{\zeta} (f-f^T)$ are analytic on $\mathcal{O}^\alpha (\sigma, \frac{1}{2} \mu)$, so $f^T,\:  \nabla_\zeta f^T,$ and $\nabla^2_{\zeta} f^T$ are analytic on $\mathcal{O}^\alpha (\sigma, \frac{1}{2} \mu)$. We obtain that:
$$ \lc f^T \rc^{\alpha,\beta} _{\sigma,\frac{1}{2}\mu,\mathcal{D}} \leq \frac{1}{4} \lc f \rc^{\alpha,\beta} _{\sigma,\mu,\mathcal{D}} + \lc f - f^T \rc^{\alpha,\beta} _{\sigma,\frac{1}{2}\mu ,\mathcal{D}} \leq \frac{1}{2}  \lc f \rc^{\alpha,\beta} _{\sigma,\mu,\mathcal{D}}. $$
Since $f^T$ is quadratic in $\zeta$, then $f^T$, $ \nabla_ \zeta f^T$ et $ \nabla^2_{\zeta}f^T$  are analytic on  $\mathcal{O}^\alpha (\sigma, \mu)$. Since $\lc f^T \rc^{\alpha,\beta} _{\sigma,\mu,\mathcal{D}} \leq 4 \lc f^T \rc^{\alpha,\beta} _{\sigma,\frac{1}{2}\mu,\mathcal{D}}$ then
$$ \lc f^T \rc^{\alpha,\beta} _{\sigma,\mu,\mathcal{D}} \leq 2 \lc f \rc^{\alpha,\beta} _{\sigma,\mu,\mathcal{D}}.$$
\item Let us return to the second estimate in the case where $\frac{\mu}{2} < \mu' < \mu$. We have:
$$\lc f - f^T \rc^{\alpha,\beta} _{\sigma,\mu ',\mathcal{D}} \leq  \lc f \rc^{\alpha,\beta} _{\sigma,\mu,\mathcal{D}} +  \lc f^T \rc^{\alpha,\beta} _{\sigma,\mu,\mathcal{D}} \leq 3  \lc f \rc^{\alpha,\beta} _{\sigma,\mu,\mathcal{D}} $$
\end{itemize}
\end{proof}
\subsection{Jets of functions and Poisson bracket.}
Recall that the Poisson bracket of two $C^1$ functions $f$ and $g$ is defined by:
\begin{equation}\nonumber
\lbrace f,g \rbrace = \nabla_r f.\nabla_\theta g -\nabla_\theta f.\nabla_r g + \langle \nabla_\zeta f, J\nabla_\zeta g \rangle.
\end{equation}
\begin{lemma} \label{estim crochet de poisson}
Consider $f\in \mathcal{T}^{\alpha,\beta+}(\mathcal{D},\sigma,\mu)$ and $g\in \mathcal{T}^{\alpha,\beta}(\mathcal{D},\sigma,\mu)$ two jet functions, then for any $0<\sigma'<\sigma$ we have $\lbrace f,g \rbrace$ belongs to $\mathcal{T}^{\alpha,\beta}(\mathcal{D},\sigma,\mu)$ and 
\begin{equation}\label{inegalite crochet de poisson}
\lc \left\lbrace   f, g \right\rbrace    \rc^{\alpha,\beta} _{\sigma',\mu,\mathcal{D}} \leq C ( \sigma - \sigma' )^{-1} \mu^{-2} \lc f \rc^{\alpha,\beta+} _{\sigma,\mu,\mathcal{D}} \lc g \rc^{\alpha,\beta} _{\sigma,\mu,\mathcal{D}}\:,
\end{equation}
where $C$ depends only on $\beta$.
\end{lemma}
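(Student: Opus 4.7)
The plan is to split the Poisson bracket into its three natural pieces,
$$\{f,g\} = \nabla_r f \cdot \nabla_\theta g - \nabla_\theta f \cdot \nabla_r g + \langle \nabla_\zeta f,\, J\nabla_\zeta g\rangle,$$
and bound, piece by piece, the four quantities that define $\lc \cdot \rc^{\alpha,\beta}_{\sigma',\mu,\mathcal{D}}$: the function itself, the gradient $\nabla_\zeta$ in $Y_\alpha$ and in $L_\beta$, and the Hessian $\nabla^2_\zeta$ in $\mathcal{M}_\beta$. Because $f$ and $g$ are jets, $\nabla_r f$, $\nabla_\zeta f = f_\zeta + f_{\zeta\zeta}\zeta$ and $\nabla^2_\zeta f = f_{\zeta\zeta}$ (and similarly for $g$) have closed-form expressions in terms of the coefficients $f_r(\theta)$, $f_\zeta(\theta)$, $f_{\zeta\zeta}(\theta)$, which already satisfy the estimates recorded in \eqref{estim deriv f-moyenne-jet}.

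For the $(r,\theta)$-pieces $\nabla_r f \cdot \nabla_\theta g$ and $\nabla_\theta f \cdot \nabla_r g$, I would use a standard Cauchy estimate on the strip $\mathbb{T}^n_\sigma$ to handle each $\theta$-derivative, which is exactly what produces the prefactor $(\sigma - \sigma')^{-1}$. The factor $\mu^{-2}$ appears because $\nabla_r f$ and $\nabla_r g$ are bounded by $\mu^{-2}\lc \cdot \rc^{\alpha,\beta}_{\sigma,\mu,\mathcal{D}}$ in \eqref{estim deriv f-moyenne-jet}. After performing the explicit $r$- and $\zeta$-differentiations on these two pieces, each of the four target quantities reduces to pointwise products of coefficient functions, all of which are controlled directly by \eqref{estim deriv f-moyenne-jet}.

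The substantive piece is $\langle \nabla_\zeta f, J\nabla_\zeta g\rangle = \langle f_\zeta + f_{\zeta\zeta}\zeta,\, J(g_\zeta + g_{\zeta\zeta}\zeta)\rangle$. Its $\zeta$-gradient yields matrix–vector terms of the form $f_{\zeta\zeta}\, J(g_\zeta + g_{\zeta\zeta}\zeta)$ and $g_{\zeta\zeta}\, J(f_\zeta + f_{\zeta\zeta}\zeta)$; I would bound these in $L_\beta$ by parts 2 and 3 of Lemma \ref{norme}, exploiting $f_{\zeta\zeta} \in \mathcal{M}_{\beta+}$ and $f_\zeta \in L_{\beta+}$ (from $f \in \mathcal{T}^{\alpha,\beta+}$) against $g_\zeta \in L_\beta$ and $g_{\zeta\zeta}\in \mathcal{M}_\beta$ (from $g \in \mathcal{T}^{\alpha,\beta}$). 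The second $\zeta$-derivative contributes matrix products $f_{\zeta\zeta} J g_{\zeta\zeta}$ and its transpose, which lie in $\mathcal{M}_\beta$ by part 1 of Lemma \ref{norme}, again using the $\beta+$ bound on $f_{\zeta\zeta}$. The $Y_\alpha$-norm of $\nabla_\zeta \{f,g\}$ is parallel but easier, requiring only the standard $Y_\alpha$ bounds from \eqref{estim deriv f-moyenne-jet} together with the inclusions $L_{\beta+}\subset L_\beta$ and $\mathcal{M}_{\beta+}\subset \mathcal{M}_\beta$.

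The main obstacle is bookkeeping rather than any single estimate: the four target quantities must each be produced for every one of the three pieces, routing the matrix products through the correct clause of Lemma \ref{norme} depending on whether the output is read in $Y_\alpha$, $L_\beta$ or $\mathcal{M}_\beta$, and applying the Cauchy loss only to $\theta$-differentiations. The asymmetric hypothesis $f\in \mathcal{T}^{\alpha,\beta+}$ versus $g \in \mathcal{T}^{\alpha,\beta}$ must be tracked carefully throughout — it is precisely what keeps $\{f,g\}$ inside $\mathcal{T}^{\alpha,\beta}$ and not in a larger space with weaker $\zeta$-decay.
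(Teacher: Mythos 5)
Your proposal is correct and follows essentially the same route as the paper: the paper likewise writes out the explicit jet-coefficient formulas for $\nabla_\zeta\{f,g\}$ and $\nabla^2_\zeta\{f,g\}$, applies the coefficient bounds \eqref{estim deriv f-moyenne} together with Cauchy estimates for the $\theta$-derivatives (producing $(\sigma-\sigma')^{-1}$), and routes the matrix--vector and matrix--matrix products through the appropriate clauses of Lemma~\ref{norme}, with the $\beta+$ decay of $f_\zeta$ and $f_{\zeta\zeta}$ absorbing the extra loss. The only small imprecision is attributing the $\mu^{-2}$ solely to the $\nabla_r$ terms: the piece $\langle\nabla_\zeta f, J\nabla_\zeta g\rangle$ also contributes a $\mu^{-1}\cdot\mu^{-1}$ from the two $\zeta$-gradients, so both sources give the same $\mu^{-2}$ prefactor (without the Cauchy loss in the latter case, which is then absorbed since $(\sigma-\sigma')^{-1}>1$).
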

\begin{proof}
To prove this lemma, we have to show that
\begin{itemize}
\item $\vert \left\lbrace  f, g \right\rbrace  \vert _{\mathcal{D}} \leq C (\sigma-\sigma')^{-1}\mu^{-2}\lc f \rc^{\alpha,\beta+} _{\sigma,\mu,\mathcal{D}} \lc g \rc^{\alpha,\beta} _{\sigma,\mu,\mathcal{D}}$,
\vspace{0.2cm}
\item $\Vert \nabla_\zeta \left\lbrace  f, g \right\rbrace  \Vert _{\mathcal{D}} \leq C (\sigma-\sigma')^{-1}\mu^{-3}\lc f \rc^{\alpha,\beta+} _{\sigma,\mu,\mathcal{D}} \lc g \rc^{\alpha,\beta} _{\sigma,\mu,\mathcal{D}}$,
\vspace{0.2cm}
\item $\vert \nabla_\zeta \left\lbrace  f, g \right\rbrace  \vert _{\mathcal{D}} \leq C (\sigma-\sigma')^{-1}\mu^{-3}\lc f \rc^{\alpha,\beta+} _{\sigma,\mu,\mathcal{D}} \lc g \rc^{\alpha,\beta} _{\sigma,\mu,\mathcal{D}}$,
\vspace{0.2cm}
\item $\vert \nabla^2_{\zeta} \left\lbrace  f, g \right\rbrace  \vert _{\mathcal{D}} \leq C (\sigma-\sigma')^{-1}\mu^{-4}\lc f \rc^{\alpha,\beta+} _{\sigma,\mu,\mathcal{D}} \lc g \rc^{\alpha,\beta} _{\sigma,\mu,\mathcal{D}}$.
\end{itemize}
\vspace{0.2cm}
\textbf{-} Let us start with with the last estimate. we have
\begin{equation} \nonumber
\nabla^2_{\zeta} \left\lbrace  f, g \right\rbrace = f_r(\theta) \nabla_\theta g_{\zeta\zeta}(\theta)-  g_r(\theta) \nabla_\theta f_{\zeta\zeta} (\theta) + f_{\zeta\zeta}(\theta) J g_{\zeta\zeta}(\theta)
\end{equation} 
Using the estimates \eqref{estim deriv f-moyenne} for the first tho terms, the first estimate from Lemma~\ref{norme}  and Cauchy estimate for the last term, we obtain:
\begin{equation} \nonumber
\vert \nabla^2_{\zeta} \left\lbrace  f, g \right\rbrace  \vert _{\mathcal{D}} \leq  (\sigma-\sigma')^{-1}\mu^{-4}\lc f \rc^{\alpha,\beta+} _{\sigma,\mu,\mathcal{D}} \lc g \rc^{\alpha,\beta} _{\sigma,\mu,\mathcal{D}}.
\end{equation}
\\ \textbf{-} For the second and third estimate we have:
\begin{align*} \nonumber
\nabla_\zeta \left\lbrace  f, g \right\rbrace & = f_r (\theta) \nabla_\theta g_\zeta (\theta) + f_r(\theta) \nabla_\theta g_{\zeta\zeta} (\theta) \zeta - g_r (\theta) \nabla_\theta f_\zeta (\theta)\\ 
& - g_r(\theta) \nabla_\theta f_{\zeta\zeta} (\theta) \zeta + g_{\zeta\zeta} (\theta) J f_\zeta (\theta) + f_{\zeta\zeta} (\theta) J g_\zeta (\theta) + g_{\zeta\zeta} (\theta) J f_{\zeta\zeta} (\theta) \zeta.  
\end{align*}
By estimates \eqref{estim deriv f-moyenne}, estimates 1, 2 and 3 from Lemma~\ref{norme} and Cauchy estimate, there exists a constant that depends on $ \beta$ such that:
\begin{align*}
\vert \nabla_\zeta \left\lbrace  f, g \right\rbrace  \vert _{\mathcal{D}} & \leq C \mu^{-2} \lc f \rc _{\sigma,\mu,\mathcal{D}} (\sigma - \sigma')^{-1} \mu^{-3} \lc g \rc _{\sigma,\mu,\mathcal{D}}
 + C \mu^{-2} \lc f \rc _{\sigma,\mu,\mathcal{D}} (\sigma - \sigma')^{-1} \mu^{-2} \lc g \rc _{\sigma,\mu,\mathcal{D}} \mu \\
& + C \mu^{-2} \lc g \rc _{\sigma,\mu,\mathcal{D}} (\sigma - \sigma')^{-1} \mu^{-1} \lc f \rc _{\sigma,\mu,\mathcal{D}}
 + C \mu^{-2} \lc g \rc _{\sigma,\mu,\mathcal{D}} (\sigma - \sigma')^{-1} \mu^{-2} \lc f \rc _{\sigma,\mu,\mathcal{D}} \mu \\
& + C \mu^{-2} \lc g \rc _{\sigma,\mu,\mathcal{D}}  \mu^{-1} \lc f \rc _{\sigma,\mu,\mathcal{D}}
 + C \mu^{-2} \lc f \rc _{\sigma,\mu,\mathcal{D}} (\sigma - \sigma')^{-1} \mu^{-2} \lc g \rc _{\sigma,\mu,\mathcal{D}} \mu \\
& \leq C (\sigma-\sigma')^{-1}\mu^{-3}\lc f \rc _{\sigma,\mu,\mathcal{D}} \lc g \rc _{\sigma,\mu,\mathcal{D}}.
\end{align*}
Similarly we prove the first and the second estimate.
\end{proof}
\subsection{Hamiltonian flow in $\mathcal{O}^\alpha(\sigma,\mu)$.} Consider a $C^1$-function $f$ on $\mathcal{O}^\alpha(\sigma,\mu)\times \mathcal{D}$. We denote by $\Phi_f^t\equiv \Phi^t$ the Hamiltonian flow of $f$ at time $t$. Assume that $f$ is a jet function:
\begin{equation} \nonumber
f=f_\theta(\theta;\rho)+f_r(\theta;\rho)r+\langle f_\zeta(\theta;\rho),\zeta\rangle+\frac{1}{2} \langle f_{\zeta\zeta}(\theta;\rho)\zeta,\zeta\rangle.
\end{equation}
The Hamiltonian system associated is
\begin{equation} \label{hameq-jet}
 \left\{ 
\begin{aligned} 
 \dot{r} &= -\nabla_\theta f(r,\theta,\zeta),
 \\ \dot{\theta} &= f_r(\theta),
 \\ \dot{\zeta} &= J (f_\zeta (\theta) + f_{\zeta\zeta}(\theta)\zeta).
\end{aligned}
\right.
\end{equation}
We denote by $V_f = (V_f^r,V_f^\theta,V_f^\zeta) \equiv (\dot{r}, \dot{\theta}, \dot{\zeta})$ the corresponding Hamiltonian vector field. It is analytic on any domain $\mathcal{O}(\sigma-2\eta, \mu-2\nu)$ where $0<2\eta<\sigma\leq 1$ and $0<2\nu<\mu \leq 1$.The flow maps $\Phi_f^t$ of $V_f$ are analytic on $\mathcal{O}(\sigma-2\eta, \mu-2\nu)$ as long as they exist. We will study them as long as they map $\mathcal{O}(\sigma-2\eta, \mu-2\nu)$ to $\mathcal{O}(\sigma, \mu)$.
\\Assume that
\begin{equation} \label{hyp-f}
\lc f \rc^{\alpha,\beta} _{\sigma,\mu,\mathcal{D}} \leq \frac{1}{2} \eta\nu^2,
\end{equation}
then for $x \in \mathcal{O}(\sigma-2\eta, \mu-2\nu)$ and by Cauchy estimate we have:
\[ \left\{ 
\begin{aligned}
 \dot{r} &= -\nabla_\theta f(r,\theta,\zeta)\text{ et donc } \vert \dot{r} \vert_{\mathbb{C}^n} \leq (2\eta)^{-1} \lc f \rc^\alpha _{\sigma,\mu,\mathcal{D}} \leq \nu^2,
 \\ \dot{\theta} &= f_r(\theta)\text{ et donc } \vert \dot{\theta} \vert_{\mathbb{C}^n} \leq (4\nu)^{-2} \lc f \rc^\alpha _{\sigma,\mu,\mathcal{D}} \leq \eta,
 \\ \dot{\zeta} &= J (f_\zeta (\theta) + f_{\zeta\zeta}(\theta)\zeta) \text{ et donc } \Vert \dot{\zeta} \Vert_{\mathbb\alpha} \leq (\mu^{-1}+\mu^{-2}\mu) \lc f \rc^\alpha _{\sigma,\mu,\mathcal{D}} \leq \nu.
\end{aligned}
\right.
\] 
Note that $r(t)=\int_0^t\dot{r}(\tau)d\tau+r(0)$, then for $0\leq t \leq 1$ we have $\vert r(t) \vert_{\mathbb{C}^n} \leq (\mu-\nu)^2$. Similarly we obtain that $\vert \theta(t) \vert_{\mathbb{C}^n}\leq \sigma-\eta$ and $\Vert  \zeta(t)\Vert_\alpha \leq \mu-\nu$ for $0\leq t \leq 1$. This proves that the flow maps
\begin{equation} \nonumber
\Phi_f^t : \: \mathcal{O}^\alpha(\sigma-2\eta, \mu-2\nu) \rightarrow \mathcal{O}^\alpha(\sigma-\eta, \mu-\nu),
\end{equation}
are well defined $0\leq t \leq 1$ and analytic.
\\ For $x=(r,\theta, \zeta) \in \mathcal{O}(\sigma-2\eta, \mu-2\nu) $ and $0 \leq t \leq 1$ we denote $\Phi_f^t(x)= (r(t), \theta(t), \zeta (t))$. Let us give some details about the Hamiltonian flow $\Phi_f^t$.
\begin{enumerate}
\item[$\clubsuit$] We remark that $V_f^\theta=\dot{\theta}=f_r(\theta)$ is independent from $r$ and $\zeta$. Then $\theta(t)=K(\theta;t)$ where $K$ is analytic in $\theta$ and $t$.
\item[$\clubsuit$] We note that  $V_f^\zeta= \dot{\theta}= J f_\zeta (\theta(t))+ J f_{\zeta \zeta}(\theta(t))\zeta$. Using Cauchy estimate, $Jf_{\zeta\zeta}(\theta(t))$ is a linear bounded operator on $Y^c_\alpha$. Since $\theta(t)=K(\theta;t)$ where $K$ is analytic in $\theta$, then $V_f^\zeta$ is also analytic in à $\theta$. Therefore $\zeta (t)= T(\theta;t)+ U(\theta;t)\zeta$ where $U$ is a linear operator bounded on $Y^c$ to $L^c_\beta$. Both $T$ and $U$ are analytic in $\theta$.
\item[$\clubsuit$] The vector $V_f^r=\dot{r}= -\nabla_\theta f(r,\theta,\zeta)$ is quadratic in $\zeta$ and linear in $r$. Then $r(t)=L(\theta, \zeta; t)+S(\theta;t)r$ where $L$ is quadratic in $\zeta$ and analytic in $\theta$ and $S$ is an $ n \times n $ matrix analytic in  $\theta$.
\end{enumerate} 
\begin{lemma} 
Let $0<2\eta<\sigma\leq1$, $0<2\nu<\mu\leq1$ and $f=f^T \in \mathcal{T}^{\alpha,\beta}(\mathcal{D}, \sigma,\mu)$ that satisfies \eqref{hyp-f}. Then for $0 \leq t < 1$, the Hamiltonian flow maps $\Phi_f^t$ of equations \eqref{hameq-jet} define an analytic symplectomorphisms from $ \mathcal{O}^\alpha(  \delta -2\eta, \mu-2\nu )$  to  $ \mathcal{O}^\alpha(  \delta -\eta, \mu-\nu )$. They are of the form:

\begin{equation} \label{flot ham}
\Phi_f^t: 
 \left( 
\begin{aligned} 
 r
 \\ \theta
 \\ \zeta
\end{aligned}
\right) \rightarrow
\left(  \begin{aligned} 
& L(\theta,\zeta;t) + S(\theta;t)r
 \\& K(\theta;t)
 \\& T(\theta;t)+U(\theta;t) \zeta
\end{aligned} \right) 
= \left(  \begin{aligned} 
 r(t)
 \\ \theta(t)
 \\ \zeta(t)
\end{aligned} \right) 
\end{equation}
where $L(\theta,\zeta;t)$ is quadratic in $\zeta$, $U(\theta;t)$ and $S(\theta,\zeta;t)$ are linear operators in corresponding spaces. All the components of $\Phi_f^t$ are bounded and analytic in $\theta$.
\end{lemma}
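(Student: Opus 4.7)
The proof is essentially a repackaging of the estimates and structural observations developed in the paragraph immediately preceding the statement; the plan is to organise them and add the symplectomorphism claim. I would split it into three steps.

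\textbf{Step 1: well-posedness and invariance of the domain.} Combining the hypothesis \eqref{hyp-f} with the jet/Cauchy estimates \eqref{estim deriv f-moyenne-jet}, the Hamiltonian vector field $V_f=(V_f^r,V_f^\theta,V_f^\zeta)$ of \eqref{hameq-jet} satisfies at every $x\in\mathcal{O}^\alpha(\sigma-2\eta,\mu-2\nu)$ the componentwise bounds $|V_f^r|\le\nu^2$, $|V_f^\theta|\le\eta$ and $\Vert V_f^\zeta\Vert_\alpha\le\nu$ already displayed before the lemma. Integrating in time on $[0,1]$ shows that a trajectory starting in $\mathcal{O}^\alpha(\sigma-2\eta,\mu-2\nu)$ stays inside $\mathcal{O}^\alpha(\sigma-\eta,\mu-\nu)$; since $V_f$ is analytic on that set, the Cauchy--Lipschitz theorem yields a well-defined analytic flow $\Phi_f^t$.

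\textbf{Step 2: triangular integration, giving the form \eqref{flot ham}.} Because $f=f^T$ is a jet, system \eqref{hameq-jet} is triangular with respect to the ordering $\theta\to\zeta\to r$. The angle equation $\dot\theta=f_r(\theta)$ is closed in $\theta$, so $\theta(t)=K(\theta(0);t)$ with $K$ analytic in both arguments. Plugging $K$ into the $\zeta$-equation gives the affine non-autonomous linear ODE
\begin{equation*}
\dot\zeta=Jf_\zeta\bigl(K(\theta(0);t)\bigr)+Jf_{\zeta\zeta}\bigl(K(\theta(0);t)\bigr)\zeta,
\end{equation*}
whose solution admits the form $\zeta(t)=T(\theta(0);t)+U(\theta(0);t)\zeta(0)$, with $U$ the resolvent of the homogeneous part and $T$ the Duhamel term. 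Boundedness of $U(\theta;t)$ follows from Lemma~\ref{norme}(1) applied to $f_{\zeta\zeta}(\theta)\in\mathcal{M}_\beta$ together with a Gronwall estimate on $[0,1]$, while analyticity in $\theta(0)$ is inherited from that of $K,f_\zeta,f_{\zeta\zeta}$. Finally, since $f$ is quadratic in $\zeta$ and affine in $r$, the right-hand side $-\nabla_\theta f(r,\theta,\zeta)$ of the $r$-equation is affine in $r$ with inhomogeneous term quadratic in $\zeta$; substituting $\theta(t),\zeta(t)$ produces an affine ODE in $r$, whose integration yields $r(t)=L(\theta(0),\zeta(0);t)+S(\theta(0);t)r(0)$, with $L$ quadratic in $\zeta(0)$ and $S$ an analytic $n\times n$ matrix-valued function of $\theta(0)$.

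\textbf{Step 3: symplectomorphism.} The map $\Phi_f^t$ is the time-$t$ flow of a $C^1$ Hamiltonian vector field and therefore automatically preserves the symplectic form $dr\wedge d\theta+Jd\zeta\wedge d\zeta$; and the backward flow $\Phi_f^{-t}$ provides an analytic inverse on its domain of definition, so $\Phi_f^t$ is an analytic symplectomorphism. The only mildly delicate point in the whole proof lies in Step~2: controlling the resolvent $U(\theta;t)$ uniformly for $t\in[0,1]$ and $\theta\in\mathbb{T}^n_{\sigma-\eta}$ as an operator on the complex weighted space $Y_\alpha$ (mapping into $L_\beta$ as announced in the pre-lemma discussion). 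This is handled by Lemma~\ref{norme} together with \eqref{estim deriv f-moyenne-jet}, everything else being bookkeeping and standard ODE arguments.
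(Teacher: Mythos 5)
Your proof is correct and takes essentially the same route as the paper, which establishes this lemma through the discussion immediately preceding it: the a priori componentwise bounds on $V_f$ derived from \eqref{hyp-f}, and the observation that the triangular structure of \eqref{hameq-jet} (angle equation autonomous, $\zeta$-equation affine, $r$-equation affine with quadratic forcing) forces the flow into the form \eqref{flot ham}. One small misattribution worth fixing: the boundedness of $U(\theta;t)$ as an operator on $Y_\alpha$ does not come from Lemma~\ref{norme}(1) — that lemma controls the matrix norm $|\cdot|_\beta$ of a product and in any case requires one factor in $\mathcal{M}_{\beta+}$, whereas here $f_{\zeta\zeta}(\theta)$ is only assumed to lie in $\mathcal{M}_\beta$; the relevant operator bound $\Vert Jf_{\zeta\zeta}(\theta)\Vert_{\mathcal{L}(Y_\alpha,Y_\alpha)}\le\mu^{-2}\lc f\rc^{\alpha}_{\sigma,\mu,\mathcal{D}}$ follows instead from the Cauchy estimate on the holomorphic map $\zeta\mapsto\nabla_\zeta f\in Y_\alpha$ (as in \eqref{estim deriv f-moyenne-jet}), and that is what feeds your Gronwall/resolvent argument.
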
 
\begin{proposition} \label{estim-phi}
Consider $0<2\eta<\sigma\leq1$, $0<2\nu<\mu\leq1$ and $f=f^T \in \mathcal{T}^{\alpha,\beta+}(\mathcal{D}, \sigma,\mu)$ that satisfy $\lc f \rc^{\alpha,\beta+} _{\sigma,\mu,\mathcal{D}} \leq \frac{1}{2} \eta\nu^2$. Then for $0 \leq t < 1$ we have:
\begin{itemize}
\item[1)] Mappings $T, K$ and operators $U$ and $S$ are analytic in  $\theta \in \mathbb{T}^n_{\sigma-2\eta}$. Mapping $L$ is analytic in $(\theta,\zeta) \in \mathbb{T}^n_{\sigma-2\eta} \times Y^c_\alpha$. Their norms and operator norms satisfy:
\begin{equation} \label{estim comp flot ham}
\vert S(\theta;t) \vert_{\mathcal{L}(\mathbb{C}^n,\mathbb{C}^n)},\: \Vert U(\theta;t) \Vert_{\mathcal{L}(Y_\alpha,Y_\alpha)}, \: \Vert ^tU(\theta;t) \Vert_{\mathcal{L}(Y_\alpha,Y_\alpha)}, \: \vert U(\theta;t)\vert_{\beta+} \leq 2.
\end{equation}
For any component $L^j$, of $L$ and any $x=(r,\theta, \zeta) \in \mathcal{O}(\sigma-2\eta, \mu-2\nu) $ we have:
\begin{equation} \label{estim comp L}
\begin{aligned}
\Vert \nabla_\zeta L^j (x;t) \Vert_\alpha & \leq 8 \eta^{-1} \mu^{-1} \lc f \rc^{\alpha} _{\sigma,\mu,\mathcal{D}}, \\
\vert \nabla_\zeta L^j (x;t) \vert_{\beta+} & \leq 8 \eta^{-1} \mu^{-1} \lc f \rc^{\alpha,\beta+} _{\sigma,\mu,\mathcal{D}}, \\
\vert \nabla^2_{\zeta} L^j (x;t) \vert_{\beta+} & \leq 4 \eta^{-1} \mu^{-2} \lc f \rc^{\alpha,\beta+} _{\sigma,\mu,\mathcal{D}}.
\end{aligned}   
\end{equation}
\item[2)]  The Hamiltonian flow maps $\Phi_f^t$ of equations  \eqref{hameq-jet} analytically extend from $\mathbb{C}^n \times \mathbb{T}^n_{\sigma-2\eta} \times  Y^c_\alpha$ to $\mathbb{C}^n \times \mathbb{T}^n_{\sigma} \times  Y^c_\alpha$. Furthermore they satisfy
\begin{equation} \nonumber
\begin{aligned}
 \vert r(t) - r^0  \vert_{\mathbb{C}^n} & \leq C_0 \eta^{-1}(1 + \mu^{-2}\Vert \zeta^0\Vert^2_\alpha+\mu^{-2} \vert r^0 \vert) \lc f \rc^\alpha_{\sigma,\mu,\mathcal{D}},
 \\ \vert \theta(t) - \theta^0  \vert_{\mathbb{C}^n} & \leq \mu^{-2} \lc f \rc^\alpha_{\sigma,\mu,\mathcal{D}},
 \\ \Vert \zeta(t) - \zeta^0  \Vert_\alpha & \leq  (1+ \mu^{-2} \Vert \zeta^0 \Vert_\alpha) \lc f \rc^\alpha_{\sigma,\mu,\mathcal{D}},
 \\ \Vert \zeta (t) - \zeta^0 \Vert_{\beta+} & \leq C_1 (1+ \mu^{-2} \Vert \zeta^0 \Vert_\alpha) \lc f \rc^{\alpha,\beta+}_{\sigma,\mu,\mathcal{D}},
\end{aligned}
\end{equation}
where $C_0$ is an absolute constant, while $C_1$ depends on $\beta$.
\end{itemize}
\end{proposition}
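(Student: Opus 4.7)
The plan is to exploit the triangular structure of the Hamiltonian system \eqref{hameq-jet} for a jet function, writing down the ODEs satisfied by each of the components $K, T, U, S, L$ in the ansatz \eqref{flot ham}, and then to control them by Cauchy estimates combined with a Gronwall-type bootstrap on $[0,1]$. The loss-of-analyticity parameters $\eta$ (in $\theta$) and $\nu$ (in $r,\zeta$) appear exactly where Cauchy estimates are invoked, and the hypothesis $\lc f \rc^{\alpha,\beta+}_{\sigma,\mu,\mathcal{D}} \leq \tfrac{1}{2}\eta\nu^2$ is what closes the bootstrap with the universal constant $2$.

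First I would observe that $\theta(t)=K(\theta;t)$ obeys the closed ODE $\dot K = f_r(K)$ with $K(0)=\theta$, because for a jet $\dot\theta = f_r(\theta)$ is independent of $r$ and $\zeta$. Standard ODE theory together with $|f_r|_{\mathcal{D}}\leq \mu^{-2}\lc f \rc^{\alpha,\beta+}_{\sigma,\mu,\mathcal{D}}$ from \eqref{estim deriv f-moyenne} shows $K(\cdot;t)$ is analytic on $\T^n_{\sigma-2\eta}$ for $t\in[0,1]$. Plugging the ansatz $\zeta(t)=T(\theta;t)+U(\theta;t)\zeta^0$ into the third equation of \eqref{hameq-jet} yields
\[
\dot U = J f_{\zeta\zeta}(K(\theta;t))\,U,\ U(0)=I,\qquad \dot T = Jf_\zeta(K(\theta;t)) + Jf_{\zeta\zeta}(K(\theta;t))T,\ T(0)=0.
\]
Gronwall applied to $\|U(\theta;t)\|_{\mathcal{L}(Y_\alpha,Y_\alpha)}$, using that $\|Jf_{\zeta\zeta}\|_{\mathcal{L}(Y_\alpha,Y_\alpha)}\lesssim |f_{\zeta\zeta}|_\beta\leq \mu^{-2}\lc f\rc^{\alpha,\beta+}_{\sigma,\mu,\mathcal{D}}$ (via parts~2--3 of Lemma~\ref{norme}), gives $\|U\|\leq \exp(\mu^{-2}\lc f\rc)\leq e^{\eta\nu^2/(2\mu^2)}\leq 2$ because $\eta,\nu\leq\mu/2$; the same argument handles ${}^tU$ and the block $S$, whose equation $\dot S = -\partial_\theta f_r(K(\theta;t))\,S$ is obtained by isolating the coefficient of $r^0$ in $\dot r=-\nabla_\theta f$. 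For the $\beta+$-bound on $U$, I would iterate the integral equation $U(t)=I+\int_0^t Jf_{\zeta\zeta}(K(\theta;\tau))U(\tau)\,d\tau$ and use part~1 of Lemma~\ref{norme} to get a submultiplicative product estimate in the $\beta+$ norm, then sum the resulting geometric series in $\mu^{-2}\lc f\rc^{\alpha,\beta+}$.

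The estimates \eqref{estim comp L} on $L$ and on $\nabla_\zeta L^j,\nabla^2_\zeta L^j$ follow by Duhamel: $L$ satisfies a linear ODE driven by the $\theta$-gradient of the jet coefficients, which costs a factor $\eta^{-1}$ via Cauchy, and by a source that is at most quadratic in $\zeta$; integrating in time and substituting the previously obtained bounds on $S,T,U$ produces the stated $\eta^{-1}\mu^{-1}$ and $\eta^{-1}\mu^{-2}$ factors. Part~2) is then a direct integration of the Hamiltonian field: $r(t)-r^0=-\int_0^t \nabla_\theta f(r(\tau),\theta(\tau),\zeta(\tau))d\tau$, and expanding $\nabla_\theta f$ via the jet structure, each term is the product of a $\theta$-derivative of a jet coefficient and a monomial in $(r,\zeta)$; inserting the a~priori bounds $|r(\tau)|\leq \mu^2$ and $\|\zeta(\tau)\|_\alpha\leq \|\zeta^0\|_\alpha+\mu$ already obtained when showing that the flow maps $\mathcal{O}^\alpha(\sigma-2\eta,\mu-2\nu)$ to $\mathcal{O}^\alpha(\sigma-\eta,\mu-\nu)$ gives the claimed inequalities for $|r(t)-r^0|$, $|\theta(t)-\theta^0|$ and $\|\zeta(t)-\zeta^0\|_\alpha$. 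The $\|\zeta(t)-\zeta^0\|_{\beta+}$ bound is obtained similarly, using parts~2 and 4 of Lemma~\ref{norme} to transfer from the $\alpha$-norm to the $\beta+$-norm through $f_\zeta\in L_{\beta+}$ and $f_{\zeta\zeta}\in\mathcal{M}_{\beta+}$. The analytic extension of $\Phi_f^t$ to $\C^n\times\T^n_\sigma\times Y^c_\alpha$ is automatic from \eqref{flot ham}: for a jet the right-hand side of \eqref{hameq-jet} is polynomial in $(r,\zeta)$ with coefficients depending only on $\theta$, so once $K$ is analytic in $\theta$ the whole flow is.

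The main technical obstacle is the $\beta+$-bootstrap for $U$, since the $\beta+$-weight is not a genuine operator norm and one cannot simply quote Gronwall. One must verify that iterating the integral equation produces a submultiplicative estimate compatible with Lemma~\ref{norme}, which relies on the triangle-like inequality $1+\big||s|-|s''|\big|\leq \bigl(1+\big||s|-|s'|\big|\bigr)\bigl(1+\big||s'|-|s''|\big|\bigr)$ hidden in the definition of $|\cdot|_{\beta+}$; once that is in hand, the smallness of $\lc f\rc^{\alpha,\beta+}_{\sigma,\mu,\mathcal{D}}$ absorbs the universal constant and yields the factor $2$ uniformly in $t\in[0,1]$.
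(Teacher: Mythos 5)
Your plan—reading off from the jet structure the closed ODEs satisfied by $K$, $U$, $T$, $S$, $L$ and then controlling them by Picard iteration/Gronwall on $[0,1]$ with the smallness hypothesis—is essentially the paper's proof: the paper writes out the Dyson series for $\zeta(t)$ and $r(t)$, identifies $U=I+B^\infty$, $S=I-\Lambda^\infty$, $L=-\alpha^\infty$, and obtains the estimates on $\nabla_\zeta L$, $\nabla^2_\zeta L$ via $\nabla_{\zeta^0}={}^t U\nabla_{\zeta(t)}$.

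Two slips are worth correcting. First, the operator bound $\Vert Jf_{\zeta\zeta}\Vert_{\mathcal{L}(Y_\alpha,Y_\alpha)}\leq\mu^{-2}\lc f\rc^\alpha_{\sigma,\mu,\mathcal{D}}$ does not follow from $|f_{\zeta\zeta}|_\beta$ via parts 2--3 of Lemma~\ref{norme}: those statements concern the $L_\beta$ norms, not operator norms on $Y_\alpha$, and there is no general embedding $\mathcal{M}_\beta\hookrightarrow\mathcal{L}(Y_\alpha,Y_\alpha)$. The bound comes instead from a Cauchy estimate on the holomorphic map $\zeta\mapsto\nabla_\zeta f\in Y_\alpha$, using the $\alpha$-part of the norm (namely $\Vert\nabla_\zeta f\Vert_\alpha\leq\mu^{-1}\lc f\rc^\alpha_{\sigma,\mu,\mathcal{D}}$), exactly as the paper states. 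Second, the multiplicative triangle inequality $1+\big||s|-|s''|\big|\leq\bigl(1+\big||s|-|s'|\big|\bigr)\bigl(1+\big||s'|-|s''|\big|\bigr)$ is true but too lossy: applied to the product kernel it discards both off-diagonal weights and leaves $\sum_k\langle k\rangle^{-2\beta}$, which diverges for $\beta\leq\tfrac12$—and the wave application takes $\beta=\tfrac12$. What actually makes $\mathcal{M}_{\beta+}$ a normed algebra for every $\beta>0$ is the additive split $\frac{1+||s|-|s'||}{(1+||s|-|k||)(1+||k|-|s'||)}\leq\frac{1}{1+||s|-|k||}+\frac{1}{1+||k|-|s'||}$, after which Lemma~\ref{série cvg} closes the sum. (The paper also invokes this algebra property of $\mathcal{M}_{\beta+}$ without proof; the split above is the missing justification.)
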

\begin{remark}
$\partial_\rho x(t)$ satisfies the same estimates as $x(t)$.
\end{remark}
\begin{proof}
\begin{itemize}
\item Let us start with the estimate on $\theta$.
From \eqref{hameq-jet} we have:
 \[ \left\{ 
\begin{aligned}
 \dot{\theta}(t) &= \nabla_r f(\theta(t)),
 \\ \theta (0) &= \theta^0 \in \mathbb{T}^n_{\sigma-2s}
\end{aligned}
\right.
\]
Consider $\bar{t}= \sup \left\lbrace t \mid \theta(u) \: \text{defined for} \: 0 \leq u \leq t ; \: \vert \theta(u) - \theta^0 \vert \leq \mu^{-2}\lc f \rc_{\sigma,\mu} \right\rbrace $ 
For any $t\leq \bar{t}$ we have:
\begin{equation} \nonumber
\theta (t) = \theta^0 + \int_0^t \nabla_rf(\theta(u))du.
\end{equation}
From \eqref{estim deriv f-moyenne} we know that $\Vert \nabla_r f \Vert \leq \mu^{-2} \lc f \rc^\alpha_{\sigma,\mu,\mathcal{D}}$, which leads to the desired estimate.
\item Let us proof the estimates on $\zeta$. From \eqref{hameq-jet} we have:
\begin{equation} \label{eq-zeta} \left\{ 
\begin{aligned}
 \dot{\zeta}(t) &= a(t) + B(t)\zeta(t),
 \\ \zeta (0) &= \zeta^0 \in \mathcal{O}_{\mu-2\nu}(Y^c_\alpha),
\end{aligned}
\right.
\end{equation} 
where $a(t):= Jf_\zeta(\theta(t))$ and $B(t) := Jf_{\zeta \zeta}(\theta(t))$. By Cauchy estimate and the fact that $\lc f \rc^{\alpha,\beta+} _{\sigma,\mu,\mathcal{D}} \leq \frac{1}{2} \eta \nu^2 $, we have: 
\begin{equation} \nonumber
\Vert a(t) \Vert_\alpha \leq \mu^{-1} \lc f \rc^\alpha_{\sigma,\mu,\mathcal{D}} \leq \nu\, \quad \Vert B(t)\Vert_{\mathcal{L}(Y_\alpha,Y_\alpha)} \leq \mu^{-2} \lc f \rc^\alpha_{\sigma,\mu,\mathcal{D}} \leq \frac{1}{2} \nu \leq \frac{1}{2}
\end{equation}
Let us rewrite \eqref{eq-zeta} in the integral form and iterating the process: 
\begin{eqnarray*}
\zeta(t) & = & \zeta^0 + \int_0^ta(t')+B(t')\zeta(t')dt'
\\ & = & \int_0^t a(t')dt'+\int_0^t \int_0^{t'}a(t'')B(t')d'' dt' +\zeta^0 + \zeta^0 \int_0^t B(t')dt\\
& + & \int_0^t\int_0^{t'} B(t')B(t'')\zeta(t'')dt''dt' \\
& = & \ldots \\
& = & a^\infty (t) + \left(  B^\infty (t) + I \right) \zeta^0,
\end{eqnarray*}
where
\begin{equation} \nonumber
a^\infty (t) = \underset{k \geq 1}{ \sum} \int_0^t \int_0^{t_1} \ldots \int_0^{t_k-1} \overset{k-1}{\underset{j=1}{\prod}} B(t_{j})a(t_k) dt_k \ldots dt_2 dt_1,
\end{equation}
and
\begin{equation} \nonumber
B^\infty (t) = \underset{k \geq 1}{ \sum} \int_0^t \int_0^{t_1} \ldots \int_0^{t_k-1} \overset{k-1}{\underset{j=1}{\prod}} B(t_{j}) dt_k \ldots dt_2 dt_1.
\end{equation}
We have:   $ \Vert \overset{k}{\underset{j=1}{\prod}} B(t_{j}) \Vert_{\mathcal{L}(Y_\alpha,Y_\alpha)}  \leq \left( \frac{1}{2} \right)^{k-1} \mu^{-2} \lc f \rc^{\alpha}_{\sigma,\mu,\mathcal{D}}$, then the operator $ B^\infty (t)$ is well defined and bounded for $t \in \lc  0\: 1 \rc $ by the convergent series $ \underset{k \geq 1}{ \sum} \frac{1}{k!} \left( \frac{1}{2} \right)^{k-1} \mu^{-2} \lc f \rc^\alpha_{\sigma,\mu,\mathcal{D}} $. Hence we have:
\begin{equation} \nonumber
\Vert B^\infty (t) \Vert_{\mathcal{L}(Y_\alpha,Y_\alpha)} \leq  \mu^{-2} \lc f \rc^\alpha_{\sigma,\mu,\mathcal{D}}.
\end{equation}
We note that $U=I+B^\infty$, it immediately  follows that $\Vert U(\theta;t)\Vert_{\mathcal{L}(Y_\alpha,Y_\alpha)},\Vert ^tU(\theta;t) \Vert_{\mathcal{L}(Y_\alpha,Y_\alpha)} \leq 2$.

Let us prove now that $a^\infty(t)$ is well defined for $t  \in \left[   0\: 1 \right]   $ . We have 
\begin{align*} \nonumber
\Vert \overset{k-1}{\underset{j=1}{\prod}} B(t_{j}) a(t_k) \Vert_\alpha & \leq  \Vert  \overset{k-1}{\underset{j=1}{\prod}} B(t_{j}) \Vert_{\mathcal{L}(Y_\alpha,Y_\alpha)} \Vert a(t_k) \Vert_\alpha \\
& \leq  \left( \frac{1}{2} \right)^{k-1}  \frac{\lc f \rc^\alpha_{\sigma,\mu,\mathcal{D}}}{\mu^2} \frac{\lc f \rc^\alpha_{\sigma,\mu,\mathcal{D}}}{\mu},
\end{align*}
then $ a^\infty (t)$ is well defined for $t \in \left[  0\: 1 \right]  $, and bounded by the convergent series $ \underset{k \geq 1}{ \sum} \frac{1}{k!} \left( \frac{1}{2} \right)^{k-1} \mu^{-3} \left(  \lc f \rc_{\sigma,\mu}^\alpha \right) ^2 $. We proved that
\begin{equation} \nonumber
\Vert a^\infty (t) \Vert_\alpha \leq \frac{(\lc f \rc^\alpha_{\sigma,\mu,\mathcal{D}})^2}{\mu^3} \leq \lc f \rc^\alpha_{\sigma,\mu,\mathcal{D}}.
\end{equation}
Using the estimates made on  $a^\infty(t)$, $B^\infty(t)$ and the hypothesis \eqref{hyp-f}, we obtain that:
\begin{equation} \label{estim-zeta}
\Vert \zeta (t) - \zeta^0 \Vert_\alpha \leq  (1+ \mu^{-2} \Vert \zeta^0 \Vert_\alpha) \lc f \rc^\alpha_{\sigma,\mu,\mathcal{D}}.
\end{equation}
Let us prove now that $\Vert \zeta (t) - \zeta^0 \Vert_{\beta+}  \leq C (1+ \mu^{-2} \Vert \zeta^0 \Vert_\alpha) \lc f \rc^{\alpha,\beta+}_{\sigma,\mu,\mathcal{D}}$. We recall that $B(t)=Jf_{\zeta\zeta}(\theta(t))$, then $B$ belongs to $\mathcal{M}^+_\beta$. Moreover
\begin{equation} \nonumber
\vert B(t) \vert_{\beta_+} \leq \mu^{-2} \lc f \rc^{\alpha,\beta+}_{\sigma,\mu,\mathcal{D}}.
\end{equation}  
Since $\mathcal{M}_{\beta+}$ is closed under multiplication, then there exists a constant that depends on $\beta$ such that
\begin{equation} \nonumber
\vert B^\infty(t) \vert_{\beta+} \leq C \mu^{-2} \lc f \rc^{\alpha,\beta+}_{\sigma,\mu,\mathcal{D}}\leq 1.
\end{equation}
From here we notice that $\vert U(\theta;t) \vert_{\beta+} \leq 2$.

Using the estimate 4 from Lemma~\ref{norme}, we obtain that:
\begin{equation} \nonumber
\vert a^\infty (t) \vert_{\beta+} \leq C \frac{(\lc f \rc^{\alpha,\beta+}_{\sigma,\mu,\mathcal{D}})^2}{\mu^3} \leq \lc f \rc^{\alpha,\beta+}_{\sigma,\mu,\mathcal{D}}.
\end{equation}
By the two previous estimates made on $\vert a^\infty(t) \vert_{\beta+}$, $\vert B^\infty(t) \vert _{\beta+}$, the estimate 4 from Lemma~\ref{norme} and the hypothesis \eqref{hyp-f}, we obtain that:
\begin{equation} \nonumber
\Vert \zeta (t) - \zeta^0 \Vert_{\beta+} \leq C (1+ \mu^{-2} \Vert \zeta^0 \Vert_\alpha) \lc f \rc^{\alpha,\beta+}_{\sigma,\mu,\mathcal{D}},
\end{equation}
where $C$ is a constant that depends only on $\beta$.
\item Let us prove now the estimate made on $r$. We have:
\begin{align*}
\dot{r}(t) & =  -\nabla_\theta f(r(t), \theta(t) , \zeta(t)) \\
& =  - \nabla_\theta f (\theta(t)) - \nabla_\theta f_r (\theta(t)) r(t) - \langle \nabla_\theta f_\zeta (\theta(t)) , \zeta(t) \rangle \\
& -\frac{1}{2} \langle \nabla_\theta f_{\zeta\zeta} (\theta(t)) \zeta(t), \zeta(t) \rangle\\
& =  -\alpha(t) - \Lambda(t) r (t).
\end{align*}
where
\begin{equation} \nonumber
\alpha(t)= \nabla_\theta f (\theta(t)) + \langle \nabla_\theta f_\zeta (\theta(t)) , \zeta(t) \rangle + \frac{1}{2} \langle \nabla_\theta f_{\zeta\zeta} (\theta(t)) \zeta(t), \zeta(t) \rangle, 
\end{equation}
and
\begin{equation} \nonumber
\Lambda(t)=\nabla_\theta f_r (\theta(t)).
\end{equation}
By Cauchy estimate we have:
\begin{equation*}
\vert \Lambda (t) \vert_{\mathcal{L}(\mathbb{C}^n,\mathbb{C}^n)} \leq \eta^{-1} \mu^{-2} \lc f \rc^\alpha_{\sigma,\mu,\mathcal{D}} \leq \frac{1}{2}.
\end{equation*}
Similarly, by Cauchy estimate, we have:
\begin{align*}
\vert \alpha(t) \vert_{\mathbb{C}^n}  & \leq  \eta^{-1} \lc f \rc^\alpha_{\sigma,\mu,\mathcal{D}} + \eta^{-1} \mu^{-1} \Vert \zeta (t) \Vert_\alpha \lc f \rc^\alpha_{\sigma,\mu,\mathcal{D}} +\frac{1}{2}\eta^{-1} \mu^{-2} \Vert \zeta (t) \Vert_\alpha^2 \lc f \rc^\alpha_{\sigma,\mu,\mathcal{D}}
\\ & \leq \eta^{-1} ( 1 + \mu^{-1} \Vert \zeta (t) \Vert_\alpha + \mu^{-2} \Vert \zeta (t) \Vert_\alpha^2) \lc f \rc^\alpha_{\sigma,\mu,\mathcal{D}}.
\end{align*}
By \eqref{estim-zeta} and \eqref{hyp-f} we note that $\Vert \zeta(t) \Vert_\alpha \leq 2 \Vert \zeta^0 \Vert_\alpha$. Then we obtain that:
\begin{equation} \nonumber
\vert \alpha (t) \vert_{\mathbb{C}^n} \leq 2 \eta^{-1} ( 1 + \mu^{-1} \Vert \zeta^0  \Vert_\alpha + \mu^{-2} \Vert \zeta^0 \Vert_\alpha^2) \lc f \rc^\alpha_{\sigma,\mu,\mathcal{D}}.
\end{equation} 
The same reasoning made for $\zeta$ gives us that:
\begin{equation} \nonumber
r(t)=-\alpha^\infty(t)+( I- \Lambda^\infty(t))r^0,
\end{equation}
where
\begin{equation} \nonumber
\alpha^\infty (t) = \underset{k \geq 1}{ \sum} \int_0^t \int_0^{t_1} \ldots \int_0^{t_k-1} \overset{k-1}{\underset{j=1}{\prod}} \Lambda(t_{j})\alpha(t_k) dt_k \ldots dt_2 dt_1,
\end{equation}
and
\begin{equation} \nonumber
\Lambda^\infty (t) = \underset{k \geq 1}{ \sum} \int_0^t \int_0^{t_1} \ldots \int_0^{t_k-1} \overset{k}{\underset{j=1}{\prod}} \Lambda(t_{j})dt_k \ldots dt_2 dt_1.
\end{equation}
We have: 
\begin{equation} \nonumber
\vert \overset{k}{\underset{j=1}{\prod}} \Lambda(t_{j}) \vert_{\mathcal{L}(\mathbb{C}^n,\mathbb{C}^n)} \leq (\frac{1}{2} )^{k-1} \eta^{-1} \mu^{-2} \lc f \rc^{\alpha}_{\sigma,\mu,\mathcal{D}},
\end{equation}
then $\Lambda^\infty (t)$ is well defined for $t \in \left[  0\: 1 \right]  $ and bounded by the convergent series $ \underset{k \geq 1}{ \sum} \frac{\left( \frac{1}{2} \right)^{k-1}}{k!}  \mu^{-2} s^{-1} \lc f \rc^\alpha_{\sigma,\mu,\mathcal{D}} $. Then
\begin{equation} \nonumber
\vert \Lambda^\infty (t) \vert_{\mathcal{L}(\mathbb{C}^n,\mathbb{C}^n)} \leq   \eta^{-1}\mu^{-2} \lc f \rc^\alpha_{\sigma,\mu,\mathcal{D}}\leq\frac{1}{2}.
\end{equation}
For the first part of the estimate \eqref{estim comp flot ham}, we remark that  $S(\theta;t)= I- \Lambda^\infty(t)$. So we obtain that $\vert S(\theta;t) \vert_{\mathcal{L}(\mathbb{C}^n,\mathbb{C}^n)} \leq 2$.

Let us now prove that $\alpha^\infty(t)$ is well defined for $t \in \left[   0\: 1 \right]  $ . We have 
\begin{align*} \nonumber
\vert \overset{k}{\underset{j=1}{\prod}} \Lambda(t_{j}) \alpha(t_k) \vert_{\mathbb{C}^n}  & \leq  \vert  \overset{k}{\underset{j=1}{\prod}} \Lambda(t_{j}) \vert_{\mathcal{L}(\mathbb{C}^n,\mathbb{C}^n)} \vert \alpha(t_k) \vert_{\mathbb{C}^n} \\
& \leq C \left(\frac{1}{2}\right)^{k-1} \eta^{-1} \left( 1+\mu^{-1}   \Vert \zeta^0 \Vert_\alpha + \mu^{-2} \Vert \zeta^0 \Vert_\alpha^2 \right)\lc f \rc^\alpha_{\sigma,\mu,\mathcal{D}}\\
& \leq C \left(\frac{1}{2}\right)^{k-2} \eta^{-1} \left( 1+ \mu^{-2} \Vert \zeta^0 \Vert_\alpha^2 \right)\lc f \rc^\alpha_{\sigma,\mu,\mathcal{D}}.   
\end{align*}
Then  $ \alpha^\infty (t)$ is well defined for $t \in \left[   0\: 1 \right]  $ and bounded by the convergent series $$  \sum_{k \geq 1 }\frac{4}{2^kk!} \eta^{-1} \left( 1+ \mu^{-2} \Vert \zeta^0 \Vert^2 \right) \lc f \rc^\alpha_{\sigma,\mu,\mathcal{D}}.$$ This leads to
\begin{equation} \nonumber
\vert \alpha^\infty (t) \vert \leq C \eta^{-1} \left( 1+ \mu^{-2} \Vert \zeta^0 \Vert^2 \right)\lc f \rc_{\sigma,\mu}
\end{equation}
Using the two previous estimates made on $\alpha^\infty(t)$ and $\Lambda^\infty(t)$, we obtain that:
\begin{equation*}
\vert r (t) - r^0 \vert_{\mathbb{C}^n} \leq C \eta^{-1} \left( 1+ \mu^{-2} \Vert \zeta^0 \Vert^2 +\mu^{-2} \vert r^0 \vert_{\mathbb{C}^n} \right)\lc f \rc^\alpha_{\sigma,\mu,\mathcal{D}},
\end{equation*}
where $C$ is an absolute constant.

It remains to prove the estimates \eqref{estim comp L}. We remark that $\Lambda^\infty(t)$ does not depends on $\zeta^0$, then $L(\theta,\zeta;t)=-\alpha^\infty(t).$

Recall that $\zeta(t)=T(\theta;t)+U(\theta,t)\zeta^0$, then $\nabla_{\zeta^0}= ^tU(\theta;t) \nabla_{\zeta(t)}$. Since
\begin{equation} \nonumber
\nabla_{\zeta(t)}\alpha(t)=\nabla_\theta f_\zeta (\theta(t))+ \nabla_\theta f_{\zeta\zeta} (\theta(t)) \zeta(t),
\end{equation}
then using Cauchy estimate, the fact that $\Vert \zeta(t) \Vert_\alpha \leq 2 \Vert \zeta^0 \Vert_\alpha$ and the estimates \eqref{estim deriv f-moyenne}, we obtain that:
\begin{equation} \nonumber
\Vert \nabla_{\zeta^0} \alpha(t) \Vert_\alpha \leq 4 \eta^{-1}\mu^{-1}(1+\mu^{-1} \Vert \zeta^0 \Vert_\alpha )\lc f \rc^\alpha_{\sigma,\mu,\mathcal{D}}.
\end{equation} 
This leads to the first estimate of \ref{estim comp L}.
\\ Using estimate 4 of Lemma~\ref{norme} we obtain
\begin{equation} \nonumber
\vert \nabla_{\zeta^0} \alpha(t) \vert_{\beta+} \leq 4 \eta^{-1}\mu^{-1}(1+\mu^{-1} \Vert \zeta^0 \Vert_\alpha )\lc f \rc^{\alpha,\beta+}_{\sigma,\mu,\mathcal{D}},
\end{equation}
this leads to the second estimate of \eqref{estim comp L}.
Similarly, we have $\nabla^2_{\zeta(t)}\alpha(t)= \nabla_\theta f_{\zeta\zeta} (\theta(t))$ and $\nabla^2_{\zeta^0}=^tU(\theta;t)\nabla^2_{\zeta(t)}U(\theta,t)$, then
\begin{equation} \nonumber
\vert \nabla_{\zeta^0}^2 \alpha(t) \vert_{\beta+} \leq 4\eta^{-1} \mu^{-2} \lc f \rc^{\alpha,\beta+}_{\sigma,\mu,\mathcal{D}}.
\end{equation}
This leads to the third estimate of \eqref{estim comp L} and finish the proof.
\end{itemize}
\end{proof}
\begin{proposition} \label{composition}
For $j=0,1$ consider $\partial_\rho^j f$ $\in \mathcal{T}^{\alpha,\beta^+} (\mathcal{D}, \sigma, \mu)$  a jet function that satisfies $\lc \partial_\rho^j f \rc^{\alpha,\beta^+} _{\sigma,\mu,\mathcal{D}} \leq \frac{1}{2} \eta\nu^2$ for $0 < 2\eta< \sigma<1$ and $0<2 \nu<\mu<1$. Let $\partial_\rho^j h \in \mathcal{T}^{\alpha,\beta} (\mathcal{D}, \sigma, \mu) $, we denote for $0 \leq t \leq 1 $ 
$$ h_t(x,\rho):= h(\Phi_{f}^t(x,\rho));\rho).$$
Then $h_t \in  \mathcal{T}^{\alpha,\beta} (\mathcal{D}, \sigma - 2\eta, \mu - 2\nu)$ and 
\begin{equation} \label{estimation composition}
\lc \partial_\rho^j h_t \rc^{\alpha,\beta} _{\sigma-2\eta,\mu-2\nu,\mathcal{D}} \leq C   \frac{\mu}{\nu} \lc \partial_\rho^j h \rc^{\alpha,\beta} _{\sigma,\mu,\mathcal{D}},
\end{equation}
where $C$ is a constant that depends only on $\beta$.
\end{proposition}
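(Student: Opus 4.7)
The plan is to estimate the four quantities that define $\lc h_t\rc^{\alpha,\beta}_{\sigma-2\eta,\mu-2\nu,\mathcal{D}}$ by combining the chain rule applied to $h_t=h\circ\Phi_f^t$ with the explicit component structure of $\Phi_f^t$ furnished by \eqref{flot ham} and with the pointwise bounds from Proposition~\ref{estim-phi}. Since $\Phi_f^t$ sends $\mathcal{O}^\alpha(\sigma-2\eta,\mu-2\nu)$ into $\mathcal{O}^\alpha(\sigma-\eta,\mu-\nu)\subset\mathcal{O}^\alpha(\sigma,\mu)$, the zeroth-order bound $|h_t(x,\rho)|\leq \lc h\rc^{\alpha,\beta}_{\sigma,\mu,\mathcal{D}}$ is automatic and is absorbed in the factor $C\mu/\nu$ (we have $\mu/\nu>2$). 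For the remaining derivatives I will differentiate explicitly and read off, component by component, the estimates of $L$, $S$, $K$, $T$, $U$ supplied by Proposition~\ref{estim-phi}, together with Cauchy estimates in $(r,\theta)$ for the derivatives of $h$ that do not appear in the norm definition.

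For the first-order gradient, write
\[
\nabla_\zeta h_t(x)=(\nabla_\zeta L(\theta,\zeta;t))^{*}\,\nabla_r h(\Phi_f^t(x))+{}^tU(\theta;t)\,\nabla_\zeta h(\Phi_f^t(x)),
\]
since $\theta(t)$ is independent of $\zeta$. The operator norm $\|{}^tU\|\leq 2$, the bound $\|\nabla_\zeta L\|_\alpha\leq 8\eta^{-1}\mu^{-1}\lc f\rc$ from Proposition~\ref{estim-phi}, a Cauchy estimate yielding $|\nabla_r h|\lesssim\nu^{-2}\lc h\rc$ on $\mathcal{O}^\alpha(\sigma-\eta,\mu-\nu)$, and the direct bound $\|\nabla_\zeta h\|_\alpha\leq(\mu-\nu)^{-1}\lc h\rc$ together with the smallness hypothesis $\lc f\rc^{\alpha,\beta+}_{\sigma,\mu,\mathcal{D}}\leq\tfrac12\eta\nu^2$ yield the desired bound on $\|\nabla_\zeta h_t\|_\alpha$. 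The corresponding $|\nabla_\zeta h_t|_\beta$ estimate follows the same algebra but is where the $\beta+$ structure becomes essential: the term ${}^tU\,\nabla_\zeta h$ is handled by $|U|_{\beta+}\leq 2$ paired with item~2 of Lemma~\ref{norme}, while $(\nabla_\zeta L)^{*}\nabla_r h$ is bounded through the $|\cdot|_{\beta+}$ estimate on $\nabla_\zeta L$ combined with item~3 of Lemma~\ref{norme}.

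The main obstacle is the Hessian $\nabla^2_\zeta h_t$. Two applications of the chain rule produce three classes of terms. The ``quadratic'' terms $(\partial_\zeta\Phi_f^t)^{*}\,\nabla^2 h(\Phi_f^t)\,(\partial_\zeta\Phi_f^t)$ are controlled in $\mathcal{M}_\beta$ by items~1 and~5 of Lemma~\ref{norme} (matrix product and tensor product), using that each $\partial_\zeta$-component of $\Phi_f^t$ lies in $L_{\beta+}$ with a $\eta^{-1}\mu^{-1}\lc f\rc$ bound while $\nabla^2_\zeta h\in\mathcal{M}_\beta$ with $\mu^{-2}\lc h\rc$ bound, and mixed partials $\nabla^2_{r\zeta}h,\,\nabla^2_{\theta\zeta}h$ are controlled by Cauchy estimates in $r,\theta$. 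The ``linear in second derivative'' terms are $\nabla_r h\cdot\partial^2_\zeta r(t)+\nabla_\zeta h\cdot\partial^2_\zeta\zeta(t)$: the second vanishes because $\zeta(t)=T+U\zeta$ is affine in $\zeta^0$, and the first is bounded via $|\nabla^2_\zeta L|_{\beta+}\leq 4\eta^{-1}\mu^{-2}\lc f\rc$ from Proposition~\ref{estim-phi} together with the Cauchy bound on $\nabla_r h$. Inserting $\lc f\rc^{\alpha,\beta+}_{\sigma,\mu,\mathcal{D}}\leq\tfrac12\eta\nu^2$ everywhere collapses the $\eta,\nu,\mu$ weights into exactly $C(\beta)\,\mu/\nu\,(\mu-2\nu)^{-2}\lc h\rc$, which is what is required for $|\nabla^2_\zeta h_t|_\beta$.

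For the parameter case $j=1$, differentiate the composition in $\rho$:
\[
\partial_\rho h_t(x,\rho)=(\partial_\rho h)(\Phi_f^t(x,\rho),\rho)+Dh(\Phi_f^t(x,\rho),\rho)\cdot\partial_\rho\Phi_f^t(x,\rho).
\]
The remark following Proposition~\ref{estim-phi} asserts that $\partial_\rho\Phi_f^t$ obeys the same estimates as $\Phi_f^t$, so differentiating further in $(r,\theta,\zeta)$ produces the same structural terms as in the $j=0$ analysis, now with $\partial_\rho h$ (or $\partial_\rho f$) playing the role of $h$ (or $f$) on the right. The verbatim repetition of the argument then yields the announced estimate on $\lc\partial_\rho h_t\rc^{\alpha,\beta}_{\sigma-2\eta,\mu-2\nu,\mathcal{D}}$, completing the proof.
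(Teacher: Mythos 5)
Your proof is correct and follows essentially the same route as the paper: the same chain-rule decomposition of $\nabla_\zeta h_t$ and $\nabla^2_\zeta h_t$ using the affine structure of $\Phi_f^t$ (in particular that $\theta(t)$ is independent of $\zeta^0$ and $\zeta(t)$ is affine in $\zeta^0$), the same component bounds from Proposition~\ref{estim-phi}, the same items of Lemma~\ref{norme} for the $\beta$ and $\beta+$ norms, and the same observation that $\partial_\rho\Phi_f^t$ obeys the same estimates as $\Phi_f^t$ to handle $j=1$. The only tiny discrepancy is that you list a spurious $\nabla^2_{\theta\zeta}h$ term in the Hessian, but since $\partial_{\zeta^0}\theta(t)=0$ it never appears, so this does not affect correctness.
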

\begin{proof}
The flow $\Phi_f^t$ is analytic on  $\mathcal{O}^\alpha(\sigma-2\eta, \mu-2\nu)$, then $h_t$ is analytic on  $\mathcal{O}^\alpha(\sigma-2\eta, \mu-2\nu)$.

1) Clearly we have $$\vert \partial_\rho^j h_t(x,.)\vert_{\mathcal{D}}=\underset{\rho \in \mathcal{D}}{sup} \left\vert \partial^j_\rho h(\Phi_f^t(x,\rho),\rho) \right\vert \leq \lc h \rc^{\alpha,\beta} _{\sigma,\mu,\mathcal{D}}. $$
It remains to estimate $\Vert \nabla_{\zeta^0} h_t \Vert_\alpha$, $\vert \nabla_{\zeta^0} h_t \vert_\beta$, $\vert \nabla^2_{\zeta_0\zeta^0} h_t \vert_\beta$ and their derivatives in $\rho$.

2) For $\nabla_{\zeta^0} h_t$, since $\theta$ does not depend on $\zeta_0$, we have
$$\nabla_{\zeta^0} h_t=\sum_{k=1}^n \frac{\partial h(x(t))}{\partial r_k(t)}\frac{\partial r_k(t)}{\partial \zeta^0} + \sum_{s \in \mathcal{L}} \frac{\partial h(x(t))}{\partial \zeta_s(t)}\frac{\partial \zeta_s(t)}{\partial \zeta^0}=\Sigma_1+\Sigma_2.$$
By Cauchy estimate $\left\vert \frac{\partial h(x(t))}{\partial r_k(t)} \right\vert_{\mathbb{C}^n} \leq \nu^{-2}\lc h \rc^{\alpha} _{\sigma,\mu,\mathcal{D}}.$ Using the first estimate from  \eqref{estim comp L}, we obtain that
$$\Vert \partial_{\zeta^0}r_k(t) \Vert_\alpha = \Vert \partial_{\zeta_0}L(\theta,\zeta;t) \Vert_\alpha \leq 8 \eta^{-1} \mu^{-1}\lc f \rc^{\alpha} _{\sigma,\mu,\mathcal{D}}.$$
Combining these estimates with hypothesis \eqref{hyp-f} yields to
$$\Vert \Sigma_1 \Vert_\alpha \leq 8 \eta^{-1}\mu^{-1}\nu^{-2}\lc h \rc^{\alpha,\beta} _{\sigma,\mu,\mathcal{D}}\lc f \rc^{\alpha} _{\sigma,\mu,\mathcal{D}}\leq 8 \eta^{-1}\lc h \rc^{\alpha,} _{\sigma,\mu,\mathcal{D}}.$$
Using this time the second estimate of \eqref{estim comp L}, we have
$$\vert \partial_{\zeta^0}r_k(t) \vert_\beta = \vert \partial_{\zeta_0}L(\theta,\zeta;t) \vert_\beta \leq 8 \eta^{-1} \mu^{-1}\lc f \rc^{\alpha,\beta} _{\sigma,\mu,\mathcal{D}}.$$
Combining this estimate with the estimate on $\frac{\partial h (x (t))}{\partial r_k (t)}$ and the assumption \eqref{hyp-f} we obtain that
$$\vert \Sigma_1 \vert_\beta \leq 4 \mu^{-1}\lc h \rc^{\alpha,} _{\sigma\mu,\mathcal{D}}.$$
For $\Sigma_2$ we have
$$\sum_{s \in \mathcal{L}} \frac{\partial h(x(t))}{\partial \zeta_s(t)}\frac{\partial \zeta_s(t)}{\partial \zeta^0}=\,^tU(t) \nabla_\zeta h , $$
where $U$ defined in \eqref{flot ham}. According to the third estimate in \eqref{estim comp flot ham} we have $\Vert \,^tU(t) \Vert _{\mathcal{L}(Y_\alpha,Y_\alpha)} \leq 2 $, so 
$$\Vert \Sigma_2 \Vert_\alpha \leq 2 \Vert \nabla_\zeta h \Vert_\alpha \leq 2 \mu^{-1} \lc h \rc^{\alpha} _{\sigma,\mu,\mathcal{D}}. $$
Using estimate 2 of the Lemma~\ref{norme} and estimate 4 of \eqref{estim comp flot ham}, we have
$$\vert \Sigma_2 \vert_\beta \leq C \vert  \,^tU(t) \vert_{\beta+} \Vert \nabla_\zeta h \Vert_\beta \leq  C \vert  U(t) \vert_{\beta+} \Vert \nabla_\zeta h \Vert_\beta \leq C \mu^{-1} \lc h \rc^{\alpha,\beta} _{\sigma,\mu,\mathcal{D}}, $$
where $C$ is a constant that depends on $\beta$. Then we obtain 
\begin{align*}
\Vert \nabla_{\zeta^0} h_t \Vert_\alpha &  \leq C \mu^{-1} \lc h \rc^{\alpha} _{\sigma,\mu,\mathcal{D}},\\
\vert \nabla_{\zeta^0} h_t \vert_\beta & \leq C \mu^{-1} \lc h \rc^{\alpha,\beta} _{\sigma,\mu,\mathcal{D}}.
\end{align*}
To obtain estimates on $\Vert \partial_\rho \nabla_{\zeta^0} h_t \Vert_\alpha$ and $\vert \partial_\rho \nabla_{\zeta^0} h_t \vert_\beta$, we have 
\begin{align*}
\partial_\rho \nabla_{\zeta^0} h_t= & \sum_{k=1}^n \frac{\partial_\rho \partial h(x(t))}{\partial r_k(t)}\frac{\partial r_k(t)}{\partial \zeta^0} + \frac{\partial h(x(t))}{\partial r_k(t)}\frac{\partial_\rho \partial r_k(t)}{\partial \zeta^0} \\
& + \sum_{s \in \mathcal{L}} \frac{\partial_\rho \partial h(x(t))}{\partial \zeta_s(t)}\frac{\partial \zeta_s(t)}{\partial \zeta^0} + \frac{\partial h(x(t))}{\partial \zeta_s(t)}\frac{\partial_\rho \partial \zeta_s(t)}{\partial \zeta^0}.
\end{align*}
Using the definition of the norm $\lc \partial_\rho^j h \rc^{\alpha,\beta} _{\sigma,\mu,\mathcal{D}}$  and the fact that $\partial_\rho x(t)$ satisfies the same estimates as $x(t)$, we obtain that
\begin{align*}
\Vert \partial_\rho \nabla_{\zeta^0} h_t \Vert_\alpha &  \leq C \mu^{-1} \lc h \rc^{\alpha} _{\sigma,\mu,\mathcal{D}},\\
\vert \partial_\rho \nabla_{\zeta^0} h_t \vert_\beta & \leq C \mu^{-1} \lc h \rc^{\alpha,\beta} _{\sigma,\mu,\mathcal{D}}.
\end{align*}

3) Now we will estimate $\nabla^2_{\zeta^0}h_t$ and $\partial_\rho \nabla^2_{\zeta^0}h_t$. From \eqref{flot ham} we note that $\theta$ does not depend on $\zeta^0$ and $\zeta(t)$ is affine in $\zeta^0$, then for $s,s' \in \mathcal{L}$ we have
\begin{align*}
\frac{\partial^2 h_t(x(t))}{\partial \zeta_s^0 \partial \zeta_{s'}^0}= & \frac{\partial^2 h(x(t))}{\partial \zeta(t) \partial \zeta(t)} \frac{\partial \zeta(t)}{\partial \zeta_s^0} \frac{\partial \zeta(t)}{\partial \zeta_{s'}^0} + \frac{\partial^2 h(x(t))}{\partial r(t)^2} \frac{\partial r(t)}{\partial \zeta_{s}^0} \frac{\partial r(t)}{\partial \zeta_{s}^0} \\
& + \frac{\partial^2 h(x(t))}{\partial r(t) \partial\zeta(t)} \frac{\partial r(t)}{\partial \zeta_{s}^0} \frac{\partial \zeta(t)}{\partial \zeta_{s'}^0} + \frac{\partial h(x(t))}{\partial r(t)} \frac{\partial r(t)^2}{\partial \zeta_{s}^0\partial \zeta_{s'}^0}\\
&= \Sigma_1+\Sigma_2+\Sigma_3+\Sigma_4.
\end{align*}

i)For $\Sigma_1$, by estimates 1 and 6 from Lemma~\ref{norme}, we have:
\begin{align*}
\vert \Sigma_1 \vert_\beta & \leq C \left\vert \frac{\partial^2 h(x(t))}{\partial \zeta(t) \partial \zeta(t)} \right\vert_\beta \left\vert \nabla_{\zeta_s^0 }\zeta \otimes \nabla_{\zeta_{s'}^0} \zeta \right\vert_{\beta+}, \\
& \leq C \left\vert \frac{\partial^2 h(x(t))}{\partial \zeta(t) \partial \zeta(t)} \right\vert_\beta \left\vert \nabla_{\zeta_s^0} \zeta \right\vert_{\beta+} \left\vert \nabla_{\zeta_{s'}^0} \zeta \right\vert_{\beta+},
\end{align*}
where $C$ is a constant that depends on $\beta$. 
According to Cauchy estimatew, we have: $\left\vert \frac{\partial^2 h(x(t))}{\partial \zeta(t) \partial \zeta(t)} \right\vert_\beta \leq \mu^{-2}\lc h \rc^{\alpha,\beta} _{\sigma,\mu,\mathcal{D}} $.
\\By estimate 4 from \eqref{estim comp flot ham} we have $\left\vert \nabla_{\zeta_s^0} \zeta \right\vert_{\beta+}=\vert U(\theta,t)\vert_{\beta+}\leq2$ .
Then
$$\vert \Sigma_1 \vert_\beta \leq C \mu^{-2}\lc h \rc^{\alpha,\beta} _{\sigma,\mu,\mathcal{D}} .$$
ii) For $\Sigma_2$, by estimate 5 from Lemma~\ref{norme}, we have:
$$\vert \Sigma_2 \vert_\beta \leq C \left\vert \frac{\partial^2 h(x(t))}{\partial r(t)^2} \right\vert_{\mathbb{C}^n} \left\vert \nabla_{\zeta_s^0} r \right\vert_\beta \left\vert \nabla_{\zeta_{s'}^0} r \right\vert_\beta.  $$
According to Cauchy estimate we have
\begin{equation} \nonumber
\left\vert \frac{\partial^2 h(x(t))}{\partial r(t)^2}\right\vert_{\mathbb{C}^n} \leq \nu^{-4}\lc h \rc^{\alpha,\beta} _{\sigma,\mu,\mathcal{D}}.
\end{equation}
By the second estimate from \eqref{estim comp L} we have 
\begin{equation} \nonumber
\left\vert \nabla_{\zeta_s^0} r \right\vert_\beta=\left\vert \nabla_{\zeta_s^0} L \right\vert_\beta \leq C \eta ^{-1} \mu^{-1}\lc f \rc^{\alpha,\beta+} _{\sigma,\mu,\mathcal{D}}.
\end{equation}
Thus, by combining these estimates with the assumption \eqref{hyp-f}, we obtain 
$$\vert \Sigma_2 \vert_\beta \leq C \mu^{-2} \lc h \rc^{\alpha,\beta} _{\sigma,\mu,\mathcal{D}}.$$
iii) For $\Sigma_3$ we have
$$\Sigma_3= \sum_{j=1}^n \frac{\partial\zeta}{\partial\zeta^0} \otimes \left( \frac{\partial^2 h}{\partial r_j \partial \zeta} \frac{\partial r_j }{\partial \zeta^0} \right).  $$
By estimate 4 from \eqref{estim comp flot ham} we have  $\vert \frac{\partial\zeta}{\partial\zeta^0} \vert_\beta \leq 2$. 
\\According to Cauchy estimate we have $\vert \frac{\partial^2 h}{\partial r_j \partial \zeta} \vert_\beta \leq C \nu^{-3} \lc h \rc^{\alpha,\beta} _{\sigma,\mu,\mathcal{D}} $.
\\Using the second estimate from \eqref{estim comp L} we have $\vert \frac{\partial r_j }{\partial \zeta^0} \vert_\beta \leq  \eta^{-1} \mu^{-1} \lc f \rc^{\alpha,\beta} _{\sigma,\mu,\mathcal{D}}. $
\\So by combining these results with the estimate 5 from Lemma~\ref{norme} and the hypothesis \eqref{hyp-f} we get:
$$\vert \Sigma_3 \vert_\beta \leq 8 \nu^{-1} \mu^{-1} \lc h \rc^{\alpha,\beta} _{\sigma,\mu,\mathcal{D}}, $$
where $C$ is a constant that depends only on $\beta$.
\\iv) Finally for $\Sigma_4$ we have:
$$ \vert  \Sigma_4 \vert_\beta \leq \left\vert \frac{\partial h(x(t))}{\partial r(t)} \right\vert_{\mathbb{C}^n} \left\vert \frac{\partial r(t)^2}{\partial \zeta_{s}^0\partial \zeta_{s'}^0} \right\vert_\beta .$$
By Cauchy estimate we have $\left\vert \frac{\partial h(x(t))}{\partial r(t)} \right\vert_{\mathbb{C}^n} \leq \nu^{-2} \lc h \rc^{\alpha} _{\sigma,\mu,\mathcal{D}} $. Using estimate from \eqref{estim comp L} we get 
\begin{equation} \nonumber
\left\vert \frac{\partial r(t)^2}{\partial \zeta_{s}^0\partial \zeta_{s'}^0} \right\vert_\beta = \vert \nabla^2_\zeta L(\theta, \zeta;t)\vert_{\beta} \leq C \eta^{-1} \mu^{-2} \lc f \rc^{\alpha,\beta+} _{\sigma,\mu,\mathcal{D}} .
\end{equation}
So by combining these estimates with the assumption \eqref{hyp-f}, we obtain
$$\vert \Sigma_4 \vert_\beta \leq C \mu^{-2}\lc h \rc^{\alpha} _{\sigma,\mu,\mathcal{D}}, $$
where $C$ is a constant that depends only on $\beta$.
\\The $\rho$-derivative of the hessian leads to similar estimates. 
\end{proof}
\section{Homological equation}
Let us first recall the general KAM strategy. Consider the following normal form 
\begin{equation} \nonumber
h_0(\rho)=  \omega_0(\rho) \cdot r + \frac{1}{2} \langle \zeta, A_0 (\rho) \zeta \rangle,
\end{equation}
that satisfies hypotheses A and B. We consider a small perturbation $f$ of $h_0$. If the jet of $f$ were zero, then  $\mathbb{T}^n \times \left\lbrace 0 \right\rbrace  \times \left\lbrace 0 \right\rbrace $ would be an invariant $n$-dimensional torus by the flow generated by the perturbed Hamiltonian $h_0+f$. Assume that the perturbation $f$ is small, let us say $f= \mathcal{O}( \varepsilon)$, then $f^T = \mathcal{O}(\varepsilon)$. In order to decrease the size of the jet of the perturbation term, we search for a symplectic change of variable $ \phi_S$, that transforms $h_0+f$ into a new normal formal close to the initial one and such that the jet of the  new perturbation term is much smaller than $f^T$. More precisely, we are searching for a Hamiltonian jet $S = S^T = \mathcal{O}(\varepsilon)$ such that its time one flow $\phi_S^1=\phi_S$ transforms the Hamiltonian $h_0+f$ into:
\begin{equation} \nonumber
(h_0+f)\circ \phi_S = h^++f^+, \quad \mbox{ where } \: (f^+)^T=\mathcal{O}(\varepsilon^2),
\end{equation}
and $ h^+ $ is the new Hamiltonian normal form close to $ h $ ( i.e. $ \vert h^+ - h \vert \sim \mathcal{O}( \varepsilon ) $). The Hamiltonian $h^+$ will be in the following form:
\begin{equation}\nonumber
 h^+=h_0+\hat{h}, \quad \hat{h}=C(\rho)+\chi(\rho)r+\frac{1}{2} \langle \zeta,\hat{K}(\rho)\zeta\rangle.
\end{equation}
Using Taylor expansion and the Hamiltonian structure , the Hamiltonian jet $S$ will solves the following nonlinear homological equation:
\begin{equation} \label{non-linear-hom-eq}
\lbrace h_0,S \rbrace + \lbrace f-f^T, S \rbrace^T + f^T = \hat{h} +\mathcal{O}( \varepsilon^2 ).
\end{equation}
We repeat the previous procedure with $h^+$ instead of $h_0$ and $f^+$ instead of $f$. Therefor the nonlinear homological equation will be solved for 
\begin{equation} \nonumber
h(\rho)=  \omega(\rho) \cdot r + \frac{1}{2} \langle \zeta, A(\rho) \zeta \rangle
\end{equation} 
for $\omega$ close to $\omega_0$ and $A$ close $A_0$.

In several proof of KAM theorems, the authors solve the following linear homological equation instead of \eqref{non-linear-hom-eq}:
\begin{equation} \label{eq homo}
\lbrace h_0,S \rbrace = \hat{h} - f^T + \mathcal{O}(\varepsilon^2).
\end{equation} 
In the KAM procedure it is very important to precisely control the jet of the new perturbation. If we use the linear equation, we would have to control $(f-f^T)\circ  \phi_S^1$ (where $f$ is the perturbation of the corresponding step). This term is difficult to control. Therefore, we solve equation \eqref{non-linear-hom-eq} and at each step we obtain a new perturbation whose jet is easier to control than the one we would obtain by solving equation \eqref{eq homo}. For more detail see Remark~\ref{utilite eq hom NL} and the elementary step.  However, we note that if we decompose $S=S_0+S_1+S_2$ where
\begin{equation*}
S_0(\theta):= S_\theta(\theta); \quad S_1(\theta,r):= S_r(\theta)r+\langle S_\zeta(\theta),\zeta \rangle;\quad S_2(\theta,\zeta):=\frac{1}{2} \langle S_{\zeta\zeta}(\theta)\zeta,\zeta \rangle,
\end{equation*}
and we do the same with $\hat{h}$ and we replace in the nonlinear homological equation, we obtain three equations of the form of the linear one. That's why we solve the linear homological equation and after we solve the nonlinear one.

Equation \eqref{eq homo} is linear because the solution $S$ is linearly dependent on the nonlinearity $f$. However, in equation \eqref{non-linear-hom-eq} the solution does not linearly depend on the nonlinearity $f$.

\subsection{Linear homological equation} 
Let $h$ be a Hamiltonian normal form 
\begin{equation} \nonumber
h(\rho)=  \omega(\rho) \cdot r + \frac{1}{2} \langle \zeta, A(\rho) \zeta \rangle.
\end{equation} 
In this part we solve the linear homological equation \eqref{eq homo}. The unknowns are $S$ and $\hat{h}$. It is sufficient to take $S$ as a jet-function:
\begin{equation} \nonumber
S(\theta,r,\zeta)= S_\theta(\theta)+ S_r(\theta)r+\langle S_\zeta(\theta),\zeta \rangle+ \frac{1}{2} \langle S_{\zeta\zeta}(\theta)\zeta,\zeta \rangle.
\end{equation}
After computing the Poisson bracket of $h$ and $S$, the equation \eqref{eq homo} becomes:
\begin{align*} 
\nabla_\theta S_\theta . \omega & + \nabla_\theta S_r . \omega. r  + \langle \nabla_\theta S_\zeta . \omega, \zeta \rangle + \frac{1}{2} \langle \nabla_\theta S_{\zeta\zeta}.\omega \zeta, \zeta \rangle + \langle A\zeta, JS_\zeta \rangle + \langle A \zeta , JS_{\zeta\zeta}\zeta \rangle \\ & = C - f_\theta + \chi .r  - f_r . r  - \langle f_\zeta , \zeta \rangle + \frac{1}{2} \langle \zeta ,\hat{K} \zeta \rangle - \frac{1}{2} \langle f_{\zeta \zeta} \zeta, \zeta \rangle + \mathcal{O}(\varepsilon^2)
\end{align*}
This gives four equation to solve. Recall that $f \in \mathcal{T}^{\alpha,\beta} (\mathcal{D}, \sigma, \mu) $, the solution $S$ will belongs to $\mathcal{T}^{\alpha,\beta+} (\mathcal{D}, \sigma, \mu) $.
\\ \textbf{The first two equations} The first two equations are
\begin{align} 
\nabla_\theta S_\theta  (\theta) . \omega & =  C - f_\theta (\theta) + \mathcal{O} (\varepsilon^2), \label{eq homo 1} \\
\nabla_\theta S_r(\theta) . \omega & =  \chi - f_r (\theta) + \mathcal{O} (\varepsilon^2) \label{eq homo 2}.
\end{align}
To solve these two equations, we impose that:
\begin{equation} \nonumber
C(\rho)= \int_{\mathbb{T}^n} f(\theta,0,0,\rho)d\theta,\quad \chi(\rho)= \int_{\mathbb{T}^n} f_r(\theta,0,0,\rho)d\theta.
\end{equation}
\textbf{The third equation} We have
\begin{equation} \nonumber
\langle \nabla_\theta S_\zeta . \omega , \zeta \rangle + \langle A \zeta , J S_\zeta (\theta)\rangle = - \langle f_\zeta , \zeta \rangle + \mathcal{O} (\varepsilon^2).
\end{equation}
The matrix $A$ is symmetric, so 
\begin{equation} \nonumber
\langle \nabla_\theta S_\zeta . \omega , \zeta \rangle + \langle A J S_\zeta (\theta), \zeta ,\rangle =  \langle -f_\zeta , \zeta \rangle + \mathcal{O} (\varepsilon^2).
\end{equation}
Matrices $A$ and $J$ commute since each $A$ block is of the form $A_{i}^j= \mathbb{C}I_2+\mathbb{C}\sigma_2$. Then the third equation becomes:
\begin{equation} \nonumber
\nabla_\theta S_\zeta (\theta).\omega + JAS_\zeta(\theta)=- f_\zeta(\theta) + \mathcal{O} (\varepsilon^2).
\end{equation} 
\textbf{The fourth equation} We have:
\begin{equation}\nonumber
\frac{1}{2} \langle \nabla_\theta S _{\zeta \zeta}(\theta).\omega \zeta, \zeta \rangle + \langle A \zeta , JS _{\zeta \zeta}(\theta) \zeta \rangle = \frac{1}{2} \langle \zeta , B \zeta \rangle - \frac{1}{2}  \langle f_{\zeta \zeta} ( \theta) \zeta, \zeta \rangle.
\end{equation}
Using the fact that $A$ is symmetric, $A$ and $J$ commute, and that $(AJS_{\zeta \zeta})^*=- S_{\zeta \zeta}JA$, we obtain:
\begin{equation} \nonumber
\langle \zeta , \nabla_\theta S_{\zeta \zeta}(\theta) . \omega \zeta \rangle + \langle \zeta , AJS_{\zeta \zeta}(\theta)\zeta \rangle - \langle \zeta , S_{\zeta \zeta}(\theta)AJ\zeta \rangle =  \langle \zeta , B \zeta \rangle -  \langle  \zeta, f_{\zeta \zeta} ( \theta) \zeta \rangle.
\end{equation}
Then we obtain the fourth equation
\begin{equation} \nonumber
\nabla_\theta S_{\zeta \zeta} ( \theta).\omega +AJS_{\zeta \zeta} ( \theta) -S_{\zeta \zeta} ( \theta) JA=-f_{\zeta \zeta} ( \theta) + B+\mathcal{O} (\varepsilon^2).
\end{equation}
\subsubsection{The first two equations.}
The first two equations \eqref{eq homo 1} and \eqref{eq homo 2} are of the form:
\begin{equation} \label{prototype}
\nabla_\theta \varphi (\theta).\omega = \psi ( \theta)
\end{equation}
with $\int_{\mathbb{T}^n}\psi(\theta)d\theta=0$. In the first equation $\varphi = S_\theta$ and  $\psi= C- f_\theta$. In the second equation $\varphi =S_r$ and  $\psi=\chi-f_r$. We start by expanding $\varphi$ and $\psi$ in Fourier series:
\begin{equation} \nonumber
\varphi (\theta) = \underset{k \in \mathbb{Z}^n \setminus \left\lbrace 0 \right\rbrace }{ \sum} \hat{\varphi}(k) e^{-ik\theta},\:\:\:\:\: \psi (\theta) = \underset{k \in \mathbb{Z}^n \setminus \left\lbrace 0 \right\rbrace }{ \sum} \hat{\psi}(k) e^{-ik\theta}
\end{equation} 
where $ \hat{\varphi}(k) = \int_{\mathbb{T}^n} \varphi (\theta) e^{-ik\theta}d\theta$ and $ \hat{\psi}(k) = \int_{\mathbb{T}^n} \psi (\theta) e^{-ik\theta}d\theta$.
We solve \eqref{prototype} by choosing
\begin{equation} \nonumber
\hat{\varphi}(k)= -i \frac{\hat{\psi}(k)}{k \cdot \omega}, \quad k\in \mathbb{Z}^n \setminus \left\lbrace 0 \right\rbrace ; \quad \hat{\varphi}(0)=0.
\end{equation}
To control the Fourier coefficients of $\varphi$, we need to truncate in $k$. For any $N \in \mathbb{N}^*$ we have: 
\begin{equation} \nonumber
-i \underset{0 < \vert k \vert \leq N}{ \sum} k \cdot \omega \hat{\varphi}(k)e^{-ik\theta}=\underset{k \in \mathbb{Z}^n}{ \sum}  \hat{\psi}(k)e^{-ik\theta} - \underset{\vert k \vert > N}{ \sum}  \hat{\psi}(k)e^{-ik\theta}
\end{equation} 
Using hypothesis A2, we have:
\begin{equation} \nonumber
\vert \omega(\rho) \cdot k \vert \geq \delta \geq \kappa, \:\:\:\:\forall \rho \in \mathcal{D}
\end{equation}
or there exists a unit vector $z_k \in \mathbb{R}^p$ such that
\begin{equation} \nonumber
\langle \partial_\rho ( k \cdot \omega(\rho)),z_k \rangle \geq \delta.
\end{equation}
The second case involves, according to Proposition~ \ref{melnikov}, that for $0<  \kappa < \delta$ and $N>1$ there exists a closed subset whose Lebesgue measure verifies:
\begin{equation} \nonumber
\operatorname{mes} ( \mathcal{D} \setminus \mathcal{D}_1) \leq C \kappa \delta^{-1} N^{2(n+1)},
\end{equation} 
such that for all $0 < \vert k \vert \leq N$ and $\rho \in \mathcal{D}_1 $ we have:
\begin{equation} \nonumber
\vert k \cdot \omega(\rho) \vert  \geq \kappa.
\end{equation}
Hence for $\rho \in \mathcal{D}_1$ and all $ 0 < \vert k \vert \leq N$ we have:
\begin{equation} \label{controle}
\vert \hat{\varphi}(k) \vert \leq \frac{\vert \hat{\psi}(k)}{\kappa},\:\:\:\:\: 0 < \vert k \vert \leq N.
\end{equation}
So we solved 
\begin{equation} \nonumber
\nabla_\theta \varphi (\theta) . \omega = \psi (\theta) + R(\theta),
\end{equation}
where $R(\theta) = - \underset{\vert k \vert > N}{ \sum}  \hat{\psi}(k)e^{-ik\theta}$.
\begin{lemma} \label{controle coef de fourier}
Let $f: \mathbb{R}^n \: \rightarrow \mathbb{C}$ a periodic holomorphic function on  $\vert Im \theta \vert < \sigma$ and continuous on $\vert Im \theta \vert \leq \sigma$. Then the Fourier coefficients of $f$ satisfy
\begin{equation} \nonumber
\vert \hat{f}(k) \vert \leq C e^{-\vert k \vert \sigma} \underset{\vert Im \theta \vert < \sigma}{ \sup} \vert f(\theta) \vert,
\end{equation}
where $C$ depends only on $n$.
\end{lemma}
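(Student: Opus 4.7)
The plan is to exploit the analyticity of $f$ in each variable $\theta_j$ together with its $2\pi$-periodicity, shifting the contour of integration defining the Fourier coefficient into the complex domain to gain the exponential factor. Write
\begin{equation*}
\hat{f}(k) = \frac{1}{(2\pi)^n}\int_{[0,2\pi]^n} f(\theta)\, e^{-i k\cdot\theta}\, d\theta.
\end{equation*}
For each index $j$, choose $s_j = -\sigma' \operatorname{sgn}(k_j)$ with $0<\sigma'<\sigma$ arbitrary (and $s_j=0$ if $k_j=0$). Apply Cauchy's theorem in the variable $\theta_j$ to the rectangle $[0,2\pi]+i[0,s_j]$: the integrals along the two vertical edges cancel by $2\pi$-periodicity of $f(\cdot)e^{-ik\cdot \cdot}$, so the integration path $[0,2\pi]$ may be replaced by $[0,2\pi]+is_j$ without changing the integral. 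Iterating over $j=1,\ldots,n$ gives
\begin{equation*}
\hat{f}(k) = \frac{1}{(2\pi)^n}\int_{[0,2\pi]^n} f(\theta + i s)\, e^{-i k\cdot(\theta+ is)}\, d\theta,
\end{equation*}
where $s=(s_1,\ldots,s_n)$ satisfies $|s|_\infty=\sigma'$ and $k\cdot s = -\sigma' \sum_j |k_j| = -\sigma' |k|_1$.

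Taking absolute values and using that $|e^{-ik\cdot(\theta+is)}| = e^{k\cdot s} = e^{-\sigma' |k|_1}$, together with the bound $\sup_{|\operatorname{Im}\theta|\le \sigma'}|f(\theta)| \le \sup_{|\operatorname{Im}\theta|<\sigma}|f(\theta)|$, we obtain
\begin{equation*}
|\hat{f}(k)| \le e^{-\sigma' |k|_1}\sup_{|\operatorname{Im}\theta|<\sigma}|f(\theta)|.
\end{equation*}
Letting $\sigma' \nearrow \sigma$ (the supremum is independent of $\sigma'$) and using the elementary equivalence $|k|_1 \ge |k|$ (for the Euclidean norm) or absorbing a dimensional constant if the notation $|k|$ in the statement means $|k|_1$ itself, we conclude
\begin{equation*}
|\hat{f}(k)| \le C\, e^{-|k|\sigma} \sup_{|\operatorname{Im}\theta|<\sigma} |f(\theta)|,
\end{equation*}
with a constant $C$ depending only on the dimension $n$ (needed only to pass between $|k|_1$ and $|k|_2$; if the two norms coincide here $C$ can be taken equal to $1$).

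The only subtle point is the justification of the contour shift up to the boundary of the strip where analyticity may degenerate. This is handled by performing the shift strictly inside the open strip (so Cauchy's theorem applies verbatim) and then passing to the limit at the end; continuity of $f$ on the closed strip is not even required, only the finiteness of the supremum on the open strip. The rest is a routine application of Cauchy's theorem plus periodicity.
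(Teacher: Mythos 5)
Your proof is correct and uses essentially the same idea as the paper: shift the contour of the Fourier integral into the complex strip via Cauchy's theorem (justified by $2\pi$-periodicity on the vertical edges), pick up the exponential factor, and let the shift tend to $\sigma$. The only difference is the direction of the shift: you move each coordinate by $-\sigma'\operatorname{sgn}(k_j)$, which produces $e^{-\sigma'|k|_1}$, whereas the paper shifts the whole torus by $-i(\sigma-\varepsilon)k/|k|$, which produces $e^{-(\sigma-\varepsilon)|k|_2}$ directly. Both are fine here because the lemma allows a constant depending on $n$, and $|k|_1$ and $|k|_2$ are equivalent up to $\sqrt{n}$; your remark on this point, and your observation that continuity on the closed strip is not really needed (only finiteness of the sup on the open strip), are both accurate.
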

\begin{proof} Recall that the Fourier coefficients of $f$ are given by:
\begin{equation*}
\hat{f}(k)  = \int_{\mathbb{T}^n} f (\theta) e^{-ik \theta} d\theta.
\end{equation*}
We can also consider the torus $\mathbb{T}^n-i(\sigma-\varepsilon)\frac{k}{\vert k \vert}$ instead of $ \mathbb{T}^n$ for all $0<\varepsilon<\sigma$. On this torus we have:
\begin{equation} \nonumber
\vert e^{ik\theta} \vert \leq e^{-\vert k \vert (\sigma-\varepsilon)}.
\end{equation}
Then we have
\begin{equation*}
\vert \hat{f} (k) \vert  \leq \int_{\mathbb{T}^n-i(\sigma-\varepsilon)\frac{k}{\vert k \vert}} \vert f(\theta) \vert e^{-\vert k \vert (\sigma-\varepsilon)} d\theta
\leq C  e^{-\vert k \vert (\sigma-\varepsilon)} \underset{\vert Im \theta \vert < \sigma}{ \sup} \vert f(\theta) \vert.
\end{equation*}
We obtain the desired inequality by continuity.
\end{proof}
\begin{lemma} \label{estim serie-integ}
Let $k\in \mathbb{Z}^n$, $a>0$ and $N\in \mathbb{N}^*$, then we have:
\begin{equation} \label{estim serie-integ 1}
\underset{\vert k \vert \leq N}{\sum} e^{- a \vert k \vert } \leq \frac{2^n}{a^n},
\end{equation}
\begin{equation} \label{estim serie-integ 3}
\underset{\vert k \vert \leq N}{\sum} \vert k \vert e^{- a \vert k \vert } \leq \frac{2^n}{a^{2n}},
\end{equation}
\begin{equation} \label{estim serie-integ 2}
\underset{\vert k \vert > N}{\sum} e^{- a \vert k \vert } \leq C(n) \frac{e^{-\frac{aN}{2}}}{a^n}.
\end{equation}
\end{lemma}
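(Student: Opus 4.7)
The plan is to reduce all three estimates to control of the basic sum $S(a) := \sum_{k \in \mathbb{Z}^n} e^{-a|k|}$, for which I would give a direct integral comparison, and then obtain (2) and (3) by elementary pointwise tricks plus a rescaling $a \mapsto a/2$.

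\textbf{Step 1: the master bound for (\ref{estim serie-integ 1}).} I would associate to every lattice point $k$ the translated unit cube $k + [0,1)^n$ and use that $|x| \leq |k| + c_n$ for $x$ in that cube (with $c_n = \sqrt{n}$ in the Euclidean case, or $c_n = n$ in the $\ell^1$ case). Consequently $e^{-a|k|} \leq e^{a c_n}\int_{k+[0,1)^n} e^{-a|x|}\,dx$, and summing over the lattice gives
\begin{equation*}
\sum_{k \in \mathbb{Z}^n} e^{-a|k|} \;\leq\; e^{a c_n}\int_{\mathbb{R}^n} e^{-a|x|}\,dx \;=\; \frac{e^{a c_n}}{a^n}\int_{\mathbb{R}^n} e^{-|y|}\,dy \;\leq\; \frac{C(n)}{a^n},
\end{equation*}
after the change of variables $y = ax$. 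Alternatively, when $|\cdot|$ is the $\ell^1$ norm the sum factorises as $\bigl(\sum_{m\in\mathbb{Z}} e^{-a|m|}\bigr)^n = \coth(a/2)^n$, and from $\coth(a/2) \leq 2/a + O(a)$ one again extracts a bound of the form $C(n)/a^n$. Either way, restricting the sum to $|k| \leq N$ only decreases it, so (\ref{estim serie-integ 1}) follows with the claimed $a^{-n}$ scaling.

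\textbf{Step 2: bound (\ref{estim serie-integ 3}) via a pointwise maximum.} The key elementary inequality is $x e^{-\beta x} \leq 1/(\beta e)$ for $x, \beta > 0$, applied with $\beta = a/2$ and $x = |k|$. This gives $|k| e^{-a|k|/2} \leq 2/(ae)$, so
\begin{equation*}
\sum_{|k| \leq N} |k|\, e^{-a|k|} \;=\; \sum_{|k| \leq N} \bigl(|k| e^{-a|k|/2}\bigr) e^{-a|k|/2} \;\leq\; \frac{2}{ae}\sum_{|k| \leq N} e^{-a|k|/2}.
\end{equation*}
Applying Step~1 with $a$ replaced by $a/2$, the remaining sum is $\leq C(n)/a^n$, and we obtain a bound of order $a^{-(n+1)}$; since in the KAM context the parameter $a$ is always small (so $a^{-(n+1)} \leq a^{-2n}$), this is absorbed into the stated $2^n/a^{2n}$.

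\textbf{Step 3: the tail bound (\ref{estim serie-integ 2}) via multiplicative splitting.} Here I would use the decomposition $e^{-a|k|} = e^{-a|k|/2}\cdot e^{-a|k|/2}$ and exploit the restriction $|k| > N$ in one of the factors:
\begin{equation*}
\sum_{|k|>N} e^{-a|k|} \;\leq\; e^{-aN/2}\sum_{|k|>N} e^{-a|k|/2} \;\leq\; e^{-aN/2}\sum_{k \in \mathbb{Z}^n} e^{-a|k|/2} \;\leq\; C(n)\,\frac{e^{-aN/2}}{a^n},
\end{equation*}
where in the last step I again invoke Step~1 applied to $a/2$. This is exactly (\ref{estim serie-integ 2}).

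There is no genuine obstacle in this lemma: each estimate is either an integral comparison or a trivial pointwise inequality. The only thing to watch is book-keeping of the $n$-dependent constants; the role of the lemma in the KAM scheme is qualitative (convergence of Fourier tails after truncation at $|k| = N$), so the specific constants $2^n$ can be inflated to $C(n)$ without affecting any downstream argument.
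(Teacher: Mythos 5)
Your proof is correct, but it takes a somewhat different route from the paper's. For \eqref{estim serie-integ 1} the paper factorises the $n$-dimensional sum into a product of one-dimensional sums and compares each with $\int_0^N e^{-ax}\,dx$, whereas you compare directly with the $n$-dimensional integral $\int_{\mathbb{R}^n}e^{-a|x|}\,dx$ via unit cubes; for \eqref{estim serie-integ 3} the paper repeats the coordinate-wise factorisation (hence the exponent $a^{-2n}$), while your pointwise inequality $xe^{-\beta x}\le 1/(\beta e)$ combined with the rescaled Step 1 gives the sharper order $a^{-(n+1)}$; and for \eqref{estim serie-integ 2} the paper bounds the tail by the radial integral $\int_{|x|>N}e^{-a|x|}\,dx$ and substitutes $y=a|x|$, pulling out $e^{-aN/2}$ from $e^{-y/2}$, whereas your multiplicative splitting $e^{-a|k|}=e^{-a|k|/2}e^{-a|k|/2}$ avoids the radial computation altogether and is arguably cleaner. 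Two minor points of book-keeping: you only obtain constants $C(n)$ rather than the stated $2^n$, and your absorption of $a^{-(n+1)}$ into $a^{-2n}$ (as well as the $e^{ac_n}$ factor in Step 1) uses $a\lesssim 1$; both are harmless, since in all applications $a=\sigma-\sigma'<1$ and every downstream lemma only records a constant ``depending on $n$'' — indeed the paper's own chain (which silently drops the $k=0$ term in the one-dimensional comparison) does not literally achieve $2^n$ either, and for large $a$ the stated bound \eqref{estim serie-integ 1} fails on the $k=0$ term alone, so the implicit restriction is inherent to the statement rather than a defect of your argument.
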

\begin{proof}

Lets us start with \eqref{estim serie-integ 1}. We have:
\begin{equation} \nonumber
\underset{\vert k \vert \leq N}{\sum} e^{- a \vert k \vert } \leq \left( \underset{\vert p \vert \leq N}{\sum} e^{- a \vert p \vert } \right)^n \leq \left( 2 \int_0^N e^{-xa}dx \right)^n \leq \frac{2^n}{a^n}. 
\end{equation}
Similarly we prove \eqref{estim serie-integ 3}. For the last estimation \eqref{estim serie-integ 2}, we have:
\begin{equation} \nonumber
\underset{\vert k \vert > N}{\sum} e^{- a \vert k \vert } \leq \int_{\vert x \vert > N}  e^{- a \vert x \vert }dx.
\end{equation}
Substituting $ \vert x \vert$ by $ y/a$ we obtain
\begin{align*}
\int_{\vert x \vert > N}  e^{- a \vert x \vert }dx & = \int_{y> aN} e^{-y} \left( \frac{y}{a} \right) ^{n-1} \frac{dy}{a}\\
& \leq a^{-n} e^{-\frac{aN}{2}} \int_{y>0} e^{-y/2}y^{n-1}dy \\
& \leq C(n) \frac{e^{-\frac{aN}{2}}}{a^n}.
\end{align*}

\end{proof}
\begin{lemma} \label{estim eq 1 et 2}
Let  $\kappa >0$, $N>1$, $0< \sigma' < \sigma \leq 1$, and $1\geq\mu>0$. We consider $\varphi$ and $\psi:\: \mathbb{R} \rightarrow \mathbb{C}$ two holomorphic functions on $\vert Im \theta \vert < \sigma$ that verify the equation \eqref{prototype}. Assume that $\rho \in \mathcal{D}_1 (\kappa, N)$, then we have:
\begin{align*}
\underset{\vert Im \theta \vert < \sigma'}{ \sup} \vert \varphi(\theta) \vert & \leq \frac{C}{\kappa(\sigma-\sigma')^{n}} \underset{\vert Im \theta \vert < \sigma}{ \sup} \vert \psi (\theta) \vert, \\
\underset{\vert Im \theta \vert < \sigma'}{ \sup} \vert R(\theta) \vert & \leq \frac{Ce^{-(\sigma-\sigma')N/2}}{(\sigma-\sigma')^{n}} \underset{\vert Im \theta \vert < \sigma}{ \sup} \vert \psi (\theta) \vert.
\end{align*} 
where $C$ depends only on $n$. 
\end{lemma}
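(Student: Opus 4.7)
The plan is to bound $\varphi$ and the remainder $R$ by Fourier expansion, exploiting the three ingredients already assembled in the preceding paragraphs: the explicit formula for $\hat{\varphi}(k)$ in terms of $\hat{\psi}(k)$, the small divisor bound $|k\cdot\omega(\rho)|\geq \kappa$ on $\mathcal{D}_1(\kappa,N)$ for $0<|k|\leq N$, and Lemmas~\ref{controle coef de fourier}–\ref{estim serie-integ} controlling individual Fourier coefficients and geometric sums.

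First, I would write the truncated solution as
\begin{equation*}
\varphi(\theta)=\sum_{0<|k|\leq N}\hat{\varphi}(k)\,e^{-ik\cdot\theta},\qquad \hat{\varphi}(k)=-i\,\frac{\hat{\psi}(k)}{k\cdot\omega(\rho)}.
\end{equation*}
On the strip $|\mathrm{Im}\,\theta|<\sigma'$ one has $|e^{-ik\cdot\theta}|\leq e^{|k|\sigma'}$. Combining the divisor bound \eqref{controle} with Lemma~\ref{controle coef de fourier} applied to $\psi$ gives
\begin{equation*}
|\hat{\varphi}(k)|\leq \kappa^{-1}|\hat{\psi}(k)|\leq \frac{C}{\kappa}\,e^{-|k|\sigma}\sup_{|\mathrm{Im}\,\theta|<\sigma}|\psi(\theta)|.
\end{equation*}
Summing in $k$ and invoking \eqref{estim serie-integ 1} of Lemma~\ref{estim serie-integ} with $a=\sigma-\sigma'$ then yields
\begin{equation*}
\sup_{|\mathrm{Im}\,\theta|<\sigma'}|\varphi(\theta)|\leq \frac{C}{\kappa}\,\Big(\sup_{|\mathrm{Im}\,\theta|<\sigma}|\psi|\Big)\sum_{0<|k|\leq N}e^{-|k|(\sigma-\sigma')}\leq \frac{C'}{\kappa(\sigma-\sigma')^n}\sup_{|\mathrm{Im}\,\theta|<\sigma}|\psi|,
\end{equation*}
which is the first estimate.

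For the second estimate the same bookkeeping is carried out on the tail $R(\theta)=-\sum_{|k|>N}\hat{\psi}(k)\,e^{-ik\cdot\theta}$: no small divisor enters, and Lemma~\ref{controle coef de fourier} together with $|e^{-ik\cdot\theta}|\leq e^{|k|\sigma'}$ gives termwise $|\hat\psi(k)\,e^{-ik\cdot\theta}|\leq C e^{-|k|(\sigma-\sigma')}\sup|\psi|$. Then \eqref{estim serie-integ 2} of Lemma~\ref{estim serie-integ}, applied with $a=\sigma-\sigma'$, produces the exponentially small factor $e^{-(\sigma-\sigma')N/2}$ and the polynomial factor $(\sigma-\sigma')^{-n}$.

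The proof is essentially mechanical once the above two lemmas are in place; the only mild subtlety is keeping the exponents consistent when one shifts the contour by $(\sigma-\sigma')k/|k|$ inside Lemma~\ref{controle coef de fourier} (so that the bound on $\hat\psi(k)$ is an estimate on the strip of width $\sigma$, while the pointwise bound on $\varphi$ is read off on the smaller strip $\sigma'$). There is no genuine obstacle: the geometric decay $e^{-|k|(\sigma-\sigma')}$ absorbs the sum in the principal term and delivers the Paley–Wiener-type remainder in the tail term.
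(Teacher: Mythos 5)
Your argument is correct and follows essentially the same route as the paper: the small-divisor bound \eqref{controle} controls $\hat\varphi(k)$ in terms of $\hat\psi(k)$, Lemma~\ref{controle coef de fourier} gives exponential decay of the coefficients of $\psi$, and Lemma~\ref{estim serie-integ} sums the geometric series (with \eqref{estim serie-integ 2} producing the Paley--Wiener-type factor $e^{-(\sigma-\sigma')N/2}$ for the tail). No gaps.
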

\begin{proof} Using estimation \eqref{controle}, Lemma~\ref{controle coef de fourier} then estimation \eqref{estim serie-integ 1} from Lemma~\ref{estim serie-integ} we obtain:
\begin{align*}
\underset{\vert Im \theta \vert < \sigma'}{ \sup} \vert \varphi(\theta) \vert & \leq \underset{\vert k \vert \leq N}{ \sum} \frac{ \vert \hat{\psi}(k) \vert }{\kappa} e^{\vert k \vert \sigma'},\\
& \leq C  \underset{\vert k \vert \leq N}{ \sum} \frac{1}{\kappa} e^{-\vert k \vert \sigma} e^{\vert k \vert \sigma'} \underset{\vert Im \theta \vert < \sigma}{ \sup} \vert \psi(\theta) \vert, \\
& \leq \frac{C}{\kappa} \underset{\vert Im \theta \vert < \sigma}{ \sup} \vert \psi(\theta) \vert  \underset{\vert k \vert \leq N}{ \sum} e^{-\vert k \vert (\sigma- \sigma')}, \\
& \leq  \frac{C}{\kappa(\sigma- \sigma')^{n}} \underset{\vert Im \theta \vert < \sigma}{ \sup} \vert \psi(\theta) \vert.
\end{align*}
For the second estimation, by  Lemma~\ref{controle coef de fourier} and estimation \eqref{estim serie-integ 2} from Lemma~\ref{estim serie-integ}, we have:
\begin{align*}
\underset{\vert Im \theta \vert < \sigma'}{ \sup} \vert R(\theta) \vert & = \underset{\vert Im \theta \vert < \sigma'}{ \sup} \vert \underset{\vert k \vert > N}{ \sum} \hat{\psi}(k) e^{ik\theta} \vert\\
& \leq  \underset{\vert k \vert > N}{ \sum} \vert \hat{\psi}(k) \vert e^{\vert k \vert \sigma'},\\
& \leq C  \underset{\vert Im \theta \vert < \sigma}{ \sup} \vert \psi(\theta) \vert \underset{\vert k \vert >N}{ \sum} e^{-\vert k \vert (\sigma- \sigma')},\\
& \leq  \frac{Ce^{-(\sigma- \sigma')N/2}}{(\sigma- \sigma')^{n}} \underset{\vert Im \theta \vert < \sigma}{ \sup} \vert \psi(\theta) \vert.
\end{align*}
\end{proof}
\begin{lemma} \label{estim deriv eq 1 et 2}
Let $\kappa >0$, $N>1$, $0< \sigma' < \sigma \leq 1$, and $1\geq\mu>0$. Consider $\varphi$ and $\psi:\: \mathbb{R}^n \rightarrow \mathbb{C}$ two holomorphic functions on $\vert Im \theta \vert < \sigma$ that verify the equation \eqref{prototype}. Assume that $\rho \in \mathcal{D}_1 (\kappa, N)$, then we have:
\begin{align*}
\underset{\vert Im \theta \vert < \sigma'}{ \sup} \vert \partial_\rho \varphi(\theta) \vert & \leq \frac{C}{\kappa(\sigma-\sigma')^{n}} \underset{\vert Im \theta \vert < \sigma}{ \sup} \vert \partial_\rho \psi (\theta) \vert + \frac{C}{\kappa^2(\sigma-\sigma')^{2n}} \underset{\vert Im \theta \vert < \sigma}{ \sup} \vert \psi (\theta) \vert , \\
\underset{\vert Im \theta \vert < \sigma'}{ \sup} \vert \partial_\rho R(\theta) \vert & \leq \frac{Ce^{-(\sigma-\sigma')N/2}}{(\sigma-\sigma')^{n}} \underset{\vert Im \theta \vert < \sigma}{ \sup} \vert \partial_\rho \psi (\theta) \vert,
\end{align*}  
where $C$ depends on $n$ and $|\omega|_{\mathcal{C}^1 \left( \mathcal{D} \right) }$.
\end{lemma}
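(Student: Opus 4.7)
The plan is to differentiate the explicit Fourier representations of $\varphi$ and $R$ in $\rho$ and then bound each piece by the same kind of geometric-series argument used in Lemma~\ref{estim eq 1 et 2}. Recall that for $0<|k|\leq N$ and $\rho\in\mathcal{D}_1(\kappa,N)$ we have
$$ \hat\varphi(k,\rho)= -i\,\frac{\hat\psi(k,\rho)}{k\cdot\omega(\rho)},\qquad \hat\varphi(0,\rho)=0, $$
while $R(\theta,\rho)=-\sum_{|k|>N}\hat\psi(k,\rho)e^{-ik\theta}$ does not involve $\omega$ at all.

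First I would treat $R$, which is the easier of the two. Since $R$ depends on $\rho$ only through $\hat\psi$, differentiating term by term gives $\partial_\rho R(\theta)=-\sum_{|k|>N}\widehat{\partial_\rho\psi}(k)e^{-ik\theta}$. Applying Lemma~\ref{controle coef de fourier} to $\partial_\rho\psi$ and then the tail bound \eqref{estim serie-integ 2} of Lemma~\ref{estim serie-integ} gives the claimed bound on $\sup_{|\mathrm{Im}\,\theta|<\sigma'}|\partial_\rho R(\theta)|$ exactly as in the proof of Lemma~\ref{estim eq 1 et 2}.

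Next I would handle $\varphi$. Differentiating the formula for $\hat\varphi(k,\rho)$ yields, for $0<|k|\leq N$,
$$ \partial_\rho\hat\varphi(k) \;=\; -i\,\frac{\widehat{\partial_\rho\psi}(k)}{k\cdot\omega(\rho)} \;+\; i\,\frac{\hat\psi(k)\,\partial_\rho(k\cdot\omega(\rho))}{(k\cdot\omega(\rho))^2}. $$
For the first term, $|k\cdot\omega|\geq\kappa$ on $\mathcal{D}_1$, so the estimate of Lemma~\ref{estim eq 1 et 2} applied with $\partial_\rho\psi$ in place of $\psi$ gives the $\frac{C}{\kappa(\sigma-\sigma')^n}\sup|\partial_\rho\psi|$ contribution. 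For the second term, use $|\partial_\rho(k\cdot\omega(\rho))|\leq |k|\,|\omega|_{\mathcal{C}^1(\mathcal{D})}$ together with $|k\cdot\omega|^2\geq\kappa^2$, so that summing in $k$ against the exponential decay $e^{-|k|(\sigma-\sigma')}$ given by Lemma~\ref{controle coef de fourier} yields
$$ \sup_{|\mathrm{Im}\,\theta|<\sigma'}\Big|\sum_{0<|k|\leq N}i\,\frac{\hat\psi(k)\,\partial_\rho(k\cdot\omega)}{(k\cdot\omega)^2}e^{-ik\theta}\Big|
\;\leq\; \frac{C\,|\omega|_{\mathcal{C}^1(\mathcal{D})}}{\kappa^2}\Big(\sum_{|k|\leq N}|k|\,e^{-|k|(\sigma-\sigma')}\Big)\sup_{|\mathrm{Im}\,\theta|<\sigma}|\psi(\theta)|. $$
The remaining series is bounded by $2^n/(\sigma-\sigma')^{2n}$ thanks to estimate \eqref{estim serie-integ 3} of Lemma~\ref{estim serie-integ}, which produces the $\frac{C}{\kappa^2(\sigma-\sigma')^{2n}}\sup|\psi|$ contribution and completes the stated bound.

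The only mildly delicate point is tracking the extra $|k|$ that comes from $\partial_\rho(k\cdot\omega)$: it is what forces the exponent $2n$ (rather than $n$) on $(\sigma-\sigma')^{-1}$ in the second term, and it is precisely why the second summand of the estimate involves $\kappa^{-2}$ rather than $\kappa^{-1}$. Everything else is a direct re-run of the calculations already carried out in Lemma~\ref{estim eq 1 et 2}, with $\partial_\rho\psi$ replacing $\psi$ on the exponential tail and on one of the two terms appearing in $\partial_\rho\hat\varphi$.
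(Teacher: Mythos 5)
Your proof is correct and follows essentially the same approach as the paper: differentiate the Fourier-coefficient formulas in $\rho$, treat the $\partial_\rho\psi$-term via Lemma~\ref{estim eq 1 et 2}, and control the $k\cdot\partial_\rho\omega$-term via $|k\cdot\omega|\geq\kappa$ on $\mathcal{D}_1$ together with estimate \eqref{estim serie-integ 3}. The paper simply compresses these calculations into ``by applying the same arguments as in the proof of Lemma~\ref{estim eq 1 et 2}'', whereas you have spelled them out, including the source of the extra $\kappa^{-1}(\sigma-\sigma')^{-n}$ in the second summand.
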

\begin{proof} Differentiating the equation \eqref{prototype} in $\rho$ gives
\begin{equation} \nonumber
\partial_\rho \hat{\varphi}(k)=i \frac{\partial_\rho \hat{\psi}(k)}{k \cdot \omega}+ i \frac{\hat{\psi}(k)}{(k \cdot \omega)^2} (k.\partial_\rho \omega(\rho)), \quad \mbox{for } \: 0<\vert k \vert \leq N,
\end{equation}
and
\begin{equation} \nonumber
\partial_\rho \hat{R}(k)= \partial_\rho \hat{\psi} (k), \quad \mbox{for } \: \vert k \vert > N.
\end{equation}
By applying the same arguments as in the proof of the Lemma~\ref{estim eq 1 et 2}, we obtain:
\begin{align*}
\underset{\vert Im \theta \vert < \sigma'}{ \sup} \vert \partial_\rho \varphi(\theta) \vert & \leq \frac{C}{\kappa(\sigma-\sigma')^{n}} \underset{\vert Im \theta \vert < \sigma}{ \sup} \vert \partial_\rho \psi (\theta) \vert + \frac{C}{\kappa^2(\sigma-\sigma')^{2n}} \underset{\vert Im \theta \vert < \sigma}{ \sup} \vert \psi (\theta) \vert , \\
\underset{\vert Im \theta \vert < \sigma'}{ \sup} \vert \partial_\rho R(\theta) \vert & \leq \frac{Ce^{-(\sigma-\sigma')N/2}}{(\sigma-\sigma')^{n}} \underset{\vert Im \theta \vert < \sigma}{ \sup} \vert \partial_\rho \psi (\theta) \vert.
\end{align*} 
\end{proof}
Now we apply the lemmas \ref{estim eq 1 et 2} and \ref{estim deriv eq 1 et 2} to the first two equations and we obtain the following proposition:
\begin{proposition}
Let $0<\kappa<\delta$, $N>1$, $0< \sigma' < \sigma$, $\mu>0$  and let $\omega : \mathcal{D} \to \mathbb{R}^n$ be $\mathcal{C}^1$ verifying $ \vert \omega - \omega_0 \vert_{\mathcal{C}^1(\mathcal{D})} \leq \delta_0$. Assume that $\partial^j_\rho f \in \mathcal{T}^{\alpha,\beta}(\sigma,\mu,\mathcal{D})$ for $j=0,1$, then there exists a closed subset whose Lebesgue measure satisfies:
\begin{equation} \nonumber
\operatorname{mes} ( \mathcal{D} \setminus \mathcal{D}_1) \leq C_0 \kappa \delta^{-1} N^{2(n+1)},
\end{equation} 
such that for $0 < \vert k \vert \leq N$ and $\rho \in \mathcal{D}_1 $ we have: 
\begin{enumerate}
\item There exist two analytic functions $S_\theta (.;\rho)$ and $R_\theta (.;\rho)$ on $\mathbb{T}^n_{\sigma'}$ such that:
\begin{equation} \nonumber
\nabla_\theta S_\theta(\theta,\rho).\omega= -f(\theta,0,\rho)+ \int_{\mathbb{T}^n} f(\theta,0,0,\rho)d\theta + R_\theta (\theta, \rho),
\end{equation}
where
\begin{align*}
\underset{\vert Im \theta \vert < \sigma'}{ \sup} \vert S_\theta(\theta,\rho) \vert & \leq \frac{C}{\kappa(\sigma-\sigma')^{n}}  \lc f^T \rc^{\alpha,\beta} _{\sigma,\mu,\mathcal{D}}, \\
\underset{\vert Im \theta \vert < \sigma'}{ \sup} \vert \partial_\rho S_\theta(\theta,\rho) \vert & \leq \frac{C}{\kappa(\sigma-\sigma')^{n}} \lc \partial_\rho f^T \rc^{\alpha,\beta} _{\sigma,\mu,\mathcal{D}} + \frac{C}{\kappa^2(\sigma-\sigma')^{2n}} \lc f^T \rc^{\alpha,\beta} _{\sigma,\mu,\mathcal{D}} , \\
\underset{\vert Im \theta \vert < \sigma'}{ \sup} \vert \partial_\rho^j R_\theta(\theta,\rho) \vert & \leq \frac{Ce^{-(\sigma-\sigma')N/2}}{(\sigma-\sigma')^{n}} \lc f^T \rc^{\alpha,\beta} _{\sigma,\mu,\mathcal{D}}\mbox{ avec } j=0,1.
\end{align*}
\item There exist two analytic functions  $S_r (.;\rho)$ and $R_r (.;\rho)$ on $\mathbb{T}^n_{\sigma'}$ such that:
\begin{equation} \nonumber
\nabla_\theta S_r(\theta,\rho).\omega= -\nabla_r f(\theta,0,\rho)+ \int_{\mathbb{T}^n} \nabla_r f(\theta,0,0,\rho)d\theta + R_r (\theta, \rho),
\end{equation}
where
\begin{align*}
\underset{\vert Im \theta \vert < \sigma'}{ \sup} \vert S_r(\theta,\rho) \vert & \leq \frac{C}{\kappa \mu^2 (\sigma-\sigma')^{n}}  \lc f^T \rc^{\alpha,\beta} _{\sigma,\mu,\mathcal{D}},\\
\underset{\vert Im \theta \vert < \sigma'}{ \sup} \vert \partial_\rho S_r(\theta,\rho) \vert & \leq \frac{C}{\kappa\mu^2(\sigma-\sigma')^{n}} \lc \partial_\rho f^T \rc _{\sigma,\mu,\mathcal{D}}^{\alpha,\beta} + \frac{C}{\kappa^2 \mu^2(\sigma-\sigma')^{2n}} \lc f^T \rc^{\alpha,\beta} _{\sigma,\mu,\mathcal{D}} , \\
\underset{\vert Im \theta \vert < \sigma'}{ \sup} \vert \partial_\rho^j R_r(\theta,\rho) \vert & \leq \frac{Ce^{-(\sigma-\sigma')N/2}}{(\sigma-\sigma')^{n}} \lc f^T \rc^{\alpha,\beta} _{\sigma,\mu,\mathcal{D}} \mbox{ avec } j=0,1.
\end{align*}
\end{enumerate} 
The constant $C_0$ depends on $ \vert \omega_0 \vert_{\mathcal{C}^1\left( \mathcal{D}\right) }$ while the constant $C$ depends on $n$ in addition.
\end{proposition}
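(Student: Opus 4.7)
The plan is to apply the two preceding lemmas (Lemma~\ref{estim eq 1 et 2} and Lemma~\ref{estim deriv eq 1 et 2}) directly to the specific data coming from the jet of $f$, after first producing the set $\mathcal{D}_1$ via Remark~\ref{melnikov}. This reduces the proposition to a bookkeeping of constants, so no new idea beyond what is already in Section~4 is needed.

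First I would invoke Remark~\ref{melnikov} with the given $\kappa$ and $N$ (using that $\vert \omega - \omega_0\vert_{\mathcal{C}^1(\mathcal{D})}\leq \delta_0$, so that Hypothesis~A2 applies to $\omega$ through the transversality assumption). This produces a closed set $\mathcal{D}_1 = \mathcal{D}_1(\kappa,N) \subset \mathcal{D}$ on which $|k\cdot\omega(\rho)|\geq \kappa$ for all $0<|k|\leq N$, with the Lebesgue estimate $\mes(\mathcal{D}\setminus\mathcal{D}_1)\leq C_0\kappa\delta^{-1}N^{2(n+1)}$, where $C_0$ depends on $|\omega_0|_{\mathcal{C}^1(\mathcal{D})}$ (and thus only on $|\omega_0|_{\mathcal{C}^1(\mathcal{D})}$, given that $\omega$ is $\delta_0$-close in $\mathcal{C}^1$).

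Next I would specialize the prototype equation \eqref{prototype}. For the first equation, set $\varphi = S_\theta$ and $\psi = -f_\theta + \int_{\mathbb{T}^n}f_\theta\,d\theta$, which is mean-free by construction; similarly, for the second equation, $\varphi = S_r$ and $\psi = -f_r + \int_{\mathbb{T}^n}f_r\,d\theta$. By the jet-of-$f$ estimates \eqref{estim deriv f-moyenne-jet}, we have $|\psi|_\mathcal{D}\leq 2\lc f^T\rc^{\alpha,\beta}_{\sigma,\mu,\mathcal{D}}$ for the first and $|\psi|_\mathcal{D}\leq 2\mu^{-2}\lc f^T\rc^{\alpha,\beta}_{\sigma,\mu,\mathcal{D}}$ for the second (and the same bounds with $\partial_\rho$ applied, for $\partial_\rho\psi$). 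Applying Lemma~\ref{estim eq 1 et 2} on $\mathcal{D}_1$ yields analytic $S_\theta,R_\theta$ (resp.\ $S_r,R_r$) on $\mathbb{T}^n_{\sigma'}$ satisfying the stated sup-bounds, with the extra factor $\mu^{-2}$ showing up in the $S_r$ case because the Cauchy bound on $\nabla_r f$ in the jet estimates carries that factor. Lemma~\ref{estim deriv eq 1 et 2} then gives the corresponding $\partial_\rho$ estimates, again with the extra $\mu^{-2}$ in the $S_r$ case.

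The main obstacle, such as it is, is just ensuring the constants track correctly: the constant $C_0$ in the measure estimate comes from Remark~\ref{melnikov} and depends on $|\omega_0|_{\mathcal{C}^1(\mathcal{D})}$ and $c_0$; the constants $C$ in the sup-estimates of $S_\theta,S_r,R_\theta,R_r$ come from Lemmas~\ref{estim eq 1 et 2}--\ref{estim deriv eq 1 et 2} and depend on $n$ (and, for the $\partial_\rho$ bounds, on $|\omega|_{\mathcal{C}^1(\mathcal{D})}\leq |\omega_0|_{\mathcal{C}^1(\mathcal{D})}+\delta_0$). Beyond this, the proof is purely a substitution: there is no nontrivial analytic step that is not already encapsulated in the Fourier truncation argument and the small divisor bound $|k\cdot\omega(\rho)|\geq \kappa$ on $\mathcal{D}_1$.
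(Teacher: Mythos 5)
Your proposal is correct and follows precisely the route the paper itself sketches: the paper introduces this proposition with the single sentence ``Now we apply the lemmas~\ref{estim eq 1 et 2} and~\ref{estim deriv eq 1 et 2} to the first two equations,'' and your argument is exactly that application, with $\mathcal{D}_1$ produced by Remark~\ref{melnikov}, $\psi$ read off from the jet estimates~\eqref{estim deriv f-moyenne-jet} (which is where the extra factor $\mu^{-2}$ in the $S_r$ case comes from), and the constants tracked as you describe.
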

\subsubsection{The third equation.}
We begin by stating the following result proved in the appendix of \cite{eliasson2010kam}.
\begin{lemma} \label{controle inverse}
Let $A(t)$ a real square diagonal $N$-matrix  with diagonal components $a_j(t)$ which are $\mathcal{C}^1$ on $I=]-1,1[$ satisfying for all $1 \leq j \leq N$ and all $t\in I$
\begin{equation} \nonumber
a_j'(t) \geq \delta. 
\end{equation}
Let $B(t)$ be a hermitiab square $N$-matrix of class $\mathcal{C}^1$ on $I$ such that 
\begin{equation} \nonumber
\Vert B'(t) \Vert \leq \frac{\delta}{2},
\end{equation}
for all $t \in I$. Then 
\begin{equation} \nonumber
\operatorname{mes} \lbrace t \in I \mid \Vert (A(t)+B(t))^{-1} \Vert > \kappa^{-1}  \rbrace \leq 4 N \kappa \delta_0^{-1}.
\end{equation}
\end{lemma}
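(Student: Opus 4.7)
The plan is to reduce the estimate to a statement about the eigenvalues of the hermitian matrix $M(t):=A(t)+B(t)$. Since $M(t)$ is hermitian, its inverse satisfies
\[
\|M(t)^{-1}\| \;=\; \frac{1}{\min_{1\le j\le N}|\lambda_j(t)|},
\]
where $\lambda_1(t)\le\cdots\le\lambda_N(t)$ are its eigenvalues arranged in increasing order. Hence
\[
\{t\in I:\|M(t)^{-1}\|>\kappa^{-1}\}\;=\;\bigcup_{j=1}^N\{t\in I:|\lambda_j(t)|<\kappa\},
\]
and by subadditivity it is enough to prove that for each fixed $j$ the measure of $\{t\in I:|\lambda_j(t)|<\kappa\}$ is at most $4\kappa/\delta$.

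The heart of the argument is the monotonicity $\lambda_j(t+h)\ge\lambda_j(t)+\tfrac{\delta}{2}h$ for every $h>0$. To prove this I would write, for $h>0$,
\[
M(t+h)-M(t)\;=\;\int_t^{t+h}\!\!A'(s)\,ds\;+\;\int_t^{t+h}\!\!B'(s)\,ds.
\]
The first integral is a diagonal matrix whose $j$-th diagonal entry is $\int_t^{t+h}a_j'(s)\,ds\ge h\delta$, so as a hermitian matrix it dominates $h\delta\cdot I$ in the positive semi-definite order. The second integral is hermitian with operator norm at most $\int_t^{t+h}\|B'(s)\|\,ds\le h\delta/2$, hence it dominates $-(h\delta/2)\,I$. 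Combining,
\[
M(t+h)-M(t)\;\ge\;\Bigl(h\delta-\tfrac{h\delta}{2}\Bigr)I\;=\;\tfrac{h\delta}{2}\,I,
\]
and Weyl's monotonicity inequality for ordered eigenvalues of hermitian matrices gives $\lambda_j(t+h)\ge \lambda_j(t)+\tfrac{\delta}{2}h$.

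From this, each ordered eigenvalue function $\lambda_j$ is strictly increasing on $I$ with slope bounded below by $\delta/2$, so $\lambda_j^{-1}$ exists and is Lipschitz with constant $2/\delta$ on its image. Consequently
\[
\operatorname{mes}\{t\in I:|\lambda_j(t)|<\kappa\}\;\le\;\frac{2\kappa-(-2\kappa\cdot 0)}{\delta/2}\;=\;\frac{4\kappa}{\delta},
\]
and summing over $j=1,\dots,N$ yields the claimed bound $4N\kappa/\delta$.

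The only subtle point is the passage from "$a_j'(t)\ge\delta$ pointwise and $\|B'(t)\|\le\delta/2$ pointwise" to the matrix inequality on differences; this is why I express the increments as integrals of the derivatives, which lets the PSD ordering and the Weyl inequalities take over and avoids having to invoke differentiability of the eigenvalue branches themselves (which can fail at crossings for merely $\mathcal{C}^1$ hermitian families). With that formulation the proof is essentially a one-line consequence of Weyl monotonicity plus a Lipschitz sublevel-set bound.
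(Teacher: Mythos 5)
Your proof is correct. Note that the paper itself gives no argument for this lemma: it is quoted verbatim as a result ``proved in the appendix of \cite{eliasson2010kam}'', so there is no in-paper proof to compare against. Your chain of reasoning is the standard one for exactly this type of small-divisor measure estimate: reduce $\Vert M(t)^{-1}\Vert$ to $1/\min_j|\lambda_j(t)|$ for the hermitian family $M=A+B$; prove the uniform increase $\lambda_j(t+h)\ge\lambda_j(t)+\tfrac{\delta}{2}h$ by writing the increment of $M$ as an integral of $A'+B'$, bounding $\int A'\ge h\delta I$ and $\int B'\ge-\tfrac{h\delta}{2}I$ in the semidefinite order, and invoking Weyl monotonicity of ordered eigenvalues; then bound the sublevel set $\{|\lambda_j|<\kappa\}$ by $4\kappa/\delta$ using that a function with increment rate $\ge\delta/2$ spends at most $2\kappa/(\delta/2)$ time in an interval of length $2\kappa$; sum over $j$. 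Your remark that one should argue via the integral form of the increment rather than differentiating eigenvalue branches (which may fail to be $\mathcal{C}^1$ at crossings) is exactly the right precaution; equivalently one could use that ordered eigenvalues are $1$-Lipschitz in operator norm and differentiate almost everywhere, but your formulation is cleaner. The only cosmetic issue is the expression $\tfrac{2\kappa-(-2\kappa\cdot 0)}{\delta/2}$, which should simply read $\tfrac{2\kappa}{\delta/2}=\tfrac{4\kappa}{\delta}$; and the $\delta_0^{-1}$ in the statement as printed in the paper is evidently a typo for $\delta^{-1}$, which your bound matches.
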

\begin{proposition}
Let $0<\kappa \leq \frac{\delta}{2} \leq \frac{c_0}{4}$ , $N>1$, $0<\sigma'<\sigma$, $\mu>0$ and let $\omega : \mathcal{D} \to \mathbb{R}^n$ be $\mathcal{C}^1$ and verifying $\vert \omega - \omega_0 \vert_{\mathcal{C}^1(\mathcal{D})} \leq \delta_0$. Let $A : \mathcal{D} \to \mathcal{NF} \cap \mathcal{M}_\beta$ be $\mathcal{C}^1$ and satisfying $
\vert A - A_0  \vert_{\beta, \mathcal{C}^1(\mathcal{D})} \leq \delta_0$.
Then there exists a closed subset $\mathcal{D}_2 \subset \mathcal{D}$ whose Lebesgue measure satisfy
\begin{equation} \nonumber
\operatorname{mes}(\mathcal{D} \setminus \mathcal{D}_2) \leq \tilde{C}d \kappa \delta^{-1}N^{n+2},
\end{equation}
such that for all $\rho \in \mathcal{D}_2$ there exist two real analytic functions $S_\zeta (.;\rho)$ and $R_\zeta (.;\rho)$ on $\mathbb{T}^n_{\sigma'}$ satisfying:
\begin{equation} \nonumber
\nabla_\theta S_\zeta (\theta).\omega + JAS_\zeta(\theta)=- f_\zeta(\theta) + R_\zeta (\theta, \rho),
\end{equation}
with
\begin{align*}
\underset{\vert Im \theta \vert < \sigma'}{ \sup} \Vert S_\zeta(\theta) \Vert_{\alpha+1} + \underset{\vert Im \theta \vert < \sigma'}{ \sup} \vert S_\zeta(\theta) \vert_{\beta+} & \leq \frac{C}{\kappa \mu(\sigma-\sigma')^{n}}\lc f^T \rc^{\alpha,\beta}_{\sigma,\mu,\mathcal{D}} , \\
\underset{\vert Im \theta \vert < \sigma'}{ \sup} \Vert R_\xi(\theta) \Vert_\alpha + \underset{\vert Im \theta \vert < \sigma'}{ \sup} \vert R_\xi(\theta) \vert_\beta & \leq \frac{Ce^{-(\sigma-\sigma')N/2}}{\mu(\sigma-\sigma')^{n}} \lc f^T \rc^{\alpha,\beta}_{\sigma,\mu,\mathcal{D}},\\
\underset{\vert Im \theta \vert < \sigma'}{ \sup} \Vert \partial_\rho S_\zeta(\theta,\rho) \Vert_{\alpha+1} + \underset{\vert Im \theta \vert < \sigma'}{ \sup} \vert \partial_\rho S_\zeta(\theta,\rho) \vert_{\beta+} & \leq \frac{C}{\kappa^2 \mu (\sigma-\sigma')^{2n}} \lc f^T \rc^{\alpha,\beta} _{\sigma,\mu,\mathcal{D}}\\ & +\frac{C}{\kappa \mu (\sigma-\sigma')^{n}} \lc \partial_\rho f^T \rc^{\alpha,\beta} _{\sigma,\mu,\mathcal{D}},\\
\underset{\vert Im \theta \vert < \sigma'}{ \sup} \Vert \partial_\rho R_\xi(\theta) \Vert_\alpha + \underset{\vert Im \theta \vert < \sigma'}{ \sup} \vert \partial_\rho R_\xi(\theta) \vert_\beta & \leq \frac{Ce^{-(\sigma-\sigma')N/2}}{\mu(\sigma-\sigma')^{n}} \lc f^T \rc^{\alpha,\beta} _{\sigma,\mu,\mathcal{D}}.
\end{align*}
The constant $C$ depends on $n$, $ |\omega_0|_{\mathcal{C}^1 \left( \mathcal{D} \right) }$ and $\vert  A_0 \vert_{\beta,{\mathcal{C}^1 \left( \mathcal{D} \right) }}$ while $\tilde{C}$ depends on $ |\omega_0|_{\mathcal{C}^1 \left( \mathcal{D} \right) }$ and $c_0$.
\end{proposition}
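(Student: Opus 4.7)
The plan is to Fourier-expand in $\theta$ and exploit the block-diagonal structure of $A$ to reduce the problem to a collection of finite-dimensional inversion problems indexed by $(k,[s])$. I expand $S_\zeta(\theta)=\sum_{k\in\mathbb{Z}^n}\hat S_\zeta(k)e^{ik\cdot\theta}$ and similarly for $f_\zeta$. Since $A(\rho)$ is independent of $\theta$, the equation decouples in Fourier: for each $k$,
\begin{equation*}
L_k(\rho)\,\hat S_\zeta(k)=-\hat f_\zeta(k),\qquad L_k(\rho):=i(k\cdot\omega(\rho))I+JA(\rho).
\end{equation*}
I truncate at $|k|\leq N$ and place the tail into $R_\zeta(\theta)=\sum_{|k|>N}\hat f_\zeta(k)e^{ik\cdot\theta}$, whose exponential decay follows from Lemmas~\ref{controle coef de fourier} and \ref{estim serie-integ} exactly as in Lemma~\ref{estim eq 1 et 2}. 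Because $A=D+N$ is block-diagonal with blocks $[s]$ of size at most $2d$ and each block lies in $\mathbb{C}I+\mathbb{C}\sigma_2$ which commutes with $J$, the operator $L_k$ is block-diagonal, so inversion reduces to the finite matrices $L_k|_{[s]}$.

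The key is to invert $L_k|_{[s]}$ with the sharp bound $\|L_k|_{[s]}^{-1}\|\leq C(\kappa\langle s\rangle)^{-1}$, which is what produces the gain of one power of $\langle s\rangle$ promoting $S_\zeta$ from $Y_\alpha,L_\beta$ to $Y_{\alpha+1},L_{\beta+}$. Using the complex change of variables from the setup, the restriction of $JA$ to block $[s]$ becomes a Hermitian matrix with spectrum within $\delta/8$ of $\pm\lambda_s(\rho)$ (by Hypothesis~B), so $L_k|_{[s]}$ has spectrum within $\delta/8$ of $i(k\cdot\omega\pm\lambda_s)$. For each $(k,[s])$ with $|k|\leq N$, Hypothesis~A2 provides either the uniform bound $|k\cdot\omega\pm\lambda_s|\geq\delta\langle s\rangle$ on all of $\mathcal{D}$, or a unit vector $z_k\in\mathbb{R}^p$ along which $\partial_\rho(k\cdot\omega\pm\lambda_s)$ is bounded below by $\delta$. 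In the latter case I slice $\mathcal{D}$ by lines in direction $z_k$ and apply Lemma~\ref{controle inverse} with the matrix of size at most $2d$ whose diagonal carries $k\cdot\omega(\rho)\pm\lambda_s(\rho)$ and whose Hermitian perturbation comes from $N_{[s]}$; Hypothesis~B guarantees $\|\partial_\rho N_{[s]}\|\leq\delta/8<\delta/2$, so the lemma applies and excludes a one-dimensional set of length $\leq 8d\kappa\langle s\rangle\delta^{-1}$ along each slice on which $\|L_k|_{[s]}^{-1}\|$ exceeds $(\kappa\langle s\rangle)^{-1}$.

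For the total measure, Hypothesis~A1 gives $\lambda_s\geq c_0\langle s\rangle$, so for $\langle s\rangle>CN/c_0$ (with $C$ involving $|\omega|_{\mathcal{C}^1(\mathcal{D})}$ and using $\kappa\leq c_0/4$) the bound $|k\cdot\omega\pm\lambda_s|\geq c_0\langle s\rangle/2\geq\kappa\langle s\rangle$ is automatic. Hence only blocks with $\langle s\rangle\leq C'N$ contribute; summing $\sum_{\langle s\rangle\leq C'N}\langle s\rangle\lesssim N^2$, then summing over the $\lesssim N^n$ frequencies $|k|\leq N$ and integrating transversally to $z_k$, yields $\operatorname{mes}(\mathcal{D}\setminus\mathcal{D}_2)\leq\tilde Cd\kappa\delta^{-1}N^{n+2}$ as claimed. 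On $\mathcal{D}_2$, the per-mode estimate $\langle s\rangle|\hat S_\zeta(k)_{[s]}|\leq\kappa^{-1}|\hat f_\zeta(k)_{[s]}|$ combined with Lemmas~\ref{controle coef de fourier} and \ref{estim serie-integ} (to sum the Fourier series while shrinking the analyticity width from $\sigma$ to $\sigma'$) and the input bound $|\hat f_\zeta|_\beta\leq\mu^{-1}\lc f^T\rc^{\alpha,\beta}_{\sigma,\mu,\mathcal{D}}$ from \eqref{estim deriv f-moyenne-jet} delivers the stated sup bounds on $S_\zeta$ and the exponential bound on $R_\zeta$. For the $\rho$-derivative, differentiating $L_k\hat S_\zeta(k)=-\hat f_\zeta(k)$ gives
\begin{equation*}
\partial_\rho\hat S_\zeta(k)=-L_k^{-1}\bigl(\partial_\rho\hat f_\zeta(k)+(\partial_\rho L_k)\hat S_\zeta(k)\bigr),
\end{equation*}
and the additional factors $\kappa^{-1}$ and $(\sigma-\sigma')^{-2n}$ in the derivative estimate come from applying $L_k^{-1}$ twice, in direct analogy with Lemma~\ref{estim deriv eq 1 et 2}.

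The main obstacle is extracting the full gain $\langle s\rangle$ uniformly: both Melnikov conditions $|k\cdot\omega\pm\lambda_s|\geq\kappa\langle s\rangle$ must hold simultaneously on the \emph{same} good set $\mathcal{D}_2$; the complex-coordinate diagonalization of each $2\times 2$ block of $J$ is essential to reshape the antisymmetric operator $JA$ into a form to which Lemma~\ref{controle inverse} applies; and the measure counting must be carried out so that only $\langle s\rangle\lesssim N$ blocks contribute, producing the exponent $n+2$ rather than something worse. Once the block-inversion bound is in hand, the remainder of the argument parallels the treatment of equations \eqref{eq homo 1}--\eqref{eq homo 2} almost verbatim.
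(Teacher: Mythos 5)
Your proposal follows essentially the same route as the paper: truncate the Fourier series at $|k|\le N$, pass to the complex coordinates of the setup so that the block operators become Hermitian, invert block by block using Hypothesis~A2 together with Lemma~\ref{controle inverse}, invoke Hypothesis~A1 to handle $\langle s\rangle\gtrsim N$ automatically, and then run the same measure count ($\lesssim N^n$ frequencies, blocks up to $\langle s\rangle\lesssim N$, per-block loss $\lesssim\kappa\delta^{-1}\langle s\rangle$) and the same $\rho$-derivative argument. The only imprecisions are cosmetic: after the complex change of variables it is the block $Q_{[s]}$ in $h=\omega\cdot r+\langle\xi,Q\eta\rangle$ (spectrum near $\lambda_s$) that is Hermitian---not $JA_{[s]}$, whose spectrum is near $\pm i\lambda_s$---and the operators to which the inverse-measure lemma is actually applied are $k\cdot\omega I_{[s]}+Q_{[s]}$ and $k\cdot\omega I_{[s]}-{}^{t}Q_{[s]}$ coming from the decoupled $\xi$- and $\eta$-equations; you also leave implicit the final replacement of $S_\zeta$ by its real part $\tfrac12(S_\zeta(\theta,\rho)+\bar S_\zeta(\bar\theta,\rho))$.
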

\begin{proof} Consider the following complex variables:
\begin{equation} \nonumber
z_{s}= \begin{pmatrix}	  
   		\xi_{s} \\
  		 \eta_{s} 
  	\end{pmatrix}
 = \frac{1}{\sqrt{2}} \begin{pmatrix}	  
   		1 & i \\
  		1 & -i 
  	\end{pmatrix}
 	\zeta_s
 = \begin{pmatrix}	  
   		\frac{1}{\sqrt{2}}(p_{s}+iq_{s}) \\
  		\frac{1}{\sqrt{2}}(p_{s}-iq_{s}) 
  	\end{pmatrix}; \quad s \in \mathcal{L}
\end{equation}
In these new variables the Hamiltonian normal form becomes
\begin{equation} \nonumber
h(\rho) = \omega(\rho)\cdot r + \langle \xi,Q(\rho)\eta \rangle,
\end{equation}
where $Q$ is a Hermitian matrix. The jet of a function $g$ is given by:
\begin{align*}
g^T= &  g_\theta(\theta)+g_r(\theta).r + \langle g_\xi(\theta),\xi \rangle + \langle g_\eta(\theta) ,\eta \rangle + \frac{1}{2} \langle g_{\xi\xi} (\theta)\xi,\xi \rangle  + \frac{1}{2} \langle g_{\eta\eta} (\theta)\eta,\eta \rangle+ \langle g_{\xi\eta}(\theta) \xi,\eta \rangle.
\end{align*}
The Poisson bracket becomes
\begin{equation} \nonumber
\left\lbrace h,S \right\rbrace = \nabla_r h \nabla_\theta S - \nabla_\theta h \nabla_r S -i ( \langle \nabla_\xi h , \nabla_\eta S \rangle - \langle \nabla_\eta h , \nabla_\xi S \rangle)
\end{equation}
In the complex variables, the jet-function $S$ is given by 
\begin{align*} \nonumber
S(\theta) & = S_\theta(\theta)+S_r(\theta).r + \langle S_\xi(\theta),\xi \rangle + \langle S_\eta (\theta),\eta \rangle + \frac{1}{2} \langle S_{\xi\xi}(\theta) \xi,\xi \rangle \\
& + \frac{1}{2} \langle S_{\eta\eta}(\theta) \eta,\eta \rangle+ \langle S_{\xi\eta}(\theta) \xi,\eta \rangle.
\end{align*}
So we can decouple the third homological equation into two equations:
\begin{align}
\begin{array}{c} \label{troisieme eq dec}
\nabla_\theta S_\xi (\theta) . \omega +i Q S_\xi (\theta) = -f_\xi (\theta) + R_ \xi( \theta),\\
\nabla_\theta S_\eta (\theta) . \omega - i {}^tQ S_\eta (\theta) = - f_\eta (\theta)+R_\eta(\theta),
\end{array}
\end{align}
where $Q$ is Hermitian and block diagonal matrix of the following form:
\begin{equation*}
Q=diag \left\lbrace \lambda_{s},\:\: s \in \mathcal{L} \right\rbrace + H, 
\end{equation*}
where $H$ is Hermitian and block diagonal matrix.
Expending in Fourier series, equations \eqref{troisieme eq dec} becomes:
\begin{align}
\begin{array}{cc} \label{troisieme eq fourier}
i ( k \cdot \omega I + Q )\hat{S}_\xi (k) = - \hat{f}_\xi (k) + \hat{R}_ \xi(k), & k \in \mathbb{Z}^n,\\
i ( k \cdot \omega I - {}^t Q )\hat{S}_\eta (k) = - \hat{f}_\eta (k) + \hat{R}_ \eta(k),  & k \in \mathbb{Z}^n.
\end{array}
\end{align}
To solve them we need to control the operators $( k \cdot \omega I + Q )$ and $( k \cdot \omega I - {}^t Q )$. The two equations are similar. We will consider the first equation .

Consider $Q_{[s]}$ the restriction of $Q$ to the block $s \times s$. We start by decomposing the equation over the block $[s]$
\begin{equation} \label{3eme eq hom bloc}
i ( k \cdot \omega I_{[s]} + Q_{[s]} )\hat{S}_{[s]} (k) = - \hat{f}_{[s]} (k) + \hat{R}_{[s]}(k),  \quad k \in \mathbb{Z}^n,
\end{equation}
where $\hat{S}_{[s]} (k)$, $\hat{f}_{[s]} (k)$ and $\hat{R}_{[s]}(k)$ are respectively the restriction of  $\hat{S}_{\xi} (k)$, $\hat{f}_{\xi} (k)$ and $\hat{R}_{\xi}(k)$ over the block $[s]$.

$Q_{[s]}$ is a Hermitian matrix. Let $\alpha_{\ell}$ denotes an eigenvalue of $Q_{[s]}$ and let us recall that $Q_{[s]}=D_{[s]}+H_{[s]}$ where $D_{[s]}=diag \left\lbrace \lambda_{\ell},\; \ell \in [s] \right\rbrace$ and $H_{[s]}$ is a matrix of dimension at most $d$. Thus $\alpha_{\ell}-\lambda_{\ell}$ is an eigenvalue of $H_{[s]}$.

To control the operator $ k \cdot \omega I_{[s]} + Q_{[s]}$  we need to control $\vert k \cdot \omega + \alpha_{\ell} \vert$ for $0 \leq \vert k \vert \leq N$. We remark that if $\vert \partial_\rho^j N(\rho) \vert_\beta < \frac{\delta}{2}$, then for any $ \ell \in [s]$
\begin{equation} \nonumber
\vert \alpha_{\ell}-\lambda_{\ell} \vert \leq \Vert H_{[s]} \Vert \leq \Vert H_{[s]} \Vert_{\infty} \leq \frac{1}{\langle s \rangle^{2\beta}} \vert H(\rho) \vert_\beta \leq  \frac{\delta}{2\langle s\rangle ^{2\beta}} \leq \frac{\delta}{2} \leq \frac{c_0}{4}.
\end{equation}
So we obtain that
\begin{equation} \nonumber
\vert \alpha_\ell \vert \geq \lambda_\ell  - \vert \alpha_{\ell}-\lambda_{\ell} \vert \geq \frac{3}{4}c_0\langle s\rangle .
\end{equation}
For $k=0$, equation \eqref{3eme eq hom bloc} is solved. For $k \neq 0$, using hypothesis A2, we have either
\begin{equation} \nonumber
\vert k \cdot \omega + \alpha_{\ell} \vert \geq \vert k \cdot \omega + \lambda_{\ell} \vert - \vert \alpha_{\ell} - \lambda_{\ell} \vert \geq \delta  \langle s\rangle - \frac{\delta}{2} \geq \frac{\delta}{2}  \langle s \rangle \geq \kappa  \langle s \rangle,
\end{equation}
or there exists a unit vector $z_k \in \mathbb{R}^p$ such that
\begin{equation} \nonumber
\langle \partial_\rho (k \cdot \omega+\lambda_{\ell}), z_k \rangle \geq \delta.
\end{equation}
Let us consider the second case. We note that for this unit vector $z_k$ we have:
\begin{equation} \nonumber
\Vert (\partial_\rho \cdot z_k) H_{[s]} \Vert \leq \Vert (\partial_\rho \cdot z_k) H_{[s]} \Vert_{\infty} \leq \Vert \partial_\rho H_{[s]} \Vert_{\infty} \leq \frac{\delta}{2}.
\end{equation}
Consider
\begin{equation} \nonumber
J(k,s) = \lbrace \rho \in \mathcal{D} \mid \Vert (k \cdot \omega I_{[s]} + Q_{[s]})^{-1} \Vert > (\kappa \langle s \rangle)^{-1}   \rbrace
\end{equation}
Applying the Lemma~\ref{controle inverse} to $ \operatorname{diag} \lbrace k \cdot \omega + \lambda_\ell , \; \ell \in [s]  \rbrace$ and the Hermitian matrix $Q_{[s]}$ we obtain that 
\begin{equation} \nonumber 
\operatorname{mes} J(k,s) \leq d \delta^{-1} \kappa \langle s \rangle \leq 4 C c_0^{-1} d \kappa \delta^{-1} N ,
\end{equation}
for $ \vert s \vert \leq  4 C c_0^{-1}N$ and $ C:= \vert \omega \vert_{\mathcal{C}^1 \left( \mathcal{D} \right)}+ \delta_0 \leq \vert \omega \vert_{\mathcal{C}^1 \left( \mathcal{D} \right)} +1 $. Consider the set  $\mathcal{B} = \lbrace (k,s) \in \mathbb{Z}^n \times \mathcal{L} \mid \vert k \vert \leq N ,\; \vert s \vert \leq 4 C c_0^{-1} \kappa \delta^{-1}N  \rbrace$. This set contains at most $16C c_0^{-1}N^{n+1}$ points. Then
\begin{equation} \nonumber
\operatorname{mes} \bigcup_{(k,s) \in  \mathcal{B}} J(k,s) \leq 64C^2 c_0^{-2}d \kappa \delta^{-1}N^{n+2} = \tilde{C}d \kappa \delta^{-1}N^{n+2}. 
\end{equation}
For $ \vert s \vert > 4 C c_0^{-1}N$, we have
\begin{align*}
\vert k \cdot \omega + \alpha_\ell \vert & \geq  \lambda_\ell - \vert \alpha_\ell - \lambda_\ell \vert - \vert k \cdot \omega \vert 
 \geq c_0 \langle s \rangle - \frac{1}{4} c_0 \langle s \rangle - \frac{1}{4} c_0 \langle s \rangle 
 \geq \kappa \langle s \rangle.
\end{align*}
We define $ \displaystyle \mathcal{D}_2 := \mathcal{D} \setminus \bigcup_{(k,s) \in  \mathcal{B}} J(k,s)$. The third equation is solved by posing for all $\rho \in \mathcal{D}_2$  :
\begin{align*}
\begin{array}{cc} 
(\hat{S}_\xi)_{[s]} (k) = i \left( (k \cdot \omega I_{[s]}+Q_{[s]})^{-1}( \hat{f}_\xi)_{[s] }(k)\right)_ , & 0 \leq \vert k \vert \leq N,\\
\end{array}
\end{align*}
and
\begin{equation} \nonumber
(R_\xi)_{[s] \ell}=- \underset{ \vert k \vert > N }{ \sum}( \hat{f}_\xi)_{[s] \ell}(k) e^{ik\theta}.
\end{equation}
We denote by $(\hat{S}_\xi)_{[s] \ell}$ the $\ell$-th component of $(\hat{S}_\xi)_{[s]}$ and similarly we define $(\hat{f}_\xi)_{[s] \ell}$ and $(\hat{R}_\xi)_{[s] \ell}$. Using the same argument as in the first equation, we obtain for any $0 < \sigma < \sigma'$ and all $\rho \in \mathcal{D}_2$  that
\begin{align*}
\underset{\vert Im \theta \vert < \sigma'}{ \sup} \vert (S_\xi)_{ [s]\ell}(\theta) \vert & \leq \frac{C}{\kappa \langle s \rangle (\sigma-\sigma')^{n}} \underset{\vert Im \theta \vert < \sigma}{ \sup} \vert (f_\xi)_{[s]\ell} (\theta) \vert, \\
\underset{\vert Im \theta \vert < \sigma'}{ \sup} \vert (R_\xi)_{[s]\ell}(\theta) \vert & \leq \frac{Ce^{-(\sigma-\sigma')N/2}}{(\sigma-\sigma')^{n}} \underset{\vert Im \theta \vert < \sigma}{ \sup} \vert (f_\xi)_{[s]\ell} (\theta) \vert.
\end{align*}
The estimates imply that
\begin{align*}
\underset{\vert Im \theta \vert < \sigma'}{ \sup} \Vert S_\xi(\theta) \Vert_{\alpha+1} + \underset{\vert Im \theta \vert < \sigma'}{ \sup} \vert S_\xi(\theta) \vert_{\beta+} & \leq \frac{C}{\kappa \mu(\sigma-\sigma')^{n}}\lc f^T \rc^{\alpha,\beta}_{\sigma,\mu,\mathcal{D}} , \\
\underset{\vert Im \theta \vert < \sigma'}{ \sup} \Vert R_\xi(\theta) \Vert_\alpha + \underset{\vert Im \theta \vert < \sigma'}{ \sup} \vert R_\xi(\theta) \vert_\beta & \leq \frac{Ce^{-(\sigma-\sigma')N/2}}{\mu(\sigma-\sigma')^{n}} \lc f^T \rc_{\sigma,\mu,\mathcal{D}},
\end{align*}
where $C$ depends on $n$, $ |\omega_0|_{\mathcal{C}^1 \left( \mathcal{D} \right) }$ and $\vert  A_0 \vert_{\beta,{\mathcal{C}^1 \left( \mathcal{D} \right) }}$.\\
To obtain the estimates of the derivatives of $S_\xi$ and $R_\xi$ with respect to $\rho$, we differentiate the equation \eqref{troisieme eq fourier}. So we obtain
\begin{equation} \nonumber
i ( k \cdot \omega I + Q ) \partial_\rho \hat{S}_\xi (k) = -i ( \partial_\rho k \cdot\omega I + \partial_\rho Q) \hat{S}_\xi (k) - \partial \hat{f}_\xi (k) , \quad   0 \leq \vert k \vert \leq N,
\end{equation}
and
\begin{equation} \nonumber
\partial_\rho R_\xi ( \theta) = - \underset{ \vert k \vert \geq N }{ \sum} \partial_\rho ( \hat{f}_\xi)(k) e^{ik\theta}.
\end{equation}
The same process described above gives us
\begin{align*} 
\underset{\vert Im \theta \vert < \sigma'}{ \sup} \Vert \partial_\rho S_\xi(\theta,\rho) \Vert_{\alpha+1} & \leq \frac{C}{\kappa^2(\sigma-\sigma')^{2n}}\underset{\vert Im \theta \vert < \sigma}{ \sup} \Vert f_\xi(\theta,\rho) \Vert_\alpha + \frac{C}{\kappa(\sigma-\sigma')^{n}} \underset{\vert Im \theta \vert < \sigma'}{ \sup} \Vert \partial_\rho f_\xi(\theta,\rho) \Vert_\alpha,
\end{align*}
and
\begin{align*} \nonumber
\underset{\vert Im \theta \vert < \sigma'}{ \sup} \vert \partial_\rho S_\xi(\theta,\rho) \vert_{\beta+} & \leq \frac{C}{\kappa^2(\sigma-\sigma')^{2n}}\underset{\vert Im \theta \vert < \sigma}{ \sup} \vert f_\xi(\theta,\rho) \vert_\beta + \frac{C}{\kappa(\sigma-\sigma')^{n}} \underset{\vert Im \theta \vert < \sigma'}{ \sup} \vert \partial_\rho f_\xi(\theta,\rho) \Vert_\beta,
\end{align*}
This leads to
\begin{align*} \nonumber
\underset{\vert Im \theta \vert < \sigma'}{ \sup} \Vert \partial_\rho S_\xi(\theta,\rho) \Vert_{\alpha+1} + \underset{\vert Im \theta \vert < \sigma'}{ \sup} \vert \partial_\rho S_\xi(\theta,\rho) \vert_{\beta+} & \leq \frac{C}{\kappa^2 \mu (\sigma-\sigma')^{2n}} \lc f^T \rc^{\alpha,\beta} _{\sigma,\mu,\mathcal{D}}\\
& + \frac{C}{\kappa \mu (\sigma-\sigma')^{n}} \lc \partial_\rho f^T \rc^{\alpha,\beta} _{\sigma,\mu,\mathcal{D}} .
\end{align*}
Similarly, for the $\rho$-derivative of $R_\xi(\theta)$ we obtain:
\begin{equation} \nonumber
\underset{\vert Im \theta \vert < \sigma'}{ \sup} \Vert \partial_\rho R_\xi(\theta) \Vert_\alpha  \leq \frac{Ce^{-(\sigma-\sigma')N/2}}{(\sigma-\sigma')^{n}} \underset{\vert Im \theta \vert < \sigma}{ \sup} \Vert \partial_\rho f_\xi (\theta) \Vert_\alpha,
\end{equation}
and
\begin{equation} \nonumber
\underset{\vert Im \theta \vert < \sigma'}{ \sup} \vert \partial_\rho R_\xi(\theta) \vert_\beta  \leq \frac{Ce^{-(\sigma-\sigma')N/2}}{(\sigma-\sigma')^{n}} \underset{\vert Im \theta \vert < \sigma}{ \sup} \vert \partial_\rho f_\xi (\theta) \vert_\beta.
\end{equation}
Therefore we get
\begin{equation} \nonumber
\underset{\vert Im \theta \vert < \sigma'}{ \sup} \Vert \partial_\rho R_\xi(\theta) \Vert_\alpha + \underset{\vert Im \theta \vert < \sigma'}{ \sup} \vert \partial_\rho R_\xi(\theta) \vert_\beta  \leq \frac{Ce^{-(\sigma-\sigma')N/2}}{\mu(\sigma-\sigma')^{n}} \lc \partial_\rho f^T \rc^{\alpha,\beta} _{\sigma,\mu,\mathcal{D}},
\end{equation}
where $C$ depends on $n$, $ |\omega_0|_{\mathcal{C}^1 \left( \mathcal{D} \right) }$ and $\vert  A \vert_{\beta,{\mathcal{C}^1 \left( \mathcal{D} \right) }}$.

The functions $f_{\zeta}$ and $ R_{\zeta}$ are complex, so the constructed solution $S_\zeta$ may also be complex. Instead of proving that it is real, we replace $S_\zeta$, $\theta \in \mathbb{T}^n$, by its real part and then analytically extend it to $\mathbb{T}^n_{\sigma'}$, using the relation $\mathfrak{R}S_\zeta (\theta, \rho) := \frac{1}{2} S_\zeta (\theta, \rho) + \bar{S}_\zeta ( \bar{\theta}, \rho))$. Thus we obtain a real solution which obeys the same estimates.
\end{proof}
\begin{remark}
The key to solve the third equation is to control the eigenvalues of the Hermitian block diagonal matrix $k \cdot \omega I + Q$ which is of class $\mathcal{C}^1$ on $\mathcal{D}$. To do this, we consider a given block and we solve the equation block by block. Specifically, we consider the Hermitian matrix $k \cdot \omega I_{[s]} + Q_{[s]}$ which is the restriction of $k \cdot \omega I + Q$ to the block $[s] \times [s]$, where $Q_{[s]}(\rho)=diag \left\lbrace \lambda_{\ell}(\rho),\; \ell \in [s] \right\rbrace + H_{[s]}(\rho)$ with $H_{[s]}(\rho)$ a Hermitian  $\mathcal{C}^1$ matrix. Let $\alpha_\ell$ be an eigenvalue of $Q_{[s]}$, then $\alpha_\ell - \lambda_\ell$ is an eigenvalue of $H_{[s]}$. One of the key points to solve the third equation is to control $\partial_\rho (\alpha_\ell - \lambda_\ell)$. The hypothesis that $k\cdot \omega I_{[s]} + Q_{[s]}$ is Hermitian is essential. Indeed, if the matrix is just diagonalisable, then we cannot hope to find an orthonormal basis in which the matrix is diagonal. In this case, we cannot control $\vert \partial_\rho( \alpha_\ell -\lambda_\ell) \vert$ by $\Vert H_{[s]} \Vert$. The other difficulty is that the matrix is not analytical. Recall that the eigenvalue of a $\mathcal{C}^1$ matrix are not necessarily $\mathcal{C}^1$. If we further assume that the matrix $H$ is analytic in $\rho$ (which is equivalent to assume that $A'$ is analytic), then by using the assumption $\vert \partial_\rho^j H \vert_\beta \leq \frac{\delta}{2}$ for $j=0,1$, we have
\begin{equation} \nonumber
\vert \partial_\rho^j (\alpha_\ell - \lambda_\ell) \vert \leq  \Vert \partial_ \rho^j H_{[s]} \Vert \leq \Vert \partial_\rho^j H_{[s]} \Vert_{\infty} \leq \frac{1}{\langle s \rangle^{2\beta}} \vert \partial_\rho^j H(\rho) \vert_\beta \leq  \frac{\delta}{2\langle s\rangle ^{2\beta}} \leq \frac{\delta}{2}.
\end{equation}
Then, by using the hypothesis $A2$, we have either 
\begin{equation} \nonumber
\vert k \cdot \omega + \alpha_\ell \vert \geq \kappa \langle s \rangle,
\end{equation}
or there exists a unit vector $z_k \in \mathbb{R}^p$ such that
\begin{equation} \nonumber
\vert \langle \partial_\rho ( k \cdot \omega + \alpha_\ell ) , z_k \rangle \vert \geq \frac{\delta}{2},
\end{equation}
and we conclude by using the same type of arguments as in the Proposition~\ref{melnikov}.
\end{remark}
\subsubsection{The fourth equation.}
\begin{proposition}
Let $0<\kappa < \frac{\delta}{4} \leq \frac{1}{8} \min ( c_0,c_1)$ , $N>1$, $0<\sigma'<\sigma$, $\mu>0$ and $\omega : \mathcal{D} \to \mathbb{R}^n$ be $\mathcal{C}^1$ and verifying $\vert \omega - \omega_0 \vert_{\mathcal{C}^1(\mathcal{D})} \leq \delta_0$. Let $A : \mathcal{D} \to \mathcal{NF} \cap \mathcal{M}_\beta$ be $\mathcal{C}^1$ and satisfying $ \vert A - A_0  \vert_{\beta, \mathcal{C}^1(\mathcal{D}} \leq \delta_0$.
Then there exists a closed subset $\mathcal{D}_3 \subset \mathcal{D}$ whose Lebesgue measure satisfy
\begin{equation} \nonumber
{mes}(\mathcal{D} \setminus \mathcal{D}_3) \leq \tilde C d (\delta^{-1}\kappa)^\iota N^\upsilon,
\end{equation}
where $\iota , \upsilon >0$ and such that for all $\rho \in \mathcal{D}_3$, there exist real $ \mathcal{C}^1$ functions $\hat{K} : \mathcal{D}_3 \rightarrow \mathcal{M}_\beta	\cap \mathcal{NF} $, $S_{\zeta\zeta} (.;\rho)$ and $R_{\zeta\zeta} (.;\rho)$ : $\mathbb{T}^n_{\sigma'}\times \mathcal{D}_3 \rightarrow \mathcal{M}_\beta	$  analytic in $\theta$ such that:
\begin{align*}
\nabla_\theta S_{\zeta \zeta} ( \theta,\rho).\omega(\rho) & +A(\rho)JS_{\zeta \zeta} ( \theta,\rho) -S_{\zeta \zeta} ( \theta,\rho) JA(\rho)  =-f_{\zeta \zeta} ( \theta,\rho) + \hat{K}(\rho)(\rho)+R_{\zeta \zeta} ( \theta,\rho),
\end{align*}
and for all $(\theta,\rho) \in \mathbb{T}^n_{\sigma'}\times \mathcal{D}_3$ we have: 
\begin{align*}
\underset{\vert Im \theta \vert < \sigma'}{ \sup} \vert  S_{\zeta\zeta}(\theta,\rho) \vert_{\beta+} & \leq \frac{C}{\kappa \mu^2 (\sigma-\sigma')^{n}} \lc f^T \rc^{\alpha,\beta} _{\sigma,\mu,\mathcal{D}},\\
\underset{\vert Im \theta \vert < \sigma'}{ \sup} \vert \partial_\rho S_{\zeta\zeta}(\theta,\rho) \vert_{\beta+} & \leq \frac{C}{\kappa^2 \mu^2 (\sigma-\sigma')^{2n}} \lc f^T \rc^{\alpha,\beta} _{\sigma,\mu,\mathcal{D}} \\ &+\frac{C}{\kappa \mu^2 (\sigma-\sigma')^{n}} \lc \partial_\rho f^T \rc^{\alpha,\beta} _{\sigma,\mu,\mathcal{D}},\\
\underset{\vert Im \theta \vert < \sigma'}{ \sup} \vert \partial^j_\rho R_{\zeta\zeta}(\theta) \vert_\beta & \leq \frac{Ce^{-(\sigma-\sigma')N/2}}{\mu^2(\sigma-\sigma')^{n}} \lc f^T \rc^{\alpha,\beta} _{\sigma,\mu,\mathcal{D}}, \quad j=0,1,\\
\vert \partial^j_\rho \hat{K}(\rho) \vert_\beta & \leq \frac{\lc \partial_\rho^j f^T \rc^{\alpha,\beta} _{\sigma,\mu,\mathcal{D}}}{\mu^{2}},\quad j=0,1.
\end{align*}
The constant $C$ depends on   $n$, $ |\omega_0|_{\mathcal{C}^1 \left( \mathcal{D} \right) }$ and $\vert  A_0 \vert_{\beta,{\mathcal{C}^1 \left( \mathcal{D} \right) }}$, while $\tilde{C}$ depends on $ |\omega_0|_{\mathcal{C}^1 \left( \mathcal{D} \right) }$, $c_0$ and $c_1$.
\end{proposition}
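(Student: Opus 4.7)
The plan is to parallel the proof of the third equation, now with the matrix-valued unknown $S_{\zeta\zeta}$. I would first pass to the complex variables $(\xi,\eta)$, in which $\frac{1}{2}\langle\zeta,A\zeta\rangle$ becomes $\langle\xi,Q\eta\rangle$ with $Q=D+H$ Hermitian and block diagonal (as in the third-equation proof), while $J$ becomes $-iI$ on the $\xi$-side and $iI$ on the $\eta$-side. Then $S_{\zeta\zeta}$ decouples into three independent matrix blocks $S_{\xi\xi}$, $S_{\eta\eta}$, $S_{\xi\eta}$, and the fourth homological equation splits into three Sylvester-type matrix equations. After Fourier expansion in $\theta$ and restriction to the block pair $(s,s')$, each equation becomes a linear pencil of the form
\begin{equation*}
i(k\cdot\omega)\hat S^{s,s'}(k)+ Q_{[s]}\hat S^{s,s'}(k)\pm \hat S^{s,s'}(k)\,Q_{[s']}=-\hat f^{s,s'}(k)+\hat K^{s,s'}\delta_{k,0}+\hat R^{s,s'}(k),
\end{equation*}
whose small divisors are $k\cdot\omega+\alpha_\ell+\alpha_{\ell'}$ for the $(\xi,\xi)$ and $(\eta,\eta)$ blocks, and $k\cdot\omega+\alpha_\ell-\alpha_{\ell'}$ for the $(\xi,\eta)$ block, where the $\alpha_\ell$ are the eigenvalues of the perturbed block $Q_{[s]}$.

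For the $(\xi,\xi)$ and $(\eta,\eta)$ blocks the third bullet of hypothesis A2, combined with $|\alpha_\ell-\lambda_\ell|\le\delta/2$ from hypothesis B, yields a closed subset $\mathcal{D}_3^{+}\subset\mathcal{D}$ of measure loss $\lesssim d\kappa\delta^{-1}N^{n+2}$ on which Lemma~\ref{controle inverse} applied to the Hermitian lift of the Sylvester pencil gives a bounded inverse of norm $\lesssim\kappa^{-1}(\langle s\rangle+\langle s'\rangle)^{-1}$; indices with $\langle s\rangle+\langle s'\rangle\gtrsim N$ require no exclusion, thanks to hypothesis A1. The real difficulty is the $(\xi,\eta)$ block. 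When $[s]=[s']$ and $k=0$ the small divisor vanishes identically, and this obstruction is absorbed by defining $\hat K_{[s]}^{[s]}$ to be the $\theta$-average of $f_{\zeta\zeta,[s]}^{[s]}(\theta,0,0;\rho)$. Because $f_{\zeta\zeta}$ takes values in the subalgebra $\mathbb{S}=\mathbb{C}I+\mathbb{C}\sigma_2$ and averaging preserves the block structure, the resulting $\hat K$ is block-diagonal and lies in $\mathcal{NF}\cap\mathcal{M}_\beta$, with the stated bound $|\partial_\rho^j\hat K|_\beta\le\mu^{-2}\lc\partial_\rho^j f^T\rc^{\alpha,\beta}_{\sigma,\mu,\mathcal{D}}$ for $j=0,1$.

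For all remaining modes ($|s|\neq|s'|$, or $[s]=[s']$ with $k\neq0$) and $|k|\leq N$, I would invoke hypothesis A3: it provides a subset $\mathcal{D}_3\subset\mathcal{D}$ with $\operatorname{mes}(\mathcal{D}\setminus\mathcal{D}_3)\le C(\delta^{-1}\kappa)^\tau N^\iota$ on which $|k\cdot\omega(\rho)+\lambda_s-\lambda_{s'}|\ge\kappa(1+||s|-|s'||)$. Transferring this bound to the perturbed eigenvalues $\alpha_\ell-\alpha_{\ell'}$ costs at most $\delta/2$ by hypothesis B, and the choice $\kappa<\delta/4$ preserves the lower bound up to a factor two; Lemma~\ref{controle inverse} applied once more to the Hermitian lift $Q_{[s]}\otimes I-I\otimes \overline{Q_{[s']}}$ then yields a bounded inverse of norm $\lesssim[\kappa(1+||s|-|s'||)]^{-1}$. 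For $\langle s\rangle+\langle s'\rangle\gg N$, no exclusion is needed thanks to A1. Once these inverses are secured, the truncated Fourier solution and its $\rho$-derivative satisfy the stated bounds via Lemma~\ref{controle coef de fourier} and Lemma~\ref{estim serie-integ}, exactly as in the third equation; the point is that the extra weight $(1+||s|-|s'||)^{-1}$ in the $(\xi,\eta)$-block estimate is precisely what upgrades $|\cdot|_\beta$-control of $f_{\zeta\zeta}$ into $|\cdot|_{\beta+}$-control of $S_{\zeta\zeta}$. Real-valuedness is enforced at the end by replacing $S_{\zeta\zeta}$ by its real part, as in the third equation.

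The main obstacle is the $(\xi,\eta)$-block pencil for indices with $||s|-|s'||$ small: here one genuinely needs the non-polynomial, measure-theoretic second Melnikov estimate A3, which is why the excluded set has the bound $(\delta^{-1}\kappa)^\tau N^\iota$. Moreover, applying Lemma~\ref{controle inverse} to a Sylvester operator rather than a scalar equation requires passing to its Hermitian lift on $\mathbb{C}^{|[s]|\cdot|[s']|}$, and verifying the $\mathcal{C}^1$-regularity of the inverse as a function of $\rho$ relies crucially on the smallness $|\partial_\rho H|_\beta\le\delta/8$ granted by hypothesis B. A last delicate point is to confirm that the averaged $\hat K$ inherits the full normal-form structure of $\mathcal{NF}$, which reduces to the fact that $\mathbb{S}$ is an algebra stable under the $\theta$-average of $f_{\zeta\zeta}$.
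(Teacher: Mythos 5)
Your overall plan matches the paper's: pass to the complex variables $(\xi,\eta)$ so that $A$ becomes a Hermitian block-diagonal $Q=D+H$, decouple the fourth homological equation into Sylvester equations for $S_{\xi\xi}$, $S_{\eta\eta}$, $S_{\xi\eta}$, absorb the $k=0$, $[s]=[s']$ obstruction of the $(\xi,\eta)$ block into $\hat K$, and then bound the small divisors $k\cdot\omega+\alpha_\ell\pm\alpha_{\ell'}$ via A2/A3 and a measure exclusion. Your treatment of the $(\xi\xi)$ and $(\eta\eta)$ blocks, and of $k=0$, is fine.

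However, there is a genuine gap in your treatment of the $(\xi,\eta)$ block for $k\neq 0$. You write that transferring the A3 lower bound $|k\cdot\omega+\lambda_\ell-\lambda_{\ell'}|\ge\kappa(1+||s|-|s'||)$ to the perturbed eigenvalues $\alpha_\ell-\alpha_{\ell'}$ ``costs at most $\delta/2$ by hypothesis B, and the choice $\kappa<\delta/4$ preserves the lower bound up to a factor two.'' This does not follow: the perturbation $|\alpha_\ell-\lambda_\ell|+|\alpha_{\ell'}-\lambda_{\ell'}|$ can be as large as $\tfrac{\delta}{4}\langle s\rangle^{-2\beta}+\tfrac{\delta}{4}\langle s'\rangle^{-2\beta}\sim\delta/2$ when both $|s|$ and $|s'|$ are small, whereas the A3 lower bound is only $\sim\kappa(1+||s|-|s'||)\sim\kappa$ in that range. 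Since the theorem allows $\kappa$ arbitrarily small relative to $\delta$ (only $\kappa<\delta/4$ is assumed), the perturbation can completely overwhelm the A3 lower bound for those finitely many bad pairs, and the inverse operator bound you need is lost.

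The paper resolves this by a two-scale argument that you omit. One first applies A3 with a \emph{strengthened} bound $\ge 2\tilde\kappa(1+||s|-|s'||)$ for an intermediate parameter $\tilde\kappa$, observing that hypothesis B gives the $\beta$-weighted decay $|\alpha_\ell-\lambda_\ell|\le\tfrac{\delta}{4}\langle s\rangle^{-2\beta}$, so the perturbation is dominated by $\tilde\kappa$ as soon as $\min(|s|,|s'|)\ge(\delta/2\tilde\kappa)^{1/2\beta}$. For the finitely many pairs with $\min(|s|,|s'|)<(\delta/2\tilde\kappa)^{1/2\beta}$ and $||s|-|s'||\lesssim N$, a separate measure exclusion based on the transversality alternative of A2 (with $\delta$-lower bound on the derivative of $k\cdot\omega+\lambda_\ell-\lambda_{\ell'}$) is performed on the sets $I(k,s,s')$; the cardinality of the excluded index set $\mathcal B_1$ is controlled by $N$ and $(\delta/\tilde\kappa)^{1/2\beta}$. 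Finally $\tilde\kappa=\delta(\kappa/\delta)^{6\beta/(9+2\beta)}$ is chosen to balance the two measure losses. Without this case split and the auxiliary parameter $\tilde\kappa$, the claimed bound on $S_{\xi\eta}$ and the stated measure estimate $\tilde C d(\delta^{-1}\kappa)^\iota N^\upsilon$ cannot be derived.

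A minor secondary difference: for the $(\xi,\eta)$ pencil you propose applying Lemma~\ref{controle inverse} to a Hermitian lift $Q_{[s]}\otimes I-I\otimes\overline{Q_{[s']}}$, whereas the paper unitarily diagonalizes $Q_{[s]}$ and $Q_{[s']}$ and solves componentwise. Both routes are viable once the small-divisor lower bound is secured, but neither sidesteps the gap above.
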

\begin{proof} 
Using the same notations and complex variables as in the third equation, we can decouple the fourth equation into three equations:
\begin{align}
\begin{array}{c} \label{quatrieme eq dec}
\nabla_\theta S_{\xi\xi} (\theta) \cdot \omega +i  Q S_{\xi\xi} (\theta) + i S_{\xi\xi} (\theta) {}^tQ = -f_{\xi\xi} (\theta) + R_ {\xi\xi}( \theta),\\
\nabla_\theta S_{\eta\eta} (\theta) \cdot \omega  - i S_{\eta\eta} (\theta) Q - i {}^tQ S_{\eta\eta} (\theta) = -f_{\eta\eta} (\theta) + R_ {\eta\eta}( \theta),\\
\nabla_\theta S_{\xi\eta} (\theta) \cdot \omega +i Q S_{\xi\eta} (\theta) - i S_{\xi\eta} (\theta) Q   = K - f_{\xi\eta} (\theta)+R_{\xi\eta}(\theta).
\end{array}
\end{align}
We start by expanding $S_{\xi\xi}, S_{\xi\eta}, S_{\xi\eta},f_{\xi\xi}, f_{\xi\eta}, f_{\xi\eta}, R_{\xi\xi}, R_{\xi\eta}$ and  $R_{\xi\eta}$ in Fourier series. then for any $k \in \mathbb{Z}^n$ we have:
\begin{align}
\label{homo4-1} ( k\cdot\omega I + Q )\hat{S}_{\xi\xi} (k) +\hat{S}_{\xi\xi} (k){}^tQ & = i( \hat{f}_{\xi\xi} (k) - \hat{R}_{\xi\xi}(k)), \\
\label{homo4-2} ( k \cdot \omega I - {}^tQ )\hat{S}_{\eta\eta} (k) - \hat{S}_{\eta\eta} (k) Q & = i( \hat{f}_{\eta\eta} (k) - \hat{R}_{\eta\eta}(k)), \\
\label{homo4-3} ( k \cdot \omega I + Q )\hat{S}_{\xi\eta} (k) - \hat{S}_{\xi\eta} (k)Q & = -i( \delta_{k,0}\hat{K} -\hat{f}_{\xi\eta} (k) - \hat{R}_{\xi\eta}(k)),
\end{align}
where $\delta_{k,j}$ is the Kronecker symbol.

Equation \eqref{homo4-1} and \eqref{homo4-2} are of the same form, So to end the resolution of the homological equation, it is enough to solve \eqref{homo4-1} and \eqref{homo4-3}.

\textbf{The equation \eqref{homo4-1}. } Consider a matrix $M \in \mathcal{M}_\beta$, we denote by $M_{[s ],[s']}$ the restriction of $M$ to the block $[s] \times [s']$ and we define $M_{[s]}:=M_{[s],[s]}$. We recall that $Q=D+H$ where $D=\diag \left\lbrace \lambda_{s}(\rho),\; s \in \mathcal{L} \right\rbrace$ and $H$ is Hermitian and block diagonal matrix. We start by decomposing the equation \eqref{homo4-1} over the block $[s] \times [s']$:
\begin{equation} \label{homologique 4-1 dec}
k \cdot\omega (\hat{S}_{\xi\xi})_{[s],[s']} (k)  +    Q_{[s]} (\hat{S}_{\xi\xi})_{[s],[s']} (k) +(\hat{S}_{\xi\xi})_{[s],[s']} (k){}^tQ_{[s']}=
  i( (\hat{f}_{\xi\xi})_{[s],[s']} (k) - (\hat{R}_{\xi\xi})_{[s],[s']}(k)).
\end{equation}
$Q_{[s]}$ is Hermitian, so we can diagonalize it in an orthonormal basis: $\tilde{Q}_{[s]}={}^tP_{[s]} Q_{[s]} P_{[s]}$ (similarly for ${}^tQ_{[s']}$). 
We denote $(\hat{S}_{\xi\xi})'_{[s],[s']}(k)  ={}^tP_{[s]} (\hat{S}_{\xi\xi})_{[s],[s']}(k) P_{[s']}$, $(\hat{f}_{\xi\xi})'_{[s],[s']}(k)  ={}^tP_{[s]} (\hat{f}_{\xi\xi})_{[s],[s']}(k) P_{[s']}$ and $(\hat{R}_{\xi\xi})'_{[s],[s']}(k) = {}^tP_{[s]} (\hat{R}_{\xi\xi})_{[s],[s']}(k) P_{[s']}$.
By multiplying equation \eqref{homologique 4-1 dec} by ${}^tP_{[s]}$ on the left and $P_{[s']}$ on the right, we get:
\begin{equation} \label{eqhmo4diag}
(k \cdot \omega I_{[s]} +  \tilde{Q}_{[s]}) (\hat{S}_{\xi\xi})'_{[s],[s']} (k)  +  (\hat{S}_{\xi\xi})'_{[s],[s']} (k) \tilde{Q}_{[s']}  = i (\hat{f}_{\xi\xi})'_{[s],[s']} (k) - i (\hat{R}_{\xi\xi})'_{[s],[s']}(k).
\end{equation}
Let $\alpha_{\ell}$ denotes an eigenvalue of $\tilde{Q}_{[s]}$ and $\alpha_{\ell'}$ an eigenvalue of $\tilde{Q}_{[s']}$. To solve \eqref{homo4-1}), we need to find a lower bound of the modulus of 
\begin{equation} \nonumber
k \cdot \omega + \alpha_{\ell} + \alpha_{\ell'},
\end{equation}
for $ \ell \in [s]$ and $ \ell' \in [s']$. The hypothesis \ref{Hypothèse B} implies that $\vert  H \vert_\beta \leq \frac{\delta}{4}$ and we obtain:
\begin{equation} \nonumber
\vert \alpha_{\ell}-\lambda_{\ell} \vert \leq \Vert H_{[s]} \Vert \leq \Vert H_{[s]} \Vert_{\infty} \leq \langle s \rangle^{-2\beta} \vert H(\rho) \vert_\beta \leq  \frac{\delta}{4} \langle s\rangle ^{-2\beta} \leq \frac{\delta}{4} \leq \frac{c_0}{8}.
\end{equation}
So for $\ell \in [s]$ and $\ell' \in [s']$  we obtain:
\begin{align*}
\vert \alpha_\ell  + \alpha_{\ell'} \vert & \geq \lambda_{\ell} -  \vert \alpha_{\ell}-\lambda_{\ell} \vert + \lambda_{\ell'} - \vert \alpha_{\ell'}-\lambda_{\ell'} \vert \\
& \geq c_0 \langle s\rangle - \frac{c_0}{8} \langle s\rangle + c_0 \langle s'\rangle - \frac{c_0}{8} \langle s' \rangle \\
& \geq \kappa (\langle s \rangle + \langle s' \rangle).
\end{align*}
Thus we obtain a lower bound when $k=0$. Assume now that $k \neq 0 $, by hypothesis A2 we have either
\begin{equation} \nonumber
\vert k \cdot \omega + \alpha_{\ell'} + \alpha_{\ell}  \vert \geq \delta(\langle s \rangle + \langle s' \rangle) - \frac{\delta}{2} \geq \kappa(\langle s \rangle + \langle s' \rangle),
\end{equation}
or there exists a unit vector $z_k \in \mathbb{R}^p$ such that
\begin{equation*}
\vert \langle \partial_\rho (k\cdot\omega+\lambda_{\ell'}+\lambda_{\ell}),z_k \rangle \vert  \geq \delta.
\end{equation*}
Let us consider the second case. We note that for this unit vector $z_k \in \mathbb{R}^p$, we have:
\begin{equation} \nonumber
 \Vert (\partial_\rho \cdot z_k) H_{[s]} \Vert \leq  \Vert (\partial_\rho \cdot z_k) H_{[s]} \Vert_{\infty} \leq  \Vert \partial_\rho H_{[s]} \Vert_{\infty} \leq \frac{\delta}{4} \langle s \rangle^{-2\beta} \leq \frac{\delta}{4}.
\end{equation}
By simplifying the notations, we can rewrite the equation \eqref{eqhmo4diag} in the form:
\begin{equation} \label{eqhmo4diag-op}
L(k,s,s') S_{[s],[s']} = i F_{[s],[s']},
\end{equation}
where
\begin{equation} \nonumber
L(k,s,s') S = (k \cdot \omega I_[s] + \tilde{Q}_{[s]} ) S_{[s],[s']} + S_{[s],[s']}  \tilde{Q}_{[s']},
\end{equation}
and
\begin{equation} \nonumber
F_{[s],[s']}= f_{[s],[s']} + R_{[s],[s']}.
\end{equation}
For $A, B \in \mathcal{M}_d(\mathbb{C})$, we consider the scalar product $\langle A,B \rangle = \tr(AB^*)$. We remark that $L$ is linear Hermitian operator. Consider
\begin{equation} \nonumber
J(k,s,s') = \lbrace \rho \in \mathcal{D} \mid \Vert (L(k,s,s'))^{-1} \Vert > (\kappa (\langle s \rangle + \langle s' \rangle))^{-1} \rbrace
\end{equation}
Following the same procedure to prove the Lemme~\ref{controle inverse} in \cite{eliasson2010kam}, we obtain that:
\begin{equation} \nonumber 
\operatorname{mes} J(k,s,s') \leq 4 d \delta^{-1} \kappa(\langle s \rangle + \langle s' \rangle) \leq 16 C c_0^{-1} d \kappa \delta^{-1} N ,
\end{equation}
for $ \max (\vert s \vert, \vert s' \vert ) \leq  4 C c_0^{-1}N$ and $ C:= \vert \omega \vert_{\mathcal{C}^1 \left( \mathcal{D} \right)}+ \delta_0 \leq \vert \omega \vert_{\mathcal{C}^1 \left( \mathcal{D} \right)} +1 $.
Consider the set
\begin{equation} \nonumber
\mathcal{B}_0 = \lbrace (k,s,s') \in \mathbb{Z}^n \times \mathcal{L} \times \mathcal{L} \mid \vert k \vert \leq N ,\; \max (\vert s \vert, \vert s' \vert ) \leq 4 C c_0^{-1} N  \rbrace
\end{equation}
This set contains at most $2^4C^2 c_0^{-2}N^{n+2}$ points. Then
\begin{equation} \nonumber
\operatorname{mes} \bigcup_{(k,s,s') \in  \mathcal{B}_0} J(k,s,s') \leq 2^8C^3 c_0^{-3}d \kappa \delta^{-1}N^{n+3} = \tilde{C}d \kappa \delta^{-1}N^{n+3}. 
\end{equation}
Let $ \displaystyle \mathcal{D}_1 = \mathcal{D} \setminus \bigcup_{(k,s,s') \in  \mathcal{B}_0} J(k,s,s')$, then for $ \rho \in \mathcal{D}_1$ we have
\begin{equation} \nonumber
\vert k \cdot \omega + \alpha_\ell + \alpha_{\ell'} \vert \geq \kappa ( \langle s \rangle + \langle s' \rangle).
\end{equation}
if $ \max (\vert s \vert, \vert s' \vert ) > 4 C c_0^{-1}N$, then we have:
\begin{align*}
\vert k \cdot \omega + \alpha_\ell + \alpha_{\ell'} \vert & \geq  \lambda_\ell + \lambda_{\ell'}  -  \vert \alpha_\ell - \lambda_\ell \vert - \vert \alpha_{\ell'} - \lambda_{\ell '}\vert - \vert k \cdot \omega \vert \\
& \geq  c_0 ( \langle s \rangle + \langle s' \rangle ) - \frac{c_0}{8} - \frac{c_0}{8} - \frac{c_0}{4} ( \langle s \rangle + \langle s' \rangle ) \\
& \geq \kappa ( \langle s \rangle + \langle s' \rangle ).
\end{align*}
For $ \ell \in [s]$ and $ \ell' \in [s']$, we can solve \eqref{eqhmo4diag} term by term:
\begin{equation} \nonumber
^{\ell}_{\ell'} (\hat{S}_{\xi\xi})'_{[s],[s']} (k)=\frac{i}{ k \cdot \omega + \alpha_{\ell} + \alpha_{\ell'}} \: ^{\ell}_{\ell'} (\hat{f}_{\xi\xi})'_{[s],[s']} (k) \quad \text{for}\: \vert k \vert \leq N,
\end{equation}
\begin{equation} \nonumber
^{\ell}_{\ell'}(\hat{R}_{\xi\xi})'_{[s],[s']} (k) =\: ^{\ell}_{\ell'}(\hat{f}_{\xi\xi})'_{[s],[s']}(k) ,\quad \text{for}\: \vert k \vert > N.
\end{equation}
Using the same arguments as in the first three equations, we obtain for any $0 < \sigma < \sigma'$ that
\begin{align*}
 \Vert (\hat{S}_{\xi\xi})'_{[s],[s']} (k) \Vert_\infty & \leq \frac{C}{\kappa(\langle s \rangle + \langle s' \rangle)(\sigma-\sigma')^{n}} \Vert (\hat{f}_{\xi\xi})'_{[s],[s']} (k) \Vert_\infty,\\
\Vert (\hat{R}_{\xi\xi})'_{[s],[s']} (k) \Vert_\infty & \leq \frac{Ce^{-(\sigma-\sigma')N/2}}{(\sigma-\sigma')^{n}}  \Vert (\hat{f}_{\xi\xi})'_{[s],[s']} (k) \Vert_\infty.
\end{align*}
Recall that the matrices $P_{[s']}$ and $P_{[s]}$ are unitary, so we obtain
\begin{align}
\underset{\vert Im \theta \vert < \sigma'}{ \sup} \vert (S_{\xi\xi})(\theta) \vert_{\beta+} & \leq \frac{C}{ \kappa(\sigma-\sigma')^{n}} \underset{\vert Im \theta \vert < \sigma}{ \sup} \vert (f_{\xi\xi}) (\theta) \vert_\beta, \label{estim S eq 4-1} \\
\underset{\vert Im \theta \vert < \sigma'}{ \sup} \vert (R_{\xi\xi})(\theta) \vert_\beta & \leq \frac{Ce^{-(\sigma-\sigma')N/2}}{(\sigma-\sigma')^{n}} \underset{\vert Im \theta \vert < \sigma}{ \sup} \vert (f_{\xi\xi}) (\theta) \vert_\beta. \label{estim R eq 4-1}
\end{align}
These estimates leads to
\begin{equation} \nonumber
\underset{\vert Im \theta \vert < \sigma'}{ \sup} \vert  S_{\xi\xi}(\theta,\rho) \vert_{\beta+} \leq \frac{C}{\kappa \mu^2 (\sigma-\sigma')^{n}} \lc f^T \rc^{\alpha,\beta} _{\sigma,\mu,\mathcal{D}},
\end{equation}
and
\begin{equation} \nonumber
\underset{\vert Im \theta \vert < \sigma'}{ \sup} \vert  R_{\xi\xi}(\theta) \vert_\beta  \leq \frac{Ce^{-(\sigma-\sigma')N/2}}{\mu^2(\sigma-\sigma')^{n}} \lc f^T \rc^{\alpha,\beta} _{\sigma,\mu,\mathcal{D}},
\end{equation}
where $C$ depends on $n$ and $\beta$.

To obtain the estimates of the derivative of  $S_{\xi\xi}$ and $R_{\xi\xi}$ with respect to $\rho$, we differentiate the equation \eqref{homo4-1}). So we obtain
\begin{equation} \nonumber 
( k \cdot \omega I +{}^t Q )\partial_\rho \hat{S}_{\xi\xi} (k) +\partial_\rho \hat{S}_{\xi\xi} (k)Q= - \partial_\rho ( k \cdot \omega I +{}^t Q ) \hat{S}_{\xi\xi} (k) - \hat{S}_{\xi\xi} (k) \partial_\rho Q  + i \hat{f}_{\xi\xi} (k),
\end{equation}
for $\vert k \vert \leq N$, and
\begin{equation} \nonumber
\partial_\rho R_{\xi\xi} ( \theta) =  \underset{ \vert k \vert \geq N }{ \sum} \partial_\rho ( \hat{f}_{\xi\xi})(k) e^{ik\theta}.
\end{equation}
Using the same method as to prove the estimates \eqref{estim S eq 4-1} and  \eqref{estim R eq 4-1} we obtain:
\begin{align*} \nonumber
\underset{\vert Im \theta \vert < \sigma'}{ \sup} \vert \partial_\rho S_{\xi\xi}(\theta,\rho) \vert_{\beta+}  \leq \frac{C}{\kappa^2(\sigma-\sigma')^{2n}}\underset{\vert Im \theta \vert < \sigma}{ \sup} \vert f_{\xi\xi}(\theta,\rho) \vert_\beta + \frac{C}{\kappa(\sigma-\sigma')^{n}} \underset{\vert Im \theta \vert < \sigma'}{ \sup} \vert \partial_\rho f_{\xi\xi}(\theta,\rho) \vert_\beta.
\end{align*}
and
\begin{equation} \nonumber
\underset{\vert Im \theta \vert < \sigma'}{ \sup} \vert \partial_\rho R_{\xi\xi}(\theta) \vert_\beta  \leq \frac{Ce^{-(\sigma-\sigma')N/2}}{(\sigma-\sigma')^{n}} \underset{\vert Im \theta \vert < \sigma}{ \sup} \vert \partial_\rho f_{\xi\xi} (\theta) \vert_\beta,
\end{equation}
These estimates leads to
\begin{align*} \nonumber
\underset{\vert Im \theta \vert < \sigma'}{ \sup} \vert \partial_\rho S_{\xi\xi}(\theta,\rho) \vert_{\beta+} \leq  \frac{C}{\kappa^2 \mu^2 (\sigma-\sigma')^{2n}} \lc f^T \rc^{\alpha,\beta} _{\sigma,\mu,\mathcal{D}} +\frac{C}{\kappa \mu^2 (\sigma-\sigma')^{n}} \lc \partial_\rho f^T \rc^{\alpha,\beta} _{\sigma,\mu,\mathcal{D}}.
\end{align*}
and
\begin{equation} \nonumber
\underset{\vert Im \theta \vert < \sigma'}{ \sup} \vert \partial_\rho R_{\xi\xi}(\theta) \vert_\beta  \leq \frac{Ce^{-(\sigma-\sigma')N/2}}{\mu^2(\sigma-\sigma')^{n}} \lc \partial_\rho f^T \rc^{\alpha,\beta} _{\sigma,\mu,\mathcal{D}}.
\end{equation}
The constant $C$ depends on $n$, $ |\omega_0|_{\mathcal{C}^1 \left( \mathcal{D} \right) }$ and $\vert  A_0 \vert_{\beta,{\mathcal{C}^1 \left( \mathcal{D} \right) }}$.
\\To get the estimates on $ S_{\eta\eta}(\theta)$, $\partial_\rho S_{\eta\eta}(\theta)$ , $ R_{\eta\eta}(\theta)$ and $\partial_\rho R_{\eta\eta}(\theta)$ we follow the same procedure.

\textbf{The equation \eqref{homo4-3}.} It remains to solve the equation \eqref{homo4-3}. We start by decomposing the equation over the block $[s] \times [s']$
 \begin{align*}
k \cdot \omega (\hat{S}_{\xi\eta})_{[s],[s']} (k) - Q_{[s]} (\hat{S}_{\xi\eta})_{[s],[s']} (k) & +(\hat{S}_{\xi\eta})_{[s],[s']} (k)Q_{[s']}
\\ & = -i( \delta_{k,0}\hat{K}_{[s],[s']} - (\hat{f}_{\xi\eta})_{[s],[s']} (k) + (\hat{R}_{\xi\eta})_{[s],[s']}(k)).
\end{align*}
Using the same notation as in \eqref{eqhmo4diag} we obtain the following equation:
\begin{align}
 \label{eqhmo4diag-2}
k \cdot \omega (\hat{S}_{\xi\eta})'_{[s],[s']} (k) - \tilde{Q}_{[s]} (\hat{S}_{\xi\eta})'_{[s],[s']} (k) & +(\hat{S}_{\xi\eta})'_{[s],[s']} (k) \tilde{Q}_{[s']}\\ \nonumber
& = - i(\delta_{k,0}\hat{K}_{[s],[s']}' - (\hat{f}_{\xi\eta})'_{[s],[s']} (k) + (\hat{R}_{\xi\eta})'_{[s],[s']}(k)).
\end{align}
Let $\alpha_{\ell}$ denotes an eigenvalue of $\tilde{Q}_{[s]}$ and $\alpha_{\ell'}$ an eigenvalue of $\tilde{Q}_{[s']}$. To solve \eqref{homo4-3}, we need to find a lower bound of the modulus of
\begin{equation} \nonumber
k \cdot \omega + \alpha_{\ell} - \alpha_{\ell'}.
\end{equation} 
We will distinguish two cases: $k=0$ or $k \neq 0 $. Assume that $k =0$. We will distinguish two cases:
\begin{itemize}
\item If  $[s]=[s']$, then $k \cdot \omega + \alpha_{\ell} - \alpha_{\ell'}=0$. We solve the equation by posing
\begin{equation} \nonumber
(\hat{S}_{\xi\eta})'_{[s],[s]} (0)=0,\quad \hat{K}'_{[s],[s]}= (\hat{f}_{\xi\eta})'_{[s],[s]} (0).
\end{equation}
\item If $[s] \neq [s']$, then for $\ell \in [s]$ and $\ell' \in [s']$ and by hypothesis A1 we have 
\begin{align*}\nonumber
\vert \alpha_{\ell} - \alpha_{\ell'} \vert  \geq  c_1 (\vert | s| -|s'|  \vert) - \frac{\delta}{2} \geq \frac{c_1}{2}(\vert | s| -|s'|  \vert)  \geq \kappa (  1 + \vert  \vert s \vert - \vert s' \vert  \vert).
\end{align*}
In this case we solve the equation \eqref{eqhmo4diag-2} by posing
\begin{equation} \nonumber
(\hat{S}_{\xi\eta})'_{[s'],[s]} (0) = i \frac{(\hat{f}_{\xi\eta})'_{[s'],[s]}(0)}{ \alpha_{s'} - \alpha_{s}},\quad K'_{[s'],[s]}=0.
\end{equation}
\end{itemize}
Assume now that $k \neq 0$. Using the second Melnikov condition, we have: for $\vert k \vert \leq N$ there exists a closed subset $\mathcal{D}_2 \subset \mathcal{D}$ whose Lebesgue measure verifies:
\begin{equation} \nonumber
\operatorname{mes} ( \mathcal{D} \setminus \mathcal{D}_2) \leq C (\delta^{-1} \tilde \kappa)^\tau N^\iota,
\end{equation}
for $\tau, \iota >0$, such that for $\rho \in \mathcal{D}_2$ we have:
\begin{equation} \nonumber
\vert k \cdot \omega +\lambda_{\ell}-\lambda_{\ell'} \vert \geq 2 \tilde \kappa(1+ \vert \vert s \vert - \vert s' \vert \vert ).
\end{equation}
At the end of the resolution of the homological equation, we will fix $ \tilde \kappa$ such that $ \tilde{\kappa} < \kappa$. For $ \rho \in \mathcal{D}_2$  we have:
\begin{align*}
\vert k \cdot \omega + \alpha_{\ell'} - \alpha_{\ell}  \vert & \geq  2 \tilde{\kappa}(1+ \vert \vert s \vert - \vert s' \vert \vert ) - \frac{\delta}{4} \langle s \rangle^{-2 \beta} - \frac{\delta}{4}  \langle s' \rangle^{-2 \beta} \geq \tilde{\kappa}(1+ \vert \vert s \vert - \vert s' \vert \vert ),
\end{align*}
for $ \min ( \vert s \vert , \vert s' \vert ) \geq \left( \frac{\delta}{2 \tilde{\kappa}} \right)^{1/2\beta}$. Assume now that $ \min ( \vert s \vert , \vert s' \vert ) < \left( \frac{\delta}{2 \tilde{\kappa}} \right)^{1/2\beta}$. Without losing generality, suppose for example that $ \vert s \vert < \left( \frac{\delta}{2 \tilde{\kappa}} \right)^{1/2\beta}$. By the hypothesis $A2$, we have either 
\begin{equation} \nonumber
\vert k \cdot \omega+\lambda_{\ell}-\lambda_{\ell'} \vert \geq  \delta (1+ \vert s \vert - \vert s' \vert \vert ),
\end{equation}
and this implies that
\begin{align*}
\vert k \cdot\omega + \alpha_{\ell'} - \alpha_{\ell}  \vert  \geq \delta (1+ \vert \vert s \vert - \vert s' \vert \vert ) - \frac{\delta}{4}  - \frac{\delta}{4} \geq \kappa (1+ \vert \vert s \vert - \vert s' \vert \vert ),
\end{align*}
or there exists a unit vector $z_k \in \mathbb{R}^p$ such that:
\begin{equation} \label{drv_4_eq_hom}
 \langle \partial_\rho (  k \cdot \omega + \lambda_{\ell} - \lambda_{\ell'} ) , z_k     \rangle \geq \delta.
\end{equation}
Let us consider the second case. We Recall that $ \vert s \vert \leq \left( \frac{\delta}{2 \tilde{\kappa}} \right)^{1/2\beta}$.  Assume that $ \vert \vert s \vert - \vert s' \vert \vert \geq 2 C c_1^{-1}N$ where  $ C:= \vert \omega \vert_{\mathcal{C}^1 \left( \mathcal{D} \right)}+ \delta_0 \leq \vert \omega \vert_{\mathcal{C}^1 \left( \mathcal{D} \right)} +1 $. Then we have: 
\begin{align*}
\vert k \cdot\omega + \alpha_{\ell} - \alpha_{\ell'}  \vert  \geq   c_1 ( \vert \vert s \vert - \vert s' \vert \vert ) - \frac{c_1}{2} ( \vert \vert s \vert - \vert s' \vert \vert )  - \frac{\delta}{2}    \geq \kappa (1+ \vert \vert s \vert - \vert s' \vert \vert ).
\end{align*}
It remains a finite number of cases, i.e when $ \vert s \vert \leq \left( \frac{\delta}{2 \tilde{\kappa}} \right)^{1/2\beta}$ and $ \vert \vert s \vert - \vert s' \vert \vert < 2 C c_1^{-1}N$. We note that this implies that $ \vert s ' \vert \leq 2 C c_1^{-1}N + \left( \frac{\delta}{2 \tilde{\kappa}} \right)^{1/2\beta}$. Consider the following set
\begin{equation}  \nonumber
I(k,s,s') = \lbrace \rho \in \mathcal{D} \mid \vert k \cdot \omega +  \alpha_{\ell} - \alpha_{\ell'} \vert < \kappa ( 1+ \vert \vert s \vert - \vert s' \vert \vert)  \rbrace.
\end{equation}
Suppose that the eigenvalues of the hermitian matrix $H$ are analytic, then we obtain
\begin{equation} \nonumber
\vert \partial_\rho (\alpha_\ell - \lambda_\ell) \vert \leq  \Vert \partial_ \rho H_{[s]} \Vert \leq \Vert \partial_\rho H_{[s]} \Vert_{\infty} \leq \frac{1}{\langle s \rangle^{2\beta}} \vert \partial_\rho H(\rho) \vert_\beta \leq  \frac{\delta}{4\langle s\rangle ^{2\beta}} \leq \frac{\delta}{4}.
\end{equation}
By \eqref{drv_4_eq_hom}, the Lebesgue measure of $I(k,s,s')$ satisfies:
\begin{equation} \nonumber 
\operatorname{mes} I(k,s,s') \leq 8  \delta^{-1} \kappa \left( \left( \frac{\delta}{2 \tilde{\kappa}} \right)^{1/2\beta} + C c_1^{-1}N  \right).
\end{equation}
If the eigenvalues of $H$ are not analytic, we obtain the same estimates by density of the analytic functions in the space of continuous functions. Consider the following set
\begin{equation*}
 \mathcal{B}_1 = \Bigg\{ (k,s,s') \in \mathbb{Z}^n \times \mathcal{L}\times \mathcal{L} \mid \vert k \vert \leq N , \; \vert s \vert \leq {\left(\frac{\delta}{2 \tilde{\kappa}}
\right)}^{1/2\beta}, \vert s ' \vert \leq 2 C c_1^{-1}N + {\left(
\frac{\delta}{2 \tilde{\kappa}}\right)}^{1/2\beta}  \Bigg\}.
\end{equation*}

This set contains at most $N^n \left( \frac{\delta}{2 \tilde{\kappa}} \right)^{\frac{1}{2 \beta}} (  2 C c_1^{-1}N + \left( \frac{\delta}{2 \tilde{\kappa}} \right)^{\frac{1}{2 \beta}})  $ points. We define $ \displaystyle \mathcal{D}_2' = \mathcal{D} \setminus \bigcup_{(k,s,s') \in  \mathcal{B}_1} I(k,s,s')$. The set $ \mathcal{D}_2'$ satisfies
\begin{align*} \nonumber 
\operatorname{mes} \mathcal{D} \setminus \mathcal{D}_2' & \leq N^n \left( \frac{\delta}{2 \tilde{\kappa}} \right)^{1/2\beta} \left(  2 C c_1^{-1}N + \left( \frac{\delta}{2 \tilde{\kappa}} \right)^{1/2\beta} \right) \delta^{-1} \kappa \times \left( \left( \frac{\delta}{2 \tilde{\kappa}} \right)^{1/2\beta} + C c_1^{-1}N  \right)\\
& \leq N^{n+2 } \left( \frac{\delta}{ \tilde{\kappa}} \right)^{3/2\beta} C_1^{-2}C^2 \kappa \delta^{-1}.
\end{align*}
Let us fix $ \tilde{\kappa} = \delta \left( \frac{\kappa}{\delta} \right)^{\frac{6 \beta}{9 + 2 \beta}}$. For this choice, we have $\tilde{\kappa} < \kappa$, and
\begin{equation} \nonumber
\operatorname{mes} \mathcal{D} \setminus \mathcal{D}_2' \leq \tilde{C}  N^{n+2}  \left( \frac{\kappa}{\delta} \right)^{\frac{2 \beta}{9 + 2 \beta}}.
\end{equation}
By construction, for $\rho \in \mathcal{D}_2 \cap \mathcal{D}_2'$, we have
\begin{equation*}
\vert k \cdot \omega + \alpha_{\ell} - \alpha_{\ell'} \vert    \geq \kappa (1 + \vert  \vert s \vert - \vert s' \vert \vert ).
\end{equation*} 
For $\rho \in \mathcal{D}_2 \cap \mathcal{D}_2'$, $ \ell \in [s]$ and $ \ell' \in [s']$, we solve the equation  \eqref{eqhmo4diag-2} by posing 
\begin{equation} \nonumber
^{\ell}_{\ell'} (\hat{S}_{\xi\eta})'_{[s],[s']} (k)=\frac{i}{ k \cdot \omega + \alpha_{\ell} - \alpha_{\ell'}} \: ^{\ell}_{\ell'} (\hat{f}_{\xi\eta})'_{[s],[s']} (k) \quad \text{for}\: \vert k \vert \leq N,
\end{equation}
\begin{equation} \nonumber
^{\ell}_{\ell'}(\hat{R}_{\xi\eta})'_{[s],[s']} (k) =\: ^{\ell}_{\ell'}(\hat{f}_{\xi\eta})'_{[s],[s']}(k) ,\quad \text{for}\: \vert k \vert > N.
\end{equation}
The same reasoning as in the equation \eqref{homo4-1} gives us that:
\begin{align*}
\underset{\vert Im \theta \vert < \sigma'}{ \sup} \vert (S_{\xi\eta})(\theta) \vert_{\beta+} & \leq \frac{C}{ \kappa \mu^2 (\sigma-\sigma')^{n}}  \lc f^T \rc^{\alpha,\beta}_{\sigma,\mu,\mathcal{D}}, \\
\underset{\vert Im \theta \vert < \sigma'}{ \sup} \vert (R_{\xi\eta})(\theta) \vert_\beta & \leq \frac{Ce^{-(\sigma-\sigma')N/2}}{(\sigma-\sigma')^{n}}  \lc f^T \rc^{\alpha,\beta}_{\sigma,\mu,\mathcal{D}},\\
\underset{\vert Im \theta \vert < \sigma'}{ \sup} \vert \partial_\rho S_{\xi\eta}(\theta,\rho) \vert_{\beta+} & \leq \frac{C}{\kappa^2 \mu^2 (\sigma-\sigma')^{2n}} \lc f^T \rc^{\alpha,\beta}_{\sigma,\mu,\mathcal{D}}  +\frac{C}{\kappa \mu^2 (\sigma-\sigma')^{n}} \lc \partial_\rho f \rc^{\alpha,\beta}_{\sigma,\mu,\mathcal{D}},\\
\underset{\vert Im \theta \vert < \sigma'}{ \sup} \vert \partial_\rho R_{\xi\eta}(\theta) \vert_\beta & \leq \frac{Ce^{-(\sigma-\sigma')N/2}}{\mu^2(\sigma-\sigma')^{n}} \lc \partial_\rho f^T \rc^{\alpha,\beta} _{\sigma,\mu,\mathcal{D}},\\
\vert \partial^j_\rho \hat{K}(\rho) \vert_\beta & \leq \frac{\lc \partial_\rho^j f^T \rc^{\alpha,\beta} _{\sigma,\mu,\mathcal{D}}}{\mu^{2}},\quad j=0,1.
\end{align*}
the constant $C$ depends on  $n$, $ |\omega_0|_{\mathcal{C}^1 \left( \mathcal{D} \right) }$ and $\vert  A_0 \vert_{\beta,{\mathcal{C}^1 \left( \mathcal{D} \right) }}$.
\\This completes the resolution of the linear homological equation.

In this way we have constructed a solution $S_{\zeta\zeta}, R_{\zeta\zeta}$ , $\hat{K}$  of the fourth component of the linear homological equation which satisfies all required estimates. To guarantee that it is real, as at the of the resolution of the third equation, we replace $S_{\zeta\zeta}, R_{\zeta\zeta}$ , $\hat{K}$ by their real parts and extend it analytically to $\mathbb{T}^n_{\sigma'}$ (i.e. replace $S_{\zeta\zeta}(\theta, \rho)$ by $\frac{1}{2}(S_{\zeta\zeta}(\theta, \rho) + \bar{S}_{\zeta\zeta}(\bar{\theta}, \rho)))$.
\end{proof}
The following theorem summarizes the results obtained in the resolution of the linear homological equation.
\begin{theorem}\label{theoreme homo}
Let $0<\kappa < \frac{\delta}{4} \leq \frac{1}{8} \min (c_0, c_1)$ , $N>1$, $0<\sigma'<\sigma$, $\mu >0$ and $\omega : \mathcal{D} \to \mathbb{R}^n$ be $\mathcal{C}^1$ and verifying $\vert \omega - \omega_0 \vert_{\mathcal{C}^1(\mathcal{D})} \leq \delta_0$. Let $A : \mathcal{D} \to \mathcal{NF} \cap \mathcal{M}_\beta$ be $\mathcal{C}^1$ and satisfying $\vert A - A_0  \vert_{\beta, \mathcal{C}^1(\mathcal{D}} \leq \delta_0$.
Assume that $\partial^j_\rho f \in \mathcal{T}^{\alpha,\beta}(\sigma,\mu,\mathcal{D})$ for $j=0,1$. Then there exists a closed subset $\mathcal{D'} \equiv \mathcal{D'}(\kappa, N) \in \mathcal{D}$ such that
\begin{equation} \nonumber
\operatorname{mes} (\mathcal{D} \setminus \mathcal{D'}) \leq \tilde C d (\kappa \delta^{-1})^\iota N^\upsilon,
\end{equation}
For $\rho \in \mathcal{D'}$, there exist two real jet-functions $S=S^T$ and  $R=R^T$ such that $\partial_\rho^j S \in \mathcal{T}^{\alpha,\beta+}(\sigma',\mu,\mathcal{D'})$ and $\partial_\rho^j R \in \mathcal{T}^{\alpha,\beta+}(\sigma',\mu,\mathcal{D'})$  and a normal form 
\begin{equation} \nonumber
\hat{h} (\rho)=\int_{\mathbb{T}^n} f(\theta,0,0,\rho)d\theta+\int_{\mathbb{T}^n} f_r(\theta,0,0,\rho)d\theta .r+\frac{1}{2} \langle \zeta,\hat{K}(\rho)\zeta\rangle,
\end{equation}
satisfying
\begin{equation} \nonumber
\lbrace h, S \rbrace + f^T = \hat{h} + R.
\end{equation}
Furthermore, for $ \rho \in \mathcal{D'} $ we have:
\begin{equation} \label{estima B}
\vert \partial_\rho^j \hat{K}(\rho) \vert_\beta  \leq \frac{\llbracket \partial_\rho^j f^T \rrbracket^{\alpha,\beta} _{\sigma,\mu,\mathcal{D}}}{\mu^{2}}, \quad j=0,1
\end{equation}
\begin{equation} \label{estim sol eq homo bis}
\llbracket S \rrbracket^{\alpha,\beta+,\kappa} _{\sigma',\mu,\mathcal{D}'} \leq \frac{C}{\kappa(\sigma-\sigma')^{2n}} \llbracket f^T \rrbracket^{\alpha,\beta,\kappa} _{\sigma,\mu,\mathcal{D}}, 
\end{equation}
\begin{equation} \label{estim sol eq homo bis 2}
\llbracket R \rrbracket^{\alpha,\beta,\kappa} _{\sigma',\mu,\mathcal{D}'} \leq \frac{C e^{-(\sigma-\sigma')N/2}}{(\sigma-\sigma')^{n}} \llbracket f^T \rrbracket^{\alpha,\beta,\kappa} _{\sigma,\mu,\mathcal{D}}, 
\end{equation}
The two exponent $ \iota$ and $\upsilon$ are positive, the constant $\tilde C$ depends on $ |\omega_0|_{\mathcal{C}^1 \left( \mathcal{D} \right) }$, $c_0$ and $c_1$ while $C$ depends on $n$, $\beta$, $ |\omega_0|_{\mathcal{C}^1 \left( \mathcal{D} \right) }$ and $\vert  A_0 \vert_{\beta,{\mathcal{C}^1 \left( \mathcal{D} \right) }}$.
\end{theorem}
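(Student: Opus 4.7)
The plan is to synthesize the four propositions of this section, each of which solves one of the component equations obtained from the jet Ansatz. With $S$ and $R$ written as jet functions of the common shape $S_\theta + S_r\,r + \langle S_\zeta,\zeta\rangle + \tfrac{1}{2}\langle S_{\zeta\zeta}\zeta,\zeta\rangle$ and $\hat h = C + \chi\,r + \tfrac{1}{2}\langle \zeta,\hat K\zeta\rangle$, the identity $\{h,S\}+f^T=\hat h+R$ decouples into the four equations \eqref{eq homo 1}, \eqref{eq homo 2}, the $S_\zeta$ equation, and the $S_{\zeta\zeta}$ equation treated above, with
\[
C(\rho)=\int_{\mathbb T^n}\!f(\theta,0,0,\rho)\,d\theta,\qquad \chi(\rho)=\int_{\mathbb T^n}\!f_r(\theta,0,0,\rho)\,d\theta
\]
forced by compatibility, and $\hat K$ supplied by the fourth proposition.

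Next I would apply those four propositions in turn to extract $(S_\theta,R_\theta)$, $(S_r,R_r)$, $(S_\zeta,R_\zeta)$, and $(S_{\zeta\zeta},R_{\zeta\zeta},\hat K)$ on closed sets $\mathcal D_1,\mathcal D_2,\mathcal D_3,\mathcal D_4\subset\mathcal D$. Setting $\mathcal D':=\bigcap_{i=1}^{4}\mathcal D_i$, the union bound reduces $\operatorname{mes}(\mathcal D\setminus\mathcal D')$ to the largest individual excision. The first three equations contribute losses of the polynomial form $\tilde C d\,\kappa\delta^{-1}N^{\ast}$, all of which are dominated by the fourth equation's bound $\tilde C d\,(\delta^{-1}\kappa)^{2\beta/(9+2\beta)}N^{n+3}$, giving the claimed $\tilde C d\,(\kappa\delta^{-1})^\iota N^\upsilon$ after renaming the exponents.

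What remains is to convert the four families of component estimates into the composite bounds \eqref{estim sol eq homo bis} and \eqref{estim sol eq homo bis 2} in the norm $\lc \cdot\rc^{\alpha,\beta+,\kappa}_{\sigma',\mu,\mathcal D'}$. For the jet $S$ this amounts to four pointwise checks on $\mathcal O^\alpha(\sigma',\mu)$: the bound on $|S|$ follows by inserting $|r|\le\mu^2$ and $\Vert\zeta\Vert_\alpha\le\mu$ into the jet expansion and summing the component estimates; the bound on $\Vert\nabla_\zeta S\Vert_\alpha$ uses the third proposition's $\Vert S_\zeta\Vert_{\alpha+1}$ output (which controls the $\alpha$-norm with room to spare) together with $\nabla_\zeta S = S_\zeta + S_{\zeta\zeta}\zeta$; the bound on $|\nabla_\zeta S|_{\beta+}$ uses the third proposition for $S_\zeta$ and Lemma~\ref{norme}(4) to estimate $|S_{\zeta\zeta}\zeta|_{\beta+}$; the bound on $|\nabla_\zeta^2 S|_{\beta+}$ is simply $|S_{\zeta\zeta}|_{\beta+}$ from the fourth proposition. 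The $\rho$-derivative bounds combine in exactly the same way, with a factor $\kappa^{-2}$ appearing whenever $\partial_\rho$ strikes a small divisor and $\kappa^{-1}$ when it strikes $f^T$; these are precisely the combinations that the definition of $\lc \cdot\rc^{\alpha,\beta+,\kappa}$ repackages. The estimate on $R$ follows in parallel, with every $\kappa^{-1}$ loss replaced by the exponentially small factor $e^{-(\sigma-\sigma')N/2}$.

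The main obstacle is bookkeeping rather than any new analytic input: one must track carefully the $(\sigma-\sigma')^{-n}$ versus $(\sigma-\sigma')^{-2n}$ losses (the latter arising only when $\partial_\rho$ lands on a divisor), the $\mu$-weights appearing in the component bounds, and the fact that the $\kappa$-weighted norm absorbs both the $\kappa^{-1}$ and $\kappa^{-2}$ factors into the single coefficient $C/(\kappa(\sigma-\sigma')^{2n})$ that appears on the right-hand side of \eqref{estim sol eq homo bis}. Once these identifications are made the theorem follows immediately from the four propositions and Lemma~\ref{norme}.
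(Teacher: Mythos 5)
Your proposal is correct and takes essentially the same approach as the paper. Indeed, the paper states this theorem explicitly as a summary ("The following theorem summarizes the results obtained in the resolution of the linear homological equation") without writing out a separate proof, and what you describe — decomposing $\{h,S\}+f^T=\hat h+R$ into the four component equations, extracting $(S_\theta,R_\theta)$, $(S_r,R_r)$, $(S_\zeta,R_\zeta)$, $(S_{\zeta\zeta},R_{\zeta\zeta},\hat K)$ from the preceding propositions on their respective good sets, intersecting to get $\mathcal D'$, applying the union bound for the measure, and reassembling the component estimates into the $\lc\cdot\rc^{\alpha,\beta+,\kappa}$ norms using the jet structure together with Lemma~\ref{norme} — is precisely the implicit argument the paper relies on. One small imprecision worth noting: the dominant power of $N$ in the measure loss actually comes from the first proposition ($N^{2(n+1)}$), not from the fourth equation's $N^{n+3}$ as you suggest, while the dominant power of $\kappa\delta^{-1}$ (namely $(\kappa\delta^{-1})^{2\beta/(9+2\beta)}$) does come from the fourth equation; since the theorem only asserts the existence of some positive $\iota$ and $\upsilon$, this does not affect the conclusion.
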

\begin{remark} 
By \eqref{estimation jet}, all previous estimates remain valid if we replace $f^T$ by $f$.
\end{remark}
\subsection{nonlinear homological equation}
In this section we will solve the nonlinear homological equation \eqref{non-linear-hom-eq} using the linear one. We start by stating the main result of this section
\begin{proposition}\label{proposition homo NL}
Let $0<\kappa < \frac{\delta}{4} \leq \frac{1}{8} \min (c_0, c_1)$ , $N>1$, $0<\sigma'<\sigma$, $\mu >0$ and $\omega : \mathcal{D} \to \mathbb{R}^n$ be $\mathcal{C}^1$ and satisfying $\vert \omega - \omega_0 \vert_{\mathcal{C}^1(\mathcal{D})} \leq \delta_0$. Let $A : \mathcal{D} \to \mathcal{NF} \cap \mathcal{M}_\beta$ be $\mathcal{C}^1$ and verifying $ \vert A - A_0  \vert_{\beta, \mathcal{C}^1(\mathcal{D}} \leq \delta_0$.
Assume that $\partial^j_\rho f \in \mathcal{T}^{\alpha,\beta}(\sigma,\mu,\mathcal{D})$ for $j=0,1$. Then there exists a closet subset  $\mathcal{D'} \equiv \mathcal{D'}(\kappa, N) \in \mathcal{D}$ such that
\begin{equation} \nonumber
\operatorname{mes} (\mathcal{D} \setminus \mathcal{D'}) \leq \tilde C d (\kappa \delta^{-1})^\iota N^\upsilon.
\end{equation}
For $\rho \in \mathcal{D'}$ there exist two real jet-functions $S=S^T$ and  $R=R^T$ such that $\partial_\rho^j S \in \mathcal{T}^{\alpha,\beta+}(\sigma',\mu,\mathcal{D'})$ and $\partial_\rho^j R \in \mathcal{T}^{\alpha,\beta+}(\sigma',\mu,\mathcal{D'})$  and a normal form $\hat{h}$ 
\begin{equation} \nonumber
\hat{h}= \hat{\omega} (\rho) \cdot r + \frac{1}{2} \langle \zeta , \hat K(\rho) \zeta \rangle,
\end{equation}
(up to a constant) such that
\begin{equation} \nonumber
\lbrace h, S \rbrace + \lbrace f-f^T ,S \rbrace^T + f^T = \hat{h} + R.
\end{equation}
Furthermore, for $\rho \in \mathcal{D'} $ we have:
\begin{equation} \label{estim new ham NL}
\llbracket \hat{h} \rrbracket^{\alpha,\beta,\kappa} _{\sigma',\mu',\mathcal{D}'} \leq C \left(1 + \frac{ \mu' }{\Pi \kappa \mu^3} \Xi + \frac{1}{\Pi \kappa^2 \mu^3 \mu'} \Xi^2 \right) \varepsilon,
\end{equation}
\begin{equation} \label{estim sol eq hom NL}
\llbracket R \rrbracket^{\alpha,\beta,\kappa} _{\sigma',\mu',\mathcal{D}'} \leq C \frac{\Delta}{\Pi}\left(1 + \frac{ \mu'}{ \kappa \mu^3}  \Xi  + \frac{1}{ \kappa^2 \mu^3 \mu'} \Xi^2 \right) \varepsilon,
\end{equation}
\begin{equation} \label{estim reste sol hom NL}
\llbracket S \rrbracket^{\alpha,\beta+,\kappa} _{\sigma',\mu',\mathcal{D}'} \leq \frac{C}{\Pi\kappa}\left(1 + \frac{ \mu' }{ \kappa \mu^3} \Xi  + \frac{1}{ \kappa^2 \mu^3 \mu'} \Xi^2 \right) \varepsilon,
\end{equation}
\begin{equation} \label{fréquence eq homo NL}
\vert \partial^j_\rho \hat{\omega} (\rho)  \vert \leq  C \left[  1+ \frac{\Xi}{\Pi \kappa (\mu^2-\mu'^2)} \left( 1 + \frac{\mu'}{\kappa \mu^3} \Xi + \frac{\mu'^2}{\kappa \mu^4} \Xi \right) \right] \frac{\varepsilon}{\mu^2}, \quad j=0,1,
\end{equation}
\begin{equation} \label{partie quad eq hom NL}
\vert \partial^j_\rho \hat K (\rho) \vert_\beta \leq C \left( 1 + \frac{\mu'}{\Pi \kappa \mu^3} \Xi  + \frac{\mu'^2}{\Pi \mu^6 \kappa^2} \Xi^2   \right) \frac{\varepsilon}{\mu^2}, \quad j=0,1,
\end{equation}
where we used the following notations
\begin{align*}
\begin{array}{ll}
\Delta  = e^{-( \sigma-\sigma')N/10}, \quad & \Pi = (\sigma-\sigma')^{6n+2},\\
\:\:\varepsilon  = \llbracket f^T \rrbracket^{\alpha,\beta,\kappa} _{\sigma,\mu,\mathcal{D}}, \quad & \Xi = \llbracket f \rrbracket^{\alpha,\beta,\kappa} _{\sigma,\mu,\mathcal{D}}.
\end{array}
\end{align*}
The two exponent $ \iota$ and $\upsilon$ are positive, the constant $\tilde C$ depends on $ |\omega_0|_{\mathcal{C}^1 \left( \mathcal{D} \right) }$, $c_0$ and $c_1$ while $C$ depends on $n$, $\beta$, $ |\omega_0|_{\mathcal{C}^1 \left( \mathcal{D} \right) }$ and $\vert  A_0 \vert_{\beta,{\mathcal{C}^1 \left( \mathcal{D} \right) }}$.
\end{proposition}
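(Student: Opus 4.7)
The plan is to reduce the nonlinear homological equation to a Picard iteration whose linearization at each step is exactly the linear homological equation solved in Theorem~\ref{theoreme homo}. Initialise $S^{(0)}=0$, $R^{(0)}=0$, $\hat h^{(0)}=0$, and for each $n\ge 0$ apply Theorem~\ref{theoreme homo} to the jet source
\[ \phi^{(n)}:=f^{T}+\{f-f^{T},S^{(n)}\}^{T}, \]
obtaining a jet $S^{(n+1)}$, a normal form $\hat h^{(n+1)}$ and a remainder $R^{(n+1)}$ satisfying $\{h,S^{(n+1)}\}+\phi^{(n)}=\hat h^{(n+1)}+R^{(n+1)}$ on a closed subset whose removed measure is controlled by $\tilde C d(\ka\delta^{-1})^{\iota}N^{\upsilon}$. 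I run the iteration on a nested family $(\sigma_n,\mu_n)$ interpolating between $(\sigma,\mu)$ and $(\sigma',\mu')$, for instance $\sigma_n=\sigma-\tfrac12(\sigma-\sigma')(1-2^{-n})$ and analogously for $\mu_n$, so that each invocation of Theorem~\ref{theoreme homo} is legitimate on a shrunken domain and the compounded losses in $(\sigma-\sigma')^{-1}$ and $\mu^{-1}$ remain summable.

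The contraction step is the heart of the argument. Setting $T^{(n)}:=S^{(n+1)}-S^{(n)}$ and subtracting consecutive iterates gives
\[ \{h,T^{(n)}\}+\{f-f^{T},T^{(n-1)}\}^{T}=(\hat h^{(n+1)}-\hat h^{(n)})+(R^{(n+1)}-R^{(n)}). \]
Combining the Poisson-bracket bound of Lemma~\ref{estim crochet de poisson} with the jet gain of Lemma~\ref{estimation jet fonction} controls the new source by
\[ \lc \{f-f^{T},T^{(n-1)}\}^{T}\rc^{\a,\b,\ka}_{\sigma_{n},\mu_{n},\D} \le \frac{C\,\mu_{n}\,\Xi}{(\sigma-\sigma')\,\mu^{3}}\;\lc T^{(n-1)}\rc^{\a,\b+,\ka}_{\sigma_{n-1},\mu_{n-1},\D}, \]
and then Theorem~\ref{theoreme homo} supplies a further $C/(\ka(\sigma-\sigma')^{2n})$ factor, so that the step ratio is bounded by $q:=C\mu'\Xi/(\Pi\ka\mu^{3})$. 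Smallness of $\Xi$ (inherited from the KAM smallness assumption) forces $q\le 1/2$ and yields geometric convergence of $\sum_{n}\lc T^{(n)}\rc^{\a,\b+,\ka}$.

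The limits $S=\sum_{n\ge 0}T^{(n)}$, $\hat h=\lim_{n}\hat h^{(n)}$, $R=\lim_{n}R^{(n)}$ solve the nonlinear homological equation on $\D'=\bigcap_{n}\D_{n+1}$, whose complement still obeys the stated measure bound because the sum of the individual exceptional sets can be absorbed into $\tilde C$ after updating $\upsilon$. The quantitative bounds \eqref{estim new ham NL}--\eqref{partie quad eq hom NL} follow by telescoping: the constant term $\varepsilon$ comes from $S^{(1)}$, obtained from Theorem~\ref{theoreme homo} applied directly to $f^{T}$; the linear-in-$\Xi$ term comes from $T^{(1)}=S^{(2)}-S^{(1)}$, whose source is the first genuine Poisson bracket $\{f-f^{T},S^{(1)}\}^{T}$; and the quadratic-in-$\Xi$ tail majorises $\sum_{n\ge 2}T^{(n)}$ via the geometric series $q^{2}/(1-q)\lesssim q^{2}$. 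The $r$- and $\zeta\zeta$-components of $\hat h$, namely $\hat\om$ and $\hat K$, are then read off as the $\theta$-averages of $\partial_{r}\phi^{(\infty)}(\theta,0,0)$ and $\partial^{2}_{\zeta\zeta}\phi^{(\infty)}(\theta,0,0)$ respectively, producing \eqref{fréquence eq homo NL} and \eqref{partie quad eq hom NL} after Cauchy estimates on the disc of radius $\mu$. The $\partial_{\rho}$-derivative statements follow by running the same argument with the $\ka$-weighted norm $\lc\cdot\rc^{\a,\b+,\ka}$, using the $\mathcal{C}^{1}$ version of Theorem~\ref{theoreme homo} and the $\rho$-differentiated form of Lemma~\ref{estim crochet de poisson} at each iteration.

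The hard part is the accounting of the three entangled small parameters $\sigma-\sigma'$, $\mu-\mu'$ and $\ka$. Each Picard step consumes a sliver of $\sigma-\sigma'$ through Lemma~\ref{estim crochet de poisson} and, asymmetrically, of $\mu$ and $\mu'$: the jet gain $\lc f-f^{T}\rc\lesssim(\mu'/\mu)^{3}\Xi$ demands $\mu'<\mu$ while the bracket and the linear solver introduce $\mu'^{-2}$ and $\ka^{-2}$ losses, so the nested radii $(\sigma_{n},\mu_{n})$ must be tuned so that the total incurred loss fits inside the polynomial $\Pi=(\sigma-\sigma')^{6n+2}$ of the final estimates, and the extra $(\mu^{2}-\mu'^{2})^{-1}$ in \eqref{fréquence eq homo NL} is produced by a final Cauchy estimate in the $r$-variable across the two radii. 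A secondary but essential structural point is that the matrix $\hat K^{(n+1)}$ produced at each iteration stays in $\NF\cap\M_{\b}$: this is preserved step by step because the source $\phi^{(n)}$ inherits from $f$ and from the class-preserving operation $\{\cdot,\cdot\}^{T}$ the normal-form block structure required by Theorem~\ref{theoreme homo}, which then propagates it to $\hat K^{(n+1)}$.
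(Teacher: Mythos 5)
Your Picard iteration breaks down at the contraction step, and the breakdown is not a bookkeeping issue but a structural one. Each application of Theorem~\ref{theoreme homo} to a source on $\mathbb{T}^n_{\sigma_n}$ returns a solution only on $\mathbb{T}^n_{\sigma_{n+1}}$, at the cost of a factor $(\sigma_n-\sigma_{n+1})^{-2n}$, and the Poisson-bracket bound of Lemma~\ref{estim crochet de poisson} costs another $(\sigma_n-\sigma_{n+1})^{-1}$. With your geometric schedule $\sigma_n-\sigma_{n+1}\sim 2^{-n}(\sigma-\sigma')$ the loss at step $n$ is of order $2^{(2n+1)n}$, so the ratio between $\lc T^{(n)}\rc$ and $\lc T^{(n-1)}\rc$ is not the fixed $q=C\mu'\Xi/(\Pi\kappa\mu^3)$ you wrote down but rather $q\cdot 2^{(2n+1)n}/\Pi_n$ with $\Pi_n$ shrinking geometrically, and the cumulative product contains a divergent factor of order $2^{cn^2}$. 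No choice of nested radii interpolating between $(\sigma,\mu)$ and $(\sigma',\mu')$ removes this: an \emph{infinite} Picard iteration with merely geometric smallness of $\Xi$ cannot beat an inverse power of a domain loss that must be repeated at every step. That requires either super-exponential (Newton-type) convergence, which this scheme does not provide, or a terminating iteration.

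The missing idea, which the paper uses, is that the iteration \emph{does} terminate — in exactly three steps — because the map $S\mapsto\{f-f^T,S\}^T$ raises the degree of a jet. Since $f-f^T$ has a vanishing jet, the bracket with a degree-$0$ jet $S_0(\theta)$ has jet components only of degree $\ge 1$; the bracket with the degree-$1$ part $S_r(\theta)r+\langle S_\zeta(\theta),\zeta\rangle$ has jet components only of degree $2$; and the bracket with the degree-$2$ part $\frac12\langle S_{\zeta\zeta}\zeta,\zeta\rangle$ has identically zero jet. So if you decompose $S=S_0+S_1+S_2$ by degree, the nonlinear homological equation is lower-triangular, and one solves three \emph{linear} homological equations with sources $f^T$, $\{f-f^T,S_0\}^T$, $\{f-f^T,S_1\}^T$ in succession. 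Only a fixed number of domain shrinkages are consumed, which is precisely what produces the explicit exponent $6n+2$ in $\Pi$. Your construction would in fact converge after three steps for this reason — but your proof does not say so, and the convergence argument you give in its place does not hold. To repair the proposal, replace the infinite Picard scheme by the finite triangular decomposition and you recover the paper's argument.
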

\begin{proof}
Consider $\sigma':=\sigma_5 <\sigma_4<\sigma_3<\sigma_2<\sigma_1<\sigma_0=:\sigma$ an arithmetic progression, and $\mu':=\mu_5<\mu_4<\mu_3<\mu_2<\mu_1<\mu_0=:\mu$ a geometric progression. We will construct two jet-functions $S$ and $R$ and a normal form $ \hat{h}$ verifying the following nonlinear homological equation
\begin{equation} \label{eq-hom-NL}
\lbrace h,S \rbrace + \lbrace f-f^T,S\rbrace^T + f^T = \hat{h} + R,
\end{equation}
We decompose $S$ as follows $=S_0+S_1+S_2$, where
\begin{equation} \label{decomp jet NL}
S_0(\theta):= S_\theta(\theta); \quad S_1(\theta,r):= S_r(\theta)r+\langle S_\zeta(\theta),\zeta \rangle;\quad S_2(\theta,\zeta):=\frac{1}{2} \langle S_{\zeta\zeta}(\theta)\zeta,\zeta \rangle.
\end{equation}
Similarly we decompose $h^+$ and $R$ in three parts:
\begin{equation} \nonumber
\hat{h}=\hat{h}^0+\hat{h}^1+\hat{h}^2, \quad R=R^0+R^1+R^2.
\end{equation}
We remark that $\lbrace f-f^T,S_3\rbrace^T =0$, so we can decompose \eqref{eq-hom-NL} in three equations
\begin{subequations}
\begin{equation} \label{eq-hom-NL1}
\lbrace h,S_0 \rbrace + f^T = \hat{h}^0+ R^0,
\end{equation}
\begin{equation} \label{eq-hom-NL2}
\lbrace h,S_1 \rbrace + f^T_1 = \hat{h}^1+ R^1 \mbox{ where } f_1= \lbrace f-f^T,S_0 \rbrace
\end{equation}
\begin{equation} \label{eq-hom-NL3}
\lbrace h,S_2 \rbrace + f^T_2 = \hat{h}^2+ R^2 \mbox{ where } f_2= \lbrace f-f^T,S_1 \rbrace
\end{equation}
\end{subequations} 
To solve \eqref{eq-hom-NL}, it suffices to solve successively the three previous homological equations. Moreover, each of these equations is of the same type as the linear homological equation solved in the previous section.

To ease notations, we define
\begin{equation} \nonumber
\Delta  = e^{-( \sigma-\sigma')N/10}, \quad  \Pi = (\sigma-\sigma')^{6n+2}, \quad \varepsilon  = \llbracket f^T \rrbracket^{\alpha,\beta,\kappa} _{\sigma,\mu,\mathcal{D}}, \quad  \Xi = \llbracket f \rrbracket^{\alpha,\beta,\kappa} _{\sigma,\mu,\mathcal{D}}.
\end{equation}
Thanks to Theorèm~\ref{theoreme homo}, we have
\begin{equation} \nonumber
\llbracket \hat{h}^0 \rrbracket^{\alpha,\beta,\kappa} _{\sigma_1,\mu,\mathcal{D}'}  \leq \varepsilon \; , \quad \llbracket R^0 \rrbracket^{\alpha,\beta,\kappa} _{\sigma_1,\mu,\mathcal{D}'} \leq \frac{C e^{-(\sigma-\sigma')N/2}}{(\sigma-\sigma')^{n}} \varepsilon, 
\end{equation}
\begin{equation} \label{S-0 eq hom NL}
\llbracket S_0 \rrbracket^{\alpha,\beta+,\kappa} _{\sigma_1,\mu,\mathcal{D}'} \leq \frac{C}{\kappa(\sigma-\sigma')^{2n}} \varepsilon.
\end{equation}
Let us now consider the equation \eqref{eq-hom-NL2}. By Lemma~\ref{estim crochet de poisson} and Lemma~\ref{estimation jet fonction},  $f_1$ satisfy
\begin{align*}
\llbracket f_1 \rrbracket^{\alpha,\beta,\kappa} _{\sigma_2,\mu_2,\mathcal{D}'}  & \leq \frac{C}{(\sigma-\sigma') \mu'^2} \llbracket f-f^T \rrbracket^{\alpha,\beta,\kappa} _{\sigma_1,\mu_1,\mathcal{D}'} \llbracket S_0 \rrbracket^{\alpha,\beta+,\kappa} _{\sigma_1,\mu_1,\mathcal{D}'}\leq \frac{C \mu'}{(\sigma-\sigma')^{2n+1} \kappa \mu^3} \varepsilon \Xi.
\end{align*}
Thus, thanks to the Theorem~\ref{theoreme homo}, we obtain:
\begin{align*} 
\llbracket \hat{h}^1 \rrbracket^{\alpha,\beta,\kappa} _{\sigma_2,\mu_2,\mathcal{D}'}  & \leq \llbracket f^T_1 \rrbracket^{\alpha,\beta,\kappa} _{\sigma_2,\mu_2,\mathcal{D}} \leq \frac{C \mu'}{(\sigma-\sigma')^{2n+1} \kappa \mu^3} \varepsilon \Xi.
\end{align*}
\begin{align*} \nonumber
 \llbracket R^1 \rrbracket^{\alpha,\beta,\kappa} _{\sigma_3,\mu_2,\mathcal{D}'} & \leq \frac{C\Delta}{(\sigma-\sigma')^{n}} \llbracket f^T_1 \rrbracket^{\alpha,\beta,\kappa} _{\sigma_2,\mu_2,\mathcal{D}} \leq \frac{C \Delta\mu'}{(\sigma-\sigma')^{3n+1}\kappa \mu^3}\varepsilon \Xi.
\end{align*}
\begin{equation} \label{S-1 eq homo NL}
\llbracket S_1 \rrbracket^{\alpha,\beta+,\kappa} _{\sigma_3,\mu_2,\mathcal{D}'}  \leq \frac{C}{(\sigma-\sigma')^{2n}\kappa} \llbracket f^T_1 \rrbracket^{\alpha,\beta,\kappa} _{\sigma_2,\mu_2,\mathcal{D}} \leq \frac{C \mu'}{(\sigma- \sigma')^{4n+1} \kappa^2 \mu^3} \varepsilon \Xi.
\end{equation}
Consider now the third equation \eqref{eq-hom-NL3}. By Lemma~\ref{estim crochet de poisson} and Lemma~\ref{estimation jet fonction},  $f_2$ satisfy
\begin{align*}
\llbracket f_2 \rrbracket^{\alpha,\beta,\kappa} _{\sigma_4,\mu_4,\mathcal{D}'}  & \leq \frac{C}{(\sigma-\sigma') \mu'^2} \llbracket f-f^T \rrbracket^{\alpha,\beta,\kappa} _{\sigma_1,\mu_1,\mathcal{D}'} \llbracket S_1 \rrbracket^{\alpha,\beta+,\kappa} _{\sigma_3,\mu_2,\mathcal{D}'} \leq \frac{C}{(\sigma-\sigma')^{4n+2} \kappa^2 \mu^3 \mu'} \varepsilon \Xi^2.
\end{align*}
So, by Theorem~\ref{theoreme homo}, we have
\begin{align*} 
\llbracket \hat{h}^1 \rrbracket^{\alpha,\beta,\kappa} _{\sigma_2,\mu_2,\mathcal{D}'}  & \leq \llbracket f^T_2 \rrbracket^{\alpha,\beta,\kappa} _{\sigma_4,\mu_4,\mathcal{D}} \leq \frac{C}{(\sigma-\sigma')^{4n+2} \kappa^2 \mu^3 \mu'} \varepsilon \Xi^2.
\end{align*}
\begin{align*} \nonumber
 \llbracket R^2 \rrbracket^{\alpha,\beta,\kappa} _{\sigma_5,\mu_4,\mathcal{D}'}  \leq \frac{C \Delta}{(\sigma-\sigma')^{n}} \llbracket f^T_2 \rrbracket^{\alpha,\beta,\kappa} _{\sigma_4,\mu_4,\mathcal{D}} \leq \frac{C \Delta}{(\sigma-\sigma')^{5n+2}\kappa^2 \mu^3\mu'}\varepsilon \Xi^2.
\end{align*}
\begin{align*} \nonumber
\llbracket S_2 \rrbracket^{\alpha,\beta+,\kappa} _{\sigma_5,\mu_4,\mathcal{D}'}  \leq \frac{C}{(\sigma-\sigma')^{2n} \kappa} \llbracket f^T_2 \rrbracket^{\alpha,\beta,\kappa} _{\sigma_4,\mu_4,\mathcal{D}} \leq \frac{C}{(\sigma- \sigma')^{6n+2} \kappa^2 \mu^3 \mu'} \varepsilon \Xi^2.
\end{align*}
Thus the unknowns of the nonlinear homological equation \eqref{eq-hom-NL} satisfy
\begin{equation} \nonumber
\llbracket \hat{h} \rrbracket^{\alpha,\beta,\kappa} _{\sigma',\mu',\mathcal{D}'} \leq C \left(1 + \frac{ \mu' \Xi}{(\sigma-\sigma')^{2n+1} \kappa \mu^3}  + \frac{\Xi^2}{(\sigma-\sigma')^{4n+2} \kappa^2 \mu^3 \mu'} \right) \varepsilon.
\end{equation}
\begin{equation} \nonumber
\llbracket R \rrbracket^{\alpha,\beta,\kappa} _{\sigma',\mu',\mathcal{D}'} \leq \frac{C \Delta}{(\sigma-\sigma')^{5n+2}}\left(1 + \frac{ \mu' \Xi}{ \kappa \mu^3}  + \frac{\Xi^2}{ \kappa^2 \mu^3 \mu'} \right) \varepsilon.
\end{equation}
\begin{equation} \nonumber
\llbracket S \rrbracket^{\alpha,\beta+,\kappa} _{\sigma',\mu',\mathcal{D}'} \leq \frac{C}{(\sigma-\sigma')^{6n+2}\kappa}\left(1 + \frac{ \mu' \Xi}{ \kappa \mu^3}  + \frac{\Xi^2}{ \kappa^2 \mu^3 \mu'} \right) \varepsilon.
\end{equation}
It remains to prove the estimates \eqref{fréquence eq homo NL}-\eqref{partie quad eq hom NL}. Let us start with the new frequency $\hat{\omega}$. Thanks to Theorem~\ref{theoreme homo} the new new frequency $\hat{\omega}$ is given by
\begin{equation} \nonumber
\hat\omega= \int_{\mathbb{T}^n} \nabla_r f(\theta,0,0,\rho)d\theta + \int_{\mathbb{T}^n} \nabla_r f_1(\theta,0,0,\rho)d\theta +\int_{\mathbb{T}^n} \nabla_r f_2 (\theta,0,0,\rho)d\theta.
\end{equation}
A simple computation gives
\begin{equation} \nonumber
\hat\omega= \hat{\omega}_1 + \hat{\omega}_2 + \hat{\omega}_3 + \hat{\omega}_4,
\end{equation}
where
\begin{align*}
\hat{\omega}_1 & := \int_{\mathbb{T}^n}   \nabla_r f(\theta,0,0,\rho)d\theta & \hat{\omega}_2 & :=  \int_{\mathbb{T}^n} \nabla_{rr} f(\theta,0,0,\rho) \nabla_\theta S_0 (\theta,0,0,\rho) d\theta \\
\hat{\omega}_3& :=\int_{\mathbb{T}^n} \nabla_{rr} f(\theta,0,0,\rho) \nabla_\theta S_1 (\theta,0,0,\rho) d\theta & \hat{\omega}_4 & := \int_{\mathbb{T}^n} \nabla_{r} \langle \nabla_\zeta(f-f^T),J \nabla_\zeta S_1 \rangle(\theta,0,0,\rho) d\theta 
\end{align*}
For $\hat{\omega}_1$ we have:
\begin{equation} \nonumber
\vert \hat{\omega}_1 \vert \leq \frac{\varepsilon}{\mu^2}.
\end{equation}
For  $\hat{\omega}_2$, by Cauchy estimate and the estimate \eqref{S-0 eq hom NL} , we have
\begin{align*}
\vert \hat{\omega}_2 \vert& \leq \Vert \nabla^2_\zeta f(\theta,0,0,\rho) \Vert_{\mathcal{L}(\mathbb{R}^n, \mathbb{R}^n)} \Vert  \nabla_{\theta}  S_0 (\theta,0,0,\rho) \Vert_{\mathbb{R}^n}\\
& \leq \frac{1}{\mu^2-\mu'^2}\sup_{x  \in  \mathcal{O}_\mu(\mathbb{R}^n)} \left(\Vert \nabla_r f(x) \Vert_{\mathbb{R}^n} \right) \Vert \nabla_{\theta}  S_0 (\theta,0,0,\rho) \Vert_{\mathbb{R}^n} \\
& \leq \frac{C}{(\sigma-\sigma')^{2n} \kappa \mu^2 (\mu^2-\mu'^2) } \varepsilon \Xi.
\end{align*}
Similarly, for $\hat{\omega}_3$, by Cauchy estimate and \eqref{S-1 eq homo NL}, we have
\begin{equation} \nonumber
\vert \hat{\omega}_3 \vert \leq \frac{C \mu' }{( \sigma - \sigma')^{4n+1} \kappa^2 \mu^5 ( \mu^2 - \mu'^2)} \varepsilon  \Xi^2.
\end{equation}
Finally, by Cauchy estimate, the inequality \eqref{estimation difference jet} from Lemma~\ref{estimation jet fonction} and the estimate \eqref{S-1 eq homo NL}, $\hat{\omega}_4$ verify
\begin{equation} \nonumber
\vert \hat{\omega}_4 \vert \leq \frac{C \mu'^2}{(\sigma-\sigma')^{4n+1} \kappa^2 \mu^6 (\mu^2- \mu'^2)}  \varepsilon  \Xi^2.
\end{equation}
Similarly we prove that $\partial_\rho \hat{\omega}$ satisfy the same estimate as  $\hat{\omega}$ for $\rho \in \mathcal{D}'$.

Let us now prove the estimate \eqref{partie quad eq hom NL}. We can decompose $K$ as follows:
\begin{equation} \nonumber
\hat K=\hat K_1+\hat K_2+\hat K_3,
\end{equation}
where $K_1$ comes from the resolution of equation \eqref{eq-hom-NL1}, $K_2$ from \eqref{eq-hom-NL2}, and $K_3$ from \eqref{eq-hom-NL3}. By estimate \eqref{estima B} from Theorem~\ref{theoreme homo} and for  $\rho \in \mathcal{D}'$ we have:
\begin{equation} \nonumber
\vert \partial_\rho^j \hat K(\rho) \vert_\beta  \leq \frac{\llbracket \partial_\rho^j f^T \rrbracket^{\alpha,\beta} _{\sigma,\mu,\mathcal{D}}}{\mu^{2}}, \quad \vert \partial_\rho^j \hat K_i(\rho) \vert_\beta  \leq \frac{\llbracket \partial_\rho^j f^T_i \rrbracket^{\alpha,\beta} _{\sigma,\mu,\mathcal{D}}}{\mu^{2}},
\end{equation}
for  $i \in \lbrace 1,2 \rbrace$ and $j \in \lbrace 0,1 \rbrace$. We conclude by using the estimates obtained for $f_1$ and $f_2$.
\end{proof}
\section{Proof of the KAM theorem.}
In this section we proof the Theorem~\ref{theoreme kam}. As mentioned in the previous section, the theorem will be proved with an iterative procedure. We start by describing the general step.
\subsection{Elementary step.}
Let $h$ be a Hamiltonian normal form
\begin{equation} \nonumber
h(\rho)= \omega(\rho).r +\frac{1}{2} \langle \zeta , A(\rho) \zeta  \rangle,
\end{equation}
where $A=D+N$, with $D$ given by \eqref{mat diag} and $ N \in \mathcal{NF} \cap \mathcal{M}_\beta$ for $\beta >0$. We assume that the internal frequency $\omega$ and the matrix $D$ verify the hypotheses A1, A2 and A3 and $N$ satisfies the hypothesis B. We consider a small perturbation $f$ (let us say $f= \mathcal{O}( \varepsilon)$) and satisfies $\partial_\rho^j f \in \mathcal{T}^{\alpha,\beta}(\mathcal{D},\sigma,\mu)$ for $j=0,1$. We search a jet-function $S$ such that its time one flow $\Phi_S^1$ is near the identity and satisfies:
\begin{equation} \nonumber
(h+f)\circ \Phi_S = h_1 +f_1, \quad \mbox{ whith } \: (f_1)^T\simeq\mathcal{O}(\varepsilon^\gamma), \quad \gamma>1
\end{equation}
and $ h_1 $ is the new normal form close to $ h $ ( i.e. $ \vert h_1 - h \vert \simeq \mathcal{O}( \varepsilon ) $). The Hamiltonian $h_1$ will be in the following form:
\begin{equation}\nonumber
h_1=h+\hat{h}.
\end{equation}
According to the previous section, the jet-function $S$ verifies the following nonlinear homological equation:
\begin{equation} \nonumber
\left\lbrace  h , S \right\rbrace + \left\lbrace  f-f^T , S \right\rbrace + f^T  = \hat{h} + R.
\end{equation} 
Using Taylor expansion and the Hamiltonian structure, we obtain
\begin{align*}
(h+f) \circ \Phi_S^1 & = h+ f + \left\lbrace  h + f, S \right\rbrace  + \int_0^1 (1-t) \left\lbrace  \left\lbrace  h+f, S \right\rbrace  , S \right\rbrace  \circ \Phi_S^t dt \\
&=h+ \hat{h} + \lbrace h,S \rbrace - \hat{h} + f + \lbrace f , S \rbrace + \int_0^1 (1-t) \left\lbrace  \lbrace  h+f, S \rbrace  , S \right\rbrace  \circ \Phi_S^t dt\\
& = h_1 + f_1,
\end{align*}
where 
\begin{equation} \label{perturbation-first-step}
f_1=  R+(f-f^T) + \lbrace f,S \rbrace - \lbrace  f-f^T , S \rbrace^T + \int_0^1 (1-t) \left\lbrace  \left\lbrace  h+f, S \right\rbrace  , S \right\rbrace  \circ \Phi_S^t dt.
\end{equation}
\begin{remark} \label{utilite eq hom NL}
In several proof of KAM theorems, the authors solve the following linear homological equation instead of the nonlinear one
\begin{equation} \nonumber
\lbrace h, S \rbrace + f^T = \hat{h} + R.
\end{equation}
In this case, the new perturbation term is given by 
\begin{equation} \nonumber
f_1= R + (f-f^T)\circ  \Phi_S^1 + \int_0^1 \lbrace (1-t)(\hat{h} + R + tf^T, S \rbrace \circ  \Phi_S^t dt.
\end{equation}
\ref{freq_kam}
Dans les itérations du théorème KAM le terme $(f-f^T)\circ  \Phi_S^1$ est très défavorable. Pour avoir un contrôle sur le jet de la perturbation à chaque étape on est amené à estimer le terme $ \left((f-f^T)\circ  \Phi_S^1 \right)^T$. Ce terme est difficile à contrôler. Pour cela on résout l'équation homologique non linéaire. De plus on remarque que 
\begin{equation} \label{jet pertub NL}
f_1^T= R + \lbrace f^T , S \rbrace^T + \left( \int_0^1 (1-t) \left\lbrace  \left\lbrace  h+f, S \right\rbrace  , S \right\rbrace  \circ \Phi_S^t dt \right)^T.
\end{equation}
\end{remark}
In the following lemma we give an upper bound of the norm of the new perturbation $f_1$ and its jet $f^T_1$.

\begin{lemma} \label{esti-f+}
Let $0 < \kappa < \frac{\delta}{4} \leq \frac{1}{8} \min (c_0, c_1)$, $N\geq 1$, $0<\sigma'< \sigma\leq 1$ and $\displaystyle 0 < \mu' < \frac{\mu}{2}$. Assume that the perturbation verifies $\partial_\rho^j f \in \mathcal{T}^{\alpha,\beta}(\mathcal{D},\sigma,\mu)$ for $j=0,1$. For $\displaystyle \rho \in \mathcal{D'} \subset \mathcal{D}$, we assume that  $R$ satisfies \eqref{estim sol eq hom NL}, that the jet function $S$ verifies $\displaystyle  \partial_\rho^j S \in \mathcal{T}^{\alpha,\beta+}(\mathcal{D}',\sigma',\mu)$ and satisfies also the estimate \eqref{estim reste sol hom NL}. If
\begin{align}\label{hypothese sur S}
\llbracket   f^T \rrbracket^{\alpha,\beta,\kappa} _{\sigma,\mu,\mathcal{D}} \leq \frac{\kappa^3(\sigma-\sigma')^{6n+3} \mu^5 \mu'}{\mu^3 \mu' \kappa^2 + \kappa \mu' \Xi +\Xi^2} ,
\end{align}
then $\displaystyle \partial_\rho^j f_1 \in \mathcal{T}^{\alpha,\beta}(\mathcal{D}',\sigma',\mu')$. Furthermore, we have the following estimates
\begin{equation} \label{nouv-pertur}
\begin{split}
\llbracket f_1 \rrbracket^{\alpha,\beta,\kappa} _{\sigma',\mu',\mathcal{D}'} \leq & C \left( \frac{\Delta}{\Pi} + \frac{\Xi}{\Pi \kappa \mu^2} + \frac{\mu' \Xi}{\Pi \kappa \mu^3 } + \frac{(\Delta+1) X \varepsilon}{\Pi^2 \kappa \mu'^2} + \frac{X \varepsilon \Xi}{\Pi \kappa \mu'^4} \right) X \varepsilon  + \left( \frac{\mu'}{\mu} \right)^3 \Xi,
\end{split}
\end{equation}

\begin{equation} \label{nouv-pertur-jet}
\llbracket f_1^T \rrbracket^{\alpha,\beta,\kappa} _{\sigma',\mu',\mathcal{D}'} \leq C \left( \frac{\Delta}{\Pi} + \frac{\varepsilon}{\Pi \kappa \mu'^2 } + \frac{(\Delta+1) X \varepsilon}{\Pi^2 \kappa \mu'^2} + \frac{X \varepsilon \Xi}{\Pi \kappa \mu'^4} \right) X \varepsilon,
\end{equation}
where 
\begin{align*}
X & = 1 + \frac{ \mu' \llbracket f \rrbracket^{\alpha,\beta,\kappa} _{\sigma,\mu,\mathcal{D}}}{ \kappa \mu^3}  + \frac{(\llbracket f \rrbracket^{\alpha,\beta,\kappa} _{\sigma,\mu,\mathcal{D}})^2}{ \kappa^2 \mu^3 \mu'}, \\
\Delta & = e^{-( \sigma-\sigma')N/10}, \quad \Pi = (\sigma-\sigma')^{6(2n+1)},\\
\varepsilon & = \llbracket f^T \rrbracket^{\alpha,\beta,\kappa} _{\sigma,\mu,\mathcal{D}}, \quad \Xi = \llbracket f \rrbracket^{\alpha,\beta,\kappa} _{\sigma,\mu,\mathcal{D}}.
\end{align*}

The constant $C>0$ depends on $n$, $\sigma$, $\beta$, $ |\omega_0|_{\mathcal{C}^1 \left( \mathcal{D} \right) }$ and $\vert  A_0 \vert_{\beta,{\mathcal{C}^1 \left( \mathcal{D} \right) }}$.
\end{lemma}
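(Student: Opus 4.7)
The strategy is to control each summand of the decomposition \eqref{perturbation-first-step} of $f_1$ (respectively \eqref{jet pertub NL} of $f_1^T$) using the tools established in Sections~3 and~4, then add them up. The whole argument hinges on first verifying that hypothesis \eqref{hypothese sur S} is exactly the smallness threshold needed to legally apply the flow $\Phi_S^t$ to the time interval $[0,1]$ and to use the composition estimate of Proposition~\ref{composition}.

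\textbf{Step 1 (setting up the flow).} Fix the intermediate radii $\eta=(\sigma-\sigma')/4$ and $\nu=\mu'/2$. Combining \eqref{estim reste sol hom NL} with hypothesis \eqref{hypothese sur S}, one checks $\lc S\rc^{\alpha,\beta+}_{\sigma',\mu',\mathcal{D}'}\le \tfrac12 \eta\nu^2$, i.e.\ the hypothesis \eqref{hyp-f} of Proposition~\ref{estim-phi}. This justifies using the flow $\Phi_S^t:\mathcal{O}^\alpha(\sigma'-2\eta,\mu'-2\nu)\to\mathcal{O}^\alpha(\sigma',\mu')$ for all $t\in[0,1]$, and Proposition~\ref{composition} gives the cost $C\mu/\nu$ of composition with $\Phi_S^t$ in the $\mathcal{T}^{\alpha,\beta}$-norm.

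\textbf{Step 2 (easy summands).} Estimate separately: $\lc R\rc$ directly from \eqref{estim sol eq hom NL}; $\lc f-f^T\rc^{\alpha,\beta,\kappa}_{\sigma',\mu',\mathcal{D}'}\le 2(\mu'/\mu)^3\Xi$ from the jet inequality \eqref{estimation difference jet}; $\lc\{f,S\}\rc$ from Lemma~\ref{estim crochet de poisson} applied at an intermediate angular radius between $\sigma$ and $\sigma'$, combined with the bound \eqref{estim reste sol hom NL} on $S$; and $\lc\{f-f^T,S\}^T\rc$ by chaining Lemma~\ref{estimation jet fonction} and Lemma~\ref{estim crochet de poisson} and then the jet-bound \eqref{estimation jet}. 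These four contributions produce exactly the terms $(\mu'/\mu)^3\Xi$, $\Xi/(\Pi\kappa\mu^2)\cdot X\varepsilon$ and $\mu'\Xi/(\Pi\kappa\mu^3)\cdot X\varepsilon$ in \eqref{nouv-pertur}.

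\textbf{Step 3 (the integral term).} This is the delicate piece. Rather than estimating $\{h+f,S\}$ head-on, use the nonlinear homological equation to substitute
\begin{equation*}
\{h+f,S\}=\hat h + R - f^T -\{f-f^T,S\}^T + \{f,S\},
\end{equation*}
which removes the unbounded object $h$ in favour of quantities already controlled in Proposition~\ref{proposition homo NL} and in Step~2. Apply Lemma~\ref{estim crochet de poisson} once more (with a further $(\sigma-\sigma')^{-1}\mu'^{-2}$ loss) to get a bound on $\{\{h+f,S\},S\}$, then feed this into Proposition~\ref{composition} to compose with $\Phi_S^t$ for $t\in[0,1]$. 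Integrating in $t$ yields the two remaining contributions $(\Delta+1)X\varepsilon/(\Pi^2\kappa\mu'^2)\cdot X\varepsilon$ and $X\varepsilon\,\Xi/(\Pi\kappa\mu'^4)\cdot X\varepsilon$ of \eqref{nouv-pertur}.

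\textbf{Step 4 (the jet).} For \eqref{nouv-pertur-jet}, use decomposition \eqref{jet pertub NL}: the terms $(f-f^T)$ and $\{f-f^T,S\}^T$ disappear from the jet, which is why the bad $(\mu'/\mu)^3\Xi$ is absent and $\Xi/(\kappa\mu^2\Pi)$ is replaced by $\varepsilon/(\kappa\mu'^2\Pi)$. Apply \eqref{estimation jet} of Lemma~\ref{estimation jet fonction} termwise to the results of Steps~2 and~3, noting that taking the jet does not increase the $\mathcal T^{\alpha,\beta}$-norm more than by a factor $3$.

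\textbf{Main obstacle.} Almost everything is routine once Step~1 is established, so the central difficulty is book-keeping: each application of Lemma~\ref{estim crochet de poisson} costs a factor $(\sigma-\sigma')^{-1}$, each composition with $\Phi_S^t$ costs $\mu/\nu$, and the double bracket in the integral produces squared losses. Hypothesis \eqref{hypothese sur S} is precisely calibrated so that (i) $S$ fits the smallness threshold \eqref{hyp-f} at the intermediate scale $(\sigma-2\eta,\mu-2\nu)$, and (ii) the factor $X$ produced by Proposition~\ref{proposition homo NL} is of the same order as the reciprocal of the right-hand side of \eqref{hypothese sur S}, so that all the powers of $\Pi^{-1}$, $\kappa^{-1}$, $\mu^{-1}$, $\mu'^{-1}$ compile to the polynomial bounds \eqref{nouv-pertur}--\eqref{nouv-pertur-jet}.
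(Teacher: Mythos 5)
Your proposal follows essentially the same route as the paper: decompose $f_1$ into the five pieces $R$, $f-f^T$, $\{f,S\}$, $\{f-f^T,S\}^T$, and the Taylor integral; bound the first four directly via Lemmas~\ref{estimation jet fonction}, \ref{estim crochet de poisson} and the homological estimates; and for the integral, replace $\{h+f,S\}$ by $\hat h + R - f^T - \{f-f^T,S\}^T + \{f,S\}$ using the nonlinear homological equation before applying Lemma~\ref{estim crochet de poisson} and Proposition~\ref{composition}. The only cosmetic difference is the choice of intermediate radii (a fixed $\eta,\nu$ versus the paper's arithmetic/geometric progressions $\sigma_3<\dots<\sigma_0$, $\mu_3<\dots<\mu_0$), which is immaterial to the argument.
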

\begin{proof}
Consider $\sigma':=\sigma_3<\sigma_2<\sigma_1<\sigma_0=:\sigma$ an arithmetic progression, and $\mu':=\mu_3<\mu_2<\mu_1<\mu_0=:\mu$ a geometric progression. We recall that the new perturbation is given by
\begin{equation} \nonumber
f_1= R+(f-f^T) + \lbrace f,S \rbrace - \lbrace  f-f^T , S \rbrace^T + \int_0^1 (1-t) \left\lbrace  \left\lbrace  h+f, S \right\rbrace  , S \right\rbrace  \circ \Phi_S^t dt.
\end{equation}
We decompose $f_1$ as follows:
\begin{equation} \nonumber
f_1^1= R, \quad f_1^2 = f-f^T, \quad f_1^3 = \lbrace f,S \rbrace , \quad f_1^4= \left\lbrace  f-f^T , S \right\rbrace , \quad f_1^5= \int_0^1 (1-t) \left\lbrace  \left\lbrace  h+f, S \right\rbrace  , S \right\rbrace  \circ \Phi_S^t dt.
\end{equation}
We denote $ \displaystyle \varepsilon := \llbracket f^T \rrbracket^{\alpha,\beta,\kappa} _{\sigma,\mu\mathcal{D}}$ and $ \displaystyle \Xi = \llbracket f \rrbracket^{\alpha,\beta,\kappa} _{\sigma,\mu,\mathcal{D}}$. We will give an upper bound of the norm of each term of $f_1$.
\begin{itemize}
\item Let us start with $\llbracket f_1^1 \rrbracket^{\alpha,\beta,\kappa} _{\sigma',\mu',\mathcal{D}'}$. By Proposition~\ref{proposition homo NL} and estimate  \eqref{estim sol eq hom NL}, we have
\begin{equation} \nonumber
\llbracket  f_1^1 \rrbracket^{\alpha,\beta,\kappa} _{\sigma',\mu',\mathcal{D}'} \leq \frac{C \Delta}{(\sigma-\sigma')^{5n+2}}\left(1 + \frac{ \mu' \Xi}{ \kappa \mu^3}  + \frac{\Xi^2}{ \kappa^2 \mu^3 \mu'} \right) \varepsilon.
\end{equation}
The constant $C$ depends on $n$, $\beta$, $ |\omega_0|_{\mathcal{C}^1 \left( \mathcal{D} \right) }$ and $\vert  A_0 \vert_{\beta,{\mathcal{C}^1 \left( \mathcal{D} \right) }}$.
\item For the second term $\llbracket f_1^2 \rrbracket^{\alpha,\beta,\kappa} _{\sigma',\mu',\mathcal{D}'}$, using estimate \eqref{estimation difference jet} from Lemma~\ref{estimation jet fonction}, we have
\begin{equation} \nonumber
\llbracket f_2^2 \rrbracket^{\alpha,\beta,\kappa} _{\sigma',\mu',\mathcal{D}'} \leq 2 \left(  \frac{\mu'}{\mu}  \right)^3 \Xi.
\end{equation} 
\item  For the third term $\llbracket f_1^3 \rrbracket^{\alpha,\beta,\kappa} _{\sigma',\mu',\mathcal{D}'}$, by Lemma~\ref{estim crochet de poisson} we have
\begin{align*}
\llbracket f_1^3 \rrbracket^{\alpha,\beta,\kappa} _{\sigma',\mu',\mathcal{D}'} & \leq \frac{C}{(\sigma- \sigma')\mu^2} \Xi \llbracket S \rrbracket^{\alpha,\beta+,\kappa} _{\sigma',\mu',\mathcal{D}'}  \\
& \leq \frac{C}{(\sigma-\sigma')^{6n+3}\kappa \mu^2} \left(1 + \frac{ \mu' \Xi}{ \kappa \mu^3}  + \frac{\Xi^2}{ \kappa^2 \mu^3 \mu'} \right) \varepsilon \Xi . 
\end{align*}
\item Consider now $ \displaystyle \llbracket f_1^4 \rrbracket^{\alpha,\beta,\kappa} _{\sigma',\mu',\mathcal{D}'}$. Using estimate \eqref{estimation difference jet} from Lemma~\ref{estimation jet fonction}, we obtain that
\begin{equation} \nonumber
\llbracket f_1^4 \rrbracket^{\alpha,\beta,\kappa} _{\sigma',\mu',\mathcal{D}'} \leq 3 \llbracket \lbrace f-f^T , S \rbrace \rrbracket^{\alpha,\beta,\kappa} _{\sigma',\mu',\mathcal{D}'}. 
\end{equation}
Then by Lemma~\ref{estim crochet de poisson} and estimate \eqref{estimation difference jet} from Lemma~\ref{estimation jet fonction}, we get
\begin{align*}
\llbracket f_1^4 \rrbracket^{\alpha,\beta,\kappa} _{\sigma',\mu',\mathcal{D}'} & \leq \frac{C}{(\sigma-\sigma') \mu'^2} \llbracket f-f^T \rrbracket^{\alpha,\beta,\kappa} _{\sigma_1,\mu_1,\mathcal{D}'}  \llbracket S \rrbracket^{\alpha,\beta+,\kappa}_{\sigma_1,\mu_1,\mathcal{D}'} \\
& \leq  \frac{C \mu'}{(\sigma-\sigma')^{6n+3} \kappa \mu^3}  \left(1 + \frac{ \mu' \Xi}{ \kappa \mu^3}  + \frac{\Xi^2}{ \kappa^2 \mu^3 \mu'} \right)  \varepsilon \Xi .
\end{align*}
\item It remains to study the term $f_1^5$. We will decompose it in two term  $f_1^5=f_1^6+f_1^7$, where
\begin{equation} \nonumber
f_1^6=\int_0^1 (1-t) \lbrace \hat h +R-f^T,S  \rbrace \circ \Phi_S^t dt,
\end{equation}
\begin{equation} \nonumber
f_1^7=\int_0^1 (1-t) \lbrace \lbrace f,S \rbrace - \lbrace f-f^T,S \rbrace^T , S  \rbrace \circ \Phi_S^t dt.
\end{equation}
For $f_1^6$, by Proposition~\ref{proposition homo NL}, we have
\begin{equation} \nonumber
\begin{split}
\llbracket \hat{h}+R-f^T \rrbracket^{\alpha,\beta,\kappa} _{\sigma_2,\mu_2,\mathcal{D}'}  \leq C & \left[ 2 + \frac{\mu'}{(\sigma- \sigma')^{2n+1}\kappa \mu^3} \Xi \right.  + \frac{1}{(\sigma- \sigma')^{4n+2}\kappa \mu^3 \mu'}  \Xi^2 \\
& + \left. \frac{ \Delta}{(\sigma-\sigma')^{5n+2}} \left( 1 + \frac{\Xi}{ \kappa \mu^3}  + \frac{\Xi^2}{ \kappa^2 \mu^3 \mu'} \right) \right] \varepsilon.
\end{split}
\end{equation}
Using Proposition~\ref{composition} with the assumption \eqref{hypothese sur S} and the Lemma~\ref{estim crochet de poisson}, we obtain
\begin{equation} \nonumber
\begin{split}
\llbracket f_1^6 \rrbracket^{\alpha,\beta,\kappa} _{\sigma',\mu',\mathcal{D}'} \leq  & \frac{C}{(\sigma-\sigma')^{6n+3}\kappa\mu'^2} \left[ 2 + \frac{\mu'}{(\sigma- \sigma')^{2n+1}\kappa \mu^3} \Xi \right. + \frac{1}{(\sigma- \sigma')^{4n+2}\kappa \mu^3 \mu'} \Xi^2 \\
& + \left. \frac{ \Delta}{(\sigma-\sigma')^{5n+2}}\left(1 + \frac{ \mu' \Xi}{ \kappa \mu^3}  + \frac{\Xi^2}{ \kappa^2 \mu^3 \mu'} \right) \right] \times \left(1 + \frac{ \mu' \Xi}{ \kappa \mu^3}  + \frac{\Xi^2}{ \kappa^2 \mu^3 \mu'} \right)  \Xi^2.
\end{split}
\end{equation}
Similarly we prove that
\begin{equation} \nonumber
\begin{split}
\llbracket f_1^7 \rrbracket^{\alpha,\beta,\kappa} _{\sigma',\mu',\mathcal{D}'} \leq & \frac{C}{(\sigma- \sigma')^{6n+4} \kappa  \mu'^4} \left(1 + \frac{ \mu' \Xi}{ \kappa \mu^3}  + \frac{\Xi^2}{ \kappa^2 \mu^3 \mu'} \right)^2 \varepsilon^2 \Xi.
\end{split}
\end{equation}
To ease notations, we define
\begin{equation} \nonumber
X  = 1 + \frac{ \mu' \llbracket f \rrbracket^{\alpha,\beta,\kappa} _{\sigma,\mu,\mathcal{D}}}{ \kappa \mu^3}  + \frac{(\llbracket f \rrbracket^{\alpha,\beta,\kappa} _{\sigma,\mu,\mathcal{D}})^2}{ \kappa^2 \mu^3 \mu'}, \quad \Delta  = e^{-( \sigma-\sigma')N/10}, \quad \Pi = (\sigma-\sigma')^{6(2n+1)},
\end{equation}
\begin{align*}
X & = 1 + \frac{ \mu' \llbracket f \rrbracket^{\alpha,\beta,\kappa} _{\sigma,\mu,\mathcal{D}}}{ \kappa \mu^3}  + \frac{(\llbracket f \rrbracket^{\alpha,\beta,\kappa} _{\sigma,\mu,\mathcal{D}})^2}{ \kappa^2 \mu^3 \mu'}, \\
\Delta & = e^{-( \sigma-\sigma')N/10}, \quad \Pi = (\sigma-\sigma')^{6(2n+1)},
\end{align*}
So we get
\begin{align*} 
\llbracket f_1 \rrbracket^{\alpha,\beta,\kappa} _{\sigma',\mu',\mathcal{D}'} \leq & C \left( \frac{\Delta}{\Pi} + \frac{\Xi}{\kappa \mu^2 \Pi} + \frac{\mu' \Xi}{\Pi \mu^3 \kappa} + \frac{(\Delta+1) X \varepsilon}{\kappa \Pi^2\mu'^2} + \frac{X \varepsilon \Xi}{\kappa\Pi \mu'^4} \right) X \varepsilon \\& + \left( \frac{\mu'}{\mu} \right)^3 \Xi.
\end{align*}
Recall that the jet of the new perturbation is given by
\begin{equation} \nonumber
f_1^T= R + \lbrace f^T , S \rbrace^T + \left( \int_0^1 (1-t) \left\lbrace  \left\lbrace  h+f, S \right\rbrace  , S \right\rbrace  \circ \Phi_S^t dt \right)^T.
\end{equation}
With the same argument used to obtain the upper bound for $ \displaystyle \llbracket f_1 \rrbracket^{\alpha,\beta,\kappa} _{\sigma',\mu',\mathcal{D}'}$, we prove that 
\begin{equation} \nonumber
\llbracket f_1^T \rrbracket^{\alpha,\beta,\kappa} _{\sigma',\mu',\mathcal{D}'} \leq C \left( \frac{\Delta}{\Pi} + \frac{\varepsilon}{\kappa \mu'^2 \Pi} + \frac{(\Delta+1) X \varepsilon}{\kappa \Pi^2\mu'^2} + \frac{X \varepsilon \Xi}{\kappa\Pi \mu'^4} \right) X \varepsilon.
\end{equation}

\end{itemize}
\end{proof}
\subsection{Initialization of KAM procedure}
In this section we will describe the first KAM step. 

To prove the KAM Theorem~\ref{theoreme kam}, we will construct an analytic symplectic change of  variables as follows
\begin{equation} \nonumber
\Phi : \mathcal{O}^\alpha(\frac{\sigma}{2}, \frac{\mu}{2}) \rightarrow \mathcal{O}^\alpha(\sigma,\mu),
\end{equation}
satisfying:
\begin{equation} \nonumber
(h + f ) \circ \Phi = \tilde{h} + \tilde{f},
\end{equation}
where $\tilde{h}$ is a normal form close to $h$ and $\tilde{f}$ is the new perturbation with a zero-jet. We will construct $ \Phi$ iteratively
\begin{equation} \nonumber
\Phi = \lim_{k \rightarrow \infty} \Phi_1 \circ \Phi_2 \ldots \circ \Phi_k.
\end{equation}
Each diffeomorphism $\Phi_k$ will be the time-one flow of a Hamiltonian $S_k$. Each $S_k$ will be a jet function that satisfies the nonlinear homological equation at step $k$. Let us focus on the first KAM step. Assume that $ f^T=O(\varepsilon)$ for $\varepsilon$ small. The first change variable $\Phi_1$ satisfy
\begin{equation} \nonumber
(h+f)\circ \Phi_1 = h_1 +f_1,
\end{equation}
and $f_1^T = O(\varepsilon^\gamma)$ for $\gamma > 1$. From Proposition~\ref{proposition homo NL}, for $\rho \in \mathcal{D}' \subset \mathcal{D}$, the new normal form is given by $h_1 = h + \hat{h}$, where
\begin{equation} \nonumber
\hat{h}(\rho) = \hat\omega(\rho) \cdot r +\frac{1}{2} \langle \zeta,\hat K(\rho)\zeta\rangle.
\end{equation}
The new frequency $\hat \omega$ satisfies \eqref{fréquence eq homo NL} and the matrix $\hat{K}$ satisfies \eqref{partie quad eq hom NL}. After the first step, the new normal form $h_1$ is given by
\begin{align*}
h_1( \rho) & = \left(  \omega(\rho) + \hat \omega(\rho) \right) \cdot r  +\frac{1}{2} \langle \zeta, ( A(\rho)+\hat K(\rho )) \zeta\rangle\\
& = \omega_1(\rho) \cdot r +\frac{1}{2} \langle \zeta,A_1(\rho)\zeta\rangle.
\end{align*}
The new perturbation is given by \eqref{perturbation-first-step} and its jet is given by \eqref{jet pertub NL}. To prove that $f_1^T = O(\varepsilon^\gamma)$ for $\gamma > 1$, we make the following choice of parameters
\begin{align*}
& \varepsilon  = \lc f^T \rc^{\alpha,\beta,\kappa}_{\sigma,\mu,\mathcal{D}},  \quad \varepsilon_1 = \lc f_1^T \rc^{\alpha,\beta,\kappa}_{\sigma',\mu',\mathcal{D}'} ,\\
& \Xi  = \lc f \rc^{\alpha,\beta,\kappa} _{\sigma,\mu,\mathcal{D}}= O(\varepsilon^\tau) \quad \tau \in  [\frac{1}{2} , 1], \\
& \Xi_1  = \lc f_1 \rc^{\alpha,\beta,\kappa} _{\sigma,\mu,\mathcal{D}} \\
& \sigma_1  = \frac{3}{4} \sigma,\\
& \mu_1= \frac{3}{4} \mu, \\
& N  = 10 (\sigma-\sigma_1)^{-1} \ln(\varepsilon^{-1}), \\
& \kappa  = \varepsilon^{1/20}  .
\end{align*}
\begin{lemma} \label{premiere etape}
There exists a closed set $\mathcal{D}' \subset \mathcal{D}$ and $\gamma>0$ such that
\begin{equation} \nonumber
\operatorname{mes} \left( \mathcal{D} \setminus \mathcal{D}' \right) \leq \varepsilon^\gamma.
\end{equation}
For $ \rho \in \mathcal{D}'$,  there exists a real analytic symplectomorphism
\begin{equation} \nonumber
\Phi : \mathcal{O}^\alpha(\sigma', \mu') \to \mathcal{O}^\alpha(\sigma,\mu),
\end{equation}
such that
\begin{equation} \nonumber
(h+f) \circ \Phi_1 = h_1 + f_1.
\end{equation}
For $ \rho \in \mathcal{D}'$ and $j=0,1$ we have:
\begin{equation} \label{hyp B}
\vert  \partial^j_\rho \left( A_{1}(\rho) - A(\rho) \right)  \vert_\beta = \vert \partial^j_\rho \hat K(\rho) \vert_\beta  \leq C \varepsilon,
\end{equation}
\begin{equation}  \label{hyp freq}
\vert \partial^j_\rho \left( \omega_{1}(\rho) - \omega(\rho) \right)   \vert  \leq  C \varepsilon,
\end{equation}
\begin{equation} \label{proche id1}
\Vert \Phi(x,\rho) - x \Vert_\alpha \leq  \varepsilon^{9/10} \mbox{ for } x \in  \mathcal{O}^\alpha(\sigma,\mu).
\end{equation}
The constant $C$ depends on $n$, $\beta$, $\sigma$, $\mu$, $ |\omega_0|_{\mathcal{C}^1 \left( \mathcal{D} \right) }$ and $\vert  A_0 \vert_{\beta,{\mathcal{C}^1 \left( \mathcal{D} \right) }}$.

Moreover the new jet satisfy
\begin{equation} \label{prem Xi}
\Xi_1 \leq \varepsilon^{7/5} + \Xi,
\end{equation}
and its jet verifies
\begin{equation} \label{prem eps}
\varepsilon_1 \leq \varepsilon^{8/5}.
\end{equation}
\end{lemma}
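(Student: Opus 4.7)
The plan is to apply the nonlinear homological equation (Proposition~\ref{proposition homo NL}) with the parameter choices listed just before the statement, define $\Phi_1 := \Phi_S^1$ as the time-one flow of the resulting jet-function $S$, and then invoke Lemma~\ref{esti-f+} to control the new perturbation $f_1$ and its jet $f_1^T$; everything is already available and the work lies in carefully substituting the parameters.

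First I would invoke Proposition~\ref{proposition homo NL} with $\sigma_1 = \tfrac{3}{4}\sigma$, $\mu_1 = \tfrac{3}{4}\mu$, $N = 10(\sigma - \sigma_1)^{-1}\ln(\varepsilon^{-1})$ and $\kappa = \varepsilon^{1/20}$. This produces the closed set $\mathcal{D}' \subset \mathcal{D}$ together with real jet-functions $S = S^T$, $R = R^T$ and a normal form $\hat h = \hat\omega\cdot r + \tfrac{1}{2}\langle \zeta, \hat K \zeta\rangle$, so that $h_1 := h + \hat h$ and $\lbrace h, S\rbrace + \lbrace f - f^T, S\rbrace^T + f^T = \hat h + R$. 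The measure bound $\operatorname{mes}(\mathcal{D}\setminus\mathcal{D}') \leq \tilde C d (\kappa\delta^{-1})^\iota N^\upsilon$, combined with $\kappa = \varepsilon^{1/20}$ and the fact that $N$ grows only logarithmically in $\varepsilon^{-1}$, yields $\operatorname{mes}(\mathcal{D}\setminus\mathcal{D}') \leq \varepsilon^\gamma$ for any $\gamma < \iota/20$ once $\varepsilon$ is small enough. The bounds \eqref{fréquence eq homo NL} and \eqref{partie quad eq hom NL}, evaluated with $\Xi = O(\varepsilon^\tau)$ and $\tau \in [\tfrac{1}{2}, 1]$, give $\vert \partial^j_\rho \hat\omega\vert \leq C\varepsilon$ and $\vert \partial^j_\rho \hat K\vert_\beta \leq C\varepsilon$, which establish \eqref{hyp B} and \eqref{hyp freq}.

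Next, with $\eta = \tfrac{1}{2}(\sigma - \sigma_1)$ and $\nu = \tfrac{1}{2}(\mu - \mu_1)$ fixed (hence independent of $\varepsilon$), estimate \eqref{estim reste sol hom NL} bounds $\lc S \rc^{\alpha,\beta+,\kappa}_{\sigma_1,\mu_1,\mathcal{D}'}$ by $C\varepsilon/\kappa = C\varepsilon^{19/20}$, so the smallness hypothesis of Proposition~\ref{estim-phi} is satisfied for $\varepsilon \leq \varepsilon_0$ small enough. Proposition~\ref{estim-phi} then asserts that $\Phi_1 = \Phi_S^1$ is a real analytic symplectomorphism from $\mathcal{O}^\alpha(\sigma_1,\mu_1)$ into $\mathcal{O}^\alpha(\sigma,\mu)$ and provides $\Vert \Phi_1 - \mathrm{Id}\Vert_\alpha \leq C\lc S \rc \leq C\varepsilon^{19/20}$, which is below $\varepsilon^{9/10}$ for $\varepsilon$ small; this yields \eqref{proche id1}. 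By the Taylor/Hamiltonian computation displayed at the beginning of Subsection~5.1 together with the homological equation, $(h + f)\circ \Phi_1 = h_1 + f_1$ with $f_1$ as in \eqref{perturbation-first-step}.

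Finally, I would verify the admissibility condition \eqref{hypothese sur S} of Lemma~\ref{esti-f+}: after substituting the parameter choices its right-hand side is bounded below by a fixed positive power of $\varepsilon^{1/20}$, while the left-hand side equals $\varepsilon$, so the inequality holds for $\varepsilon \leq \varepsilon_0$. Substituting into \eqref{nouv-pertur-jet}, we use $\Delta = e^{-(\sigma-\sigma_1)N/10} = \varepsilon$, the factor $\Pi$ depends only on $\sigma$, and $X = 1 + O(\varepsilon^{\tau - 1/20})$ is bounded. The dominant term is then of order $X\varepsilon^2/(\kappa\mu_1^2\Pi) \sim \varepsilon^{2 - 1/20} = \varepsilon^{39/20} \leq \varepsilon^{8/5}$, the remaining terms being strictly smaller; this proves \eqref{prem eps}. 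For \eqref{nouv-pertur}, the same substitutions make every contribution apart from $(\mu_1/\mu)^3 \Xi \leq \Xi$ of order at most $\varepsilon^{\tau + 19/20} \leq \varepsilon^{29/20} \leq \varepsilon^{7/5}$, summing to $\Xi_1 \leq \varepsilon^{7/5} + \Xi$, which is \eqref{prem Xi}. The main obstacle is this fine-tuned exponent bookkeeping: the choice $\kappa = \varepsilon^{1/20}$ is dictated precisely by the requirement that the dominant contribution $\varepsilon^2/\kappa$ to $\varepsilon_1$ beats $\varepsilon^{8/5}$ while the measure loss $\kappa^\iota$ remains polynomially small; no new analytic idea beyond Sections~3 and~4 is needed.
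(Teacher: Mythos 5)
Your proposal follows exactly the strategy of the paper's proof: apply Proposition~\ref{proposition homo NL} with the given parameter choices to obtain $\mathcal{D}'$, $S$, and the new normal form, use Proposition~\ref{estim-phi} together with \eqref{estim reste sol hom NL} to bound $\Vert\Phi_1-\mathrm{id}\Vert_\alpha$, verify \eqref{hypothese sur S}, and then substitute into Lemma~\ref{esti-f+} to read off $\Xi_1$ and $\varepsilon_1$. The exponent bookkeeping ($\Delta=\varepsilon$, $X=O(1)$, dominant term $\varepsilon^2/\kappa=\varepsilon^{39/20}\leq\varepsilon^{8/5}$, the extra $\varepsilon^{29/20}\leq\varepsilon^{7/5}$ contribution to $\Xi_1$) matches the paper's computation, so the argument is correct.
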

\begin{proof}
The existence of the the closed set $\mathcal{D}'$ and the map $ \Phi$ is given by the Proposition~\ref{proposition homo NL}. According to the choice of parameters, the Lebesgue measure of the closed set satisfy
\begin{equation} \nonumber
\operatorname{mes} \mathcal{D} \setminus \mathcal{D}' \leq Cd (\kappa \delta^{-1} )^\iota N^\upsilon \leq \varepsilon^\gamma.
\end{equation}
For $\rho \in \mathcal{D}'$, by estimate \eqref{partie quad eq hom NL} and the parameters choice, we have
\begin{equation} \nonumber
\begin{split}
\vert \partial^j_\rho ( A_{1}(\rho) - A(\rho)) \vert_\beta \leq \vert \partial_\rho^j \hat K (\rho) \vert_\beta \leq C \varepsilon, \quad j=0,1, 
\end{split}
\end{equation}
for $ \varepsilon$ sufficiently small. For the frequency, according to estimate \eqref{fréquence eq homo NL} and the parameters choice, we have
\begin{equation}\nonumber
\begin{split}
\vert \partial^j_\rho \left( \omega_{1} (\rho) - \omega(\rho) \right)  \vert = & \vert \partial^j_\rho \hat{\omega} (\rho) \vert \leq C \varepsilon, \quad j=0,1,
\end{split}
\end{equation}
for $ \varepsilon$ sufficiently small.
For $x=(r,\theta,\zeta) \in \mathcal{O}^\alpha(\sigma,\mu)$ we recall that $\Vert (r,\theta,\zeta) \Vert_\alpha=\max(|r|,|\theta|,\Vert \zeta \Vert_\alpha)$. By Proposition~\ref{estim-phi}, estimate \eqref{estim reste sol hom NL} and the parameters choice, we have
\begin{equation} \nonumber
\begin{split}
\Vert \Phi(x) - x \Vert_\alpha & \leq \frac{\llbracket S \rrbracket^{\alpha,\beta+,\kappa} _{\sigma',\mu,\mathcal{D}_1}}{(\sigma-\sigma')\mu^2} \leq \frac{C}{\Pi(\sigma - \sigma_1) \mu^2 \kappa}\left(1 + \frac{ \mu' }{ \kappa \mu^3} \Xi  + \frac{1}{ \kappa^2 \mu^3 \mu'} \Xi^2 \right) \varepsilon, \\
& \leq \tilde{C} \varepsilon^{19/20} \leq \varepsilon^{9/10}.
\end{split}
\end{equation}
for $ \varepsilon$ sufficiently small. It remains to prove estimates \eqref{prem Xi} and \eqref{prem eps}. According to the parameters choice, the hypothesis \eqref{hypothese sur S} is verified for $ \varepsilon$ sufficiently small.
So we can apply the Lemma~\ref{esti-f+}. From parameters choice, we have
\begin{align*}
X & =1 + \frac{3}{4\mu^2} \varepsilon^{-1/20} O(\varepsilon^\tau) + \frac{4}{3\mu^4} \varepsilon^{-1/10} O(\varepsilon^{2\tau}), \quad \tau \in [ \frac{1}{2},1], \\
& \leq 1 + \frac{3}{4\mu^2} \varepsilon^{9/20} + \frac{4}{3\mu^4} \varepsilon^{8/10} \leq 3 ,\\
  \Delta &= e^{-(\sigma-\sigma_1)N/10}=\varepsilon.
\end{align*} 
So by estimate \ref{nouv-pertur}, we have
\begin{align*}
\Xi & \leq \frac{C}{\Pi} \left( \varepsilon + \frac{1}{\mu^2} \varepsilon^{9/20} + \frac{4}{3 \mu^2} \varepsilon^{9/20} + \frac{48}{9\Pi \mu^2} (\varepsilon+1) \varepsilon^{19/20} + \frac{256}{27 \mu^4} \varepsilon^{29/20} \right)  \varepsilon + \left( \frac{3}{4} \right)^3 \Xi \\
& \leq \tilde{C} \varepsilon^{29/20} + \Xi \leq \varepsilon^{7/5} + \Xi.
\end{align*}
Similarly, by estimate \eqref{nouv-pertur-jet}, the jet of the the new perturbation satisfies:
\begin{align*}
\varepsilon_1 &  \leq  \frac{C}{\Pi} \left( \varepsilon + \frac{1}{\mu^2} \varepsilon^{19/20} + \frac{48}{9\Pi \mu^2} (\varepsilon+1) \varepsilon^{19/20} + \frac{256}{27 \mu^4} \varepsilon^{29/20} \right) 3 \varepsilon \\
& \leq \tilde{C} \varepsilon^{39/20} \leq \varepsilon^{8/5},
\end{align*}
for $ \varepsilon$ sufficiently small. This ends the proof of the first iteration.
\end{proof}
\begin{remark} \label{condition reiterer}
\begin{itemize}
\item To be able to reiterate again, it is necessary that the new frequency satisfies the separation condition A1, the transversality condition A2 and the second Melnikov condition. So it is necessary  that the new frequency is at a distance of $ \delta_0 $ from the starting frequency.
According to \eqref{hyp freq}, we set
\begin{equation} \nonumber
 \varepsilon < \delta_0 \leq \delta.
\end{equation} 
\item Similarly to be able to reiterate again, it is necessary that the matrix $\hat K$ satisfies hypothesis B \eqref{Hypothèse B}. According to estimate \eqref{hyp B}, we set
\begin{equation} \label{Condition KAM}
\varepsilon < \frac\delta8.
\end{equation} 
\end{itemize}
\end{remark}
\subsection{Choice of parameters}
In this section we will make a choice of parameters for any KAM step.

Let $k\geq 1$, According to Proposition~\ref{proposition homo NL}  we assume that we constructed a Hamiltonian normal form $h_k=h_{k-1} + \hat{h}_{k-1}=\omega_k.r+\frac{1}{2} \langle \zeta,A_k(\rho)\zeta \rangle$, a perturbation $f_k$ and a jet function $S_k$ satisfying the nonlinear homological equation at step $k$.
We choose
\begin{align*}
&\varepsilon_0=\lc f^T \rc^{\alpha,\beta,\kappa} _{\sigma,\mu,\mathcal{D}}, \quad \Xi_0=\lc f \rc^{\alpha,\beta,\kappa} _{\sigma,\mu,\mathcal{D}}, \quad \sigma_0=\sigma, \quad \mu_0=\mu,\\
& \varepsilon_k=\lc f_k^T \rc^{\alpha,\beta,\kappa_k}_{\sigma_k,\mu_k,\mathcal{D}_k}, \quad \Xi_k = \lc f_k \rc^{\alpha,\beta,\kappa_k}_{\sigma_k,\mu_k,\mathcal{D}_k}  \\
& \Xi_0=O(\varepsilon_0^\tau) \quad \tau \in [  \frac{1}{2} , 1 ], \\
&\sigma_{k} = (\frac{1}{2} + \frac{1}{2^{k+1}}) \sigma, \quad k \geq 0 \\
&\mu_{k} = (\frac{1}{2} + \frac{1}{2^{k+1}}) \mu,, \quad k\geq0,\\
& N_k=10(\sigma_{k} - \sigma_{k+1})^{-1} \ln (\varepsilon_k^{-1}), \quad k\geq 0,\\
& \kappa_k =\varepsilon_k^{1/20} \quad k\geq 0 ,\\
& \mathcal{O}(k)= \mathcal{O}^\alpha(\sigma_k,\mu_k), \quad k\geq 0,
\end{align*}
Recall that the perturbation at step $k+1$ is given by
\begin{align*}
f_{k+1} =& R_k+(f_k-f_k^T) + \lbrace f_k,S_k \rbrace - \left\lbrace  f_k-f_k^T , S_k \right\rbrace + \int_0^1 (1-t) \left\lbrace  \left\lbrace  h_k+f_k, S_k \right\rbrace  , S_k \right\rbrace  \circ \Phi_{S_k}^t dt.
\end{align*} 
By estimate \eqref{nouv-pertur}, we have
\begin{align*} \nonumber
\Xi_{k+1} \leq & C \left( \frac{\Delta_k}{\Pi_{k}} + \frac{\Xi_{k}}{\Pi_{k} \kappa_{k} \mu_{k}^2} + \frac{\mu_{k+1} \Xi_{k}}{\Pi_{k} \kappa_{k} \mu_{k}^3 } + \frac{(\Delta_k+1) X_{k} \varepsilon_k}{\Pi_{k}^2 \kappa_{k} \mu_{k+1}^2} + \frac{X_{k} \varepsilon_{k} \Xi_{k}}{\Pi_{k} \kappa_{k} \mu_{k+1}^4} \right) X_{k} \varepsilon_{k} + \left( \frac{\mu_{k+1}}{\mu_{k}} \right)^3 \Xi_{k}.
\end{align*}
where
\begin{align*}
X_{k} & = 1 + \frac{ \mu_{k+1} \Xi_k}{ \kappa_k \mu^3_k}  + \frac{\Xi_k^2}{ \kappa_k^2 \mu_k^3 \mu_{k+1}}, \\
\Delta_k & = e^{-( \sigma_k-\sigma_{k+1})N/10}, \quad \Pi_k = (\sigma_k-\sigma_{k+1})^{6(2n+1)}.
\end{align*}
The jet of the perturbation at step $k+1$ is given by
\begin{equation} \nonumber 
f_{k+1}^T= R_k + \lbrace f_k^T , S_k \rbrace^T + \left( \int_0^1 (1-t) \left\lbrace  \left\lbrace  h_k+f_k, S_k \right\rbrace  , S_k \right\rbrace  \circ \Phi_{S_k}^t dt \right)^T,
\end{equation}
and according to estimate \eqref{nouv-pertur-jet} satisfies
\begin{equation} \nonumber
\varepsilon_{k+1} \leq C \left( \frac{\Delta_k}{\Pi_k} + \frac{\varepsilon_k}{\Pi_k \kappa_k \mu_{k+1}^2 } + \frac{(\Delta_k+1) X_k \varepsilon_k}{\Pi^2_k \kappa_k \mu_{k+1}^2} + \frac{X_k \varepsilon_k \Xi_k}{\Pi_k \kappa_k \mu_{k+1}^4} \right) X_k \varepsilon_k.
\end{equation}
We can remark that $\varepsilon_0$ depends on $n,\alpha,\beta,\sigma$, $\mu$ $, |\omega_0|_{\mathcal{C}^1 \left( \mathcal{D} \right) }$ et $\vert  A_0 \vert_{\beta,{\mathcal{C}^1 \left( \mathcal{D} \right) }}$.
\begin{lemma} \label{perturabation-jet-etape-k}
For the previous choice of parameters we have  
\begin{equation} \label{perturbation k}
\Xi_{k+1} \leq \varepsilon_k^{4/5} + \Xi_k,
\end{equation}
\begin{equation} \label{jet perturbation k}
\varepsilon_{k+1} \leq  \varepsilon_k^{8/5},
\end{equation}
for $k \in \mathbb{N}$ and $\varepsilon_0$ sufficiently small.
\end{lemma}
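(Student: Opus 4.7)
The plan is to proceed by induction on $k$. The base case $k=0$ follows directly from Lemma~\ref{premiere etape}: since $7/5 > 4/5$, the bounds $\varepsilon_1 \leq \varepsilon_0^{8/5}$ and $\Xi_1 \leq \varepsilon_0^{7/5} + \Xi_0 \leq \varepsilon_0^{4/5} + \Xi_0$ are immediate for $\varepsilon_0$ small. For the inductive step, assuming the claim for all indices up to $k$, iterating \eqref{jet perturbation k} and \eqref{perturbation k} will yield
\begin{equation} \nonumber
\varepsilon_j \leq \varepsilon_0^{(8/5)^j} \quad \text{and} \quad \Xi_j \leq \Xi_0 + \sum_{i < j} \varepsilon_i^{4/5} \leq 2\Xi_0 = O(\varepsilon_0^{1/2})
\end{equation}
for all $j \leq k$; the super-exponential decay of $\varepsilon_j$ makes the sum converge for $\varepsilon_0$ small enough.

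Next I would apply Lemma~\ref{esti-f+} to $f_k$ with the parameters $\sigma_k, \sigma_{k+1}, \mu_k, \mu_{k+1}, N_k, \kappa_k$ fixed above. The choice $N_k = 10(\sigma_k - \sigma_{k+1})^{-1}\ln(\varepsilon_k^{-1})$ gives $\Delta_k = e^{-(\sigma_k-\sigma_{k+1})N_k/10} = \varepsilon_k$, while $\Pi_k = (\sigma/2^{k+2})^{6(2n+1)}$ and $\kappa_k = \varepsilon_k^{1/20}$. Substituting $\Xi_k = O(\varepsilon_0^{1/2})$ into the denominator of \eqref{hypothese sur S} (whose dominant contribution for small $\varepsilon_k$ is the $\Xi_k^2$ term) reduces that hypothesis to an inequality of the form $\varepsilon_k^{17/20} \leq C\,\Xi_0^{-2}(\sigma/2^{k+2})^{6n+3}$, which holds because the super-exponential decay $\varepsilon_k \leq \varepsilon_0^{(8/5)^k}$ crushes the merely geometric loss $2^{-Ck}$ on the right. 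The auxiliary factor $X_k$ is however \emph{not} uniformly bounded in $k$: using the induction bounds,
\begin{equation} \nonumber
X_k \leq 1 + \frac{C\,\Xi_k}{\kappa_k \mu_k^2} + \frac{C\,\Xi_k^2}{\kappa_k^2 \mu_k^4} \leq C\bigl(1 + \Xi_0^2\,\varepsilon_k^{-1/10}\bigr),
\end{equation}
so that $X_k$ may grow like $\varepsilon_k^{-1/10}$.

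Inserting these estimates into \eqref{nouv-pertur-jet} and \eqref{nouv-pertur}, I would examine each of the four or five terms separately. The worst contribution to $\varepsilon_{k+1}$ comes from $X_k^2 \varepsilon_k^2 \Xi_k / (\Pi_k \kappa_k \mu_{k+1}^4)$, with a net $\varepsilon_k$-exponent of $35/20 = 7/4$ multiplied by the polynomial-in-$k$ prefactor $\Pi_k^{-1} = O(2^{Ck})$; since $7/4$ exceeds the target $8/5 = 32/20$ by $3/20$, and since $\varepsilon_k^{3/20}$ decays super-exponentially, this margin absorbs $\Pi_k^{-1}$ and gives $\varepsilon_{k+1} \leq \varepsilon_k^{8/5}$. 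Similarly, the dominant contribution to $\Xi_{k+1}$ beyond the transport term $(\mu_{k+1}/\mu_k)^3 \Xi_k \leq \Xi_k$ comes from $\Xi_k X_k \varepsilon_k /(\Pi_k \kappa_k \mu_k^2)$, with exponent $17/20$ and a slim margin $1/20$ over $4/5 = 16/20$; the same super-exponential argument yields $\Xi_{k+1} \leq \varepsilon_k^{4/5} + \Xi_k$, closing the induction for $\varepsilon_0$ small enough.

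The main obstacle is precisely this non-uniform behaviour of $X_k$: as $\kappa_k = \varepsilon_k^{1/20}$ shrinks super-exponentially, $\Xi_k/\kappa_k$ blows up and essentially every prefactor of Lemma~\ref{esti-f+} deteriorates from step to step. The delicate point will be a careful bookkeeping of every power of $\kappa_k$ and $\Pi_k$ in the five terms of \eqref{nouv-pertur-jet} and the six terms of \eqref{nouv-pertur}, checking that the net $\varepsilon_k$-exponent strictly exceeds $8/5$ (respectively $4/5$) and that the resulting margin is large enough to absorb the geometric-in-$k$ loss $\Pi_k^{-1}$; this is exactly what forces the particular numerical choices $\kappa_k = \varepsilon_k^{1/20}$ and the target exponents $8/5$ and $4/5$ stated in the lemma.
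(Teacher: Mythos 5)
Your proposal is correct and takes essentially the same route as the paper: induction starting from Lemma~\ref{premiere etape}, the super-exponential bound $\varepsilon_j\leq\varepsilon_0^{(8/5)^j}$ to control $\Xi_k\leq 2\Xi_0$ uniformly, the observations $\Delta_k=\varepsilon_k$ and $\Pi_k^{-1}=O(2^{Ck})$, the product estimate $X_k\varepsilon_k\leq C\varepsilon_k^{9/10}$ (which is what the paper uses directly, rather than a bound on $X_k$ alone), and the bookkeeping of the dominant exponent $35/20$ for $\varepsilon_{k+1}$ and $17/20$ for $\Xi_{k+1}$, with the slim margins $3/20$ and $1/20$ absorbing the geometric loss $\Pi_k^{-1}$. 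The only cosmetic difference is that you phrase the key cancellation as ``$X_k$ is not uniformly bounded but the product $X_k\varepsilon_k$ is small'', whereas the paper writes out $X_k\varepsilon_k\leq\tilde C\varepsilon_k^{9/10}$ at once and never isolates $X_k$; the verification of hypothesis~\eqref{hypothese sur S} by the same super-exponential-vs-geometric comparison is also implicit in the paper but spelled out by you. Nothing is missing.
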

\begin{proof}
We prove the statement by induction. For $k=0$, estimates \eqref{prem Xi} and \eqref{prem eps} are verified. Assume that 
\begin{align*}
\Xi_{k} & \leq \varepsilon_{k-1}^{4/5} + \Xi_{k-1},\\
\varepsilon_{k} & \leq  \varepsilon_{k-1}^{8/5}.
\end{align*}
Using the induction hypothesis, we have
\begin{align*}
\Xi_k \leq \sum_{1\leq j \leq k-1} \varepsilon_j^{4/5} + \varepsilon^\tau_0 \leq \sum_{1\leq j \leq k-1}\varepsilon_0^{\frac{4}{5} \left( \frac{8}{5} \right)^j} + \varepsilon^\tau_0 \leq 2 \varepsilon_0^\tau.
\end{align*}
The parameters choice gives
\begin{equation} \nonumber
\Delta_k  = e^{-( \sigma_k-\sigma_{k+1})N/10}=\varepsilon_k, \quad \Pi_k = (\sigma_k-\sigma_{k+1})^{6(2n+1)}= \left( \frac{\sigma}{2^{k+2}} \right)^{6(2(n+1)}.
\end{equation}
By estimate \eqref{nouv-pertur}, we have
\begin{align*} \nonumber
\Xi_{k+1} \leq & \frac{C}{\Pi_k}X_k \varepsilon_k^2 + \frac{C}{\Pi_k} \left( \frac{\Xi_{k}}{ \mu_{k}^2} + \frac{\mu_{k+1} \Xi_{k}}{\mu_{k}^3 } + \frac{(\varepsilon_k+1) X_{k} \varepsilon_k}{\Pi_{k} \mu_{k+1}^2} + \frac{X_{k} \varepsilon_{k} \Xi_{k}} {\mu_{k+1}^4} \right) X_{k} \varepsilon_{k}^{\frac{19}{20}} + \left( \frac{\mu_{k+1}}{\mu_{k}} \right)^3 \Xi_{k}.
\end{align*}
Remarque that 
\begin{align*}
X_k \varepsilon_k & = \varepsilon_k  + \frac{\mu_{k+1}}{\mu_k^3} \varepsilon_k^{19/20} \Xi_k + \frac{1}{\mu^3_k \mu_{k+1}} \varepsilon^{9/10} \Xi_k  \leq \tilde{C} \varepsilon_k^{9/10}
\end{align*}
This lead to
\begin{equation} \nonumber
\Xi_{k+1} \leq C \varepsilon_k^{17/20} + \Xi_k \leq \varepsilon_k^{4/5} + \Xi_k.
\end{equation}
It remains to prove $\varepsilon_{k+1} \leq  \varepsilon_k^{8/5}$. By \eqref{nouv-pertur-jet} and the parameters choice, we have
\begin{align*}
\varepsilon_{k+1} & \leq \frac{C}{\Pi_k} X_k \varepsilon_k^2 + \frac{C}{\Pi_k} \left( \frac{1}{\mu_{k+1}^2} \varepsilon_k^{\frac{19}{20}} + \frac{\varepsilon_k+1 }{\Pi_k \mu_{k+1}^2} X_k \varepsilon_k^{\frac{19}{20}} + \frac{1}{\mu_{k+1}^4} X_k \varepsilon_k^{\frac{19}{20}} \right) X_k \varepsilon_k \\
& \leq C \varepsilon_k^{17/20} \varepsilon^{9/10}_k \leq \varepsilon_k^{8/5}.
\end{align*}
This conclude the proof of the lemma.
\end{proof}
\subsection{Iterative lemma}
In this section we describe a general step of the KAM procedure. We set $h_{0}(\rho)=\omega_{0}(\rho).r+\frac{1}{2} \langle \zeta,A_{0}(\rho)\zeta \rangle$, $\mathcal{D}_0=\mathcal{D}$, $f_0=f$ where $\partial_\rho^j f_0 \in \mathcal{T}^{\alpha,\beta} (\sigma_0,\mu_0, \mathcal{D}_0)$  for $j=0,1$, $\lc f_0^T \rc^{\alpha,\beta,\kappa_0} _{\sigma_0,\mu_0,\mathcal{D}_0} = \varepsilon_0$, $\Xi_0 = \lc f_0 \rc^{\alpha,\beta,\kappa_0} _{\sigma_0,\mu_0,\mathcal{D}_0}$, and $\varepsilon= \varepsilon_0$.
\begin{lemma} \label{lem-eps}
Assume that there exists a positive constant $\varepsilon$ depending on $n,\alpha,\beta,\sigma$, $\mu$ $, |\omega_0|_{\mathcal{C}^1 \left( \mathcal{D} \right) }$ and $\vert  A_0 \vert_{\beta,{\mathcal{C}^1 \left( \mathcal{D} \right) }}$ that verifies
\begin{equation} \label{Condit itéra}
\varepsilon \leq  \frac{1}{8} \delta.
\end{equation}
Assume that
\begin{equation} \nonumber
\Xi_0 = O(\varepsilon_0^\tau), \mbox{ whith } \quad \frac{1}{2} \leq \tau \leq 1,
\end{equation}
then, for $k \geq 1$, there exists a closed subset $\mathcal{D}_k \subset \mathcal{D}_{k-1}$, a real jet-function $S_{k-1}$ such that $\partial^j_\rho S_{k-1} \in \mathcal{T}^{\alpha,\beta+} (\sigma_k,\mu_k, \mathcal{D}_k)$ for $j=0,1$, a normal form $h_k(\rho)=\omega_{k}\cdot r+\frac{1}{2} \langle \zeta,A_{k}(\rho)\zeta \rangle$ where $\rho \in \mathcal{D}_k$ and a perturbation  $f_{k}$ that satisfies $\partial^j_\rho f_{k} \in \mathcal{T}^{\alpha,\beta} (\sigma_k,\mu_k, \mathcal{D}_k)$ such that
\begin{equation} \nonumber
\Phi_{k}=\Phi_{S_{k-1}}^1(.,\rho) : \mathcal{O}(k) \longrightarrow \mathcal{O}(k-1), \quad  \rho \in \mathcal{D}_{k},\
\end{equation}
is a real analytic symplectomorphism linking the Hamiltonian at step $k-1$ and the Hamiltonian at the step $k$, i.e.
\begin{equation} \nonumber
(h_{k-1}+f_{k-1}) \circ \Phi_k = h_{k} + f_{k}.
\end{equation}
Moreover, we have
\begin{align*}
\operatorname{mes} (\mathcal{D}_{k-1} \setminus \mathcal{D}_k) &\leq \varepsilon_{k-1}^{\gamma}, \quad \mbox{ with } \gamma >0,\\
\lc f_k^T \rc^{\alpha,\beta,\kappa_k} _{\sigma_k,\mu_k,\mathcal{D}_k} & \leq  \varepsilon_k,\\
\vert \partial_\rho^j (A_{k} - A_{k-1}) \vert_\beta = \vert \hat \partial_\rho^j K_{k-1} \vert_\beta & \leq  C \varepsilon_{k-1}^{9/10}, \quad j=0,1,\\
\vert \partial_\rho^j (\omega_{k} - \omega_{k-1} )\vert & \leq  C \varepsilon_{k-1}^{9/10}, \quad j=0,1,\\
\Vert \Phi_k(x,\rho) - x \Vert_\alpha & \leq   \varepsilon_{k-1}^{4/5} \: \mbox{ for }\: x \in  \mathcal{O}(k),\:\rho\in \mathcal{D}_k,
\end{align*}
$C>0$ and depends on $n$, $\beta$, $\sigma$, $\mu$, $ |\omega_0|_{\mathcal{C}^1 \left( \mathcal{D} \right) }$ and $\vert  A_0 \vert_{\beta,{\mathcal{C}^1 \left( \mathcal{D} \right) }}$.
\end{lemma}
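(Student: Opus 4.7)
The plan is to prove the iterative lemma by induction on $k$, using Lemma~\ref{premiere etape} as the base case. The crux is to verify at each step that the new normal form still satisfies the hypotheses needed to apply Proposition~\ref{proposition homo NL}, then to feed this into Lemma~\ref{esti-f+} to control the new perturbation, and finally to invoke Lemma~\ref{perturabation-jet-etape-k} and Proposition~\ref{estim-phi} to obtain the quantitative estimates.

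For the inductive step, I assume the lemma holds up to step $k-1$ and construct step $k$. I first check that $\omega_{k-1}$ and $A_{k-1}$ remain admissible: by the inductive accumulation
\begin{equation*}
\vert \omega_{k-1} - \omega_0 \vert_{\mathcal{C}^1(\mathcal{D}_{k-1})} \leq C \sum_{j=0}^{k-2} \varepsilon_j^{9/10}, \qquad \vert A_{k-1} - A_0 \vert_{\beta,\mathcal{C}^1(\mathcal{D}_{k-1})} \leq C \sum_{j=0}^{k-2} \varepsilon_j^{9/10},
\end{equation*}
and since $\varepsilon_j \leq \varepsilon_0^{(8/5)^j}$ by the super-exponential decay in Lemma~\ref{perturabation-jet-etape-k}, the series is convergent and bounded by $\delta_0$ provided $\varepsilon_0$ is small enough. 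This, combined with the smallness condition \eqref{Condit itéra}, ensures that the hypotheses A2, A3, and B remain valid for $(\omega_{k-1},A_{k-1})$. I then apply Proposition~\ref{proposition homo NL} at level $k-1$ with parameters $(\sigma_{k-1},\mu_{k-1},\kappa_{k-1},N_{k-1})$ to produce a closed subset $\mathcal{D}_k \subset \mathcal{D}_{k-1}$, a jet-function $S_{k-1}$, a residual $R_{k-1}$ and a normal form correction $\hat h_{k-1} = \hat\omega_{k-1}\cdot r + \frac12\langle\zeta,\hat K_{k-1}\zeta\rangle$ solving the nonlinear homological equation, with the excised measure bounded by $C d (\kappa_{k-1}\delta^{-1})^\iota N_{k-1}^\upsilon \leq \varepsilon_{k-1}^{\gamma}$ for a suitable $\gamma>0$.

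Next I verify the smallness hypothesis \eqref{hypothese sur S} of Lemma~\ref{esti-f+} with the chosen parameters — this reduces to checking a polynomial inequality in $\varepsilon_{k-1}$ that holds for $\varepsilon_0$ small enough. The time-one flow $\Phi_k := \Phi_{S_{k-1}}^1$ is then well-defined as a real analytic symplectomorphism $\mathcal{O}(k) \to \mathcal{O}(k-1)$ by Proposition~\ref{estim-phi}, the hypothesis of which follows from \eqref{estim reste sol hom NL} and the parameter choice. From Proposition~\ref{estim-phi} applied with the bound \eqref{estim reste sol hom NL} on $S_{k-1}$, I get $\Vert \Phi_k - \operatorname{Id} \Vert_\alpha \leq C \varepsilon_{k-1}^{19/20} \leq \varepsilon_{k-1}^{4/5}$ as required. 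The new Hamiltonian $(h_{k-1}+f_{k-1})\circ\Phi_k = h_k + f_k$ then has $h_k = h_{k-1} + \hat h_{k-1}$, so
\begin{equation*}
\omega_k = \omega_{k-1} + \hat\omega_{k-1}, \qquad A_k = A_{k-1} + \hat K_{k-1},
\end{equation*}
and the estimates $\vert \partial^j_\rho \hat\omega_{k-1}\vert \leq C\varepsilon_{k-1}^{9/10}$, $\vert \partial^j_\rho \hat K_{k-1}\vert_\beta \leq C\varepsilon_{k-1}^{9/10}$ follow from \eqref{fréquence eq homo NL}–\eqref{partie quad eq hom NL} combined with the parameter choice (the factor $\varepsilon_{k-1}^{-1/10}$ coming from $\kappa_{k-1}$ yields the $9/10$ exponent). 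Finally the bound $\lc f_k^T \rc^{\alpha,\beta,\kappa_k}_{\sigma_k,\mu_k,\mathcal{D}_k} \leq \varepsilon_k$ is exactly \eqref{jet perturbation k} of Lemma~\ref{perturabation-jet-etape-k}.

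The main obstacle I expect is not any single estimate but the bookkeeping: verifying that the block-diagonal structure and the normal-form character of $A_k$ (i.e.\ $A_k \in \mathcal{NF}$ and $N_k := A_k - D$ satisfies hypothesis B) are preserved under the iteration, and that the measure estimates at each step are truly summable. The preservation of normal form follows from the fact that $\hat K_{k-1}$ is block diagonal with blocks of the correct form, as guaranteed by the construction in the fourth homological equation (the equation for $S_{\xi\eta}$ with $k=0$, $[s]=[s']$). The preservation of hypothesis B reduces to controlling $\vert N_k \vert_\beta \leq \vert N_0 \vert_\beta + C \sum_{j<k} \varepsilon_j^{9/10} \leq \delta/8 + o(1) < \delta/4$, which again uses the super-convergent decay of $\varepsilon_j$. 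All other estimates are direct applications of the results proved in the previous sections.
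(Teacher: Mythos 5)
Your proposal is correct and follows the same general line as the paper: base case via Lemma~\ref{premiere etape}, then an inductive step that applies Proposition~\ref{proposition homo NL} to the pair $(h_{k-1},f_{k-1})$, uses Proposition~\ref{estim-phi} to get the symplectomorphism estimate, invokes the estimates \eqref{fréquence eq homo NL}--\eqref{partie quad eq hom NL} with the parameter choice (in particular $\kappa_{k-1}=\varepsilon_{k-1}^{1/20}$) to extract the $\varepsilon_{k-1}^{9/10}$ bounds on $\hat\omega_{k-1}$ and $\hat K_{k-1}$, and then feeds into Lemma~\ref{perturabation-jet-etape-k} for the bound on $\varepsilon_k$. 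Your write-up is a touch more careful than the paper's on the bookkeeping (you check admissibility of $(\omega_{k-1},A_{k-1})$ before applying Proposition~\ref{proposition homo NL}, re-verify hypothesis \eqref{hypothese sur S} at each step, and explicitly flag preservation of the $\mathcal{NF}$ structure of $A_k$), whereas the paper glides past these points with a slightly off-by-one indexing, but there is no difference of approach.
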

\begin{proof}
At the first step, by Lemma~\ref{premiere etape}, there exists a closed set $\mathcal{D}_1 \subset\mathcal{D}_0$ that satisfies:
\begin{equation} \nonumber
\operatorname{mes} (\mathcal{D}_0 \setminus \mathcal{D}_1) \leq \varepsilon_0^\gamma.
\end{equation}
For $\rho \in \mathcal{D}_1$, there exists an anlytic symplectomorphism
\begin{equation} \nonumber
\Phi_1 = \Phi_{S_{0}}^{t=1} : \mathcal{O}(1) \to \mathcal{O}(0),
\end{equation}
linking the initial Hamiltonian and the Hamiltonian at the first step
\begin{equation} \nonumber
(h+f) \circ \Phi_1 = h_1 + f_1,
\end{equation} 
where $h_{1}(\rho)=\omega_{1}.r+\frac{1}{2} \langle \zeta,A_{1}(\rho)\zeta \rangle$. By estimate \eqref{hyp B}-\eqref{proche id1}, we have:
\begin{equation} \nonumber
\vert  \partial^j_\rho \left( A_{1}(\rho) - A_0(\rho) \right)  \vert_\beta   \leq C \varepsilon_0, \quad \vert \partial^j_\rho \left( \omega_{1}(\rho) - \omega_0(\rho) \right)   \vert  \leq  C \varepsilon_0,
\end{equation}
\begin{equation} \nonumber
\Vert \Phi_1(x,\rho) - x \Vert_\alpha \leq \varepsilon^{9/10} \mbox{ for } x \in  \mathcal{O}^\alpha(\sigma,\mu),
\end{equation} 
This achieve the first step. Thanks to conditions \eqref{Condit itéra} we are able to reiterate again. Now assume that we have completed the iteration up to step $k-1$. We want to perform the step $k$.  By construction, the matrix $A_{k}$ satisfies
\begin{equation}  \nonumber
A_{k}=A_{k-1}+\hat{K}_{k-1} = A_0 + \hat K_0+ \hat K_1 + \ldots+ \hat K_{k-1}.
\end{equation} 
According to \eqref{partie quad eq hom NL} and Lemma~\ref{perturabation-jet-etape-k}, we have
\begin{equation} \nonumber
\vert \partial_\rho^j (A_{k}(\rho)-A_{k-1}(\rho)) \vert_\beta \leq \vert \partial_\rho^j \hat K_{k-1}(\rho) \vert_\beta \leq C \varepsilon^{9/10}_{k-1} \leq \frac{\delta}{8}.
\end{equation}
The frequency $\omega_{k}$ satisfies
\begin{equation} \nonumber
\omega_{k} = \omega_0 +  \underset{0 \leq j \leq k-1}{ \sum} \hat{\omega}_j.
\end{equation}
By estimate \eqref{fréquence eq homo NL} and Lemma~\ref{perturabation-jet-etape-k}, $\omega_k$ is close to $\omega_0$, i.e. 
\begin{equation} \nonumber
\vert \omega_{k} - \omega_0 \vert \leq C \underset{0 \leq j \leq k-1}{ \sum} \varepsilon_j^{9/10} \leq \delta_0.
\end{equation}
So we can apply the Proposition~\ref{proposition homo NL}: there exists a closed subset $\mathcal{D}_k \subset \mathcal{D}_{k-1}$ that Lebesgue measure satisfies
\begin{equation} \nonumber
\operatorname{mes} (\mathcal{D}_{k-1} \setminus \mathcal{D}_k) \leq C (\kappa_k \delta^{-1})^\iota N_k^\upsilon \leq \varepsilon_k^{\gamma}, \quad \mbox{ for } \gamma>0.
\end{equation} 
For $ \rho \in \mathcal{D}_k$, there exists a jet-function $S_{k-1}$ such that $\partial^j_\rho S_{k-1} \in \mathcal{T}^{\alpha,\beta+} (\sigma_k,\mu_k, \mathcal{D}_k)$ for $j=0,1$, and we have
\begin{equation} \nonumber
\llbracket S_{k-1} \rrbracket^{\alpha,\beta+,\kappa_{k-1}} _{\sigma_{k},\mu_{k},\mathcal{D}_{k}} \leq \frac{C}{\Pi_{k-1}\kappa_{k-1}}\left(1 + \frac{ \mu_{k} }{ \kappa_{k-1} \mu^3_{k-1}} \Xi_{k-1}  + \frac{1}{ \kappa^2_{k-1} \mu^3_{k-1} \mu_{k}} \Xi^2_{k-1} \right) \varepsilon_{k-1}.
\end{equation}
The symplectomorphism associate to $S_{k}$ is analytic and we have
\begin{equation} \nonumber
\Phi_{k}=\Phi_{S_{k-1}}^1(.,\rho) : \mathcal{O}(k) \longrightarrow \mathcal{O}(k-1), \quad  \rho \in \mathcal{D}_{k}.
\end{equation}
Thins transformation link the Hamiltonian at step $k-1$ and the Hamiltonian at the step $k$
\begin{equation} \nonumber
(h_{k-1}+f_{k-1}) \circ \Phi_k = h_{k} + f_{k}.
\end{equation}
By construction we have $h_k(\rho)=\omega_{k}\cdot r+\frac{1}{2} \langle \zeta,A_{k}(\rho)\zeta \rangle$. According to \eqref{partie quad eq hom NL}, Lemma~\ref{perturabation-jet-etape-k} and the choice of parameters, we have for $j=0,1$:
\begin{align*} \nonumber
\vert \partial_\rho^j (A_{k}(\rho)-A_{k-1}(\rho)) \vert_\beta & \leq \vert \partial_\rho^j \hat K_{k-1}(\rho) \vert_\beta  \\
& \leq  C \left( 1 + \frac{\mu_k\Xi_{k-1}}{\Pi_{k-1} \kappa_{k-1} \mu^3_{k-1}}  + \frac{\mu_k^2\Xi_{k-1}^2}{\Pi_{k-1} \mu_{k-1}^6 \kappa_{k-1}^2}  \right) \frac{\varepsilon_{k-1}}{\mu^2_{k-1}}\\
& \leq C \varepsilon_{k-1}^{9/10}.
\end{align*}
The new frequency $\omega_{k}$ are given by
\begin{equation} \nonumber
\omega_{k} = \omega_{k-1} +  \hat{\omega}_{k-1} ,
\end{equation}
and by estimate \eqref{fréquence eq homo NL}, Lemma~\ref{perturabation-jet-etape-k} and the choice of parameters satisfies
\begin{align*}
\vert \partial_\rho^j (\omega_{k} - \omega_{k-1}) \vert &=  \leq \Bigg[  1+ \frac{\Xi_{k-1}}{\Pi_{k-1} \kappa_{k-1} (\mu_{k-1}^2-\mu_k^2)} \Bigg( 1 + \frac{\mu_k \Xi_{k-1}}{\kappa_{k-1} \mu_{k-1}^3}  + \frac{\mu_k^2\Xi_{k-1}}{\kappa_{k-1} \mu_{k-1}^4}  \Bigg) \Bigg] \frac{\varepsilon_{k-1}}{\mu_{k-1}^2} \\
& \leq C \varepsilon_{k-1}^{9/10}.
\end{align*}
According to Proposition~\ref{estim-phi}, estimate \eqref{estim reste sol hom NL} and the choice of parameters, we have
\begin{align*} 
\Vert \Phi_k(x) - x \Vert_\alpha & \leq \frac{\llbracket S_{k-1} \rrbracket^{\alpha,\beta+,\kappa_{k-1}} _{\sigma_k,\mu_{k-1},\mathcal{D}_k}}{\mu^2_{k-1}(\sigma_{k-1} - \sigma_{k})}\\
&  \leq \frac{C}{\Pi_{k-1} (\sigma_{k-1} - \sigma_{k}) \kappa_{k-1} \mu^2_{k-1}}\Bigg(1 + \frac{ \mu_{k} }{ \kappa_{k-1} \mu^3_{k-1}} \Xi_{k-1} + \frac{1}{ \kappa^2_{k-1} \mu^3_{k-1} \mu_{k}} \Xi^2_{k-1} \Bigg) \varepsilon_{k-1} \\
& \leq C \varepsilon^{17/20}_{k-1} \leq \varepsilon^{4/5}_{k-1}.
\end{align*}
for $\varepsilon_0$ small enough.
\end{proof}
\subsection{Transition to limit and proof of theorem \ref{theoreme kam}}
Consider $\mathcal{D'}:= \underset{k\geq 1}{\cap}\mathcal{D}_k$. The Lebesgue measure of  $\mathcal{D'}$ satisfies
\begin{equation} \nonumber
\operatorname{mes} (\mathcal{D} \setminus \mathcal{D'}) \leq  c \varepsilon ^{\gamma},
\end{equation} 
where $c$ depends on $\delta$ and $\sigma$.
Consider $1 \leq j \leq M$, we define $\Phi_M^j = \Phi_{j} \circ \Phi_{j+1} \circ \ldots \circ \Phi_M$ a symplectomorphism that maps $\mathcal{O}(M) \times \mathcal{D}'$ to $\mathcal{O}(j)\times \mathcal{D}'$. Moreover, for $1\leq j \leq M$, we have:
$$ \Vert \Phi_M^j- id \Vert_\alpha \leq \sum_{k=j}^M \varepsilon_{k-1}^{4/5} \leq C \varepsilon_{j-1}^{4/5}, $$
For $P>M$, we have
$$\Vert \Phi_P^j- \Phi_M^j \Vert_\alpha \leq C \varepsilon^{4/5}_M.$$
Consequently, $(\Phi_M^j)_M$ is a Cauchy sequence that converge when $M \to \infty$ to a real analytic symplectomorphism $\Phi_\infty^j$ mapping $\mathcal{O}^\alpha(\frac{\sigma}{2},\frac{\mu}{2})$ to $\mathcal{O}(j)$. Moreover, for $\rho \in \mathcal{D}'$ we have
\begin{equation} \label{Phi infini}
\Vert \Phi_\infty^j - id \Vert_{\alpha} \leq \sum_{k\geq j} \varepsilon_{k-1}^{4/5} \leq  C \varepsilon_{j-1}^{4/5}.
\end{equation}
By Cauchy estimate, for $j\geq 1$, we have:
\begin{equation}
\Vert D \Phi_\infty^j - id \Vert_{\mathcal{L}\left( Y_\alpha,Y_\alpha \right) } \leq C \varepsilon_{j-1}^{4/5}.
\end{equation}
By construction, the map $\Phi_M^1$ transforms  the Hamiltonian
\begin{equation} \nonumber
H_1=\omega.r+\frac{1}{2} \langle \zeta,A(\rho)\zeta \rangle + f
\end{equation}
into
\begin{equation} \nonumber
H_M=\omega_M.r+\frac{1}{2} \langle \zeta,A_M(\rho)\zeta \rangle + f_M.
\end{equation}
Clearly $\omega_M$ converge to $\omega_\infty$ and $A_M$ converge to $A_\infty$. In addition, we have
\begin{align*}
\vert \omega_\infty - \omega \vert & \leq C \varepsilon_0 + C \underset{j \geq 1}{ \sum} \varepsilon_j^{9/10} \leq C \varepsilon_0 + C \sum_{j \geq 1} \varepsilon_0^{\frac{9}{10} (\frac{8}{5})^j} \leq C \varepsilon_0 + C \sum_{j \geq 0} \varepsilon_0^{\frac{36}{25} (\frac{8}{5})^j} \leq C \varepsilon_0, \\
\end{align*}
and we obtain the same estimate for $\vert A_\infty - A \vert_\beta$. Similarly, $\partial_\rho \omega_\infty$ and $\partial_\rho A_\infty$ satisfies the same estimates.

We define $H_\infty = H_1 \circ \Phi_\infty^1$, with
\begin{equation} \nonumber
H_\infty = \omega_\infty.r+\frac{1}{2} \langle \zeta,A_\infty(\rho)\zeta \rangle + f_\infty.
\end{equation}
Consider $x=(\theta,0,0) $ and $h=(\theta,r,\zeta)$, by the chain rule we have
\begin{equation} \nonumber
\langle \nabla H_\infty (x) , h \rangle = \langle \nabla H_k (\Phi_\infty^k(x)), D \Phi_\infty^k(x)h\rangle.
\end{equation}
We recall that $\lc f_k^T \rc^{\alpha,\beta,\kappa_k} _{\sigma_k,\mu_k,\mathcal{D}'}  \leq \varepsilon_k$ for $ k \geq 1$, then $\nabla H_k (\Phi_\infty^k(x))= ^t(0,\omega_k,0)+O(\varepsilon_k^{4/5})$. Recall also, for $j\geq 1$, that $\Vert \Phi_\infty^j - id \Vert_\alpha \leq  C \varepsilon_{j-1}^{4/5}$. So 
$$\nabla H_\infty=^t(0,\omega_\infty,0).$$ This allows us to deduce that
\begin{equation} \nonumber
\partial_r f_\infty(\theta,0,0)= \partial_\zeta f_\infty(\theta,0,0)=0.
\end{equation}
Consider now the matrix $\partial^2_{\zeta_i\zeta_j} H_\infty(x)$. We have
\begin{equation} \nonumber
\partial^2_{\zeta_i\zeta_j} H_k(x)=(A_k)_{i,j} + O(\varepsilon_k^{4/5}).
\end{equation}
This leads to $\partial^2_{\zeta_i\zeta_j} H_\infty(x)=(A_\infty)_{i,j}$ and to deduce that
\begin{equation} \nonumber
\partial^2_{\zeta\zeta} f_\infty(\theta,0,0)=0
\end{equation}

This completes the proof of Theorem~\ref{theoreme kam}.

\section{Wave equation with a convolutive potential}
We consider the convolutive wave equation on the circle:
\begin{equation} \label{eq onde}
u_{tt} - u_{xx} + V \star u + \varepsilon g(x,u) = 0, \quad t \in \mathbb{R},\: x \in \mathbb{S}^1,
\end{equation}
where $g$ is a real holomorphic function on $\mathbb{S}^1\times J$, for $J$ some neighborhood of the origin of $\mathbb{R}$. The convolution potential $V : \mathbb{S}^1 \to \mathbb{R}$ is supposed to be holomorphic with real Fourier coefficients $\hat{V}(a), a \in \mathbb{Z} $, satisfying
\begin{equation} \label{wave_conv_assumption1}
a^2 +  \hat{V}(a)>0 ,  \quad \forall \: a \in \mathbb{Z}.
\end{equation}  
Consider $ \mathcal{A}$ a finite set of $\mathbb{Z}$ of cardinality $n$. We define the set $ \mathcal{L} := \mathbb{Z} \setminus \mathcal{A}$ and the parameter of the equation $\displaystyle \rho := \left( \hat V (a) \right)_{a \in \mathcal{A}}$. We assume that the parameter $\rho=(\rho_{a_1}, \ldots, \rho_{a_n})$ belongs to the set $\displaystyle \mathcal{D}= \left[ b_{a_1}, c_{a_1} \right]  \times \ldots \times \left[ b_{a_n}, c_{a_n} \right]$ and all other Fourier coefficients are fixed. We denote $\omega(\rho)=\left( \omega_a (\rho) \right)_{a \in \mathcal{A}} =\left( \sqrt{a^2+\rho_a} \right)_{a \in \mathcal{A}}$ and $\lambda_s = \sqrt{s^2+ \hat{V}(s)}$ for $s \in \mathcal{L}$. We also suppose that
\begin{equation} \label{wave_conv_assumption2}
\lambda_s \neq \lambda_{s'}, \quad \forall \: s,s' \in \mathcal{L}, \quad s \neq \pm s'.
\end{equation}
Introducing $v=\dot u$, the equation \eqref{eq onde} becomes:
\begin{equation*}
\dot{u}= v, \quad \dot{v} = - (\Lambda^2 u+\varepsilon g(x,u)),
\end{equation*}
where $\Lambda:=(\sqrt{-\partial_{xx}+ V \star})$.
Defining $\psi:=\frac{1}{\sqrt{2}} ( \Lambda^{\frac{1}{2}}u-i\Lambda^{-\frac{1}{2}}v)$, we get the following equation for $\dot \psi$:
\begin{equation} \nonumber
\frac{1}{i} \dot \psi = \Lambda \psi + \varepsilon \frac{1}{\sqrt{2}} \Lambda^{-1/2}g\left( x,\Lambda^{-1/2}\left( \frac{\psi+\bar{\psi}}{\sqrt{2}} \right)\right) .
\end{equation}
Let us endow $L^2(\mathbb{S}^1,\mathbb{C})$ with the classical real symplectic form $-i d \psi \wedge d \bar{\psi}=-du \wedge dv$ and consider the following Hamiltonian:
\begin{equation} \nonumber
H(\psi,\bar{\psi}) = \int_{\mathbb{S}^1} (\Lambda \psi)\bar{\psi} dx + \varepsilon \int_{\mathbb{S}^1} G \left( x,\Lambda^{-1/2} \left( \frac{\psi+\bar{\psi}}{\sqrt{2}} \right)  \right) dx,
\end{equation}
where $ G $ is a primitive of $ g $ with respect to $ u $: $ g=\partial_uG $. Then, \eqref{eq onde} becomes a Hamiltonian system:
\begin{equation} \nonumber
\dot{\psi} = i \frac{\partial H}{\partial \bar{\psi}}.
\end{equation}

Consider now the complex Fourier orthonormal basis given by $\lbrace \varphi_a(x)=\frac{e^{iax}}{\sqrt{2\pi}} ,\:a\in \mathbb{Z}\rbrace$. In this base, the operator $\Lambda$ is diagonal, and we have:
\begin{equation} \nonumber
\Lambda \varphi_a= \sqrt{a^2 + \hat{V}(a)} \: \varphi_a.
\end{equation}
Let us decompose $\psi$ and $\bar{\psi}$ in this basis: $ \psi=\underset{s \in \mathbb{Z}}{ \sum}\xi_s\varphi_s$ and $\bar{\psi}= \underset{s \in \mathbb{Z}}{ \sum} \eta_s \varphi_{-s}$.
By injecting this decomposition into the expression of $H$, we obtain:
\begin{equation} \label{Hamiltonien_0}
H= \sum_{a \in \mathcal{A}} \omega_a(\rho) \xi_a \eta_a + \sum_{s \in \mathcal{L}} \lambda_s \xi_s \eta_s + \varepsilon \int_{\mathbb{S}^1} G \left( x , \underset{s \in \mathbb{Z}}{ \sum} \frac{\xi_s\varphi_s+\eta_s \varphi_{-s}}{\sqrt{2\lambda_s}} \right) dx.
\end{equation}
Let $\mathcal{P}_{\mathbb{C}}:= \ell^2(\mathbb{Z},\mathbb{C}) \times \ell^2(\mathbb{Z},\mathbb{C})$ that we endow with the complex symplectic form $-i{ \sum_{s \in \mathbb{Z}}} d\xi_s \wedge d\eta_s$. We define the subspace $ \mathcal{P}_{\mathbb{R}}:= \lbrace (\xi,\eta) \in \mathcal{P}_{\mathbb{C}} | \eta_s=\bar{\xi}_s \rbrace$.
Then, equation \eqref{eq onde} is equivalent to the following Hamiltonian system on $\mathcal{P}_{\mathbb{R}}$:
\begin{equation} \label{onde2}
\dot{\xi}_s = i \frac{\partial H}{\partial \eta_s}, \quad \dot{\eta}_s = -i \frac{\partial H}{\partial \xi_s}, \quad s \in \mathbb{Z}
\end{equation}
From now, we write $H=h+ \varepsilon f $, where
\begin{equation} \label{hamiltonien_new}
h= \sum_{a \in \mathcal{A}} \omega_a(\rho) \xi_a \eta_a + \sum_{s \in \mathcal{L}} \lambda_s \xi_s \eta_s, \quad \mbox{and} \quad  f=\int_{\mathbb{S}^1} G \left( x , \underset{s \in \mathbb{Z}}{ \sum} \frac{\xi_s\varphi_s+\eta_s \varphi_{-s}}{\sqrt{2\lambda_s}} \right)dx.
\end{equation}
Let us fix a vector $I=(I_a)_{a\in\mathcal{A}}$ with positive components (i.e. $I_a > 0$ for all $ a \in \mathcal{A}$).
Let $T_I^n$ be the real torus of dimension $n$ defined by
\begin{equation} \nonumber
T_I^n=\left\{
\begin{array}{l c l}
\xi_a=\bar{\eta}_a, \: |\xi_a|^2=I_a \quad  &\text{if}&\: a \in \mathcal{A},\\
\xi_s=\eta_s=0 \quad & \text{if} & \: s \in \mathcal{L}=\mathbb{Z} \setminus  \mathcal{A}.
\end{array}
\right.
\end{equation}

This torus is invariant by the Hamiltonian flow when the perturbation $ f $ is zero and it is linearly stable. We can even give the analytic expression of the solution of the linear equation.

Our purpose is to prove the persistence of the torus $T_I^n$ when the perturbation $ f $ is no longer zero.

In a neighborhood of the invariant torus $T_I^n$ in $\mathbb{C}^{2n}$, we define the action-angle variables $ (r_a, \theta_a) _ {\mathcal {A}} $ by:
\begin{equation*}
\left\{
\begin{array}{l l l}
\xi_a=\sqrt{(I_a+r_a)} e^{i\theta_a},\\
\eta_a=\bar{\xi}_a.
\end{array}
\right.
\end{equation*}
In these new variables, the Hamiltonian becomes, up to a constant,
\begin{equation} \nonumber
H= \underset{a \in \mathcal{A}}{ \sum} \omega_a( \rho) r_a +   \underset{s \in \mathcal{L}}{ \sum} \lambda_s \xi_s\eta_s + \varepsilon \int_{\mathbb{S}^1} G(x,\hat{u}_{I,V}(r,\theta,\xi,\eta))dx,
\end{equation}
with
\begin{align*} \nonumber
\hat{u}_{I,V}(r,\theta,\xi,\eta)& =\underset{a \in \mathcal{A}}{ \sum} \sqrt{(I_a+r_a)} \frac{e^{-i\theta_a}\varphi_a(x)+e^{i\theta_a}\varphi_{-a}(x)}{\sqrt{2 \omega_a}}  + \underset{s \in \mathcal{L}}{ \sum}  \frac{\xi_s \phi_s(x)+\eta_{-s}\phi_s(x)}{\sqrt{2\lambda_s}}.
\end{align*}
We set $u_{I,V}(\theta,x)=\hat{u}_{I,V}(0,\theta,0,0)$. Then, for any $I \in \mathbb{R}_+^\mathcal{A}$ and $\theta_0 \in \mathbb{S}^1$, the function $(t,x) \mapsto u_{I,V}(\theta_0+t\omega,x)$ is solution of the linear wave equation. In this case, the torus $T_I^n$ is invariant and linearly stable.
Our goal is to state a similar result when the perturbation is not zero (in the nonlinear case) by applying the Theorem~\ref{theoreme kam}. For this, we have to verify the assumptions A1, A2, A3 and that the nonlinearity $f$ belongs to the right space.

Due to assumption \eqref{wave_conv_assumption1} and \eqref{wave_conv_assumption2}, the hypothesis A1 holds trivially. The hypothesis A2 also holds since in each case the second alternative is fulfilled. More precisely, for $s \in \mathcal{L}$, the frequency $\lambda_s$ does not depend on the parameter $ \rho$. So it's enough to show that there exists a unit vector $z_k \in \mathbb{R}^n$ such that
\begin{equation} \nonumber
\langle \partial_\rho ( k\cdot\omega(\rho)),z_k \rangle \geq \delta \quad \forall \rho \in \mathcal{D};
\end{equation}
for $k \neq 0$ and suitable $\delta$. This hypothesis is fulfilled for $ z_k = k/ \vert k \vert  $. Let us prove now that the hypothesis A3 holds. Consider $N>0$, $0 < \kappa < \delta$ and the following set
\begin{equation} \nonumber
J(k,s,s') = \lbrace \rho  \in \mathcal{D} \mid \left| k \cdot \omega(\rho) + \lambda_s-\lambda_{s'}  \right| < \kappa \rbrace.
\end{equation}
By hypothesis A2, the Lebesgue measure of $J(k,s,s')$ is bounded by $C \kappa \delta^{-1}$, where $C$ depends on $\mathcal{D}$. For $p \in \mathbb{Z}$ and $k \in \mathbb{Z}^n$, we define the set $ W(k,p) = \lbrace \rho \in \mathcal{D} \mid \left| k \cdot \omega( \rho) + p \right| < 2 \kappa \rbrace$.
By A2, its Lebesgue measure is bounded by $ C \kappa \delta^{-1}$. Let $W = \lbrace \rho \in \mathcal{D} \mid \left| k \cdot \omega (\rho) \cdot k + p \right| < 2\kappa \rbrace$, then
\begin{equation} \nonumber
\operatorname{mes} \left(  W \right) \leq \sum \limits_{\underset{\vert k \vert \leq  N}{k \in \mathbb{Z}^n}} \sum \limits_{\underset{\vert p \vert < CN}{p \in \mathbb{Z}}}  W(k,p) \leq C N^{n+1} \kappa \delta^{-1}.
\end{equation}
For $s \in  \mathcal{L}$, we note that $\left|  \lambda_s - \vert s \vert  \right|  \leq  \frac{\tilde C}{\vert s \vert}$, where $ \tilde{C}$ depends on the potential $V$. If $ \vert s \vert > \vert s' \vert > 2 \tilde{C} k^{-1}$, then $\left| \lambda_s - \lambda_{s'} - \left( \vert s  \vert - \vert s' \vert  \right)    \right| \leq  \kappa.$ So, if $\rho \in \mathcal{D} \setminus W$ and $\vert s  \vert > \vert s' \vert >2 \tilde{C} k^{-1}$, we obtain that
\begin{align*}
\left| \omega(\rho)+\lambda_{s} - \lambda_{s'} \right| & \geq \kappa.
\end{align*}
It remains to look at the cases where $\min (\vert s \vert, \vert s' \vert) < 2 \tilde{C} k^{-1}$ and there is $ k \in \mathbb{Z}^n$ such that
\begin{equation} \nonumber
\left| \omega(\rho)+\lambda_{s} - \lambda_{s'} \right| < 1,
\end{equation}
for $ \vert k \vert < N$. We remark in those cases that $\left| \vert s \vert - \vert s' \vert \right| \leq CN$.  Consider the set:
\begin{equation} \nonumber
\mathcal{Q} = \left\lbrace (s,s') \in \mathbb{Z}^2 \mid \min( \vert s \vert, \vert s' \vert)  < 2 \tilde{C} k^{-1} \:\text{and} \: \left| \vert s \vert - \vert s' \vert \right| \leq CN   \right\rbrace.
\end{equation}
We remark that $\operatorname{Card} \left(  \mathcal{Q} \right)  \leq C N \kappa^{-2}$.
So if we restrict $ \rho $ to
\begin{equation} \nonumber
\mathcal{D}' = \mathcal{D} \setminus (  W \bigcup_{\stackrel{\vert k \vert \leq  N}{(s,s') \in \mathcal{Q}}} \left( J (k,s,s')\right)) 
\end{equation}
we get
\begin{equation} \nonumber
\left| k\cdot\omega(\rho)+\lambda_{s} - \lambda_{s'} \right| \geq \kappa.
\end{equation}
Moreover,
\begin{equation*}
\operatorname{mes} \left(  \mathcal{D} \setminus \mathcal{D}' \right)  \leq \operatorname{mes}  \left(  W \right)  +  \sum \limits_{\underset{\vert k \vert \leq  N}{k \in \mathbb{Z}^n}} \sum_{(s,s') \in \mathcal{Q}} \operatorname{mes} J(k,s,s') \leq CN^{n+1} \kappa \delta^{-1}.
\end{equation*}
Using this, we prove easily that \ref{Melnikov-measure} and \ref{Melnikov-cond} are fulfilled for suitable positive exponent $ \tau$ and $ \iota $ .

It remains to prove that the nonlinearity $f$ belongs to the right space. We denote by $\mathcal{T}^{\alpha,\beta}(\mu)$ the set of functions of $\mathcal{T}^{\alpha,\beta}(\mathcal{D},\sigma,\mu)$ that do not depend on $r,\theta$ and $\rho$. It remains to verify that $f \in \mathcal{T}^{\alpha,\beta}(\mu)$ for some choice of $ \alpha, \beta$ and $\mu$. We will prove that $f \in \mathcal{T}^{\alpha,1/2}(\mu)$ for $\alpha > 0$.  It suffices to show that
\begin{equation} \nonumber
\nabla f \in Y^\alpha \cap L_{1/2}\quad \text{ and } \quad \nabla^2f \in \mathcal{M}_{1/2}.
\end{equation}
Recall that for $x \in \mathbb{S}^1$, we have:
\begin{equation} \nonumber
u(x) = \sum_{s \in \mathbb{Z}}\frac{\xi_s\varphi_s(x)+\eta_s \varphi_{-s}(x)}{\sqrt{2\lambda_s}}=u(\zeta)(x),
\end{equation}
where $\zeta= (\xi_s, \eta_s)_{s \in \mathbb{Z}}$. By Cauchy-Schwarz inequality, there exists a constant $C_\alpha$ depending on $\alpha$, such that for $\zeta  \in \mathcal{O}_\mu(Y_\alpha)$ we have
\begin{equation} \nonumber
\vert u(\zeta)(x) \vert \leq C_\alpha \Vert \zeta \Vert_\alpha \leq C_\alpha \mu .
\end{equation}
For $\alpha\geq 0$, we define the following space:
\begin{equation} \nonumber
Z_\alpha= \left\lbrace v=\left( v_s \in \mathbb{C}, \: s \in \mathbb{Z} \right) \:| \: \left( \vert v_s \vert \langle s\rangle^{\alpha}\right)_s \in \ell^2\left( \mathbb{Z} \right)  \right\rbrace .
\end{equation}
For $v \in Z_\alpha$, we define the Fourier transform $\mathcal{F}(v)$ of $v$ by  $u(x)=\mathcal{F}(v):=~ \sum v_s e^{isx}$. We also define the discrete Sobolev space by
\begin{equation} \nonumber
H^\alpha (\mathbb{S}^1)=\left\lbrace u \,|\, u(x)=\sum_{s\in \mathbb{Z}} \hat{u}(s)e^{isx} | \: \left( \vert\hat{u}(s) \vert \langle s\rangle^{\alpha}\right)_s \in \ell^2\left( \mathbb{Z} \right) \right\rbrace .
\end{equation}
If $\alpha \in \mathbb{N}$, then
\begin{equation} \nonumber
H^\alpha (\mathbb{S}^1)=\left\lbrace u \, | \, u(x)=\sum_{s\in \mathbb{Z}} \hat{u}(s)e^{isx} | \: \left( \widehat{ \partial^\alpha u}(s)\right) _s \in \ell^2\left( \mathbb{Z} \right) \right\rbrace .
\end{equation}
So we have the following equivalence:
\begin{equation} \label{eqiv sob}
u \in H^\alpha(\mathbb{S}^1) \Longleftrightarrow \left( \hat{u}(s)\right)_s \in Z_\alpha.
\end{equation}
\begin{itemize}
\item To prove that $\nabla_\zeta f \in Y_\alpha$, it is sufficient to prove, for example, that $\frac{\partial f}{\partial \xi} \in Z_\alpha$.
We have
\begin{align*}
\frac{\partial f}{ \partial \xi_s}  (\zeta) & = \frac{1}{\sqrt{2 \lambda_s}} \int_{\mathbb{S}^1} \partial_u G \left( x, u(\zeta)(x) \right) \phi_s(x)dx.
\end{align*}
The map  $(x,u) \mapsto g(x,u)$ is real holomorphic on a neighborhood of  $\: \mathbb{S}^1 \times J$, so $x \mapsto  \partial_u f \left( x, u(\zeta)(x) \right) \in H^\alpha (\mathbb{S}^1)$. We deduce from equivalence \eqref{eqiv sob} that $\frac{\partial f}{\partial \xi} \in Z_\alpha$.
\item Let us prove now that $\nabla^2 f \in \mathcal{M}_{1/2}$. Recall that:
\begin{equation} \nonumber
\vert  \nabla^2 f \vert_{1/2} = \sup_{s,s' \in \mathbb{Z}} \langle s \rangle^{1/2}\langle s' \rangle^{1/2} \left\Vert \frac{\partial^2 f}{\partial\zeta_s \partial \zeta_{s'}} \right\Vert_\infty .
\end{equation}
We have
\begin{equation} \nonumber
\frac{\partial^2 f}{\partial \xi_s \xi_{s'}}= \frac{1}{2 \lambda_s^{1/2} \lambda_{s'}^{1/2}} \int_{\mathbb{S}^1} \partial_u^2 G (x,u(\zeta)(x)) \varphi_s(x) \phi_{s'}(x)dx.
\end{equation}
Then
\begin{equation} \nonumber
\frac{\partial^2 f}{\partial \zeta_s \zeta_{s'}}=  \frac{1}{2 \lambda_s^{1/2} \lambda_{s'}^{1/2}} \begin{pmatrix}
\widehat{\partial^2_u G} (s+s')& \widehat{\partial^2_u G} (s-s') \\
\widehat{\partial^2_u G} (-s+s')& \widehat{\partial^2_u G} (-s-s')
\end{pmatrix},
\end{equation}
which leads to
\begin{equation} \nonumber
\vert  \nabla^2 f \vert_{1/2} = \sup_{s \in \mathbb{Z}} \left\vert \widehat{\partial^2_u G} (s) \right\vert  < \infty.
\end{equation}
\item To conclude, we have to show that $\nabla f \in L_{1/2}$. Recall that for  $\beta \leq \alpha$, we have $Y_\alpha \subset L_\beta$. So $\nabla f \in Y_1 \subset L_{1/2}$. This achieve the verification of the assumptions of Theorem~\ref{theoreme kam} and gives the following result:
\end{itemize}
\begin{theorem} \label{théorème onde}
Let $ \alpha > 0$. There exist $ \varepsilon_0$, $\gamma$, $C>0$ such that for $0 < \varepsilon \leq \varepsilon_0$ there exists a Borel set $ \mathcal{D}' \subset \mathcal{D}$ asymptotically of full Lebesgue measure, i.e.
\begin{equation} \nonumber
\operatorname{mes}\left( \mathcal{D} \setminus  \mathcal{D}'  \right) \leq C \varepsilon^\gamma,
\end{equation}
where $\gamma$ depends on $n$. For $ \rho \in \mathcal{D}'$, there exists:
\begin{enumerate}
\item a function $u(\theta,x)$ analytic in $\theta \in \mathbb{T}^n_{\sigma/2}$ and of class $H^\alpha$ in $x \in \mathbb{S}^1$ such that:
\begin{equation} \nonumber
\underset{\theta \in \mathbb{R}}{ \sup} \parallel u(\theta,.)-u_{I,V}(\theta,.) \parallel_{H^\alpha} \leq \tilde{C} \varepsilon^{4/5},
\end{equation}
with $\tilde{C}$ an absolute constant.
\item a mapping $\omega': \mathcal{D}' \rightarrow \mathbb{R}^n $ verifying:
\begin{equation} \nonumber
\vert \omega' (\rho) - \omega (\rho) \vert_{\mathcal{C}^1(\mathcal{D}')} \leq \bar C \varepsilon
\end{equation}
such that for $\rho \in \mathcal{D}'$ the function
\begin{equation}  \nonumber
t\mapsto u(\theta+t\omega',x)
\end{equation}
is solution of the wave equation \eqref{eq onde}. This solution is linear stable.
\end{enumerate}

\end{theorem}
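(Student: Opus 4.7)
The plan is to apply the abstract KAM Theorem~\ref{theoreme kam} to the Hamiltonian $H = h + \varepsilon f$ of \eqref{hamiltonien_new} and then translate the resulting invariant torus back into wave-equation variables. All the structural hypotheses have just been verified in the preceding pages: A1 follows from \eqref{wave_conv_assumption1}-\eqref{wave_conv_assumption2}, A2 holds with $z_k = k/|k|$ since only the internal frequencies $\omega_a(\rho) = \sqrt{a^2+\rho_a}$ depend on $\rho$, A3 is obtained from the set-estimate computation performed with $\mathcal{D}' = \mathcal{D} \setminus (W \cup \bigcup J(k,s,s'))$, and $f \in \mathcal{T}^{\alpha,1/2}(\mu)$ was checked for any $\alpha > 0$. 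The matrix $N$ is simply $0$ here (the quadratic part $\sum \lambda_s \xi_s \eta_s$ is already diagonal), so hypothesis B is trivial. Since the perturbation is proportional to $\varepsilon$, we have $\lc \varepsilon f^T \rc^{\alpha,1/2,\kappa}_{\sigma,\mu,\mathcal{D}} \leq C\varepsilon$, which lies below the threshold $\varepsilon_0$ of Theorem~\ref{theoreme kam} as soon as $\varepsilon$ is small.

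Feeding these data into Theorem~\ref{theoreme kam} yields a Borel set $\mathcal{D}' \subset \mathcal{D}$ with $\mes(\mathcal{D} \setminus \mathcal{D}') \leq c(\sigma,\delta)\,\varepsilon^\gamma$, a modified frequency vector $\tilde\omega(\rho)$ satisfying $|\tilde\omega - \omega|_{\mathcal{C}^1(\mathcal{D}')} \leq C\varepsilon$, a block-diagonal matrix $\tilde A$ with $|\tilde A - A|_\alpha \leq C\varepsilon$, and a real analytic symplectomorphism $\Phi_\rho : \mathcal{O}^\alpha(\sigma/2,\mu/2) \to \mathcal{O}^\alpha(\sigma,\mu)$ such that $(h+\varepsilon f)\circ \Phi_\rho = \tilde\omega(\rho)\cdot r + \tfrac12 \langle \zeta,\tilde A(\rho)\zeta\rangle + \tilde f$ with $\partial_r \tilde f = \partial_\zeta \tilde f = \partial^2_{\zeta\zeta}\tilde f = 0$ at $r=\zeta=0$. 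Setting $\omega'(\rho) := \tilde\omega(\rho)$ gives assertion (2) of the theorem immediately.

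The vanishing-jet condition tells us that $\mathbb{T}^n \times \{0\} \times \{0\}$ is invariant under the new Hamiltonian flow, with dynamics $\theta(t) = \theta_0 + t\,\tilde\omega(\rho)$, $r \equiv 0$, $\zeta \equiv 0$. Pushing this orbit back through $\Phi_\rho$ produces an invariant torus in the original action-angle-Fourier coordinates, and reading the resulting $(r_a, \theta_a, \xi_s, \eta_s)$ back into $u$ via $\xi_a = \sqrt{I_a+r_a}\,e^{-i\theta_a}$, $\eta_a = \bar\xi_a$, together with the Fourier sum for the external modes, defines the function $u(\theta,x)$. By construction, $t \mapsto u(\theta_0 + t\omega'(\rho), x)$ solves \eqref{eq onde}, and $u$ is analytic in $\theta \in \mathbb{T}^n_{\sigma/2}$ with values in $H^\alpha(\mathbb{S}^1)$ thanks to the Fourier-space characterization \eqref{eqiv sob}. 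Linear stability is inherited from the block-diagonal form of $\tilde A$: the linearized equation along the torus reduces to a constant-coefficient system with purely imaginary eigenvalues (since $\tilde A$ is real symmetric in the $(p,q)$ variables, hence $J\tilde A$ has imaginary spectrum), giving reducibility to a harmonic system.

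The remaining $H^\alpha$-estimate on $u - u_{I,V}$ is where a small amount of bookkeeping is required. The bound \eqref{kam_id} gives $\|\Phi_\rho - \operatorname{Id}\|_\alpha \leq C\varepsilon^{4/5}$ on $\mathcal{O}^\alpha(\sigma/2,\mu/2)$. Evaluated at $(\theta,0,0)$, this controls simultaneously $|\Phi_r(\theta,0,0)|$, $|\Phi_\theta(\theta,0,0) - \theta|$, and $\|\Phi_\zeta(\theta,0,0)\|_\alpha$ by $C\varepsilon^{4/5}$. The final point is to convert an estimate in the $Y_\alpha$-norm on the complex Fourier coefficients $(\xi_s,\eta_s)$ into one in the $H^\alpha(\mathbb{S}^1)$-norm: the explicit formula $u(x) = \sum_s (\xi_s\varphi_s(x) + \eta_s\varphi_{-s}(x))/\sqrt{2\lambda_s}$ together with $\lambda_s \asymp \langle s\rangle$ (hypothesis A1) shows that the map $\zeta \mapsto u$ is a bounded linear isomorphism $Y_\alpha \to H^{\alpha+1/2}(\mathbb{S}^1)$, so a fortiori into $H^\alpha$. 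Propagating the $\varepsilon^{4/5}$ bound through this isomorphism and through the action-angle parametrization near $T_I^n$ (which is smooth since $I_a > 0$) gives the stated inequality. The main obstacle in the argument is therefore not analytical — the abstract KAM theorem does all the heavy lifting — but consists of carefully tracking the $\varepsilon^{4/5}$ factor across the three successive changes of coordinates (Fourier, action-angle, and the KAM conjugacy $\Phi_\rho$) so that it survives intact in the $H^\alpha$-norm of the output.
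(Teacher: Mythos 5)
Your overall strategy matches the paper's implicit proof (the paper itself offers no separate argument for Theorem~\ref{théorème onde}; it verifies hypotheses A1--A3, checks $f\in\mathcal{T}^{\alpha,1/2}(\mu)$, and invokes Theorem~\ref{theoreme kam}), so the structure is right. However, your linear-stability step contains a genuine logical gap.

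You write that \emph{``$\tilde A$ is real symmetric in the $(p,q)$ variables, hence $J\tilde A$ has imaginary spectrum.''} That implication is false in general: take $A=\begin{pmatrix}1&0\\0&-1\end{pmatrix}$ and $J=\sigma_2$; then $JA=\begin{pmatrix}0&1\\1&0\end{pmatrix}$ has real eigenvalues $\pm1$. Real symmetry alone does not force an imaginary spectrum for $JA$; one needs an extra positivity or structural hypothesis. In this problem the conclusion is nonetheless true, and it can be recovered in either of two ways: (a) positivity --- since $|\tilde A - A|_\beta\leq C\varepsilon$ and here $A=D$ with $\lambda_s\geq c_0\langle s\rangle$, $\tilde A$ stays positive definite, so the conserved quadratic form $\langle\zeta,\tilde A\zeta\rangle$ controls the linearized trajectories; or (b) the $\mathcal{NF}$ structure --- the blocks of $\tilde A$ are of the form $\alpha I+\beta\sigma_2$, so passing to the complex coordinates of Section~4 turns the quadratic part into $\langle\xi,Q\eta\rangle$ with $Q$ Hermitian and block diagonal, and the flow $\dot\xi=iQ\xi$ conserves $|\xi|^2$. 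Either route closes the gap; the bare appeal to symmetry does not.

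Two smaller points worth tightening: Theorem~\ref{theoreme kam} also requires $\lc f\rc^{\alpha,\beta,\kappa}_{\sigma,\mu,\mathcal{D}}=O(\varepsilon^\tau)$ for some $0<\tau<1$, which you do not acknowledge; because the perturbation $\varepsilon g$ scales linearly in $\varepsilon$, this is automatic for $\varepsilon\le 1$, but it should be stated since it is one of the two quantitative hypotheses of the abstract theorem. Also, the map $\zeta\mapsto u$ is bounded from $Y_\alpha$ to $H^{\alpha+1/2}$ as you say, but it is not an isomorphism on all of $Y_\alpha$ (only on the real subspace $\eta_s=\bar\xi_s$); this does not affect your estimate, just the phrasing.
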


\appendix
\section*{Appendix}
\begin{proof}[Proof of Remark \ref{melnikov}]
We prove in the same way the three estimates on the small divisors. We will choose to prove the last one. Let $ 0 <  \kappa < \frac{1}{2} \delta$, $N> 1$ and $s , s' \in \mathcal{L}$.
There are two possible cases, we have either
\begin{equation} \nonumber
\vert k \cdot \omega' (\rho) + \lambda_s +\lambda_{s'}\vert  \geq \delta (\langle s\rangle + \langle s\rangle) \geq \kappa (\langle s\rangle+\langle s\rangle),\quad \forall \rho \in \mathcal{D},
\end{equation}
or there exists a unit vector $z_k \in \mathbb{R}^p$ such that:
\begin{equation} \nonumber
\langle \partial_\rho ( k \cdot \omega'(\rho) + \lambda_s + \lambda_{s'}),z_k \rangle \geq \delta \quad \forall \rho \in \mathcal{D}.
\end{equation}
Let us consider the second case and assume that $\max( \vert s \vert , \vert s' \vert )\leq 2 Cc_0^{-1} N \delta^{-1}$ where $ C= \vert \omega \vert_{\mathcal{C}^1 \left( \mathcal{D} \right)}$. Consider the following set
\begin{equation} \nonumber
J(k,s,s') = \lbrace \rho \in \mathcal{D} \mid \vert k \cdot \omega'(\rho) + \lambda_s + \lambda_{s'} \vert < \kappa ( \langle s\rangle + \langle 's\rangle)\rbrace.
\end{equation}
The Lebesgue measure of that set satisfies
\begin{equation} \nonumber
\operatorname{mes} (J(k,s,s') ) \leq \kappa \delta^{-1}(\langle s\rangle + \langle s' \rangle)  \leq 2 C c_0^{-1} \kappa \delta^{-1}N.
\end{equation}
We define the set  $\mathcal{B}_2 := \lbrace (k,s,s') \in \mathbb{Z}^n \times \mathcal{L} \times \mathcal{L} \mid \vert k \vert \leq N ,\; \max( \vert s \vert , \vert s' \vert )\leq 2 C c_0^{-1} N  \rbrace$. This set contains at most $16C c_0^{-2} N^{2n+1} \delta^{-2}$ points. For $\rho \in  \displaystyle \mathcal{D}_1 := \mathcal{D} \setminus \bigcup_{(k,s,s') \in  \mathcal{B}_2} J(k,s,s')$, we have:
\begin{equation} \nonumber
\vert k \cdot \omega' (\rho) + \lambda_s + \lambda_s \vert  \geq \kappa ( \langle s\rangle + \langle s'\rangle),\quad \forall \rho \in \mathcal{D}.
\end{equation}
Moreover 
\begin{equation} \nonumber
\operatorname{mes} ( \mathcal{D} \setminus \mathcal{D}_1^3 ) \leq \tilde C \kappa \delta^{-1} N^{2(n+1)}.
\end{equation}
Assume now that $\max( \vert s \vert , \vert s' \vert ) > 2 C c_0^{-1} N\delta^{-1}$, by the first separation condition, we have
\begin{align*}
\vert k \cdot \omega' (\rho) + \lambda_s + \lambda_{s'} \vert  & \geq \lambda_s + \lambda_{s'}- \vert \omega' \cdot k \vert  \geq c_0 (\langle s\rangle + \langle s' \rangle)  - \frac{1}{2} c_0^{-1} ( \langle s\rangle + \langle s' \rangle)\\ 
& \geq \kappa ( \langle s\rangle + \langle s' \rangle).
\end{align*} 
\end{proof}

\begin{lemma} \label{série cvg}
Let $s \in \mathbb{N}$ and $\gamma>0$, then $$\sum_{k\in \mathcal{L}} \frac{1}{\langle k \rangle^{2\gamma}(1+|\: | k|-|s| \:|)} \leq C,$$
where $C$ is  a positive constant and depends on $\gamma$ and does not depend on $s$.
\end{lemma}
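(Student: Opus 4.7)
The plan is to bound the series by reducing it to a one-dimensional sum over $n = |k| \in \mathbb{N}$ and then splitting that sum into three regimes according to the size of $n$ relative to $|s|$. Since $\mathcal{L} \subset \mathbb{Z}$ and the summand depends only on $|k|$, one immediately has
$$\sum_{k\in\mathcal{L}} \frac{1}{\langle k\rangle^{2\gamma}(1+||k|-|s||)} \leq \frac{1}{1+|s|} + 2\sum_{n\geq 1} \frac{1}{n^{2\gamma}(1+|n-|s||)},$$
so it will suffice to bound $\Sigma(s) := \sum_{n\geq 1} n^{-2\gamma}(1+|n-|s||)^{-1}$ uniformly in $s$.

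First I would treat the tail $n > 2|s|$, where $|n-|s|| \geq n/2$ forces the summand to be at most $2 n^{-2\gamma-1}$; this tail is summable because $\gamma>0$, and in fact tends to $0$ as $|s|\to\infty$. Next I would handle $1 \leq n \leq |s|/2$, where $|n-|s|| \geq |s|/2$, so that the contribution is dominated by $\frac{2}{2+|s|}\sum_{n\leq |s|/2} n^{-2\gamma}$. Depending on whether $2\gamma>1$, $2\gamma=1$, or $2\gamma<1$, the inner sum is $O(1)$, $O(\log|s|)$, or $O(|s|^{1-2\gamma})$ respectively, and in all three cases the prefactor $(2+|s|)^{-1}$ produces a bound uniform in $s$.

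The main obstacle is the middle regime $|s|/2 < n \leq 2|s|$, where $|n-|s||$ may be as small as $0$ so the factor $(1+|n-|s||)^{-1}$ alone is not summable; the decay $n^{-2\gamma}$ must absorb the resulting logarithm. On this range I would use $n \geq |s|/2$ to bound $n^{-2\gamma} \leq C_\gamma\,\langle s\rangle^{-2\gamma}$, then change variables to $m = n - |s|$ and estimate
$$\sum_{|s|/2 < n \leq 2|s|} \frac{1}{n^{2\gamma}(1+|n-|s||)} \;\leq\; C_\gamma \langle s\rangle^{-2\gamma} \sum_{|m|\leq |s|} \frac{1}{1+|m|} \;\leq\; C_\gamma\, \langle s\rangle^{-2\gamma}\bigl(1+\log(1+|s|)\bigr).$$
Since $\gamma>0$, the map $|s|\mapsto \langle s\rangle^{-2\gamma}(1+\log(1+|s|))$ is bounded on $\mathbb{N}$, so the middle regime contributes a constant depending only on $\gamma$. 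Combining the three pieces then yields the uniform bound $C=C(\gamma)$ claimed in the statement.
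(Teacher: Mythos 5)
Your proof is correct, but it follows a genuinely different route from the paper's. After the same initial reduction, the paper splits only on the sign of $2\gamma-1$: for $\gamma>1/2$ it simply drops the factor $\bigl(1+\bigl|\,|k|-|s|\,\bigr|\bigr)^{-1}$, and for $0<\gamma\leq 1/2$ it applies Young's inequality for products with exponents $p>\frac{1}{2\gamma}$ and $q=p/(p-1)$, which decouples the two factors into $\frac1p \langle k\rangle^{-2\gamma p}+\frac1q \bigl(1+\bigl|\,|k|-|s|\,\bigr|\bigr)^{-q}$; each resulting sum then converges independently of $s$. You instead reduce to a one-dimensional sum over $n=|k|$ and split into the three regimes $n>2|s|$, $|s|/2<n\leq 2|s|$, and $n\leq |s|/2$, bounding each directly; the case distinction $2\gamma>1$, $=1$, $<1$ reappears inside your low-$n$ regime rather than at the outset. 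The paper's Young-inequality device is shorter and slicker; your three-regime decomposition is more elementary and more instructive, since it isolates where the real difficulty sits (the window $n\approx|s|$, where the second factor yields only a logarithm) and shows explicitly how the $\langle s\rangle^{-2\gamma}$ decay available there absorbs that logarithm. Both arguments are complete and correct.
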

\begin{proof}\begin{itemize}
\item If $\gamma >1/2$, then
$$\sum_{k\in \mathcal{L}} \frac{1}{\langle k \rangle^{2\gamma}(1+|\: | k|-|s| \:|)} \leq\sum_{k\in \mathcal{L}} \frac{1}{\langle k \rangle^{2\gamma}} \leq C.$$
\item If $0<\gamma \leq 1/2$, then there exists $p>\frac{1}{2\gamma}\geq 1$. Let $q=1+\frac{1}{p-1}$, then $p>1$, $q>1$ and $\frac{1}{p}+\frac{1}{q}=1$. By Young's inequality for products, we have
\begin{eqnarray*}
\sum_{k\in \mathcal{L}} \frac{1}{\langle k \rangle^{2\gamma}(1+|\: | k|-|s| \:|)} & \leq & \frac{1}{p} \sum_{k\in \mathcal{L}} \frac{1}{\langle k \rangle^{2\gamma p}} + \frac{1}{q} \sum_{k\in \mathcal{L}} \frac{1}{(1+|\: | k|-|s| \:|)^{q}} \\
& \leq & \frac{1}{p} \sum_{k\in \mathcal{L}} \frac{1}{\langle k \rangle^{2\gamma p}} +  \frac{1}{q} \sum_{k \in \mathbb{Z} } \frac{1}{(1+|k|)^{q}} \leq C.
\end{eqnarray*} 
\end{itemize}
\end{proof}

\begin{proof}[Proof of Lemma \ref{norme}]
[1.] For $s$, $s'\in \mathcal{L}$, we have
\begin{equation*}
\Vert (AB)_s^{s'} \Vert_\infty  \leq  \sum_{k \in \mathcal{L}} \Vert A_s^k \Vert_\infty \Vert B_k^{s'} \Vert_\infty \leq  \frac{\vert A \vert_{\beta+} \vert B \vert_\beta}{\langle s \rangle^\beta\langle s' \rangle^\beta} \sum_{k\in \mathcal{L}} \frac{1}{\langle k \rangle^{2\beta}(1+|\: | k|-|s| \:|)}.
\end{equation*}
We conclude by Lemma~\ref{série cvg}.
\\$\left[ 2.\right] $ For $s\in \mathcal{L}$ we have
\begin{equation*}
\vert (A \zeta )_s \vert  \leq 2 \sum_{k \in \mathcal{L}} \Vert A_s^k \Vert_\infty \vert \zeta_k \vert  \leq  \frac{\vert A \vert_{\beta+} \vert \zeta \vert_{\beta}}{\langle s \rangle^\beta} \sum_{k\in \mathcal{L}} \frac{2}{\langle k \rangle^{2\beta}(1+|\:|k|-|s|\:|)}.
\end{equation*}
Similarly, we conclude by Lemma~\ref{série cvg}.
\\$\left[ 3.\right] $ For $s\in \mathcal{L}$ we have
\begin{equation*}
\vert (A \zeta )_s \vert  \leq  2 \sum_{k \in \mathcal{L}} \Vert A_s^k \Vert_\infty \vert \zeta_k \vert  \leq  \frac{\vert A \vert_{\beta} \vert \zeta \vert_{\beta+}}{\langle s \rangle^\beta} \sum_{k\in \mathcal{L}} \frac{2}{\langle k \rangle^{2\beta+1}} \leq  C  \vert A \vert_{\beta} \vert \zeta \vert_{\beta+}.
\end{equation*}
[4.] For $s\in \mathcal{L}$, we have
\begin{equation*}
\vert (A \zeta )_s \vert  \leq  2 \sum_{k \in \mathcal{L}} \Vert A_s^k \Vert_\infty \vert \zeta_k \vert  \leq  \frac{\vert A \vert_{\beta+} \vert \zeta \vert_{\beta+}}{\langle s \rangle^\beta} \sum_{k\in \mathcal{L}} \frac{2 \langle s \rangle}{\langle k \rangle^{2\beta+1}(1+|\:|k|-|s|\:|)} .
\end{equation*}
We note that
\begin{align*}
\sum_{k\in \mathcal{L}} \frac{\langle s \rangle}{\langle k \rangle^{2\beta+1}(1+|\:|k|-|s|\:|)} & \leq \sum_{|\:|k|-|s|\:|\leq |s|/2} \frac{ | s |}{\langle k \rangle^{2\beta+1}(1+|\:|k|-|s|\:|)} \\
& +\sum_{|\:|k|-|s|\:|> |s|/2} \frac{ | s |}{\langle k \rangle^{2\beta+1}(1+|\:|k|-|s|\:|)}.
\end{align*}
The second series is bounded by the convergent series $\underset{k \in \mathbb{Z} }{\sum} \frac{2}{\langle k \rangle^{2\beta+1}}$. The first series is bounded by $\underset{k \in \mathbb{Z} }{\sum} \frac{ 2}{\langle k \rangle^{2\beta}(1+|\:|k|-|s|\:|)}$. We conclude by Lemma~\ref{série cvg}.
\\$\left[5.\right]$  For $s$, $s'\in \mathcal{L}$ we have
\begin{equation*}
\Vert (AB)_s^{s'} \Vert_\infty  \leq  2 \vert X_s \vert \vert Y_{s'} \vert  \leq \frac{2}{\langle s \rangle^\beta \langle s' \rangle^\beta} \vert X \vert_\beta \vert Y \vert_\beta.
\end{equation*}
Similarly we prove the last assertion.
\end{proof}

\bibliographystyle{plain}
\bibliography{biblio2}
\end{document}